\title{Exotic Elliptic Algebras}
\author{Alex Chirvasitu}
\author{S. Paul Smith}
\address{Department of Mathematics, Box 354350, University of  Washington, Seattle, WA 98195,USA.}
\email{chirva@math.washington.edu, smith@math.washington.edu}
\keywords{Sklyanin algebras, comodule algebras, torsors, descent}
\subjclass[2010]{16E65, 16S38, 16T05, 16W50}
\newtheorem{lemma}{Lemma}[section]
\newtheorem{theorem}[lemma]{Theorem}
\newtheorem{proposition}[lemma]{Proposition}
\newtheorem{corollary}[lemma]{Corollary}
\theoremstyle{definition} 
\newtheorem{definitionnodiamond}[lemma]{Definition}
\newtheorem{examplenodiamond}[lemma]{Example}
\newtheorem{remarknodiamond}[lemma]{Remark}
\newenvironment{definition}{\begin{definitionnodiamond}}{\hfill\ensuremath\blacklozenge\end{definitionnodiamond}}
\newcounter{stepofproof}
\numberwithin{equation}{section}
\crefname{section}{Section}{Sections}
\crefname{subsection}{}{Subsections}
\crefname{definition}{Definition}{Definitions}
\crefname{example}{Example}{Examples}
\crefname{examplenodiamond}{Example}{Examples}
\crefname{remark}{Remark}{Remarks}
\crefname{remarknodiamond}{Remark}{Remarks}
\crefname{convention}{Convention}{Conventions}
\crefname{lemma}{Lemma}{Lemmas}
\crefname{proposition}{Proposition}{Propositions}
\crefname{corollary}{Corollary}{Corollaries}
\crefname{theorem}{Theorem}{Theorems}
\crefname{assumption}{Assumption}{Assumptions}
\crefname{equation}{}{}
\crefname{proofstep}{Step}{Steps}
\newcommand\cat[1]{\textsc{#1}}
\newcommand\define[1]{\emph{#1}}
\newcommand\arXiv[1]{\href{http://arxiv.org/abs/#1}{\nolinkurl{arXiv:#1}}}
\newcommand\MRnumber[1]{\href{http://www.ams.org/mathscinet-getitem?mr=#1}{\nolinkurl{MR#1}}}
\newcommand\DOI[1]{\href{http://dx.doi.org/#1}{\nolinkurl{DOI:#1}}}
\newcommand\MAILTO[1]{\href{mailto:#1}{\nolinkurl{#1}}}
\newcommand\bN{\mathbb N}
\newcommand\bP{\mathbb P}
\newcommand\bZ{\mathbb Z}
\newcommand\cB{\mathcal B}
\newcommand\cC{\mathcal C}
\newcommand\cD{\mathcal D}
\newcommand\cE{\mathcal E}
\newcommand\cF{\mathcal F}
\newcommand\cG{\mathcal G}
\newcommand\cL{\mathcal L}
\newcommand\cM{\mathcal M}
\newcommand\cO{\mathcal O}
\newcommand\cP{\mathcal P}
\newcommand\cV{\mathcal V}
\newcommand\Vect{\cat{Vect}}
\DeclareMathOperator\Qcoh{\cat{Qcoh}}
\DeclareMathOperator\QCoh{\cat{Qcoh}}
\newcommand\op{\mathrm{op}}
\newcommand\id{\mathrm{id}}
\renewcommand\lim{\varprojlim}
\def\CC{{\mathbb C}}
\def\FF{{\mathbb F}} 
\def\GG{{\mathbb G}}
\def\NN{{\mathbb N}}
\def\PP{{\mathbb P}}
\def\ZZ{{\mathbb Z}}
\def\bfu{{\bf u}}
\def\bfv{{\bf v}}
\def\Ext{\operatorname {Ext}} 
\def\GL{\operatorname {GL}}
\def\pr{{\operatorname {pr}}}
\def\th{\operatorname {th}}    
\def\Aut{\operatorname{Aut}}
\def\Div{\operatorname{Div}} 
\def\Fdim{{\sf Fdim}} 
\newcommand{\GKdim}{\mathrm{GKdim}}
\def\gldim{\operatorname{gldim}} 
\def\Gr{{\sf Gr}} 
\def\Hom{\operatorname{Hom}}
\def\Mod{{\sf Mod}}
\def\Projnc{\operatorname{Proj}_{nc}}
\def\QGr{\operatorname{\sf QGr}}
\def\rank{\operatorname{rank}}
\def\uExt{\operatorname{\underline{Ext}}}
\def\uHom{\operatorname{\underline{Hom}}}
\def\a{\alpha}
\def\b{\beta}
\def\c{\gamma}
\def\d{\delta}
\def\l{\lambda}
\def\ve{\varepsilon}
\def\G{\Gamma}
\def\L{\Lambda}
\def\Ups{\Upsilon}
\def\fP{{\mathfrak P}}
\def\sC{{\sf C}}
\def\sM{{\sf M}}
\def\sT{{\sf T}}
\def\sfy{{\sf y}}
\def\wtA{{\widetilde{ A}}}
\def\wtB{{\widetilde{ B}}}
\def\wtM{{\widetilde{ M}}}
\def\wtN{{\widetilde{ N}}}
\def\wtP{{\widetilde{ P}}} 
\def\wtQ{{\widetilde{ Q}}}
\def\QCoh{{\sf Qcoh}}
\def\qcoh{{\sf Qcoh}}
\def\hdot{{\:\raisebox{2pt}{\text{\circle*{1.5}}}}}
\begin{document}

\maketitle

\begin{abstract}
The 4-dimensional Sklyanin algebras, over $\CC$, 
  $A(E,\tau)$, are constructed from an elliptic curve $E$ and a translation automorphism $\tau$ of $E$. 
The Klein vierergruppe $\G$ acts as graded algebra automorphisms of $A(E,\tau)$. There is also an action of $\G$ 
as automorphisms of the matrix algebra $M_2(\CC)$ making it isomorphic to the regular representation. The main object of study
in this paper 
is the algebra $\wtA:=\big(A(E,\tau) \otimes M_2(\CC)\big)^\G$. Like $A(E,\tau)$, 
$\wtA$ is noetherian, generated by 4 elements modulo six quadratic relations, Koszul, Artin-Schelter regular of global dimension 4,
has the same Hilbert series as the polynomial ring on 4 variables, satisfies the $\chi$ condition, and so on.
These results are special cases of general results proved for a triple $(A,T,H)$ consisting of a Hopf algebra $H$,
a (often graded) $H$-comodule algebra $A$, and an $H$-torsor $T$. Those general results involve transferring properties between
$A$, $A \otimes T$, and $(A \otimes T)^{\rm{co} H}$.  We then investigate $\wtA$ from the point of view of non-commutative
projective geometry. We examine its point modules, line modules, and 
a certain quotient $\wtB:=\wtA/(\Theta,\Theta')$ where $\Theta$ and $\Theta'$
are homogeneous central elements of degree two. In doing this we show that $\wtA$ differs from $A$ in interesting ways.
For example, the point modules for $A$ are parametrized by $E$ and 4 more points whereas
 $\wtA$ has exactly 20 point modules. Although $\wtB$ is not a twisted homogeneous coordinate ring in the sense of 
 Artin and Van den Bergh a certain quotient of the category of graded $\wtB$-modules is equivalent to the category of quasi-coherent 
 sheaves on the curve $E/E[2]$ where $E[2]$ is the 2-torsion subgroup. We construct line modules for $\wtA$ that are parametrized by
 the disjoint union $(E/\langle \xi_1\rangle) \sqcup (E/\langle \xi_2\rangle) \sqcup (E/\langle \xi_3\rangle)$ of the quotients of $E$ by
 its three subgroups of order 2.
\end{abstract}


\tableofcontents

\section{Introduction}

\subsection{}
The 3- and 4-dimensional Sklyanin algebras are among the most interesting algebras that have 
appeared in non-commutative algebraic geometry. Such an algebra determines and is determined by an elliptic curve, $E$,
a translation automorphism, $\tau$, of $E$, and an invertible $\cO_E$-module $\cL$ of degree 3, and 4, respectively. 
The representation theory of the Sklyanin algebra $A(E,\tau,\cL)$ and, what is almost the same thing, the geometric features of 
the non-commutative projective space $\Projnc\big(A(E,\tau,\cL)\big)$, is governed by the geometry of 
$E$ and $\tau$ when $E$ is embedded as a cubic or quartic curve in $\PP\big(H^0(E,\cL)^*\big)$. 
We refer the reader to \cite{A90} and  \cite{S94} for overviews of the 3- and 4-dimensional Sklyanin algebras.  
The $n$  in ``$n$-dimensional''  refers to the Gelfand-Kirillov dimension of 
$A(E,\tau)$, or its global dimension, or the dimension of $A(E,\tau,\cL)_1$ which is equal to $H^0(E,\cL)$.

Odesskii and Feigin have defined generalizations of the 4-dimensional Sklyanin algebras in \cite{OF89}, \cite{OF93}, and \cite{OF97}. 
The algebras they construct depend on a pair $(E,\tau)$, as before, but now a higher degree line bundle is used to construct $A(E,\tau,\cL)$.
In particular, when $\deg(\cL)=n^2$, $n \ge 2$, Odesskii and Feigin construct an algebra that they denote by $Q_{n^2}(E,\tau)$. 

Following an idea of Odesskii in \cite{O}, described in \Cref{sse.Q-tilde} below, we construct for every such pair $(E,\tau)$ and integer $n\ge 2$ a connected graded algebra $\widetilde{Q}=\widetilde{Q}_{n^2}(E,\tau)$ by a kind of Galois descent procedure applied to $Q_{n^2}(E,\tau)$. We show that the algebras obtained in this manner inherit many of the good properties enjoyed by $Q_{n^2}(E,\tau)$. For example, they are Artin-Schelter regular. 

\subsection{}

This paper examines the case $n=2$ and shows that the algebras $\wtQ$ exhibit a range of novel features. 
They are still governed very strongly by 
the geometry of $E$ and $\tau$. For this reason we call them ``elliptic algebras'', the name Odesskii and Feigin adopted for their algebras,
and we append the adjective ``exotic'' to indicate that they are somewhat novel when compared to the familiar 4-dimensional Sklyanin algebras and other  4-dimensional Artin-Schelter regular algebras.

\subsection{}
\label{ss.Hopf.torsor}
The procedure we use to construct the algebras $\wtQ$ is quite general. Let $H$ be a finite dimensional
Hopf algebra over a field $k$ and $A$ an $H$-comodule algebra. One might also require $A$ to be a graded algebra and that every
homogeneous component be a subcomodule. Let $T$ be an $H$-torsor (see \S\ref{se.torsor}) 
and define the algebra $A':=A \otimes T$. If $A$ 
is graded one places $T$ in degree zero to make $A'$ a graded algebra. Let $\wtA$ denote the subalgebra of $A'$
consisting of the $H$-coinvariant elements. In \S3 and \S4 we show how various properties pass back and forth
between $A$, $A'$, and $\wtA$. For example, we consider the noetherian property, that of being finite as a module over its center, and
numerous homological properties that play an important role in non-commutative algebraic geometry.  
When $H$ is commutative, which is the case in the definition of $\wtQ$,  $A'$ is an $H$-comodule algebra.

In \S4 we assume that $\dim_k(H)<\infty$,  and (usually) $A$ is a connected graded $H$-comodule algebra. 
We show $A$ is Koszul ($m$-Koszul) if and only if $\wtA$ is. We show $A$ is Artin-Schelter regular of dimension $d$ if and only if $\wtA$ is. We show $\wtA$ satisfies the $\chi$ condition, introduced in \cite{AZ94},  if $A$ does.

\subsection{}
The construction $A \rightsquigarrow \wtA$, and our results about properties shared by $A$ and $\wtA$, should be useful in other situations. It would be sensible to examine the effect of this construction on 2- and 3-dimensional Artin-Schelter regular algebras now that J.J. Zhang and his 
co-authors have determined (many/all?) the finite dimensional Hopf algebras for which such algebras can be comodule algebras. 
Even the case when $A$ is a polynomial ring, or an enveloping algebra, deserves investigation. 

\subsection{}
\label{sse.Q-tilde}
Let $Q=A(E,\tau,\cL)$ be a 4-dimensional Sklyanin algebra. It was shown in \cite{SSJ93} that $\G=(\ZZ/2) \times (\ZZ/2)$ acts as
 graded algebra automorphisms of $Q$  when $k=\CC$. The action there is induced by the translation
action of the 2-torsion subgroup, $E[2]$, on $E$. Here, working over an arbitrary algebraically closed field $k$ of characteristic $\ne 2$,
we define an action of $\G$ as graded $k$-algebra automorphisms of $Q$ and show that this ``corresponds'' to the translation action
of $E[2]$ on $E$. 

In the language of \S\ref{ss.Hopf.torsor}, we take $H$ to be the Hopf algebra of $k$-valued functions on $\G$ and $T$ to be $M_2(k)$, 
the ring of $2 \times 2$ matrices, with an appropriate $H$-comodule algebra structure. We then have $\wtQ=(Q \otimes T)^{\rm{co} H}
= (Q \otimes T)^\G$. The results in \S3 and \S4 show that $\wtQ$ has ``all'' the good properties $Q$ has.
It is a noetherian domain, has global dimension 4, has the same Hilbert series as the polynomial ring on 4 indeterminates, is Artin-Schelter regular, satisfies the $\chi$ condition, etc.

\subsection{}
Among the most important results about Sklyanin algebras are classifications of their point and line modules. 
The point modules of a 3-dimensional Sklyanin algebra are naturally parametrized by $E$ or, more informatively, 
by a natural copy of $E$ embedded as a smooth cubic curve in $\PP^2=\PP(Q_1^*)$. The point modules for a 4-dimensional Sklyanin 
are parametrized by a natural copy of $E$ as a smooth quartic curve in $\PP^3=\PP(Q_1^*)$ and 4 additional points, those being the 
vertices of the 4 singular quadrics that contain the copy of $E$. The line modules are, in both cases, parametrized by the secant lines
to $E$, the lines in $\PP(Q_1^*)$ that meet $E$ with multiplicity $\ge 2$.

The results for $\wtQ$ are very different. For example, $\wtQ$ has only 20 point modules.
In a note circulated in 1988 \cite{VdB88}, Van den Bergh showed that a generic 4-dimensional AS-regular algebra (with some other
properties) has exactly 20 point modules. Since then, there have been a number of examples showing that particular algebras,
rather than the ephemeral ``generic algebras'', have exactly 20 point modules. We believe that ours are the first such examples that
turn up ``in vivo'', so to speak. 

\subsection{}
 Van den Bergh and Tate \cite{TvdB96} showed that the Odesskii-Feigin algebras $Q_{n^2}$ are 
 noetherian, Koszul, Artin-Schelter  regular algebras of dimension $n^2$ with Hilbert series $(1-t)^{-n^2}$. It follows from the relations for 
 $Q_{n^2}$ that $\G=(\ZZ/n) \times (\ZZ/n)$, realized as the $n$-torsion subgroup $E[n] \subset E$, acts as graded algebra
 automorphisms of $Q_{n^2}$. It is an easy matter to see that the ring of $n \times n$ matrices $M_n(\CC)$ is an $H$-torsor where $H$ is the 
 Hopf algebra of $k$-valued functions on $\G$. 
In \S\ref{se.exotic} we show that for all $n \ge 2$, $\widetilde{Q_{n^2}}=\big(Q_{n^2} \otimes M_n(k)\big)^\G$ has ``the same'' properties as $Q_{n^2}$.

\subsection{}
In \S\ref{se.Q-tilde} we begin a detailed examination of the algebra $\wtQ$ in \S\ref{sse.Q-tilde}. 
We give explicit generators and relations for $\wtQ$. It has 4 generators and 6 quadratic relations (\Cref{prop.Q-tilde}). Since $\G=(\ZZ/2)\times (\ZZ/2)$ acts on $Q_1$ it acts as automorphisms of $\PP(Q_1)^*=\PP^3$. This $\PP^3$ contains a natural copy of $E$ embedded as a quartic curve and $\G$ restricts to an action as automorphisms of $E$. 

In \S\ref{se.translation} we show that this action is the same as the translation action of the 2-torsion subgroup $E[2]$. 
Each $\gamma \in \G$ acts as an auto-equivalence $M \rightsquigarrow \gamma^*M$ of the graded-module category $\Gr(Q)$. 
Because $\G$ acts as $E[2]$ does, if $M_p$, $p \in E$, is the point module corresponding to $p \in E$, then $\gamma^*M_p \cong M_{p+\omega}$ for a suitable $\omega \in E[2]$. There is a similar result for line modules: $\c^*M_{p,q} \cong M_{p+\omega,q+\omega}$. 

\subsection{}
By \cite{SS92}, there is a regular sequence in $Q$ consisting of two homogeneous central elements of degree 2, 
$\Omega$ and $\Omega'$ say, such that $Q/(\Omega,\Omega')$ is a twisted homogeneous coordinate ring, $B(E,\tau,\cL)$, 
in the sense of Artin and Van den Bergh \cite{AV90}. The main result in \cite{AV90} tells us that the quotient category $\QGr(B(E,\tau,\cL))$
is equivalent to $\qcoh(E)$, the category of quasi-coherent sheaves on $E$. 

The algebra $\wtQ$ also has a  regular sequence consisting of two homogeneous central elements of degree 2, $\Theta$ and $\Theta'$ say.
Although $\wtB:=\wtQ/( \Theta,\Theta')$ is not a twisted homogeneous coordinate ring, Theorem \ref{th.wtB_Azumaya} proves that $\QGr(\wtB)$ is equivalent to $\QCoh(E/E[2])$.\footnote{Although $E/E[2]$  is isomorphic to $E$ it is
``better'' to think of  $\QGr(\wtB)$ as equivalent to $\QCoh(E/E[2])$.}
Nevertheless, $\wtB$ has no point modules. The points on $E/E[2]$ correspond to fat point modules of multiplicity 2 over $\wtB$. Another new feature is that $\wtB$ is not a domain although $B$ is. Nevertheless, $\wtB$ is a prime ring.

\subsection{}
In \S\ref{sect.pt.modules} we prove that $\wtQ$ has exactly 20 point modules. These modules correspond to 20
 points in $\PP^3=\PP(\wtQ_1^*)$  that we determine explicitly. The ``meaning'' of these 20 points eludes us. Let $\fP$ denote that 
 set of 20 points. The degree shift functor $M \rightsquigarrow M(1)$ induces a permutation $\theta:\fP\to \fP$ of order 2. 
 Shelton and Vancliff \cite{ShV02} have shown that the data $(\fP,\theta)$
 determines $\wtQ$ in the sense that the subspace $R \subseteq Q_1 \otimes Q_1$ of bihomogeneous forms vanishing on  the graph of $\theta$
 has the property that $\wtQ$ is isomorphic to $T(Q_1)/(R)$, the tensor algebra on $Q_1$ modulo the ideal generated by $R$.
 
 In \S\ref{se.line_modules}, we exhibit three families of line modules for $\wtQ$ parametrized by $\big( E/\langle \xi \rangle\big) \sqcup \big( E/\langle \xi' \rangle\big) \sqcup \big( E/\langle \xi'' \rangle\big)$ where $\{\xi,\xi',\xi''\}$ is the set of 2-torsion points on $E$. These are {\it not} all the line modules for $\wtQ$. 
 
 \subsection{}
In \S\S \ref{sec.wtB} and \ref{se.line_modules},  we examine $\G$-equivariant objects in $\Gr(Q)$ and other categories of interest. 
So as not to interrupt the flow of the paper we collect some basic facts about group actions on categories and equivariant objects in an Appendix. The material there is known in one form or another, and in various degrees of generality but we have not found a suitable reference. The reader might find the appendix useful in filling in the details of some of the proofs in \S10.

\subsection{}
In late January 2015, after proving most of the results in this paper, we found an announcement on the web of a seminar talk
by Andrew Davies at the University of Manchester in January 2014 that appeared to contain some of the results we prove here.
On 1/20/2015, we found a copy of his Ph.D. thesis (\cite{D}, \cite{Davies-arXiv})
which has substantial overlap with this paper. Davies also proves several things we don't.  
For example, he describes $\wtB$ (when $\tau$ has infinite order) in the manner of Artin and Stafford  \cite{AS00}.  
Nevertheless, most of what we do is more general, and most of our arguments differ from his. 
For example, when we deal with the 4-dimensional Sklyanin algebras we make no assumption on the order of $\tau$, 
we do not restrict our base field to the complex numbers, and we describe some of the line modules for $\wtQ$. 
Also, the results in \S3 and \S4 for arbitrary $H$ and $T$ are proved by Davies only in the case $H$ is
the ring of $k$-valued functions on a finite abelian group.

\subsection*{Acknowledgement}
We are very grateful to Kenneth Chan for numerous useful conversations while working on this paper and in particular for providing some of the insight on Azumaya algebras and related topics necessary in \Cref{sec.wtB}. We thank Pablo Zadunaisky and Michaela Vancliff for
pointing out errors in an earlier version of this paper.

 \section{Preliminaries}
 \label{se.prelims}
 
In \Cref{se.prelims,se.torsors,se.homological}, we work over an arbitrary field $k$. 
Once we begin discussing the 4-dimensional Sklyanin algebras $k$
will be an algebraically closed field of characteristic $\ne 2$.

\subsection{}

We will use what is now standard terminology and notation for graded rings and non-commutative projective  algebraic geometry.
There are several sources for unexplained terminology:
the Artin-Tate-Van den Bergh papers (\cite{ATV1}, \cite{ATV2}) that started the subject of non-commutative projective  algebraic geometry; Stafford and Van den Bergh's survey \cite{StVdB01}; papers by Stafford and Smith \cite{SS92} and 
Levasseur and  Smith \cite{LS93} on 4-dimensional Sklyanin algebras; the survey \cite{S94} on 4-dimensional Sklyanin algebras;
Artin and Van den Bergh's paper on twisted homogeneous coordinate rings \cite{AV90}; Artin and Zhang's on 
non-commutative projective schemes \cite{AZ94}. 

Suppose $A$ is an $\NN$-graded $k$-algebra such that 
$\dim_k(A_i)<\infty$ for all $i$. The category of $\ZZ$-graded left $A$-modules with degree-preserving $A$-module
homomorphisms is denoted by $\Gr(A)$. The full subcategory of $\Gr(A)$ consisting of modules that are the sum of their
finite dimensional submodules is denoted by $\Fdim(A)$. This is a Serre subcategory so we can form the quotient category 
$$
\QGr(A) \; :=\; \frac{\Gr(A)}{\Fdim(A)}.
$$
In fact, $\Fdim(A)$ is a localizing subcategory so the quotient functor $\pi^*:\Gr(A) \to \QGr(A)$ has a right adjoint $\pi_*$. 
The functor $\pi^*$ is exact. By definition, $\QGr(A)$ has the same objects as $\Gr(A)$. Since $\pi_*\pi^*$ is 
isomorphic to the identity functor we may view objects in $\QGr(A)$ as objects in $\Gr(A)$. 

\subsection{}
We write $\Vect$ for the category of vector spaces over $k$. 
 
\subsection{}\label{subsec.hopf}

Throughout this paper, $H$ is a Hopf algebra over $k$ with bijective antipode. 
We write  ${}^H\cM$  for the category of left $H$-comodules and  $\cM^H$ for the category of right $H$-comodules. 
Furthermore $A$ denotes a right $H$-comodule-algebra, i.e., an algebra object in $\cM^H$.

Let $\Ups$ be an abelian group. We call $A$ an {\sf $\Ups$-graded $H$-comodule algebra} or an {\sf $\Ups$-graded algebra} in $\cM^H$ 
if it is an $H$-comodule algebra such that each homogeneous component, $A_i$, is an $H$-subcomodule. 
For example, if $V$ is a right $H$-comodule and $R\subseteq V\otimes V$ an $H$-subcomodule, then the tensor algebra, $TV$, and its quotient $TV/(R)$, are $\ZZ$-graded algebras in $\cM^H$. 
 
We write $\Mod(R)$ for the the category of left modules over a ring $R$.
We write ${_A}\cM^H$ for the category of $A$-modules internal to the category of $H$-comodules, i.e., vector spaces $V$ equipped with an $A$-module structure and an $H$-comodule structure such that $A\otimes V\to V$ is an $H$-comodule map.    
If $A$ is an $\Ups$-graded algebra in $\cM^H$ we write ${}_{\Gr(A)}\cM^H$ for the category of $\Ups$-graded 
$A$-modules internal to  $\cM^H$, i.e. each homogeneous component $M_i$ is an $H$-comodule. Similar conventions apply to right $A$-modules, with the algebra subscripts appearing on the right in that case.

\section{Torsors, twisting, and descent}
\label{se.torsors}
 
In this section we prove some general results on the inheritance of various properties for certain rings of (co)invariants, relating various good properties of $A$ to those of the algebra $\wtA$ defined in \Cref{defn.A.tilde} below. In \S\S3.1-3.3, the only assumption on $H$ is that it is a Hopf algebra with bijective antipode. In \S3.4
we add the hypothesis that $H$ is commutative.

\subsection{Torsors} 
\label{se.torsor}

A {\sf left $H$-torsor} (or just torsor for short) is a left $H$-comodule-algebra $T$ such that 
\begin{enumerate}
  \item 
  $T \cong H$ in ${}^H\cM$, 
  \item 
   the ring of coinvariants,  ${}^{\rm{co} H}T$, is $k$, and
  \item 
 the linear map 
\begin{equation}\label{eq.torsor}
\begin{tikzpicture}[auto,baseline=1pt]
  \path[anchor=base] (0,0) node (1) {$T\otimes T$} +(4,0) node (2) {$H\otimes T\otimes T$} +(8,0) node (3) {$H\otimes T$};
         \draw[->] (1) to node[] {$\rho\otimes \id$} (2);
         \draw[->] (2) to node[] {$\id\otimes m$} (3);
\end{tikzpicture}
\end{equation}
is bijective where $\rho:T\to H\otimes T$ is the comodule structure and $m:T\otimes T\to T$ is multiplication. 
\end{enumerate}

Throughout \Cref{se.torsors}, $T$ denotes a left $H$-torsor.

\subsubsection{}
\label{rem.torsor}
A comodule algebra for which the composition in \Cref{eq.torsor} is an isomorphism is sometimes called a left $H$-Galois object (see e.g. \cite[Defn. 1.1]{Bic10}).   Loc. cit. and the references therein are good sources for background on torsors. 
Left $H$-torsors  classify exact monoidal functors $\cM^H\to\Vect$, the functor corresponding to $T$ being 
\begin{equation}
\label{mon.func}
 M \; \mapsto \; M\square_HT \; :=\;  \{x\in M\otimes T\ |\ (\rho_M\otimes\id)(x) = (\id\otimes\rho)(x)\},
\end{equation}
where $\rho_M:M\to M\otimes H$ and $\rho:T\to H\otimes T$ are the comodule structure maps. The vector space $M\square_HT$ is 
called the \define{cotensor product} of $M$ and $T$.

\subsubsection{Left versus right torsors}
\label{ssect.left.v.right}
Since the antipode, $s:H \to H$, is an algebra anti-isomorphism, the categories $^H\cM$ and $\cM^H$ are equivalent: 
if $\rho:X \to H \otimes X$ is a left $H$-comodule, then $X$ becomes a right $H$-comodule with respect to the structure map
\begin{equation}
\label{eq.switch.sides}
\xymatrix{
X \ar[rr]^\rho && H \otimes X \ar[rr]^{s \otimes \id} && H \otimes X \ar[rr]^\tau&& X\otimes H
}
\end{equation}
where the right-most  map is $\tau(h \otimes x)= x \otimes h$.  

\subsubsection{Left versus right comodule algebras}
The operation \Cref{eq.switch.sides} does {\it not} turn a left $H$-comodule algebra into a right $H$-comodule algebra.  
However, if $X$ is a  left $H$-comodule algebra and $X^{\op}$ denotes $X$ with the opposite multiplication, then $X^{\op}$ becomes a 
right $H$-comodule algebra with respect to the structure map \Cref{eq.switch.sides}. To see this, first denote the composition 
in \Cref{eq.switch.sides} by $\rho^\circ$ and, when $x \in X$, write $x^\circ$ for $x$ viewed as an element in $X^\op$. 
Thus, if $x,y  \in X$, then $x^\circ y^\circ = (yx)^\circ$. Therefore if $x,y \in X$ 
and $\rho(x)=x_{-1} \otimes x_0$, then $\rho^\circ(x^\circ)=x_0^\circ \otimes s(x_{-1})$ so 
$$
\rho^\circ(x^\circ y^\circ)=\rho^\circ\big((yx)^\circ\big) =\tau(s \otimes \id)\rho(yx) = \tau(s \otimes \id)(y_{-1}x_{-1} \otimes y_0x_0)=y_0x_0 \otimes s(x_{-1})s(y_{-1})
$$
which is equal to $\big(x_0^\circ  \otimes s(x_{-1})\big)\big(y_0^\circ  \otimes s(y_{-1})\big)=\rho^\circ(x^\circ)\rho^\circ(y^\circ)$.
 
Since $T$ is a left $H$-torsor, $T^\op$ with the structure map $\rho^\circ:T^\op  \to T^\op \otimes H$ is a right $H$-torsor.

\subsubsection{The monoidal functor $\widetilde{\bullet}:M \mapsto \wtM$}
By \cite[Lemma 1.4]{U87}, the functor $M\mapsto M\square_HT$ in \Cref{rem.torsor} is a  monoidal functor.  We denote it by $\widetilde{\bullet}:M \mapsto \wtM$. It is naturally equivalent to $M\mapsto (M\otimes T)^{\rm{co} H}$.

In the expression $M\square_HT$ we treat $T$ as a left $H$-comodule. In the expression $(M\otimes T)^{\rm{co} H}$ we treat
$T$ as a right $H$-comodule using the new structure map in \Cref{eq.switch.sides}. The algebra structure on $T$ in not used in 
constructing either $M\square_HT$ or $(M\otimes T)^{\rm{co} H}$.

\subsubsection{}
\label{ssect.A.tilde}
Since $\widetilde{\bullet}$ is a monoidal functor, it sends algebras in $\cM^H$ to algebras in $\Vect$, and hence for $A\in \cM^H$ as in \Cref{subsec.hopf}, 
\begin{equation}
\label{defn.A.tilde}
\wtA \; := \; (A \otimes T)^{{\rm co} H}  
\end{equation}
has a natural algebra structure. We treat $T$ as a right $H$-comodule in the expression $(A \otimes T)^{{\rm co} H} $.

Although $T$ has two algebra structures, its original one and the opposite one, neither makes $A\otimes T$ into an $H$-comodule algebra
unless additional hypotheses are made (see \Cref{ssect.H.comm}). 
Nevertheless, $\wtA$ is a subalgebra of $A\otimes T$ ($T$ having its initial algebra structure, not the opposite one).
 In \Cref{ssect.H.comm} below we specialize to commutative $H$, in which case $A\otimes T$ is a comodule algebra.

$\widetilde{\bullet}$ lifts to a functor ${}_A\cM^H\to \Mod(\wtA)$, and similarly when everything in sight is $\Ups$-graded for some abelian group $\Ups$. We denote all of these functors by the same symbol, relying on context to differentiate between them.

 \subsubsection{}
In the definition of a torsor, the condition that $T\cong H$ in ${}^H\cM$ makes the 
Galois object \define{cleft}; this condition follows automatically from 
\Cref{eq.torsor}  when $H$ is finite-dimensional, which is the case we are really interested in here. This is (part of) \cite[Thm.
1.9]{Bic10}, which cites \cite{KrCo76} for a proof.  

Cleft objects have an alternative characterization by means of Hopf \define{cocycles}. Recall (e.g. \cite[Example 1.3]{Bic10}) that the latter are linear maps $\sigma:H\otimes H\to k$ satisfying certain conditions that we will not spell out here and which are reminiscent of those from group cohomology.

By \cite[Theorem 1.8]{Bic10}, every left torsor in the sense of \Cref{se.torsor} can be obtained from such a gadget $\sigma$ by twisting $H$: $T$ can be identified with $H$ as a vector space, but has a new multiplication defined by
\begin{equation*}
s\circ t = s_1t_1\sigma(s_2\otimes t_2) \quad \hbox{for all } \, s,t\in H.
\end{equation*}
Here, $s\mapsto s_1\otimes s_2$ is the comultiplication in $H$ and juxtaposition on the right hand side means multiplication in $H$. Similarly, the algebra $\wtA$ can be identified with the vector space $A$ 
endowed with the modified multiplication 
\begin{equation*}
a\circ b = a_0b_0\sigma(a_1\otimes b_1) \quad \hbox{for all } \, a,b\in A,
\end{equation*}
where $a\mapsto a_0\otimes a_1$ is the $H$-comodule structure. 

When $H$ is the function algebra of an abelian group $\G$ whose order is not divisible by the characteristic of $k$ this construction specializes in the following way. 

$H$ can be identified with the group algebra $k\widehat{\G}$ of the character group of $\G$, i.e. $A$ is $\widehat{\G}$-graded. A Hopf cocycle $H\otimes H\to k$ then turns out to be the same thing as (the linear extension of) a normalized group 2-cocycle $\mu:\widehat{\G}\times\widehat{\G}\to k^\times$ in the usual sense. Now, denoting by $A_\a$ the $\a$-homogeneous component of $A$ with respect to the $\widehat{\G}$-grading, the twisted algebra $\wtA$ can be identified with the vector space $A$ together with the new multiplication
\begin{equation*}
  a\circ b = \mu(\a,\b)ab  \qquad \hbox{for all } \, \a,\b\in \widehat{\G},\ a\in A_\a,\ b\in A_\b.
\end{equation*}

\subsection{Generalities}

We prove some auxiliary general results of use below.

\begin{lemma}\label{le.descent}
The categories $\cM^H_{T^\op}$ and  $\Vect$ are equivalent via the mutually quasi-inverse functors
\begin{equation}\label{eq.desc}
  \begin{tikzpicture}[auto,baseline=(current  bounding  box.center)]
    \path[anchor=base] (0,0) node (1) {$\Vect$} +(4,0) node (2) {$\cM^H_{T^\op}$};
         \draw[->] (1) to[bend left=20] node[auto,pos=.5] {$\bullet\otimes T^\op$} (2);
         \draw[->] (2) to[bend left=20] node[auto,pos=.5] {$\bullet^{\rm{co} H}$} (1);
  \end{tikzpicture}
\end{equation}  
\end{lemma}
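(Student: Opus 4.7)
I would begin by unwinding the two functors. The functor $\bullet\otimes T^{\op}\colon\Vect\to\cM^H_{T^\op}$ sends $V$ to $V\otimes T^{\op}$ with the right $T^\op$-action on the second tensor factor and the right $H$-coaction $\id_V\otimes\rho^\circ$, where $\rho^\circ$ is the right coaction on $T^\op$ described in \S\ref{ssect.left.v.right}; the coinvariants functor $\bullet^{\mathrm{co}H}$ takes right $H$-coinvariants while retaining the $T^\op$-action by restriction. Two natural isomorphisms must be produced: $(V\otimes T^\op)^{\mathrm{co}H}\cong V$ and $M^{\mathrm{co}H}\otimes T^\op\cong M$ for $M\in\cM^H_{T^\op}$.

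The first is immediate. Since $H$ coacts only on the second tensor factor,
\[
(V\otimes T^\op)^{\mathrm{co}H}\;=\;V\otimes (T^\op)^{\mathrm{co}H}.
\]
Unpacking $\rho^\circ$ via \Cref{eq.switch.sides}, an element $t\in T$ satisfies $\rho^\circ(t^\circ)=t^\circ\otimes 1$ if and only if $\rho(t)=1\otimes t$, so $(T^\op)^{\mathrm{co}H}$ coincides as a subspace with ${}^{\mathrm{co}H}T$, which equals $k$ by axiom (2) of \S\ref{se.torsor}. Hence $V\otimes k\cong V$.

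For the second isomorphism I plan to construct an explicit inverse to the action map
\[
\alpha_M\colon M^{\mathrm{co}H}\otimes T^\op\to M,\qquad m\otimes t\mapsto m\triangleleft t,
\]
using the cleft structure. As recalled in \S\ref{se.torsor}, axioms (1) and (3) together make $T$ cleft; transferring to $T^\op$ yields a convolution-invertible right $H$-colinear map $j\colon H\to T^\op$ with $j(1)=1$. Writing $m\mapsto m_0\otimes m_1$ for the right coaction on $M$, set
\[
\Phi_M\colon M\to M^{\mathrm{co}H}\otimes T^\op,\qquad \Phi_M(m)\;=\;m_0\triangleleft j^{-1}(m_1)\otimes j(m_2).
\]
The remaining checks are standard for cleft Hopf--Galois theory: $m_0\triangleleft j^{-1}(m_1)$ lies in $M^{\mathrm{co}H}$ because $j^{-1}$ is colinear and $\triangleleft$ is compatible with the coaction, and $\Phi_M$ is inverse to $\alpha_M$ via the convolution identity $\sum j(h_1)j^{-1}(h_2)=\varepsilon(h)=\sum j^{-1}(h_1)j(h_2)$ together with associativity of $\triangleleft$; naturality in both $V$ and $M$ is automatic.

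The hard part will be this last bookkeeping step: the cocycle implicit in $j$ has to be shown to interact correctly with the action--coaction compatibility and with the antipode twist of \Cref{eq.switch.sides}, particularly in verifying $\Phi_M\circ\alpha_M=\id$ on simple tensors $m\otimes t$ with $m$ coinvariant. An alternative that bypasses the explicit construction is to appeal to the classical structure theorem for Hopf modules over a faithfully flat Hopf--Galois extension: since $T^\op$ is a right $H$-Galois extension of $k$ (the right-handed form of axiom (3)) and $k\subseteq T^\op$ is automatically faithfully flat, that theorem yields the equivalence $\cM^H_{T^\op}\simeq\Vect$ via coinvariants directly, at the cost of invoking a deeper external result.
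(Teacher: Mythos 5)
Your proposal is correct in substance, but your primary route is genuinely different from the paper's, while your ``alternative'' at the end is essentially what the paper actually does. The paper's proof is a two-line citation: it applies Schneider's structure theorem \cite[Thm.~I]{Sch90} to the comodule algebra $T^{\op}$, whose hypotheses are the surjectivity of the Galois map \Cref{eq.torsor} together with injectivity of $T^{\op}$ as an $H$-comodule; the latter is checked from axiom (1) ($T\cong H$ in ${}^H\cM$) and self-injectivity of coalgebras. Your fallback via faithful flatness of $k\subseteq T^{\op}$ invokes the same theorem under an equivalent hypothesis (nonzero vector spaces are faithfully flat over $k$), so that branch matches the paper. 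Your main route --- building the inverse $\Phi_M(m)=m_0\triangleleft j^{-1}(m_1)\otimes j(m_2)$ from a cleaving map --- is the classical Doi--Takeuchi argument for cleft comodule algebras; it is legitimate here because axioms (1) and (3) give cleftness (as the paper itself notes in \S3.1.6), and it has the virtue of being self-contained and explicit, at the cost of the convolution bookkeeping you defer. Two small cautions on that route: $j^{-1}$ is not $H$-colinear in the same sense as $j$ --- it is colinear for the coaction twisted by the antipode, $\rho(j^{-1}(h))=j^{-1}(h_2)\otimes s(h_1)$, and it is this twisted colinearity that makes $m_0\triangleleft j^{-1}(m_1)$ coinvariant; and since you are working with $T^{\op}$ and the coaction $\rho^\circ$ of \Cref{eq.switch.sides}, the antipode enters the formulas twice, so the verification that $\Phi_M$ and $\alpha_M$ are mutually inverse, while standard, is not purely formal. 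If you carry it out (or cite the cleft structure theorem precisely), the argument is complete.
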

\begin{proof}
By \cite[Thm. I]{Sch90} applied to the comodule algebra $T^{\op}\in \cM^H$ the assertion follows from the torsor condition \Cref{eq.torsor} if
$T^\op$ is injective as an $H$-comodule. It is because $T\cong H$ as a left comodule and
every coalgebra is self-injective in the same way that every algebra is self-projective. 
\end{proof}

\begin{proposition}\label{pr.descent}
There is an isomorphism
\begin{equation}\label{eq:1.pr.descent}
  \Hom^H(M,N\otimes T)\cong \Hom(\wtM,\wtN),
\end{equation}
functorial in $M,N\in \cM^H$. Moreover, it restricts to a functorial isomorphism
\begin{equation}\label{eq:2.pr.descent}
  \Hom^H_A(M,N\otimes T)\cong \Hom_{\wtA}(\wtM,\wtN)
\end{equation}
for $M,N\in {}_A\cM^H$. 
\end{proposition}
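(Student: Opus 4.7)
The plan is to combine \Cref{le.descent} with the standard free--forget adjunction between $\cM^H$ and $\cM^H_{T^\op}$. The key observation is that, for any $M\in\cM^H$, the tensor product $M\otimes T^\op$ carries an obvious structure of object in $\cM^H_{T^\op}$, with $T^\op$ acting by right multiplication on the second tensorand and $H$ coacting diagonally. The assignment $M\mapsto M\otimes T^\op$ is left adjoint to the forgetful functor $\cM^H_{T^\op}\to\cM^H$, and under the conventions of \S\ref{ssect.left.v.right} the vector space underlying $T^\op$ is $T$ with the right $H$-comodule structure used to define $\widetilde{\bullet}$; hence $(M\otimes T^\op)^{{\rm co} H}=\wtM$, and likewise for $N$.

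For \Cref{eq:1.pr.descent}, apply this adjunction with target $X=N\otimes T^\op$ to get
\[
\Hom^H_{T^\op}(M\otimes T^\op,\,N\otimes T^\op)\;\cong\;\Hom^H(M,\,N\otimes T),
\]
and apply \Cref{le.descent} to the pair $M\otimes T^\op,\,N\otimes T^\op\in\cM^H_{T^\op}$ to get
\[
\Hom^H_{T^\op}(M\otimes T^\op,\,N\otimes T^\op)\;\cong\;\Hom(\wtM,\wtN).
\]
Composing yields the first isomorphism, and functoriality in $M,N$ is immediate from the naturality of both inputs.

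For the $A$-linear refinement \Cref{eq:2.pr.descent}, I would run the same argument inside the categories ${}_A\cM^H$ and ${}_A\cM^H_{T^\op}$. The required analog of \Cref{le.descent}---that ${}_A\cM^H_{T^\op}\simeq\Mod(\wtA)$ via $(-)^{{\rm co} H}$ and $-\otimes_{\wtA}(A\otimes T^\op)$---follows from the same injectivity argument (the $A$-action commutes with everything used there) together with the identification $(A\otimes T^\op)^{{\rm co} H}=\wtA$.

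The technical point on which I would spend the most care is verifying that the adjunction isomorphism respects module structures: concretely, that an $A$-linear $H$-colinear map $f\colon M\to N\otimes T$ really corresponds to an $\wtA$-linear map $\widetilde{f}\colon\wtM\to\wtN$. A clean conceptual argument is that $\widetilde{\bullet}$ is a monoidal functor, so it sends the $A$-action $A\otimes(N\otimes T)\to N\otimes T$ to an $\wtA$-action on $\widetilde{N\otimes T}$; since $T$ is a torsor one has $\widetilde{T}\cong k$, and the monoidal isomorphism $\widetilde{N\otimes T}\cong\wtN\otimes\widetilde{T}\cong\wtN$ then identifies this with the original $\wtA$-action on $\wtN$.
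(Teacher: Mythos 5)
Your proof of the first isomorphism \Cref{eq:1.pr.descent} is exactly the paper's: the free--forget adjunction for $\bullet\otimes T^\op$ identifies $\Hom^H(M,N\otimes T)$ with $\Hom^H_{T^\op}(M\otimes T^\op,N\otimes T^\op)$, and \Cref{le.descent} together with $(M\otimes T^\op)^{\rm{co} H}=\wtM$ finishes it. No issues there.

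The second part is where you diverge from the paper, and it is also where your argument breaks. The specific error is the claim that $\widetilde{T}\cong k$: since $T\cong H$ as a comodule, $\widetilde{T}=(T\otimes T)^{\rm{co} H}$ has dimension $\dim_k H$, not $1$ (what the torsor axiom gives you is $T^{\rm{co} H}=k$, which is a different object). Consequently the chain $\widetilde{N\otimes T}\cong\wtN\otimes\widetilde{T}\cong\wtN$ fails already on dimension grounds: $\widetilde{N\otimes T}=(N\otimes T\otimes T)^{\rm{co} H}$ has dimension $\dim(N)\cdot\dim(T)$, whereas $\wtN$ has dimension $\dim(N)$. You do not need any such identification to see that $N\otimes T^\op$ corresponds to $\wtN$ under \Cref{le.descent} --- that is the definition of $\wtN$ --- but you do need a correct argument for the point you yourself flag as critical, namely that $A$-linearity of $f$ corresponds to $\wtA$-linearity of $\widetilde{f}$, and the monoidality argument as you state it does not supply one. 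The paper handles this by writing $\Hom^H_A(M,N\otimes T)$ as the equalizer of the two maps $f\mapsto f\circ\triangleright$ and $f\mapsto\triangleright\circ(\id_A\otimes f)$ into $\Hom^H(A\otimes M,N\otimes T)$, applying the part-(1) isomorphism to both parallel arrows (here monoidality of $\widetilde{\bullet}$ is used, but applied to $A\otimes M$ and to the action map $A\otimes N\to N$, not to $N\otimes T$), and observing that the resulting equalizer is $\Hom_{\wtA}(\wtM,\wtN)$. Separately, your proposed replacement for this --- an equivalence ${}_A\cM^H_{T^\op}\simeq\Mod(\wtA)$ --- is asserted rather than proved; it is essentially the content of the later and harder \Cref{pr.desc}, which the paper establishes only under the extra hypothesis that $H$ is commutative, so you cannot simply cite "the same injectivity argument" without doing real work.
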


\begin{proof}
By the adjunction between scalar extension $\bullet\otimes T^\op:\cM^H\to \cM^H_{T^\op}$ and scalar restriction (i.e. simply forgetting the $T^{\op}$-action) the left hand side of \Cref{eq:1.pr.descent} is naturally isomorphic to the space $\Hom^H_{T^\op}(M\otimes T^{\op},N\otimes T^\op)$, where $T^\op$ acts on just the $T^\op$ tensorands. In turn, this is naturally isomorphic to the right hand side of \Cref{eq:1.pr.descent} by \Cref{le.descent}. 

To verify the second assertion note that the left hand side of \Cref{eq:2.pr.descent} can be realized as an equalizer
\begin{equation}\label{eq:3.pr.descent}
  \begin{tikzpicture}[auto,baseline=(current  bounding  box.center)]
    \path[anchor=base] (0,0) node (1) {$\Hom^H_A(M,N\otimes T)$} +(4,0) node (2) {$\Hom^H(M,N\otimes T)$} +(10,0) node (3) {$\Hom^H(A\otimes M, N\otimes T)$} ;
         \draw[->] (1) -- (2);
         \draw[->] (2) to[bend left=10] node[auto,pos=.5] {$\scriptstyle f\mapsto f\circ\triangleright$} (3);
         \draw[->] (2) to[bend right=10] node[auto,pos=.5,swap] {$\scriptstyle f\mapsto \triangleright\circ (\id_A\otimes f)$} (3);
  \end{tikzpicture}
\end{equation}
where the upper and lower $\triangleright$ symbols denote the action $A\otimes M\to M$ and $A\otimes N\to N$ respectively. 

Applying the natural isomorphism from the first part of the proposition to the two parallel arrows in \Cref{eq:3.pr.descent}, 
and keeping in mind the fact that $\widetilde{\bullet}$ is a monoidal functor, we get the arrows
\begin{equation*}\label{eq:4.pr.descent}
  \begin{tikzpicture}[auto,baseline=(current  bounding  box.center)]
    \path[anchor=base] (0,0) node (2) {$\Hom(\wtM,\wtN)$} +(6,0) node (3) {$\Hom(\wtA\otimes \wtM, \wtN).$} ;
         \draw[->] (2) to[bend left=10] node[auto,pos=.5] {$\scriptstyle f\mapsto f\circ\triangleright$} (3);
         \draw[->] (2) to[bend right=10] node[auto,pos=.5,swap] {$\scriptstyle f\mapsto \triangleright\circ (\id_A\otimes f)$} (3);
  \end{tikzpicture}
\end{equation*}
Their equalizer is precisely the right hand side of \Cref{eq:2.pr.descent}. 
\end{proof}

\subsubsection{}
 \label{rem.descent}
There is a graded version of \Cref{pr.descent} with virtually the same proof ($M$ and $N$ are graded comodules, etc.).

The following simple observation turns out to be rather important.

\begin{lemma}\label{le.wt_proj}
Suppose $H$ is finite-dimensional. The functors $\widetilde{\bullet}:{}_A\cM^H\to\Mod(\wtA)$ 
and
\newline
  $\widetilde{\bullet}:{}_{\Gr(A)}\cM^H\to \Gr(\wtA)$  send projective objects to projective objects. 
\end{lemma}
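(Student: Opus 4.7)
The plan is to exploit the strong monoidality of $\widetilde{\bullet}:\cM^H\to\Vect$---a standard consequence of the torsor condition \Cref{eq.torsor}, which strengthens the (lax) monoidal statement from \cite[Lem.~1.4]{U87}---in order to identify the images of the canonical projective generators of ${}_A\cM^H$ as free $\wtA$-modules.

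Since $\dim_k H<\infty$, the comodule category $\cM^H\simeq\Mod(H^*)$ has enough projectives, and for any projective $V\in\cM^H$ the object $A\otimes V$ (with diagonal $H$-coaction and $A$-action on the first tensorand) is projective in ${}_A\cM^H$ via the adjunction $\Hom^H_A(A\otimes V,M)=\Hom^H(V,M)$. Every projective $P\in{}_A\cM^H$ is therefore a direct summand of a coproduct of copies of such objects $A\otimes V$, so by additivity of $\widetilde{\bullet}$ it suffices to show $\widetilde{A\otimes V}$ is projective in $\Mod(\wtA)$ for every $V\in\cM^H$.

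For this, strong monoidality of $\widetilde{\bullet}$ furnishes a $\wtA$-module isomorphism $\widetilde{A\otimes V}\cong\wtA\otimes\wtV$, where $\wtA\otimes\wtV$ is the free $\wtA$-module on the vector space $\wtV$. The underlying vector-space map is the monoidal comparison $(a\otimes t)\otimes(v\otimes t')\mapsto a\otimes v\otimes tt'$ (with inverse supplied by \Cref{eq.torsor}), and its $\wtA$-linearity follows from naturality of the comparison applied to the $A$-action map $A\otimes A\otimes V\to A\otimes V$. Consequently $\widetilde{A\otimes V}$ is a free $\wtA$-module, and $\widetilde{P}$, being a direct summand of such a free module, is projective. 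The graded version is handled identically, every construction respecting the $\Ups$-grading. The main technical point is the strong monoidality of $\widetilde{\bullet}$: verifying that the natural comparison $\wtM\otimes\wtN\to\widetilde{M\otimes N}$ is bijective for all $M,N\in\cM^H$, which is precisely where the torsor condition \Cref{eq.torsor} plays its role.
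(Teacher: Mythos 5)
Your proof is correct and follows essentially the same route as the paper's: both reduce to the canonical projective generators of ${}_A\cM^H$ of the form $A\otimes V$ with diagonal coaction (the paper phrases this via the identification ${}_A\cM^H\simeq\Mod(A\sharp H^*)$ and the generator $A\sharp H^*=A\otimes H^*$), and both then invoke (strong) monoidality of $\widetilde{\bullet}$ to identify the image with the free module $\wtA\otimes\wtV$. Your explicit flagging of the bijectivity of the monoidal comparison map is a point the paper leaves implicit in its citation of Ulbrich, but the argument is the same.
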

 
\begin{proof}
Let $A\sharp H^*$ denote the smash product. 
The category ${}_A\cM^H$ can be identified with $\Mod(A\sharp H^*)$. Under this identification, projectives are direct 
summands of direct sums of copies of $A\sharp H^*$. It therefore suffices to show that the image of $A\sharp H^*$ under $\widetilde{\bullet}$ is projective over $\wtA$. 

As an $A$-module $A\sharp H^*$ is simply $A\otimes H^*$ with the $A$-action on the left tensorand. As an $H$-comodule $A\sharp H^*$ 
is the tensor product $A\otimes H^*$, with $H$ coacting on $H^*$ regularly. Since $\widetilde{\bullet}$ is a monoidal functor, it sends $A\sharp H^*\in {}_A\cM^H$ to $\wtA\otimes \widetilde{H^*}$ with the obvious action of $\wtA$. This is a direct sum of copies of $\wtA$ in $\Mod({\wtA})$ and hence projective. 
\end{proof}

\subsection{The noetherian property and GK-dimension}

\begin{proposition}\label{pr.Hilbert}
Let $\Ups$ be an abelian group and  $A$ an $\Ups$-graded $H$-comodule algebra. Then $\dim_k(A_i)=\dim_k(\wtA_i)$ for all $i \in \Ups$.
\end{proposition}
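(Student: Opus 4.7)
The plan is to establish a $k$-linear isomorphism $\wtA_i \cong A_i$ for each $i \in \Ups$; the dimension identity then follows at once, with no finite-dimensionality hypothesis needed on $H$ or on the individual $A_i$.

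First, note that the $H$-coaction on $A \otimes T$ preserves the $\Ups$-grading, since each $A_i$ is an $H$-subcomodule of $A$ and $T$ is placed in a fixed degree. Consequently
\[
\wtA \;=\; (A\otimes T)^{\mathrm{co}\,H} \;=\; \bigoplus_{i\in\Ups} (A_i\otimes T)^{\mathrm{co}\,H} \;=\; \bigoplus_{i\in\Ups} A_i\square_H T,
\]
where $T$ carries the right $H$-comodule structure obtained from its left-comodule structure by the antipode twist of \S\ref{ssect.left.v.right}. In particular $\wtA_i = A_i\square_H T$, and it suffices to exhibit a $k$-linear isomorphism $A_i\square_H T\cong A_i$.

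Now invoke the cleaving built into the definition of torsor: condition~(1) of \S\ref{se.torsor} supplies an isomorphism $T\cong H$ in ${}^H\cM$. The equivalence ${}^H\cM\simeq \cM^H$ of \S\ref{ssect.left.v.right} transports this to an isomorphism of right $H$-comodules, so
\[
A_i\square_H T \;\cong\; A_i\square_H H \;\cong\; A_i,
\]
the last isomorphism being the standard identity $M\square_H H\cong M$ implemented for any right $H$-comodule $M$ by $m\mapsto \sum m_0\otimes m_1$ (with inverse $\id\otimes\varepsilon$). Chaining the three isomorphisms yields $\wtA_i\cong A_i$ as $k$-vector spaces, completing the proof.

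There is no real obstacle here — just notational bookkeeping around the fact that $T$ is introduced as a \emph{left} $H$-comodule algebra but enters $\wtA$ as a \emph{right} $H$-comodule via the antipode twist. One may alternatively read the conclusion off the cocycle description recalled in \S\ref{se.torsor}: $\wtA$ is, as a graded vector space, the underlying graded vector space of $A$ equipped only with a modified multiplication $a\circ b=a_0 b_0\sigma(a_1\otimes b_1)$, and since this twist leaves the $\Ups$-grading untouched the homogeneous components coincide on the nose.
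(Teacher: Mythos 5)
Your proof is correct and follows essentially the same route as the paper's: both reduce to the facts that $T\cong H$ as a comodule and that the diagonal comodule $W\otimes H$ has coinvariants isomorphic to $W$ (the paper via the explicit untwisting map $w\otimes h\mapsto w_0\otimes w_1h$, you via the cotensor identity $M\square_H H\cong M$). Your only addition is the explicit check that the coaction preserves the $\Ups$-grading so that coinvariants can be taken degreewise, a point the paper leaves implicit.
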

\begin{proof}  
We are assuming $T\cong H$ in $\cM^H$ so $W\otimes T\cong W\otimes H$ in $\cM^H$ for all $W\in \cM^H$. As in the proof of \Cref{pr.desc}, the map $W\otimes H \to W \otimes H$, $w\otimes h \mapsto w_0\otimes w_1h$, 
is an isomorphism from $W\otimes H$ with the diagonal $H$-coaction to $W\otimes H$ with the regular $H$-coaction on the right-hand tensorand. As a consequence, there is a vector space isomorphism $W\cong (W\otimes T)^{\rm{co} H}$. Now apply this fact with $W$ equal to 
each homogeneous component of $A$. 
\end{proof}
 
 \begin{lemma}
\cite[Lem. 6.1]{KraLen00}
\label{lem.GKdim}
Let $A$ be an $\NN$-graded $k$-algebra such that $\dim_k(A_i)<\infty$ for all $i$, and $M$ a finitely generated graded $A$-module.
Then 
$$
\GKdim(M)=1+ \limsup \log_n(\dim_k(M_n)).
$$
\end{lemma}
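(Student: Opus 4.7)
The plan is to reduce the statement to a routine manipulation of limits by unwinding the definition of module GK-dimension. Recall that
\[
\GKdim(M) \;=\; \sup_{V,F}\, \limsup_n \log_n \dim_k(V^n F),
\]
where $V$ ranges over finite-dimensional subspaces of $A$ containing $1$ and $F$ ranges over finite-dimensional subspaces of $M$. The goal is to show that when $A$ is $\NN$-graded and locally finite, and $M$ is finitely generated graded, the supremum is computed by graded subspaces and reduces to an asymptotic invariant of the Hilbert function.

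First I would choose $V$ and $F$ in a graded fashion. Since $M$ is finitely generated, pick a finite homogeneous generating set spanning $F\subseteq M_{\leq c}$ for some $c$; for $V$ take $V = \bigoplus_{i\leq b} A_i$ for some $b\geq 1$. Then $V^n F \subseteq M_{\leq bn+c}$, so $\dim_k V^n F \leq \dim_k M_{\leq bn+c}$. Conversely, after enlarging $b$ so that $A_0+\cdots+A_b$ generates enough of $M_{\leq n}$ via $F$, one gets $M_{\leq n}\subseteq V^{\lceil n/b\rceil} F$. Since $\limsup_n \log_n$ is insensitive to rescaling $n$ by a positive constant, this gives the intermediate identity
\[
\GKdim(M) \;=\; \limsup_n \log_n \dim_k M_{\leq n}.
\]

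Next I would pass from the cumulative dimensions to the graded pieces. Set $h(n):=\dim_k M_n$, $H(n):=\sum_{i=0}^n h(i)$, and $d:=\limsup_n \log_n h(n)$. The upper bound is immediate: for any $\varepsilon>0$, $h(n)\leq n^{d+\varepsilon}$ for $n\gg 0$, hence $H(n)\leq (n+1)n^{d+\varepsilon}$ so $\limsup_n \log_n H(n)\leq d+1$. For the lower bound I would pick a subsequence $n_k$ with $\log_{n_k} h(n_k)\to d$ and leverage the finite-generation hypothesis to argue that the dimensions $h(i)$ for $i$ in an interval of length proportional to $n_k$ around $n_k$ remain comparable to $h(n_k)$; summing these contributions produces $H(n_k)\gtrsim n_k\cdot n_k^{d-o(1)}$, yielding $\limsup \log_n H(n)\geq d+1$.

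The main obstacle is this final lower bound: a priori the Hilbert function $h$ could oscillate so wildly that the partial sums grow strictly faster than $n\cdot h(n)$ along every subsequence realizing $d$. The cure is to exploit that $M$ is finitely generated over a locally finite $\NN$-graded algebra, which forces $h$ to vary in a controlled way (any element of $A_1 A_{n-1}\cdot F$ lies in $M_n$, etc.), so that large values of $h$ propagate to nearby degrees. This is the content of \cite[Lem.~6.1]{KraLen00}, whose statement we merely invoke.
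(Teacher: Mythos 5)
The paper offers no proof of this lemma at all: it is stated purely as a citation of \cite[Lem.~6.1]{KraLen00}, and the Krause--Lenagan reference is where the actual argument lives. So there is no argument in the paper for your proposal to be compared against.

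Taken on its own terms, your proposal has a genuine circularity. You correctly reduce the question to the comparison between $\limsup_n \log_n \bigl(\sum_{i\le n} h(i)\bigr)$ and $1+\limsup_n \log_n h(n)$, and you correctly identify that the lower bound is where work is needed and where the finite-generation hypothesis must enter (without it the claim is simply false: let $h(n)=n^d$ when $n$ is a power of $2$ and $h(n)=0$ otherwise; then $\limsup\log_n h(n)=d$ but the partial sums are only $O(n^d)$, so the cumulative exponent is $d$, not $d+1$). But for that crucial lower bound you write that you ``merely invoke'' \cite[Lem.~6.1]{KraLen00} --- which is precisely the statement you set out to prove. A proof cannot cite itself for its own hard step. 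What is actually missing is a concrete argument that, for a finitely generated graded module over a locally finite $\NN$-graded algebra, large values of $h$ force comparably large values over a positive-density set of nearby degrees; for example, one can fix a homogeneous generating set for $M$, fix a homogeneous generating set in bounded degrees for a subalgebra over which $M$ remains finitely generated, and argue that $h$ cannot drop to zero on long intervals without forcing the module to be torsion of bounded degree, but none of this is carried out. As written, your ``proof'' is simply an expanded restatement of the citation already in the paper, and the key estimate is not established.
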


\begin{proposition}\label{cor.GK}
If $A$ is a $\ZZ$-graded comodule algebra such that $\dim_k(A_i)<\infty$ for all $i$, then $A$ and $\wtA$ have the same Gelfand-Kirillov dimension. 
\end{proposition}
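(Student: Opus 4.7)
The plan is to invoke \Cref{pr.Hilbert} and \Cref{lem.GKdim} in tandem. Both $A$ and $\wtA$ are cyclic graded modules over themselves (generated by $1$), hence finitely generated. In the $\NN$-graded setting, \Cref{lem.GKdim} applied to each yields
\begin{equation*}
\GKdim(A) \; = \; 1 + \limsup_n \log_n \dim_k(A_n), \qquad \GKdim(\wtA) \; = \; 1 + \limsup_n \log_n \dim_k(\wtA_n),
\end{equation*}
and \Cref{pr.Hilbert} identifies the two right-hand sides.

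The one wrinkle is that \Cref{lem.GKdim} is stated for $\NN$-graded algebras, whereas the hypothesis allows a $\ZZ$-grading. I would dispose of this by the usual reduction: $\GKdim$ of any algebra equals the supremum of $\GKdim$ over its finitely generated subalgebras, and any finitely generated subalgebra of $A$ can be enlarged to a finitely generated graded $H$-subcomodule algebra $B \subseteq A$ (close a finite generating set first under homogeneous decomposition, then under the $H$-coaction, both steps preserving finite-dimensionality). Since $B$ is finitely generated and $\ZZ$-graded with finite-dimensional components, it is supported on a bounded interval of degrees, so after a degree shift is $\NN$-graded. The monoidal functor $\widetilde{\bullet}$ sends $B$ to $\wt{B}\subseteq \wtA$, whose Hilbert series agrees with that of $B$ by \Cref{pr.Hilbert}, so the $\NN$-graded calculation applies to the pair $(B,\wt{B})$. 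The symmetric direction is handled by the quasi-inverse of $\widetilde{\bullet}$ supplied by \Cref{le.descent}, which shows that every finitely generated subalgebra of $\wtA$ arises from one of $A$ in this way.

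I do not anticipate any real obstacle: the result is essentially the content of \Cref{pr.Hilbert} combined with the well-known fact packaged in \Cref{lem.GKdim} that the GK-dimension of a locally finite $\NN$-graded algebra is determined by its Hilbert series. Only the mild $\ZZ$-to-$\NN$ reduction merits a comment, and it is routine.
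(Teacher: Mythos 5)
Your main line --- identify the Hilbert series of $A$ and $\wtA$ via \Cref{pr.Hilbert} and read off the GK-dimension via \Cref{lem.GKdim} --- is exactly what the paper intends (the proposition is stated without proof immediately after those two results), and for $\NN$-graded $A$ your argument is the paper's. You are also right to flag the $\ZZ$- versus $\NN$-grading as the one point needing care. But the step you use to dispose of it is false: a finitely generated $\ZZ$-graded algebra with finite-dimensional homogeneous components need \emph{not} be supported on a bounded interval of degrees. The Laurent polynomial ring $k[x,x^{-1}]$ with $\deg(x)=1$ is generated by two elements, has one-dimensional components, and is supported on all of $\ZZ$; the same happens for your subalgebra $B$ whenever the chosen generating subcomodule contains elements of both positive and negative degree. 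So $B$ cannot be shifted into an $\NN$-graded algebra, \Cref{lem.GKdim} does not apply to it, and the reduction collapses at that point. Note also that one cannot simply substitute a two-sided analogue of \Cref{lem.GKdim}: if the generators have degrees in $[-N,N]$ one only gets $|\deg| \le N\cdot(\text{word length})$, which bounds $\GKdim(B)$ \emph{above} by $\limsup_n \log_n\bigl(\sum_{|i|\le n}\dim_k B_i\bigr)$; the lower bound used in the $\NN$-graded proof (degree bounds word length from below) is simply unavailable, since elements of $B_i$ may only be expressible as sums of words far longer than $|i|$.

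A repair that avoids the issue entirely is to compare generating subspaces directly rather than Hilbert functions. For a finite-dimensional subcomodule $V\subseteq A$ containing $1$, monoidality and left-exactness of $\widetilde{\bullet}$ give $\widetilde{V}\cdot\widetilde{V}\subseteq \widetilde{V\cdot V}$ inside $\wtA$, hence $\sum_{j\le n}\widetilde{V}^{\,j}\subseteq \widetilde{\sum_{j\le n}V^{j}}$ for all $n$; since $\widetilde{\bullet}$ preserves dimensions (it is isomorphic to the forgetful functor as a linear functor, as noted in the proof of \Cref{thm.Ext.A.A-tilde}), this yields $\dim_k\bigl(\sum_{j\le n}\widetilde{V}^{\,j}\bigr)\le \dim_k\bigl(\sum_{j\le n}V^{j}\bigr)$. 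As every finite-dimensional subspace of $A$ (resp.\ of $\wtA$) is contained in a subspace of the form $V$ (resp.\ $\widetilde{V}$) for some finite-dimensional subcomodule $V$, this gives $\GKdim(\wtA)\le\GKdim(A)$, and the reverse inequality follows by the symmetry of the torsor construction. This argument uses no grading at all, so it covers the $\ZZ$-graded case; your \Cref{pr.Hilbert}-plus-\Cref{lem.GKdim} computation then remains a clean proof in the $\NN$-graded situation the paper actually uses.
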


\begin{lemma}\label{le.equiv_proj_inj}
The functor $\cat{forget}:{}_{\Gr(A)}\cM^H\to {\Gr(A)}$ preserves projectivity, as does the analogous functor for ungraded modules.
\end{lemma}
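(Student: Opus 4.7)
The plan is to exhibit a right adjoint to the forgetful functor and observe that it is exact, so that the left adjoint (the forgetful functor) must preserve projectives.

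First, I would construct a right adjoint $R\colon \Mod(A) \to {}_{A}\cM^H$ by the cofree comodule construction: set $R(M) := M\otimes H$, with $H$-coaction $\id\otimes \Delta$ on the right tensorand and diagonal $A$-action
\begin{equation*}
a\cdot (m\otimes h) \; := \; a_0 m \otimes a_1 h,
\end{equation*}
using Sweedler notation $\rho_A(a) = a_0\otimes a_1$ for the comodule structure on $A$. A direct check using coassociativity of $\Delta$ shows that this $A$-action is a morphism of $H$-comodules, so indeed $R(M)\in {}_{A}\cM^H$.

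Next I would verify the adjunction $\Hom_A(\mathrm{forget}(N),M) \cong \Hom^H_A(N,M\otimes H)$. In one direction, a map $g\colon N\to M$ of $A$-modules lifts to $n\mapsto g(n_0)\otimes n_1$; in the other, a comodule-and-module map $f\colon N\to M\otimes H$ descends via $(\id\otimes\epsilon)\circ f$. Coassociativity of $\rho_N$ gives that the lift is a comodule map, and the compatibility of $\rho_N$ with the $A$-action on $N$ gives that the lift is $A$-linear; these two operations are mutually inverse by the counit axiom. This is the standard cofree-comodule adjunction upgraded to the $A$-equivariant setting.

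The functor $R = \bullet\otimes H$ is obviously exact, since $H$ is a flat (in fact free) $k$-module. A general abstract-nonsense argument then yields the conclusion: if $P\in {}_{A}\cM^H$ is projective, then
\begin{equation*}
\Hom_A\bigl(\mathrm{forget}(P),\,\bullet\bigr) \;\cong\; \Hom^H_A(P,\,R(\bullet))
\end{equation*}
is the composition of the exact functor $R$ with the exact functor $\Hom^H_A(P,\bullet)$, hence exact. Thus $\mathrm{forget}(P)$ is projective in $\Mod(A)$. For the graded version, one places $H$ in degree $0$, forms the graded tensor product $M\otimes H$ in the obvious way, and the same adjunction/exactness argument transports verbatim to $\Gr(A)$.

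There is no real obstacle here; the argument is routine once the cofree right adjoint is in place. The one small verification that must be done carefully is that the diagonal $A$-action on $M\otimes H$ makes the multiplication map a morphism of $H$-comodules, but this is an elementary Sweedler-notation computation using coassociativity.
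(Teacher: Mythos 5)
Your argument is exactly the paper's: exhibit $\bullet\otimes H$ (with the cofree comodule structure and diagonal $A$-action) as a right adjoint to the forgetful functor, observe it is exact, and conclude that the left adjoint preserves projectives. The paper states this in one sentence; you fill in the adjunction and its verification, but the route is identical.
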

\begin{proof}
This follows from the fact that $\cat{forget}$ is left adjoint to an exact functor, namely 
$\bullet\otimes H:\Gr(A)\to {}_{\Gr(A)}\cM^H$. The same proof works in the ungraded case. 
\end{proof}

\begin{proposition}\label{pr.noeth}
Suppose $H$ is finite-dimensional. If $A$ is left or right noetherian then so is $\wtA$.
\end{proposition}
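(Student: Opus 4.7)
The plan is to use $A\otimes T$ as an intermediary between $A$ and $\wtA$, exploiting two structural facts about it: as a left $A$-module (via the subalgebra $A\otimes 1\subseteq A\otimes T$) it is free of finite rank $\dim_k H$, which is immediate from $T\cong H$ as vector spaces (itself a consequence of the torsor axiom $T\cong H$ in ${}^H\cM$); as a right $\wtA$-module it is faithfully flat, by an argument that rests on the cleft structure of the torsor.

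Granting these, the argument runs as follows. Let $J_1\subseteq J_2\subseteq\cdots$ be an ascending chain of left ideals in $\wtA$, and form the left $A$-submodules $(A\otimes T)\cdot J_i\subseteq A\otimes T$ generated by the $J_i$ under multiplication in the algebra $A\otimes T$. Flatness of $A\otimes T$ over $\wtA$ gives a canonical identification
\[
  (A\otimes T)\otimes_{\wtA}J_i \;\cong\; (A\otimes T)\cdot J_i,
\]
so $J_i\mapsto(A\otimes T)\cdot J_i$ is order-preserving. Since $A$ is left noetherian and $A\otimes T$ is finitely generated as a left $A$-module, $A\otimes T$ is noetherian as a left $A$-module, so the resulting chain stabilizes at some $n$. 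Faithful flatness then forces the original chain to stabilize at $n$ too: if $(A\otimes T)\cdot J_m=(A\otimes T)\cdot J_{m+1}$ for $m\ge n$, then $(A\otimes T)\otimes_{\wtA}(J_{m+1}/J_m)=0$, giving $J_{m+1}/J_m=0$. This proves $\wtA$ is left noetherian; the right noetherian case is handled symmetrically, working with right ideals of $\wtA$ and using that $A\otimes T$ is a finitely generated right $A$-module and a faithfully flat left $\wtA$-module.

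The main obstacle is the faithful flatness---indeed the freeness---of $A\otimes T$ as a one-sided $\wtA$-module. I would establish this via the cocycle description discussed in \Cref{se.torsor}: the comodule isomorphism $T\cong H$ in ${}^H\cM$ supplies a cleaving $H\to T$ which, together with the convolution-invertible cocycle $\sigma$, yields an explicit basis of $A\otimes T$ as a free rank-$\dim_k H$ module over $\wtA$ on either side. Some care is needed because $A\otimes T$ is not itself an $H$-comodule algebra unless $H$ is commutative (cf.\ \Cref{ss.Hopf.torsor}), so the standard cleft-extension arguments must be adapted to our setting; nevertheless the cleaving continues to supply the required basis, and the standard Sweedler-notation verification that the cleaving elements are linearly independent over $\wtA$ and spanning goes through essentially unchanged.
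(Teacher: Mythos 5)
Your argument is correct in outline, but it takes a genuinely different route from the paper, and the step you defer is exactly the one the paper is built to avoid. The paper's proof of \Cref{pr.noeth} never uses flatness of $A\otimes T$ over $\wtA$: it characterizes the noetherian property by the stabilization of images of restrictions $f_{S'}:\wtA^{\oplus S'}\to\wtA$ of a map $\wtA^{\oplus S}\to\wtA$, uses \Cref{pr.descent} to identify $f$ with an $A$-module $H$-comodule map $\varphi:A^{\oplus S}\to A\otimes T$ compatibly with restriction, recovers the image of $f_{S'}$ as the coinvariants of the $T^{\op}$-submodule generated by the image of $\varphi_{S'}$, and concludes because $A\otimes T$ is a finitely generated module over the noetherian ring $A$. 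Be aware that you cannot cite the paper for your faithful flatness claim: \Cref{pr.desc} establishes it only when $H$ is commutative, via Schneider's theorem, which needs $A\otimes T$ to be an $H$-comodule algebra --- and it is not one for general $H$ (\Cref{ssect.A.tilde}). That said, the freeness you assert is true for finite-dimensional $H$ and provable along the lines you sketch: with $T={}_\sigma H$ and cleaving $\gamma$, one checks $\wtA=\{a_0\otimes\gamma(a_1)\mid a\in A\}$ and that $w\otimes h\mapsto w\cdot(1\otimes\gamma(h))$ is a bijection $\wtA\otimes H\to A\otimes T$, by factoring it as a composite of two maps, one inverted using bijectivity of the antipode and the other using convolution-invertibility of $\sigma$ (and symmetrically on the other side). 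So your route works; it costs a Sweedler computation the paper never performs, but it buys the stronger conclusion that $A\otimes T$ is free of rank $\dim_k H$ over $\wtA$ on either side, which the paper obtains (as faithful flatness) only in the commutative case. The remainder of your argument --- $(A\otimes T)\otimes_{\wtA}J_i\cong(A\otimes T)\cdot J_i$ by flatness, stabilization of the latter chain because $A\otimes T$ is a noetherian left $A$-module, and descent of the stabilization by faithful flatness --- is standard and correct.
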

\begin{proof}
Suppose $A$ is left noetherian. (The  right noetherian case has a similar proof using the right-handed version of \Cref{pr.descent}.)

Let $S$ be an arbitrary set. The goal is to show that for any $\wtA$-module map $f:\wtA^{\otimes S}\to \wtA$ the images of the restrictions $f_{S'}:\wtA^{\oplus S'}\to \wtA$ stabilize as $S'\subseteq S$ ranges over ever larger finite subsets. 

By \Cref{pr.descent}, $f$ can be identified with some $A$-module $H$-comodule map $\varphi:A^{\oplus S}\to A\otimes T$. By naturality, this identification is compatible with taking restrictions $\varphi_{S'}$ to $A^{\oplus S'}$ for finite subsets $S'\subseteq S$ (in the sense that $f_{S'}$ gets identified with $\varphi_{S'}$). 

From the proof of \Cref{pr.descent} we see that the image of $f_{S'}$ consists of the $H$-coinvariants of the $T^\op$-submodule of $A\otimes T$ generated by the image of $\varphi_{S'}$. Hence, it suffices to show that the images of $\varphi_{S'}$ stabilize as $S'$ increases. This, however, is a consequence of the noetherianness of $A$ and the fact that $T$ is finite-dimensional (so that the $A$-module $A\otimes T$ is finitely generated). 
\end{proof}

\subsection{The case when $H$ is commutative, and the algebra $A'$}
\label{ssect.H.comm}

In this section we assume that $H$ is commutative, i.e., the ring of regular functions on
an affine group scheme (not necessarily reductive or reduced). 

Because $H$ is commutative, if $V$ and $W$ are right $H$-comodules,  the map 
$V \otimes W \to W \otimes V$, $v \otimes w \mapsto w \otimes v$, is an isomorphism of right $H$-comodules. 
It follows from this that if $T$ is made into a right $H$-comodule via the procedure in \Cref{ssect.left.v.right}, then 
\begin{equation}
\label{defn.A'}
A'\;  :=\; A\otimes T
\end{equation}
becomes a right $H$-comodule algebra with its usual tensor product algebra structure. We emphasize that the $T$ factor in $A \otimes T$ has
its original multiplication and is made into a right $H$-comodule algebra by the procedure in \Cref{ssect.left.v.right} and {\it not} by giving $T$
the opposite multiplication.

As mentioned in \Cref{ssect.A.tilde},  $\wtA$ is a subalgebra of $A'$. The following result therefore makes sense.

\begin{proposition}\label{pr.desc}
The categories ${_{A'}}\cM^H$ and  $\Mod(\wtA)$ are equivalent via the mutually quasi-inverse functors
\begin{equation}\label{eq.desc}
  \begin{tikzpicture}[auto,baseline=(current  bounding  box.center)]
    \path[anchor=base] (0,0) node (1) {$\Mod(\wtA)$} +(4,0) node (2) {${_{A'}}\cM^H$};
         \draw[->] (1) to[bend left=20] node[auto,pos=.5] {$\scriptstyle A'\otimes_{\wtA}\bullet$} (2);
         \draw[->] (2) to[bend left=20] node[auto,pos=.5] {$\bullet^{\rm{co} H}$} (1);
  \end{tikzpicture}
\end{equation}
Furthermore, the extension $\wtA\to A'$ is faithfully flat on the right and on the left. 
\end{proposition}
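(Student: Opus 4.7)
The plan is to invoke Schneider's descent theorem \cite[Thm.~I]{Sch90} --- the same tool used in the proof of \Cref{le.descent} --- now applied to the right $H$-comodule algebra $A' = A \otimes T$ in place of $T^{\op}$. Two hypotheses must be verified: that $A'$ is injective as a right $H$-comodule, and that the extension $\wtA \subseteq A'$ is right $H$-Galois, i.e., that the canonical map
\[
\mathrm{can} \colon A' \otimes_{\wtA} A' \longrightarrow A' \otimes H, \qquad x \otimes y \longmapsto x y_0 \otimes y_1,
\]
is bijective. Together with the equality $\wtA = (A')^{\rm{co} H}$, which holds by the definition of $\wtA$, these are precisely the inputs Schneider's theorem requires.

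Injectivity of $A'$ is immediate from the torsor data: the hypothesis $T\cong H$ in ${}^H\cM$ transports via \eqref{eq.switch.sides} to an isomorphism $T\cong H$ in $\cM^H$, and so $A' = A\otimes T \cong A\otimes H$ is cofree (hence injective) as a right $H$-comodule.

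For bijectivity of $\mathrm{can}$, I would reduce to the torsor condition \eqref{eq.torsor}. Transported to the right-handed setting via \eqref{eq.switch.sides} and using commutativity of $H$, that condition becomes a bijection $T\otimes T\to T\otimes H$ of exactly the shape of $\mathrm{can}$. To pass from this to bijectivity of $\mathrm{can}$ for $A'/\wtA$, the strategy is to apply \Cref{le.descent}: every $M\in\cM^H_{T^\op}$ has the canonical form $M\cong M^{\rm{co} H}\otimes T^\op$, and applied to $M = A\otimes T$ --- viewed in $\cM^H_{T^\op}$ by letting $T^{\op}$ act via left multiplication by $T$ on the right-hand tensorand --- this identifies $A'$ with $\wtA\otimes T$ in a way that intertwines $\mathrm{can}$ with the torsor map for $T$ tensored with $\id_{\wtA}$.

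With both hypotheses in hand, Schneider's theorem delivers the equivalence \eqref{eq.desc} together with faithful flatness of $\wtA\hookrightarrow A'$ on one side. The other-sided version follows by a symmetric argument: the torsor condition is symmetric under the (bijective) antipode of $H$, so the entire proof runs through on the opposite side. The principal technical hurdle is the verification that $\mathrm{can}$ for $A'/\wtA$ inherits bijectivity from the torsor map for $T$. This requires carefully disentangling the diagonal $H$-coaction on $A\otimes T$ from the $\wtA$-bimodule structure, but it is a standard ``tensoring-up'' manipulation once one commits to working with the identification $A'\cong \wtA\otimes T$ supplied by \Cref{le.descent}.
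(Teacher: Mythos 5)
You correctly identify Schneider's Theorem I as the driver and verify injectivity of $A'$ the same way the paper does, but you have chosen the more expensive form of the Galois input and have not actually discharged it. Schneider's theorem, in the form the paper invokes, asks only that the ``big'' map $A'\otimes A'\to H\otimes A'$ (no coinvariants, tensor over $k$) be \emph{surjective}; injectivity of $A'$ as a comodule plus surjectivity of this map already yield bijectivity of $\can\colon A'\otimes_{\wtA}A'\to A'\otimes H$, faithful flatness on both sides, and the categorical equivalence. The paper then observes that the restriction of the big map to $T\otimes A'\subseteq A'\otimes A'$ factors as
\begin{equation*}
T\otimes A' = T\otimes T\otimes A \longrightarrow H\otimes T\otimes T\otimes A \longrightarrow H\otimes T\otimes A = H\otimes A',
\end{equation*}
which is already an isomorphism by the torsor condition \Cref{eq.torsor}; surjectivity is immediate, no identification of $A'$ with $\wtA\otimes T$ needed.

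Your alternative is to prove bijectivity of the small canonical map $\can\colon A'\otimes_{\wtA}A'\to A'\otimes H$ directly, by identifying $A'$ with $\wtA\otimes T^{\op}$ via \Cref{le.descent} and claiming this intertwines $\can$ with $\id_{\wtA}\otimes(\text{torsor map})$. This is where the gap is, and you flag it yourself as ``the principal technical hurdle.'' The identification $A'\cong\wtA\otimes T^{\op}$ from \Cref{le.descent} is an isomorphism in $\cM^H_{T^{\op}}$, i.e., it respects the $T^{\op}$-action on the right tensorand; it does \emph{not} by itself tell you anything about the $\wtA$-bimodule structure on $A'$. The subalgebra $\wtA\subseteq A'$ sits diagonally, not as $\wtA\otimes 1$, so the balanced tensor product $A'\otimes_{\wtA}A'$ does not transparently collapse to $\wtA\otimes T\otimes T$. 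Making that collapse precise is essentially a descent statement for the extension $\wtA\subseteq A'$, and descent of that strength is one of the things Schneider's theorem is supposed to \emph{give} you --- so the argument as sketched is in danger of circularity. Either prove bijectivity of $\can$ by an honest computation that avoids presupposing flatness (which will cost you), or switch to the weaker surjectivity condition as the paper does, where the torsor condition for $T$ alone carries the day.

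The final remark about the left/right symmetry being handled by the bijectivity of the antipode is fine but unnecessary once one uses Schneider's Theorem I in the form that outputs faithful flatness on both sides simultaneously from the single surjectivity check.
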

\begin{proof}
It will be convenient to phrase the proof in terms of left comodules. Note that since $H$ is commutative its antipode is an automorphism and 
therefore the equivalence between $\cM^H$ and ${}^H\cM$ described in \Cref{ssect.left.v.right} is a \define{monoidal} equivalence. In this manner, we think of $A$ and $A'$ as left comodule algebras for the duration of the proof, and show that the two functors above implement an equivalence between $\Mod(\wtA)$ and ${}_{A'}^H\cM$. We will also freely interchange the order of tensorands, as permitted by the commutativity of $H$.  

By \cite[Thm. I]{Sch90}, both assertions follow if $A'$ is injective as an $H$-comodule and the map 
\begin{equation}\label{eq.torsor_bis}
\begin{tikzpicture}[auto,baseline=1pt]
  \path[anchor=base] (0,0) node (1) {$A'\otimes A'$} +(4,0) node (2) {$H\otimes A'\otimes A'$} +(8,0) node (3) {$H\otimes A'$};
         \draw[->] (1) to node[] {$\rho\otimes \id$} (2);
         \draw[->] (2) to node[] {$\id\otimes m$} (3);
\end{tikzpicture}
\end{equation}
analogous to \Cref{eq.torsor} is onto, where $\rho:A'\to H\otimes A'$ is the left comodule structure mentioned at the beginning of the proof and $m$ is multiplication.

The $H$-comodule $T\cong H$ is injective in ${}^H\cM$ (every coalgebra is self-injective, in the same way that every algebra is self-projective). Now, for any left $H$-comodule $M$, the map 
\begin{equation*}
  H\otimes M \to H\otimes M, \qquad  h\otimes m\mapsto hm_{-1}\otimes m_0
\end{equation*}
is an isomorphism from $M\otimes H\cong H\otimes M$ with the tensor product comodule structure to $M\otimes H$ with the comodule structure coming from the right hand tensorand alone. In other words $M\otimes H$ is isomorphic in ${}^H\cM$ to a direct sum of $\dim_k(M)$ copies of $H$ and in particular is injective. Applying this to $M=A$, it follows that $A'=A\otimes T\cong A\otimes H$ is injective in ${}^H\cM$. 

To check the surjectivity of \Cref{eq.torsor_bis} note that since \Cref{eq.torsor} is an isomorphism so is the composition 
\begin{equation*}
T\otimes A'=T\otimes T\otimes A \to H\otimes T\otimes A'= H\otimes T\otimes T\otimes A \to H\otimes T\otimes A = H\otimes A', 
\end{equation*}
i.e. the restriction of \Cref{eq.torsor_bis} to $T\otimes A'\subseteq A'\otimes A'$ already surjects onto $H\otimes A'$.    
\end{proof}

\begin{lemma}\label{le.descent_of_finiteness}
Keeping the notation above, if $N\in{}_{A'}\cM^H$ is finitely generated over $A'$, then $N^{\rm{co} H}$ is finitely generated over $\wtA$.   
\end{lemma}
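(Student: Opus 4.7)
The plan is to invoke faithfully flat descent of finite generation applied to the ring extension $\wtA\hookrightarrow A'$, which is faithfully flat by \Cref{pr.desc}. The same Proposition gives an $A'$-linear isomorphism $N\cong A'\otimes_{\wtA}N^{{\rm co} H}$, and that is the identification I would work with throughout.

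First I would choose a finite set of $A'$-module generators $n_1,\ldots,n_r$ of $N$ and, using the identification above, express each $n_i$ as a finite sum $\sum_j a'_{ij}\otimes m_{ij}$ with $a'_{ij}\in A'$ and $m_{ij}\in N^{{\rm co} H}$. Next I would let $M\subseteq N^{{\rm co} H}$ be the $\wtA$-submodule generated by the finite collection $\{m_{ij}\}$. By construction, the natural map $A'\otimes_{\wtA}M\to A'\otimes_{\wtA}N^{{\rm co} H}=N$ has image containing every $n_i$, so it is surjective.

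To conclude, I would combine right-exactness of $A'\otimes_{\wtA}-$ with faithful flatness: the surjection just displayed forces $A'\otimes_{\wtA}\bigl(N^{{\rm co} H}/M\bigr)=0$, and faithful flatness of $\wtA\to A'$ then gives $N^{{\rm co} H}=M$, which is finitely generated over $\wtA$ by its definition. Equivalently, one can phrase the last step as follows: the inclusion $M\hookrightarrow N^{{\rm co} H}$ becomes an isomorphism after applying the equivalence $A'\otimes_{\wtA}-$ of \Cref{pr.desc}, so it was an isomorphism to begin with.

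The argument is short and standard, so I do not expect a genuine obstacle. The one thing I would take care to verify is that the identification $N\cong A'\otimes_{\wtA}N^{{\rm co} H}$ coming out of the equivalence in \Cref{pr.desc} is honestly $A'$-linear on the left; this is part of the content of that Proposition, and it is precisely what makes the surjectivity reduction above work.
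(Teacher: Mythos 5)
Your proof is correct, but it runs on a different engine than the paper's. The paper characterizes finite generation categorically: $N$ is finitely generated over $A'$ if and only if $\Hom_{A'}(N,-)$ commutes with filtered colimits of monomorphisms, and it then transports this characterization across the equivalence of \Cref{pr.desc}, using that the coinvariants functor $(\bullet)^{{\rm co} H}$ preserves filtered colimits. You instead exploit the \emph{other} half of \Cref{pr.desc} --- the faithful flatness of $\wtA\to A'$ --- together with the counit isomorphism $N\cong A'\otimes_{\wtA}N^{{\rm co} H}$: collect the finitely many coinvariant coordinates of a finite generating set, observe that the induced map from $A'\otimes_{\wtA}M$ is onto, and kill $N^{{\rm co} H}/M$ by right-exactness plus faithful flatness. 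Your route is more elementary and more self-contained in the sense that it never needs the full equivalence of categories nor any colimit-preservation, only the single isomorphism and flatness; the paper's route is the one that generalizes most painlessly to other notions of finiteness expressible via commutation with filtered colimits. The one point you flag --- that $N\cong A'\otimes_{\wtA}N^{{\rm co} H}$ is $A'$-linear --- is indeed guaranteed by \Cref{pr.desc}, since the quasi-inverse functors compose to a natural isomorphism in ${}_{A'}\cM^H$, so there is no gap.
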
 

\begin{proof}
Finite generation can be characterized in category-theoretic terms as follows. 
Let $I$ be a \define{filtered} small category in the sense of \cite[Section IX.1]{Mac98}: Every two objects $i,i'$ fit inside a diagram
\begin{equation*}
  \begin{tikzpicture}[auto,baseline=(current  bounding  box.center)]
    \path[anchor=base] (0,0) node (1) {$i$} +(0,-1) node (2) {$i'$} +(2,-.5) node (3) {$k$};
         \draw[dashed,->] (1) to[bend left=10] (3);
         \draw[dashed,->] (2) to[bend right=10] (3);
  \end{tikzpicture}
\end{equation*}
and every solid left hand wedge as in the picture below can be completed to a commutative diagram by a dotted right hand wedge
\begin{equation*}
  \begin{tikzpicture}[auto,baseline=(current  bounding  box.center)]
    \path[anchor=base] (0,0) node (1) {$j$} +(2,.5) node (2) {$i$} +(2,-.5) node (3) {$i'$} +(4,0) node (4) {$k$};
         \draw[->] (1) to[bend left=10] (2);
         \draw[->] (1) to[bend right=10] (3);
         \draw[dashed,->] (2) to[bend left=10] (4);
         \draw[dashed,->] (3) to[bend right=10] (4);
  \end{tikzpicture}
\end{equation*}
For any functor $F:I\to \Mod(A')$ we have a canonical map 
\begin{equation}\label{eq.filtered}
 \varinjlim_{i\in I}\Hom_{A'}(N,F(i))\to \Hom_{A'}(N,\varinjlim_i F(i)). 
\end{equation}
We leave it to the reader to check that $N$ is finitely generated if and only if for every filtered $I$ and every functor $F$ such that every arrow $F(i\to i')$ is an embedding the map \Cref{eq.filtered} is an isomorphism. 
Also, the hom spaces on the two sides of the arrow are $H$-comodules, and the isomorphism respects these comodule structures. 

Let $F:I\to {}_{A'}\cM^H$ be a functor from a filtered small category such that all $F(i\to i')$ are monomorphisms. 
Since by \Cref{pr.desc} the equivalence
${}_{A'}\cM^H \equiv \Mod(\wtA)$ is effected by the functor
$(\bullet)^{\rm{co} H}$ which preserves filtered colimits, the
analogue of \Cref{eq.filtered} over $A^{\rm{co} H}$ is obtained by
applying this functor to \Cref{eq.filtered}. Since \Cref{eq.filtered} is an
isomorphism, so is its image under $(\bullet)^{\rm{co} H}$.
\end{proof}

There are analogous graded versions of
\Cref{le.descent_of_finiteness} and \Cref{pr.desc}.

\section{Homological properties under twisting}
\label{se.homological}

We keep the notation and conventions from the previous section, under the assumption that $H$ is finite dimensional.
We do not assume $H$ is commutative until \Cref{thm.chi_bis}.

\subsection{} 
Let $A$ be a (usually connected) graded $k$-algebra. For $M,N \in \Gr(A)$ we define the graded vector space
\[
 \uHom(M,N)\; := \; \bigoplus_{d\in\bZ} \Hom(M,N(d)),
\]
where $N(d)$ is the degree shift of $N$ by $d$ and $\Hom$ here is understood from context to be the space of degree-preserving $A$-module maps. Just like ordinary $\Hom$, $\uHom$ has derived functors $\uExt^i$ taking values in the category of graded vector spaces. We denote the degree-$j$ component of $\uExt^i(M,N)$ by $\uExt^i(M,N)_j$, as usual.

If $A$ is noetherian and $M$ is finitely generated then $\uExt(M,-)$ and $\Ext(M,-)$ agree or, more precisely, 
$\Ext(M,-)$ is the vector space obtained by forgetting the grading on $\uExt(M,-)$. This is not the case in general though.

\subsection{}
Let $A$ be a connected graded $k$-algebra in $\cM^H$. If we make the smash product $A \sharp H^*$ into a $\ZZ$-graded $k$-algebra by placing $H^*$ in degree 0, then ${}_{\Gr(A)}\cM^H$ is equivalent to $\Gr(A \sharp H^*)$. Therefore every $M \in {}_{\Gr(A)}\cM^H$
has a resolution by projective objects in ${}_{\Gr(A)}\cM^H$. Let $(P_*,d)$ be such a projective resolution; it is also a projective resolution in $\Gr(A)$ by \Cref{le.equiv_proj_inj}. If $N \in {}_{\Gr(A)}\cM^H$, then the homology of $\uHom_A(P_*,N)$ is in $\cM^H$. Thus, if $M,N \in  {}_{\Gr(A)}\cM^H$, then every $\uExt^i_A(M,N)_j$ is in $\cM^H$:

\begin{lemma}\label{le.ext_comod}
Let $A$ be a connected graded $H$-comodule algebra and $M,N\in {}_{\Gr(A)}\cM^H$. Then the components $\uExt^i_A(M,N)_j$ acquire $H$-comodule structures natural in $M,N\in {}_{\Gr(A)}\cM^H$.
\end{lemma}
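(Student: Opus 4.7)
The plan is to transport the problem to ordinary (graded) modules over the smash product $A\sharp H^*$ and then construct the comodule structure directly on a complex computing the $\uExt$'s.

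First I would make precise the translation already sketched in the paragraph preceding the lemma: since $H$ is finite dimensional, the category ${}_{\Gr(A)}\cM^H$ is isomorphic to $\Gr(A\sharp H^*)$, and an $H$-comodule structure on a vector space is the same thing as an $H^*$-module structure (every $H^*$-module is automatically locally finite because $H^*$ is itself finite dimensional). So to equip $\uExt^i_A(M,N)_j$ with an $H$-comodule structure it is enough to equip it with an $H^*$-module structure, and to show naturality in ${}_{\Gr(A)}\cM^H = \Gr(A\sharp H^*)$.

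Second, choose a projective resolution $P_\bullet \to M$ in the graded category $\Gr(A\sharp H^*)$; such a resolution exists because this category has enough projectives. By \Cref{le.equiv_proj_inj}, forgetting the $H$-coaction produces a resolution of $M$ in $\Gr(A)$ by objects that are still projective over $A$. Consequently
\[
\uExt^i_A(M,N) \;\cong\; H^i\bigl(\uHom_A(P_\bullet,N)\bigr)
\]
as graded vector spaces, with the degree-$j$ part computed by the degree-$j$ parts of $\uHom_A(P_\bullet,N)$.

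The key step is then to equip each term $\uHom_A(P_i,N)_j$ with an $H^*$-module structure for which the differentials of the complex are $H^*$-equivariant. This uses the standard ``conjugation'' formula: for $h^*\in H^*$, $f\in \Hom_A(P_i,N(j))$, and $p\in P_i$ one sets
\[
(h^*\cdot f)(p) \;:=\; h^*_{(1)}\cdot f\bigl(S^{-1}(h^*_{(2)})\cdot p\bigr),
\]
where $S$ is the antipode of $H^*$ (a bijection since $H$ has bijective antipode, hence so does $H^*$). A direct check shows that $h^*\cdot f$ is again $A$-linear (this uses that both $P_i$ and $N$ are modules in ${}_{\Gr(A)}\cM^H$, i.e.\ the $A$- and $H^*$-actions intertwine via the smash product relations), that this defines an $H^*$-action respecting the grading, and that $H^*$-equivariant chain maps and chain homotopies between two choices of resolution remain $H^*$-equivariant. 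The differentials $\uHom_A(P_\bullet,N)\to \uHom_A(P_{\bullet+1},N)$ are $H^*$-equivariant because they are induced by $H^*$-equivariant maps $P_{\bullet+1}\to P_\bullet$.

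Taking cohomology in degree $i$ and degree $j$, and invoking independence of resolution (via the standard homotopy argument, which now runs inside $\Gr(A\sharp H^*)$), gives the desired $H^*$-module structure, equivalently $H$-comodule structure, on $\uExt^i_A(M,N)_j$. Naturality in $M$ and $N$ is immediate from the explicit formula above, since a morphism in ${}_{\Gr(A)}\cM^H$ commutes with both the $A$- and the $H^*$-action, so induces an $H^*$-equivariant map at the level of $\uHom$ and hence at the level of cohomology. The only conceptually subtle point — and the one I would treat with most care — is verifying that the conjugation formula for the $H^*$-action lands in the subspace of $A$-linear maps; everything else is routine homological algebra inside an abelian category with enough projectives.
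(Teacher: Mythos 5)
Your overall strategy is exactly the one the paper has in mind: use the identification $ {}_{\Gr(A)}\cM^H \cong \Gr(A\sharp H^*)$, take a projective resolution there, forget the $H^*$-action using \Cref{le.equiv_proj_inj} to get a projective resolution in $\Gr(A)$, compute $\uHom_A(P_\bullet,N)$, and observe that the whole complex, not merely its cohomology, lives in $\cM^H$. The paper leaves the last step implicit; you attempt to make it explicit, which is exactly the right thing to scrutinize. Unfortunately, the explicit formula you propose is not correct, and this is not a typographical quibble but the crux of the entire proof.

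Concretely: with the usual smash-product conventions (so that $h^*\cdot(am)=(h^*_{(1)}\cdot a)(h^*_{(2)}\cdot m)$ for $a\in A$, $m\in M$), the assignment
\[
(h^*\cdot f)(p)\;:=\;h^*_{(1)}\cdot f\bigl(S^{-1}(h^*_{(2)})\cdot p\bigr)
\]
does \emph{not} send $A$-linear maps to $A$-linear maps. Tracking Sweedler indices, one finds
\[
(h^*\cdot f)(ap)\;=\;\bigl(h^*_{(1)}\,S^{-1}(h^*_{(4)})\cdot a\bigr)\cdot\bigl(h^*_{(2)}\cdot f(S^{-1}(h^*_{(3)})\cdot p)\bigr),
\]
and the first factor $h^*_{(1)}S^{-1}(h^*_{(4)})\otimes h^*_{(2)}\otimes h^*_{(3)}$ does not collapse to $1\otimes h^*_{(1)}\otimes h^*_{(2)}$ unless $H^*$ is cocommutative; taking $H^*$ to be Sweedler's four-dimensional Hopf algebra gives an explicit failure. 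So your formula only defines an $H^*$-action on $\Hom_k(P_i,N)$, not on the subspace $\Hom_A(P_i,N)$, and the rest of the argument (equivariance of differentials, passage to cohomology) never gets off the ground.

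The fix is to reverse the Sweedler indices: the correct conjugation action is
\[
(h^*\cdot f)(p)\;:=\;h^*_{(2)}\cdot f\bigl(S^{-1}(h^*_{(1)})\cdot p\bigr).
\]
With this version one finds $(h^*\cdot f)(ap)=\bigl(h^*_{(3)}S^{-1}(h^*_{(2)})\cdot a\bigr)\cdot\bigl(h^*_{(4)}\cdot f(S^{-1}(h^*_{(1)})\cdot p)\bigr)$, and now the middle legs contract via the identity $x_{(2)}S^{-1}(x_{(1)})=\ve(x)1$, yielding $a\cdot(h^*\cdot f)(p)$. One checks in the same way that this is an associative $H^*$-action, that it commutes with the differentials $f\mapsto f\circ d$ since $d$ is $H^*$-equivariant, and that it is natural in $M$ and $N$. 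Every remaining step in your write-up then goes through verbatim. So the gap is localized to the one formula, but it is precisely the point you yourself flagged as delicate, and as written the proof does not close it.
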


Similarly, if $M,N \in {}_A\cM^H$, then  $\Ext^i_A(M,N)\in \cM^H$, naturally in $M$ and $N$.

The following result will be used repeatedly.

\begin{theorem}
\label{thm.Ext.A.A-tilde}
Let $A$ be a connected graded $H$-comodule algebra and $M,N\in {}_{\Gr(A)}\cM^H$. There is a natural isomorphism of bigraded vector spaces
\begin{equation}\label{eq:0.Ext.A.A-tilde}
\uExt_A^*(M,N)_\hdot \; \cong \; \uExt_{\wtA}^*\big(\widetilde{M},\widetilde{N}\big)_\hdot.
\end{equation}
\end{theorem}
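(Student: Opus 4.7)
The plan is to compute both sides of \Cref{eq:0.Ext.A.A-tilde} from a single projective resolution chosen in the equivariant category ${}_{\Gr(A)}\cM^H$, and then to compare the two resulting Hom complexes termwise via \Cref{pr.descent}.

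First I would pick a projective resolution $P_\bullet\to M$ in ${}_{\Gr(A)}\cM^H$. Such resolutions exist because, since $H$ is finite dimensional, ${}_{\Gr(A)}\cM^H$ is equivalent to $\Gr(A\sharp H^*)$. By \Cref{le.equiv_proj_inj} each $P_n$ remains projective after forgetting the $H$-comodule structure, so $P_\bullet\to M$ is a projective resolution in $\Gr(A)$ and
$$
\uExt_A^*(M,N)_\hdot \;=\; H^*\!\bigl(\uHom_A(P_\bullet,N)\bigr)_\hdot.
$$
Applying $\widetilde{\bullet}$, \Cref{le.wt_proj} shows that each $\widetilde{P_n}$ is projective in $\Gr(\wtA)$; the Hopf-cocycle description at the end of \S\ref{se.torsor} realises $\widetilde{\bullet}$ as the identity on underlying graded vector spaces, with multiplications and actions deformed by the cocycle $\sigma$, so in particular $\widetilde{\bullet}$ is exact. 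Hence $\widetilde{P_\bullet}\to \widetilde{M}$ is a projective resolution in $\Gr(\wtA)$ and
$$
\uExt_{\wtA}^*(\widetilde{M},\widetilde{N})_\hdot \;=\; H^*\!\bigl(\uHom_{\wtA}(\widetilde{P_\bullet},\widetilde{N})\bigr)_\hdot.
$$

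The proof is then reduced to producing, naturally in $N$ and compatibly with the differential of $P_\bullet$, a termwise bigraded isomorphism $\uHom_{\wtA}(\widetilde{P_n},\widetilde{N})\cong \uHom_A(P_n,N)$. The graded version of \Cref{pr.descent} promised in \Cref{rem.descent} already supplies the natural isomorphism
$$
\uHom_{\wtA}(\widetilde{P_n},\widetilde{N}) \;\cong\; \uHom_A^H(P_n, N\otimes T) \;=\; \bigl(\uHom_A(P_n, N\otimes T)\bigr)^{\rm{co} H}.
$$
Because $A$ acts on $N\otimes T$ solely through the $N$-tensorand, there is a canonical $H$-comodule isomorphism
$$
\uHom_A(P_n, N\otimes T)\;\cong\; \uHom_A(P_n,N)\otimes T,
$$
and so taking coinvariants yields $\bigl(\uHom_A(P_n,N)\otimes T\bigr)^{\rm{co} H}$. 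Finally I would invoke the shearing isomorphism used in the proof of \Cref{pr.Hilbert}, which for every $W\in\cM^H$ gives a natural identification $W\cong (W\otimes T)^{\rm{co} H}$ once a trivialisation $T\cong H$ in ${}^H\cM$ is fixed. Applied with $W=\uHom_A(P_n,N)$, this collapses the right-hand side to $\uHom_A(P_n,N)$ and produces the desired termwise isomorphism.

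The main point that needs attention is that this chain of natural isomorphisms commutes with the differentials inherited from $P_\bullet$, which is where the argument could go wrong if any of the intermediate identifications failed to be functorial in the first argument. Fortunately the naturality clauses of \Cref{pr.descent,rem.descent} and the proof of \Cref{pr.Hilbert} are all given in $P_\bullet$ as well as in $N$, and the boundary maps of $P_\bullet$ are morphisms in ${}_{\Gr(A)}\cM^H$; this makes the chain-level identification a map of complexes, and passing to cohomology gives the bigraded natural isomorphism \Cref{eq:0.Ext.A.A-tilde}.
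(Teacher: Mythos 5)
Your proposal is correct and follows essentially the same route as the paper's own proof: the same equivariant projective resolution, the same appeal to \Cref{le.equiv_proj_inj}, \Cref{le.wt_proj} and the graded version of \Cref{pr.descent}, and the same chain of identifications, differing only cosmetically in that you apply the linear identification of $\widetilde{\bullet}$ with the forgetful functor termwise before taking cohomology rather than after. The one point worth making explicit is that the isomorphism $\uHom_A(P_n,N\otimes T)\cong \uHom_A(P_n,N)\otimes T$ rests on the finite-dimensionality of $T$ (not merely on $A$ acting through the $N$-tensorand), which the paper's proof states.
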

\begin{proof}
Let $(P_*,d)$ be a projective resolution of ${}_AM$ in ${}_{\Gr(A)}\cM^H$ (and hence also in ${\Gr(A)}$ by \Cref{le.equiv_proj_inj}). Then $(\wtP_*,\widetilde{d})$
is a projective resolution of $\wtM$ in ${}_{\Gr(\wtA)}\cM$ by \Cref{le.wt_proj}, and $\uExt_{\wtA}^*(\wtM,\wtN)_\hdot$ is the cohomology of the complex $\uHom_{\wtA}(\wtP_*,\wtN)$. By \Cref{pr.descent} (or rather its graded version; see \Cref{rem.descent}), this is the same as the cohomology of the complex 
\begin{equation}\label{eq:1.Ext.A.A-tilde}
\uHom_A^H(P_*,N\otimes T)\cong \uHom_A(P_*,N\otimes T)^{\rm{co} H}\cong (\uHom_A(P_*,N)\otimes T)^{\rm{co} H},  
\end{equation}
where the second isomorphism uses the finite-dimensionality of $T$. 

The right-most complex is the image of $\uHom_A(P_*,N)$ (regarded as a complex of $\bZ$-graded $H$-comodules) under the functor $\widetilde{\bullet}$ to graded vector spaces. Since this functor is exact, it turns the cohomology of $\uHom_A(P_*,N)$, i.e., $\uExt_A^*(M,N)_\hdot$, into that of \Cref{eq:1.Ext.A.A-tilde}. In other words, $\widetilde{\bullet}$ turns the left-hand side of \Cref{eq:0.Ext.A.A-tilde} into its right-hand side.  

Finally, $\widetilde{\bullet}$ is isomorphic to the
forgetful functor $\cM^H\to \Vect$ as a linear functor (though not as a
monoidal functor) because $T\cong H$ as a comodule; the conclusion follows. 
\end{proof}

There is a version of \Cref{thm.Ext.A.A-tilde} for ungraded modules $M,N\in {}_A\cM^H$; the same proof, with the obvious modifications,
works.

 \begin{corollary}\label{cor.m-homog}
 Let $A$ be a connected graded $H$-comodule algebra. If $A \cong TV/(R)$, then $A \cong TV/(\widetilde{R})$ where $\widetilde{R}$ and $R$
 are isomorphic as graded vector spaces.
 \end{corollary}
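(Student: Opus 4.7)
The plan is to apply \Cref{thm.Ext.A.A-tilde} to $M=N=k$, the trivial $A$-module equipped with its unique $H$-comodule structure, and to read off the minimal presentation data of $\widetilde{A}$ from the resulting bigraded Ext comparison. First I would verify that $\widetilde{k}\cong k$: the torsor condition gives $T^{\mathrm{co}\,H}=k$, hence $\widetilde{k}=(k\otimes T)^{\mathrm{co}\,H}=k$ as a graded $\widetilde{A}$-module. Consequently \Cref{thm.Ext.A.A-tilde} yields a bigraded isomorphism
\begin{equation*}
\uExt^*_A(k,k)_\hdot \;\cong\; \uExt^*_{\widetilde{A}}(k,k)_\hdot.
\end{equation*}

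Next I would invoke the standard fact that for any connected $\NN$-graded $k$-algebra $B$ with $B_0=k$, the minimal graded free resolution of ${}_Bk$ has $i$th term $\bigoplus_j B(-j)^{b_{i,j}}$ with $b_{i,j}=\dim_k\uExt^i_B(k,k)_j$. In particular, the minimal degree-$j$ generating space of $B$ has dimension $\dim_k\uExt^1_B(k,k)_j$, and the minimal degree-$j$ relation space has dimension $\dim_k\uExt^2_B(k,k)_j$. Applied to $A\cong TV/(R)$ with $V$ concentrated in degree (say) $1$ and $R\subseteq V^{\otimes m}$, this says that $\uExt^1_A(k,k)$ is concentrated in degree $1$ with dimension $\dim V$, while $\uExt^2_A(k,k)$ is concentrated in degree $m$ with dimension $\dim R$.

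Transferring this through the bigraded isomorphism above, $\widetilde{A}$ inherits the same bigraded Betti pattern, and hence admits a minimal presentation $\widetilde{A}\cong T\widetilde{V}/(\widetilde{R})$ with $\widetilde{V}$ concentrated in degree $1$ and $\widetilde{R}\subseteq \widetilde{V}^{\otimes m}$ of the correct dimension. Since \Cref{pr.Hilbert} already guarantees $\dim\widetilde{V}=\dim V$ and since the dimension of $\widetilde{R}$ now matches $\dim R$, the graded vector spaces $\widetilde{V},\widetilde{R}$ are abstractly isomorphic to $V,R$.

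The main point to be careful about is the bigraded (rather than merely graded) nature of \Cref{thm.Ext.A.A-tilde}: without control over the internal degree, one could not conclude that the new relations remain concentrated in a single degree, which is exactly the content of the $m$-homogeneous assertion. Everything else is formal once $\widetilde{k}\cong k$ has been established.
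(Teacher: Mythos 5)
Your proposal is correct and is essentially the paper's own argument: apply \Cref{thm.Ext.A.A-tilde} with $M=N=k$ and read off the generating and relation spaces of $\wtA$ from the bigraded isomorphisms $\uExt^1_A(k,k)\cong V^*$ and $\uExt^2_A(k,k)\cong R^*$; you simply spell out the minimal-free-resolution bookkeeping that the paper leaves implicit. The only (harmless) deviation is that you specialize to $V$ in degree $1$ with $R\subseteq V^{\otimes m}$, whereas the corollary is stated for arbitrary graded $V$ and $R$ — the bigraded Ext comparison handles that general case verbatim, and the appeal to \Cref{pr.Hilbert} for $\dim\widetilde V=\dim V$ is redundant since the $\uExt^1$ comparison already gives it.
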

 \begin{proof}
 This follows by applying \Cref{thm.Ext.A.A-tilde} to $M=N=k$ from the fact that there are isomorphisms $\Ext_A^1(k,k) \cong V^*$ and $\Ext_A^2(k,k) \cong R^*$ of bigraded vector spaces.
 \end{proof}

\subsection{The Koszul property}

\begin{definition}\label{def.Koszul}
Let $m$ be an integer $\ge 2$. A connected graded algebra $A$ is {\sf $m$-Koszul} if $A \cong TV/(R)$ with $\deg(V)=1$, $R \subseteq V^{\otimes m}$, and  $\Ext^i_A(k,k)$ is concentrated in just one degree for all $i$.  
\end{definition}

 \begin{corollary}\label{cor.Koszul}
Let $m$ be an integer $\ge 2$. A connected graded $H$-comodule algebra $A$ is m-Koszul if and only if $\wtA$ is.
\end{corollary}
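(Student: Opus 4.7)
The plan is to reduce everything to \Cref{thm.Ext.A.A-tilde} applied to the trivial module $M = N = k$, together with \Cref{cor.m-homog} to secure the presentation half of the definition of $m$-Koszulity.

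First I would verify that the trivial module is preserved under the twisting functor $\widetilde{\bullet}$. The ground field $k$ is an object of ${}_{\Gr(A)}\cM^H$ via the counit and the augmentation of $A$. By definition $\widetilde{k} = (k \otimes T)^{\rm{co}H} = T^{\rm{co}H}$, which equals $k$ by condition (2) in the definition of a torsor in \S\ref{se.torsor}. So $\widetilde{k} \cong k$ as a graded $\wtA$-module.

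Applying \Cref{thm.Ext.A.A-tilde} with $M = N = k$ therefore yields a natural bigraded isomorphism
\[
\uExt_A^i(k,k)_j \;\cong\; \uExt_{\wtA}^i(k,k)_j \qquad \text{for all } i,j.
\]
In particular, $\uExt^i_A(k,k)$ is concentrated in a single internal degree for every $i$ if and only if the same holds for $\wtA$, and moreover the degrees in which it concentrates coincide.

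To finish, I would address the presentation condition. Suppose $A \cong TV/(R)$ with $\deg V = 1$ and $R \subseteq V^{\otimes m}$. \Cref{cor.m-homog} gives $\wtA \cong TW/(\widetilde R)$ where $W \cong V$ and $\widetilde R \cong R$ as graded vector spaces, so $W$ is in degree $1$ and $\widetilde R \subseteq W^{\otimes m}$; combined with the Ext comparison above, $\wtA$ satisfies \Cref{def.Koszul}. The converse direction is completely symmetric: the roles of $A$ and $\wtA$ can be interchanged because the twisting construction, being given by a Hopf cocycle as recalled in \S\ref{se.torsor}, can be inverted by the inverse cocycle; equivalently, one observes that the $m$-Koszul condition for $\wtA$ forces $\Ext^1_A(k,k)$ to live in degree $1$ and $\Ext^2_A(k,k)$ to live in degree $m$, whence $A$ is generated in degree $1$ with relations in $V^{\otimes m}$, and the higher Ext vanishing outside one degree is transported back across the isomorphism. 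No serious obstacle is expected: the real content is entirely in \Cref{thm.Ext.A.A-tilde}, and this corollary is an immediate specialization.
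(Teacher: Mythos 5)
Your proof is correct and follows exactly the paper's route: the paper's own argument is the one-line observation that the statement ``follows immediately from \Cref{cor.m-homog} and \Cref{thm.Ext.A.A-tilde} applied to $M=N=k$.'' Your additional checks (that $\widetilde{k}\cong k$ via the torsor coinvariance condition, and that the converse direction is obtained by symmetry of the bigraded $\uExt$ isomorphism) are exactly the details the paper leaves implicit.
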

\begin{proof}
This follows immediately from  \Cref{cor.m-homog}  and \Cref{thm.Ext.A.A-tilde} applied to $M=N=k$.
\end{proof}

\subsection{Artin-Schelter regularity}

We begin by recalling the relevant notions.

\begin{definition}\label{def.AS} 
A connected graded $k$-algebra $A$ is  {\sf Artin-Schelter Gorenstein} (AS-Gorenstein for short) of dimension $d$ if the left and right injective dimensions of 
$A$ as a graded $A$-module equal $d$ and  
\begin{equation}\label{eq.AS}
\uExt_A^i(k,A) = \uExt_{A^\circ}^i(k,A) \cong \d_{id} \, k(\ell). 
\end{equation}
for some integer $\ell$.
  
If $A$ is AS-Gorenstein we say it is {\sf Artin-Schelter regular} (AS-regular for short) of dimension $d$ if in addition 
$\gldim(A)=d<\infty$.
\end{definition}

Artin and Schelter's original definition of regularity included a restriction on the growth of $\dim_k(A_i)$ but in some situations it is sensible to avoid that restriction. We will show that if $A$ is AS-regular of dimension $d$ then so is $\wtA$. Since $\dim_k(A_i)=\dim_k(\wtA_i)$ for all $i$ (\Cref{cor.GK}), if $A$ is AS-regular with the growth restriction so is $\wtA$.

\begin{proposition}\label{pr.gldim}
For all noetherian connected graded algebras $A\in\cM^H$, $\gldim(\wtA) = \gldim(A)$. 
\end{proposition}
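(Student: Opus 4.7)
The plan is to reduce the computation of global dimension to the computation of $\projdim_A(k)$ and then invoke \Cref{thm.Ext.A.A-tilde} with $M=N=k$ to transfer this quantity between $A$ and $\wtA$.

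First I would check that $\wtA$ is, like $A$, a connected graded noetherian algebra: connectedness follows from $\dim_k(\wtA_0)=\dim_k(A_0)=1$ via \Cref{pr.Hilbert} (together with the fact that $\wtA$ contains the identity $1\otimes 1$), and the noetherian property is \Cref{pr.noeth}. Next I would observe that the trivial module $k=A/A_{\ge 1}$ is naturally an object of ${}_{\Gr(A)}\cM^H$: since $A_0=k$ is one-dimensional, the coaction on $A_0$ is necessarily trivial, hence the augmentation $A\to k$ is $H$-colinear. Moreover $\widetilde{k}=(k\otimes T)^{\rm{co}H}=T^{\rm{co}H}=k$ by the torsor axioms, so $k$ corresponds to $k$ under the functor $\widetilde{\bullet}$.

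The crux is the standard fact that for a connected graded noetherian algebra $B$ one has
\[
  \gldim(B)\;=\;\projdim_B(k)\;=\;\sup\{\,i\ge 0\mid \Ext^i_B(k,k)\neq 0\,\}.
\]
Indeed, every finitely generated graded $B$-module admits a minimal graded projective resolution (bounded below in internal degree), and for such resolutions the length is controlled by $\projdim_B(k)$; together with the symmetry of global dimension for connected graded noetherian algebras (left and right versions agree), this gives the equality. Applying this to both $B=A$ and $B=\wtA$ reduces the proposition to the claim
\[
  \sup\{\,i\mid \uExt^i_A(k,k)\neq 0\,\}\;=\;\sup\{\,i\mid \uExt^i_{\wtA}(k,k)\neq 0\,\}.
\]

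Finally, I would invoke \Cref{thm.Ext.A.A-tilde} with $M=N=k$, which yields a natural isomorphism of bigraded vector spaces
\[
  \uExt^*_A(k,k)_\hdot\;\cong\;\uExt^*_{\wtA}(k,k)_\hdot,
\]
so in each cohomological degree $i$ the groups vanish simultaneously, and the two suprema coincide. The main potential obstacle is the folklore identity $\gldim(B)=\projdim_B(k)$ for connected graded noetherian $B$; once this is in hand the argument is purely formal. I do not expect any genuine difficulty, since the whole point of \Cref{thm.Ext.A.A-tilde} is precisely to enable such transfers.
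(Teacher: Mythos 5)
Your argument is correct and is essentially the same as the paper's: the paper likewise combines \Cref{pr.noeth}, \Cref{thm.Ext.A.A-tilde} applied to $M=N=k$, and the standard fact that for a noetherian connected graded algebra the global dimension is the supremum of those $i$ with $\Ext^i(k,k)\neq 0$. Your additional checks (that $\wtA$ is connected graded and that $\widetilde{k}\cong k$) are correct fillings-in of details the paper leaves implicit.
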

\begin{proof}
This follows immediately from \Cref{pr.noeth}, \Cref{thm.Ext.A.A-tilde} and the fact that for noetherian connected graded algebras the homological dimension can be computed as the supremum of those $i$ for which $\Ext^i(k,k)$ is non-zero. 
\end{proof}

\begin{theorem}\label{thm.AS-reg}
If a noetherian connected graded algebra $A\in \cM^H$ is AS-regular of dimension $d$ so is $\wtA$. 
\end{theorem}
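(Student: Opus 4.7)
The plan is to verify the defining properties of AS-regularity (\Cref{def.AS}) one-by-one for $\wtA$, each time invoking the corresponding transfer result proved earlier in Sections~\ref{se.torsors} and~\ref{se.homological}. The connected graded noetherian structure is immediate: by \Cref{pr.Hilbert} the algebra $\wtA$ has the same Hilbert series as $A$ (hence $\wtA_0 = k$), and by \Cref{pr.noeth} it is noetherian because $H$ is finite-dimensional. The equality $\gldim(\wtA) = d$ is \Cref{pr.gldim}.

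The heart of the argument is the Gorenstein Ext condition \Cref{eq.AS}. The idea is to apply \Cref{thm.Ext.A.A-tilde} with $M = k$ and $N = A$, both regarded as objects of ${}_{\Gr(A)}\cM^H$: $k$ carries the trivial comodule structure and is an $A$-module via $A \twoheadrightarrow A_0 = k$, while $A$ is a left module over itself in the obvious way. Two identifications then need to be made on the $\wtA$-side. Firstly, $\widetilde{k} = k$: this is the statement that the monoidal functor $\widetilde{\bullet}$ sends the tensor unit to the tensor unit. Secondly, $\widetilde{A} = \wtA$ as a graded left $\wtA$-module: this is essentially the definition \Cref{defn.A.tilde} of $\wtA$, once one observes that the left $A$-action on $A$ commutes with the $H$-coaction needed to form the coinvariants. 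Granting these two identifications, \Cref{thm.Ext.A.A-tilde} yields a bigraded isomorphism
\begin{equation*}
\uExt_{\wtA}^i(k, \wtA)_j \;\cong\; \uExt_A^i(k, A)_j,
\end{equation*}
and the right-hand side is $\delta_{i,d}\, k(\ell)_j$ by the AS-regularity of $A$.

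For the condition over $\wtA^\circ$ I would invoke the right-handed version of \Cref{thm.Ext.A.A-tilde} (noted in the paragraph immediately following that theorem to hold with the same proof, applied to the right comodule-algebra structure obtained from $A^\circ$ as in \Cref{ssect.left.v.right}), giving the matching isomorphism
\begin{equation*}
\uExt_{\wtA^\circ}^i(k, \wtA)_j \;\cong\; \uExt_{A^\circ}^i(k, A)_j \;\cong\; \delta_{i,d}\, k(\ell)_j.
\end{equation*}
Finally, the left and right injective dimensions of $\wtA$ as a graded $\wtA$-module are at most $\gldim(\wtA) = d$, and the non-vanishing of $\uExt_{\wtA}^d(k,\wtA)$ (and of its right-handed counterpart) forces them to equal $d$ on both sides.

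The step I expect to require the most care is keeping track of comodule structures and sides in the identifications $\widetilde{k} = k$ and $\widetilde{A} = \wtA$, and in particular convincing oneself that the right-handed analogue of \Cref{thm.Ext.A.A-tilde} delivers the isomorphism over $\wtA^\circ$ with the same shift $\ell$; everything else is bookkeeping on top of the results already established.
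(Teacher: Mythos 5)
Your argument is essentially the paper's proof of \Cref{thm.AS-reg}: invoke \Cref{pr.gldim} for $\gldim(\wtA)=d$ and \Cref{thm.Ext.A.A-tilde} (together with its right-handed version) with $M=k$, $N=A$ for the Gorenstein condition. The extra bookkeeping you supply — the identifications $\widetilde{k}=k$ and $\widetilde{A}=\wtA$, and the injective-dimension remark — is correct and merely spells out what the paper leaves implicit.
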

\begin{proof}
By \Cref{pr.gldim}, $\gldim(\wtA) = d$. \Cref{thm.Ext.A.A-tilde} and its right handed version applied to $M=k$ and $N=A$ show that \Cref{eq.AS} holds (or does not hold) simultaneously for $A$ and $\wtA$. 
\end{proof}

\begin{corollary}\label{cor.skew_CY}
If $A$ is a noetherian twisted Calabi-Yau algebra, so is $\wtA$.
\end{corollary}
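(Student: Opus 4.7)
The plan is to leverage the known equivalence between the twisted Calabi-Yau property and Artin--Schelter regularity for noetherian connected graded algebras, reducing the corollary to \Cref{thm.AS-reg}. Specifically, by a theorem of Reyes--Rogalski--Zhang (\emph{Skew Calabi-Yau algebras and homological identities}), a noetherian connected graded algebra is twisted Calabi-Yau of dimension $d$ if and only if it is AS-regular of dimension $d$, and in that case the Nakayama automorphism is determined by the AS-regularity data. Since the construction $A\rightsquigarrow\wtA$ preserves the connected graded structure (with the same Hilbert series, by \Cref{pr.Hilbert}) and the noetherian property (by \Cref{pr.noeth}), the equivalence applies to both $A$ and $\wtA$, and the hypothesis on $A$ transports via \Cref{thm.AS-reg}.

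Concretely, my first step would be to observe that the corollary makes sense in the connected graded setting that has been in force since \Cref{se.homological}, so that ``twisted Calabi-Yau'' may be taken to mean ``AS-regular'' by the cited theorem. The second step is a one-line invocation of \Cref{thm.AS-reg}: if $A\in\cM^H$ is AS-regular of dimension $d$, so is $\wtA$; hence $\wtA$ is twisted Calabi-Yau of dimension $d$.

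If instead the intention is to cover the general (possibly ungraded) case, the argument has to produce a Nakayama automorphism of $\wtA$ from one of $A$. Here the approach would be to enhance \Cref{thm.Ext.A.A-tilde} to the bimodule setting by working with the enveloping algebra $\wtA\otimes\wtA^{\op}$, noting that $T\otimes T^{\op}$ carries a natural $H\otimes H^{\op}$-torsor structure, so that the functor $\widetilde{\bullet}$ upgrades to an exact monoidal functor on bimodules. One would then obtain a natural isomorphism
\[
\Ext^*_{\wtA^e}\!\bigl(\wtA,\,\wtA\otimes\wtA\bigr) \;\cong\; \widetilde{\Ext^*_{A^e}\!\bigl(A,\,A\otimes A\bigr)}
\]
transferring vanishing in degrees $\ne d$ and invertibility in degree $d$ from $A$ to $\wtA$. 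The main obstacle here is identifying the Nakayama automorphism of $\wtA$, which should be $\widetilde{\nu}$, a suitable twist of the Nakayama automorphism $\nu$ of $A$ by the Hopf cocycle $\sigma$ describing $T$ as in \S3.1; tracking the $H$-coaction carefully through the bimodule duality identification is the one place where the argument is not automatic. In either version, however, no new homological input is needed beyond what has been built up in \S\ref{se.homological}.
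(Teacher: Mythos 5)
Your first two paragraphs reproduce the paper's argument exactly: cite the Reyes--Rogalski--Zhang equivalence between twisted Calabi--Yau and AS-regular, then apply \Cref{thm.AS-reg}. The speculative third paragraph about the ungraded bimodule setting is unnecessary, since the paper (like your main argument) works in the noetherian connected graded setting where the one-line reduction suffices.
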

\begin{proof}
By \cite[Lem. 1.2]{RRZ14}, an algebra is twisted Calabi-Yau if and only if it is AS-regular.  
\end{proof}

  We can drop the noetherian hypothesis from \Cref{thm.AS-reg} and \Cref{cor.skew_CY} if we assume that $H$ is cosemisimple, i.e. its category of comodules is semisimple.

\subsection{Condition $\chi$}

In this subsection we prove that the finiteness condition $\chi$ introduced in \cite{AZ94} is preserved under twisting. Throughout, $A$ will be an $\bN$-graded algebra.

\begin{definition}\cite[Defn. 3.7]{AZ94}\label{def.chi}
We say that $A$ has {\sf property $\chi$} if for all non-negative integers $i,d$ and all finitely-generated graded $A$-modules $N$ there is an integer $n_0$ such that $\uExt^i_A(A/A_{\ge n},N)_{\ge d}$ is finitely generated over $A$ for all $n\ge n_0$. (The left $A$-module structure on $\uExt$ comes from the right $A$-action on $A/A_{\ge n}$.)
\end{definition}

The $\chi$ condition is crucial in proving Serre-type results on  finiteness
of cohomology for non-commutative projective schemes (see e.g. \cite[Thm. 7.4]{AZ94}).

\begin{theorem}\label{thm.chi}
If the noetherian connected graded algebra $A\in\cM^H$ of finite global dimension has property $\chi$ then so does $\wtA$. 
\end{theorem}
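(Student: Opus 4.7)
The strategy is to transport property $\chi$ from $A$ to $\wtA$ through the isomorphism of \Cref{thm.Ext.A.A-tilde}, proving $\chi$ first for $\wtA$-modules that arise as $\wtN$ for some $H$-equivariant $A$-module $N$, and then reducing the general case to this. The plan rests on three auxiliary facts, each of which I will establish before combining them.

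First, $\wtA/\wtA_{\geq n}\cong \widetilde{A/A_{\geq n}}$ as graded $\wtA$-bimodules: the short exact sequence $0\to A_{\geq n}\to A\to A/A_{\geq n}\to 0$ lies in ${}_{\Gr(A)}\cM^H$, and applying the exact functor $\widetilde{\bullet}$ together with the degree-by-degree dimension count of \Cref{pr.Hilbert} identifies $\widetilde{A_{\geq n}}$ with $\wtA_{\geq n}$. Second, every f.g.\ graded $\wtA$-module is isomorphic to $\wtN$ for some f.g.\ $N\in {}_{\Gr(A)}\cM^H$. Indeed, the $i=0$ case of \Cref{thm.Ext.A.A-tilde} shows that $\widetilde{\bullet}\colon {}_{\Gr(A)}\cM^H\to\Gr(\wtA)$ is full, so any presentation $\wtA^{s}\xrightarrow{\psi}\wtA^{r}\to M\to 0$ lifts to a morphism $\varphi\colon A^{s}\to A^{r}$ in ${}_{\Gr(A)}\cM^H$ with $\widetilde{\varphi}=\psi$; exactness of $\widetilde{\bullet}$ then gives $\wtN\cong M$ for $N:=A^{r}/\im(\varphi)$, which is f.g.\ over $A$. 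Third, $\widetilde{\bullet}$ preserves finite generation on the right-module side: if $M\in\cM^H_A$ is f.g.\ over $A$, then since $H$ is finite-dimensional one can choose generators inside a finite-dimensional $H$-subcomodule $V\subseteq M$, and applying the exact monoidal functor $\widetilde{\bullet}$ to the surjection $V\otimes A\twoheadrightarrow M$ yields $\widetilde{V}\otimes\wtA\twoheadrightarrow \wtM$, so $\wtM$ is f.g.\ over $\wtA$.

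With these in hand, given a f.g.\ graded $\wtA$-module $\wtN$, choose the corresponding $N\in {}_{\Gr(A)}\cM^H$ as above and combine the first two facts with \Cref{thm.Ext.A.A-tilde} to obtain a natural bigraded isomorphism
\[ \uExt_{\wtA}^i(\wtA/\wtA_{\geq n},\wtN)_\hdot \;\cong\; \widetilde{\uExt_A^i(A/A_{\geq n},N)}_\hdot. \]
Each right multiplication $r_a\colon A/A_{\geq n}\to A/A_{\geq n}$ by an element $a\in A$ is a morphism in ${}_{\Gr(A)}\cM^H$, and naturality of the isomorphism in the first argument makes the right-hand side a right $A$-module in $\cM^H$ whose image under $\widetilde{\bullet}$ has a right $\wtA$-structure matching that on the left-hand side. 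Property $\chi$ for $A$ now provides $n_0$ such that $\uExt_A^i(A/A_{\geq n},N)_{\geq d}$ is f.g.\ over $A$ for all $n\geq n_0$, and the third fact upgrades this to finite generation of the corresponding $\wtA$-module, which by the display is $\uExt_{\wtA}^i(\wtA/\wtA_{\geq n},\wtN)_{\geq d}$.

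The main technical obstacle is the compatibility of right module structures in the displayed isomorphism: $\wtA$ is not a subalgebra of $A$ but of $A\otimes T$, so identifying the $\wtA$-action arising from the right-hand side (via the monoidal structure of $\widetilde{\bullet}$) with the one on the left-hand side (coming from $r_{\tilde{a}}$-precomposition) requires unwinding the natural transformations inside the proof of \Cref{thm.Ext.A.A-tilde} carefully. The finite global dimension hypothesis plays no role in this outline beyond ensuring one may, if desired, truncate inductions at some fixed cohomological degree.
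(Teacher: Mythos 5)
Your first and third auxiliary facts are essentially sound (the third, read for the \emph{left} $A$-module structure that $\uExt^i_A(A/A_{\ge n},N)$ actually carries, is exactly the paper's observation that a finitely generated $A$-module which is also an $H$-comodule is a quotient of a finite direct sum of copies of $A\sharp H^*$, whose image under $\widetilde{\bullet}$ is $\wtA$-free of finite rank). The gap is in your second fact. The functor $\widetilde{\bullet}\colon{}_{\Gr(A)}\cM^H\to\Gr(\wtA)$ is \emph{not} full, and the $i=0$ case of \Cref{thm.Ext.A.A-tilde} does not say that it is: that theorem produces an isomorphism of bigraded \emph{vector spaces} $\uHom_A(M,N)\cong\uHom_{\wtA}(\wtM,\wtN)$, obtained by composing the genuinely functorial isomorphism of \Cref{pr.descent}, namely $\uHom_{\wtA}(\wtM,\wtN)\cong\uHom^H_A(M,N\otimes T)$, with the merely linear (non-monoidal) identification of $\widetilde{\bullet}$ with the forgetful functor. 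A morphism $\wtM\to\wtN$ therefore corresponds to an equivariant morphism $M\to N\otimes T$, not to one $M\to N$. Concretely, $\uHom^H_A(A,A)$ consists of right multiplications by elements of $A^{\mathrm{co}H}$, whereas $\uHom_{\wtA}(\wtA,\wtA)\cong\wtA$, and these have different Hilbert series in general (in the Sklyanin example, right multiplication by $y_1=x_1q_1$ does not lift to an equivariant endomorphism of $A$). So a presentation $\wtA^{s}\xrightarrow{\psi}\wtA^{r}\to M\to 0$ need not lift to ${}_{\Gr(A)}\cM^H$, and your reduction of an arbitrary finitely generated $M$ to the form $\wtN$ is unproved. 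Replacing $\psi$ by the corresponding $\varphi\colon A^{s}\to A^{r}\otimes T$ does not repair this, since $\widetilde{A^{r}\otimes T}\cong\wtA^{\,r\dim H}$ rather than $\wtA^{r}$.

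This is exactly the difficulty the paper's proof is built to avoid, and it is why the finite-global-dimension hypothesis is essential rather than cosmetic. The paper proves the conditions $\chi^i_d$ by \emph{descending} induction on $i$, starting above $\gldim\wtA$ where all $\uExt^i$ vanish: for finitely generated $N$, a presentation $0\to K\to\wtA^{\oplus S}\to N\to 0$ and the long exact sequence (plus noetherianness of $\wtA$) reduce $\chi^i$ for $N$ to $\chi^{i+1}$ for $K$ (the induction hypothesis, valid for arbitrary finitely generated $K$) together with $\chi^i$ for the \emph{free} module $\wtA^{\oplus S}=\widetilde{A^{\oplus S}}$, which manifestly lies in the image of $\widetilde{\bullet}$, so that \Cref{thm.Ext.A.A-tilde} and your first and third facts apply to it. If you could prove that $\widetilde{\bullet}$ is essentially surjective onto the finitely generated objects of $\Gr(\wtA)$, your argument would go through and would indeed make the global-dimension hypothesis superfluous; but that statement requires its own proof, and the paper only establishes a descent equivalence of this kind under the additional hypothesis that $H$ is commutative (\Cref{pr.desc}, which is what powers \Cref{thm.chi_bis}).
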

\begin{proof}
If the finite generation condition from \Cref{def.chi} holds for all $N$ for a fixed choice of $i$ and $d$ we say that condition $\chi^i_d$ holds. 

By \Cref{pr.Hilbert,pr.noeth,pr.gldim}, $\wtA$ is also noetherian connected graded and of finite global dimension. This latter condition means that all sufficiently high $\uExt^i$ vanish, so that we can prove that all $\chi^i_d$ hold by descending induction on $i$. We now do this.

Fix $i$ and suppose we have proved that $\chi^j_d$ holds for all $d$ and all $j>i$. Fix $N\in\Gr(\wtA)$ and $d$ as in \Cref{def.chi}. 
Because $\wtA$ is noetherian,  $N$ is the cokernel in a short exact sequence
\begin{equation*}
  0\to K\to \wtA^{\oplus S}\to N\to 0
\end{equation*}
of finitely generated graded modules. Applying the resulting long exact $\uExt$ sequence and the induction hypothesis we conclude that it suffices to prove that the graded $\wtA$-module $\uExt^i_{\wtA}(\wtA/\wtA_{\ge n},\wtA^{\oplus S})_{\ge d}$ is finitely generated for sufficiently large $n$. 

Just as in the proof of \Cref{thm.Ext.A.A-tilde}, $\uExt^i_{\wtA}(\wtA/\wtA_{\ge n},\wtA^{\oplus S})_{\ge d}$ is the image of 
\begin{equation*}
U_{n} = \uExt^i_{A}(A/A_{\ge n},A^{\oplus S})_{\ge d}\; \in\;  {}_{\Gr(A)}\cM^H  
\end{equation*}
under the functor $\widetilde{\bullet}$. By hypothesis, $U_n$ is finitely generated over $A$ for sufficiently large $n$. Since $U_n$ is also an $H$-comodule, it is finitely generated over $A\sharp H^*$ and hence is a quotient of some finite direct sum of copies of $A\sharp H^*$ in ${}_{\Gr(A)}\cM^H$. Applying $\widetilde{\bullet}$ we obtain  
\begin{equation*}
  \widetilde{U_n} = \uExt^i_{\wtA}(\wtA/\wtA_{\ge n},\wtA^{\oplus S})_{\ge d}
\end{equation*}
as a quotient of a finite direct sum of copies of $\widetilde{A\sharp H^*}\cong \wtA\otimes \widetilde{H^*}\in\Gr(\wtA)$. 
\end{proof}

When $H$ is commutative the noetherian and global dimension hypotheses are not needed.

\begin{theorem}\label{thm.chi_bis}
 If $H$ is commutative and the graded algebra $A\in \cM^H$ satisfies condition $\chi$ then so does $\wtA$.  
\end{theorem}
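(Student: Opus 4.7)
The plan is to reduce condition $\chi$ for $\wtA$ to condition $\chi$ for $A$ by transporting the relevant $\uExt$-groups along the equivalence from \Cref{pr.desc} (available because $H$ is commutative), the natural isomorphism of \Cref{thm.Ext.A.A-tilde}, and the descent of finite generation from \Cref{le.descent_of_finiteness}. Fix a finitely generated graded $\wtA$-module $N$ and integers $i,d\ge 0$. Setting $M:=A'\otimes_{\wtA}N$, the graded version of \Cref{pr.desc} gives $M\in{}_{\Gr(A')}\cM^H$ with $\widetilde{M}\cong N$; since $N$ is finitely generated over $\wtA$ and $T$ is finite dimensional, $M$ is finitely generated over $A'$, hence also over $A$.

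Because each $A_i$ is an $H$-subcomodule of $A$, so is $A_{\ge n}$, and exactness of $\widetilde{\bullet}$ combined with \Cref{pr.Hilbert} identifies $\widetilde{A/A_{\ge n}}$ with $\wtA/\wtA_{\ge n}$. Applying \Cref{thm.Ext.A.A-tilde} to these modules yields a natural isomorphism
\[
\widetilde{\uExt^i_A(A/A_{\ge n},M)_{\ge d}}\;\cong\;\uExt^i_{\wtA}(\wtA/\wtA_{\ge n},N)_{\ge d}.
\]
By condition $\chi$ applied to the finitely generated graded $A$-module $M$ there is $n_0$ with $U_n:=\uExt^i_A(A/A_{\ge n},M)_{\ge d}$ finitely generated over $A$ for all $n\ge n_0$. \Cref{le.ext_comod} places $U_n$ in ${}_{\Gr(A)}\cM^H$; commutativity of $H$ then puts $U_n\otimes T$ in ${}_{\Gr(A')}\cM^H$, and it remains finitely generated over $A'$ because $T$ is finite dimensional. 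The graded version of \Cref{le.descent_of_finiteness} gives that $\widetilde{U_n}=(U_n\otimes T)^{\rm{co} H}$ is finitely generated over $\wtA$, so the displayed isomorphism yields condition $\chi$ for $\wtA$.

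The main obstacle is confirming that the isomorphism in \Cref{thm.Ext.A.A-tilde} is compatible with the left $\wtA$-module structures induced by the right actions on $A/A_{\ge n}$ and $\wtA/\wtA_{\ge n}$: the theorem is stated at the level of bigraded vector spaces, but since its construction applies the monoidal functor $\widetilde{\bullet}$ to a projective resolution in ${}_{\Gr(A)}\cM^H$, its naturality should upgrade it to an isomorphism of $\wtA$-modules. Beyond this bookkeeping point, the argument dispenses with the noetherian and finite-global-dimension hypotheses of \Cref{thm.chi}, which is exactly the content of \Cref{thm.chi_bis}.
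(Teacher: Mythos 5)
Your proof is correct and follows essentially the same route as the paper's: both set $N'=A'\otimes_{\wtA}N$, apply condition $\chi$ for $A$ to this finitely generated $A$-module, transport the relevant $\uExt$ groups through the descent machinery of \Cref{pr.desc} and \Cref{thm.Ext.A.A-tilde}, and conclude with the graded version of \Cref{le.descent_of_finiteness}. The $\wtA$-module-structure compatibility you flag is likewise left implicit in the paper (which says only ``arguing as in the proof of \Cref{thm.Ext.A.A-tilde}''), so it is not a gap relative to the published argument.
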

\begin{proof}
Let $N$ be a finitely generated graded $\wtA$-module and $i,d$ fixed integers. Because $A$ has property $\chi$, there is some $n_0$ for which the finiteness condition in \Cref{def.chi} holds for the graded $A$-module $N'=A'\otimes_{\wtA}N$ (the $A$-module structure is obtained by restricting scalars from $A'=A\otimes T^\op$ to $A$). We will show that $n_0$ satisfies the requirements of \Cref{def.chi} for $N$. 

Apply the graded analogue of \Cref{pr.desc} to identify ${\Gr(\wtA)}$ with ${}_{\Gr(A')}\cM^H$. Arguing as in the proof of \Cref{thm.Ext.A.A-tilde} we see that the $\wtA$-module $\uExt^i_{\wtA}(\wtA/\wtA_{\ge n},N)_{\ge d}$ that we are interested in is precisely the space of $H$-coinvariants in
\begin{equation}\label{eq.chi_bis}
 \uExt^i_{A'}(A'/A'_{\ge n},N')_{\ge d}\cong \uExt^i_{A}(A/A_{\ge n},N')_{\ge d}. 
\end{equation} 
To conclude, apply \Cref{le.descent_of_finiteness} (substituting \Cref{eq.chi_bis} for $N$ in that result).
\end{proof}

\section{``Exotic'' elliptic algebras}\label{se.exotic}

We now apply the above results to Sklyanin algebras. 

\subsection{}\label{subse.n^2}
Fix an integer $n\ge 3$. Let $k=\CC$. Fix a primitive $(n^2)^{\th}$ root of unity $\ve \in k$. 

Let $Q=Q_{n^2,1}(E,\tau)$ be the Sklyanin algebra  defined in \cite{OF89}. 

By \cite[\S1, Remark 2]{OF89}, the finite Heisenberg group of order $n^6$, $H_{n^2}$, acts as automorphisms of $Q$. 
There is a basis $x_i$, $1 \le i \le n^2$, for the degree-1 component of $Q$ on which the generators
of the Heisenberg group act as $x_i \mapsto x_{i+1}$ and $x_i \mapsto \ve^ix_i$ where the indices are labelled modulo $n^2$. 
The $n^{\rm{th}}$ powers of the two generators generate a subgroup $\G \subseteq H_{n^2}$ that is isomorphic to $(\ZZ/n)^2$. 
The generators of $\G$ act by $x_i\mapsto x_{i+n}$  and $x_i\mapsto \zeta^ix_i$ where $\zeta=\ve^n$. 

Let $H=k(\G)$ denote the algebra of $k$-valued functions on $\G$ and let $M_n(k)$ denote the $n\times n$ matrix algebra. 
We make $\G$ act on $M_n(k)$ by having its generators act as conjugation by 
$$
\begin{pmatrix}0&1&0&\cdots&0\\0&0&1&\cdots &0\\\vdots&\vdots&\ddots&\ddots&\vdots\\0&0&0&\ddots&1\\1&0&0&\cdots&0\end{pmatrix}
\qquad \hbox{and} \qquad 
\begin{pmatrix}1&0&\cdots&0\\0&\zeta&\cdots &0\\\vdots&\vdots&\ddots&\vdots\\0&0&\cdots&\zeta^{n-1}\end{pmatrix}.
$$ 

By duality, the action of $\G$ as automorphisms of $M_n(k)$ gives $M_n(k)$ the structure of an $H$-comodule algebra. 

\begin{lemma}
\label{lem.Q.torsor}
The above action makes $M_n(k)$ into a left $H$-torsor in the sense of \Cref{se.torsor}. 
\end{lemma}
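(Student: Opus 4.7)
The plan is to check the three conditions of \Cref{se.torsor} directly, using the standard ``clock-and-shift'' basis of $M_n(k)$. Write $S$ for the first (cyclic shift) matrix and $D$ for the second (diagonal) matrix displayed in \Cref{subse.n^2}, and note the Heisenberg relation $SD = \zeta DS$. A straightforward induction then gives $S\,D^b = \zeta^b D^b S$ and $D\,S^a = \zeta^{-a} S^a D$, from which one reads off that the $n^2$ elements $\{S^a D^b \mid 0 \le a,b < n\}$ form a $k$-basis of $M_n(k)$, and that conjugation by $S$ (resp.\ $D$) sends $S^a D^b$ to $\zeta^{-b} S^a D^b$ (resp.\ $\zeta^{-a} S^a D^b$). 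Thus each $S^a D^b$ spans a $\Gamma$-stable line on which $\Gamma$ acts through a character $\chi_{a,b}$, and the assignment $(a,b)\mapsto\chi_{a,b}$ is a bijection from $(\bZ/n)^2$ onto the character group of $\Gamma$.

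Translating to $H$-comodule language via the duality between $\Gamma$-actions and $H=k(\Gamma)$-coactions, this says that $M_n(k)$ decomposes as a left $H$-comodule into a direct sum of one copy of each simple $H$-comodule. In other words, $M_n(k)\cong H$ in ${}^H\cM$, verifying condition~(1); and the coinvariants are precisely the $\chi_{0,0}$-isotypic component, i.e.\ the scalar matrices $k\cdot I$, which verifies condition~(2).

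For condition~(3), the domain and codomain of the map \Cref{eq.torsor} both have dimension $n^4$, so it suffices to show surjectivity. On the basis,
\[
(\id\otimes m)\circ(\rho\otimes\id)\bigl(S^a D^b \otimes S^c D^d\bigr) \;=\; \chi_{a,b}\otimes S^a D^b \cdot S^c D^d \;=\; \zeta^{-bc}\,\chi_{a,b}\otimes S^{a+c} D^{b+d},
\]
using the commutation $D^b S^c = \zeta^{-bc}S^c D^b$. For fixed $(a,b)$, letting $(c,d)$ vary over $(\bZ/n)^2$ produces every element of $\chi_{a,b}\otimes M_n(k)$ up to a nonzero scalar; as $(a,b)$ also varies, we surject onto $H\otimes M_n(k)$.

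The only possible source of trouble is keeping the sign conventions and the dualization between $\Gamma$-actions and $H=k(\Gamma)$-coactions straight, but none of it is delicate: the whole argument is essentially the observation that the conjugation representation of $\Gamma$ on $M_n(k)$ is the regular representation, which is the classical Heisenberg fact underlying the whole construction.
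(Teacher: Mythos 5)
Your proof is correct, but it takes a different route from the paper's for the key point. You verify all three torsor conditions by hand in the clock-and-shift basis $\{S^aD^b\}$: the conjugation computation shows these are eigenvectors for the $n^2$ distinct characters of $\G$, which gives $M_n(k)\cong H$ in ${}^H\cM$ and $M_n(k)^{\rm{co}H}=k$, and then you check bijectivity of the Galois map \Cref{eq.torsor} directly by evaluating it on basis vectors and counting dimensions. The paper establishes conditions (1) and (2) the same way (multiplicity-one decomposition into characters), but for condition (3) it avoids computation entirely: it observes that each homogeneous component of the $\widehat{\G}$-grading is spanned by an invertible matrix, so $M_n(k)$ is strongly graded, and then invokes Ulbrich's theorem that strongly graded algebras are exactly the Galois objects over the group algebra, together with Pontryagin duality $k\widehat{\G}\cong k(\G)=H$. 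Your direct computation is more self-contained and makes the bijectivity of \Cref{eq.torsor} concrete; the paper's argument is shorter and identifies the structural reason (strong gradedness) that makes the Galois condition automatic. Two small points worth tidying: the sign in ``conjugation by $S$ sends $S^aD^b$ to $\zeta^{-b}S^aD^b$'' depends on whether $S$ shifts indices up or down, as you note, but this does not affect the bijection onto $\widehat{\G}$; and linear independence of the $S^aD^b$ is cleanest to justify not from the commutation relations but from the fact that they are eigenvectors for pairwise distinct characters of $\G$ (and there are $n^2=\dim M_n(k)$ of them).
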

\begin{proof}
Every character of $\G$ appears with multiplicity one in $M_n(k)$. In particular, $M_n(k)^{\rm{co} H}=M_n(k)^\G=k$. 

A $k$-algebra on which $\G$ acts as automorphisms is the same thing as a $k$-algebra with a grading by the character group of $\G$. 
Every homogeneous component of $T=M_n(k)$ is the $k$-span of an invertible matrix. Hence, if $\chi$ and $\chi'$ are characters of $\G$,
then $T_\chi T_{\chi'}=T_{\chi\chi'}$. In other words, $T$ is a strongly graded algebra.  A result of Ulbrich
shows that for every group $\Ups$ the $\Ups$-graded algebras that are Galois as comodules over 
 the group algebra $k\Ups$ are exactly the strongly graded ones  \cite[Thm. 8.1.7]{Mon93}. Let $\Ups$ be the character group of $\G$. Using the natural isomorphism, Pontryagin duality, $k\Ups \cong k(\G)=H$, so $T$ is a left $H$-torsor. 
\end{proof}

Let $\widetilde{Q}=(Q\otimes M_n(k))^{\rm{co} H}$.

\begin{proposition}\label{pr.tildeQ_regular}
The algebra $\widetilde{Q}$ is AS-regular of dimension $n^2$, Koszul,  and noetherian,  and has Hilbert series $ {(1-t)^{-n^2}}$.
\end{proposition}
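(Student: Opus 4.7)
The plan is to observe that $\widetilde{Q}$ is built from $Q$ by the exact construction $A\rightsquigarrow\widetilde{A}$ studied in \Cref{se.torsors,se.homological}, and then invoke the general inheritance theorems proved there. The hypotheses needed are all in place: $H=k(\Gamma)$ is finite dimensional (and commutative), $M_n(k)$ is a left $H$-torsor by \Cref{lem.Q.torsor}, and $Q$ is a connected graded $H$-comodule algebra since each $Q_i$ is $\Gamma$-stable under the action described in \Cref{subse.n^2}.

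The starting point is the theorem of Tate and Van den Bergh cited in the introduction: $Q=Q_{n^2,1}(E,\tau)$ is itself noetherian, Koszul, and Artin-Schelter regular of dimension $n^2$, with Hilbert series $(1-t)^{-n^2}$. Given this, I would transfer the four listed properties to $\widetilde{Q}$ one at a time, in the following order.

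First, the Hilbert series: \Cref{pr.Hilbert} gives $\dim_k(\widetilde{Q}_i)=\dim_k(Q_i)$ in every degree $i$, so the Hilbert series of $\widetilde{Q}$ is $(1-t)^{-n^2}$. Second, the noetherian property: since $H$ is finite dimensional, \Cref{pr.noeth} applies and yields that $\widetilde{Q}$ is noetherian (on both sides, using the right-handed analogue of \Cref{pr.descent}). Third, Koszulity: $Q$ is $2$-Koszul in the sense of \Cref{def.Koszul}, so \Cref{cor.Koszul} (which only needs $H$ finite dimensional) transfers $2$-Koszulity to $\widetilde{Q}$. Fourth, AS-regularity of dimension $n^2$: this is exactly the content of \Cref{thm.AS-reg}, applied to the noetherian connected graded $H$-comodule algebra $Q$.

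There is essentially no obstacle in this proof; the bulk of the work has already been done in the general framework of \Cref{se.torsors,se.homological}, and the only verifications specific to this section are that (a) $M_n(k)$ really is an $H$-torsor, which is \Cref{lem.Q.torsor}, and (b) the action of $\Gamma$ on $Q$ preserves the grading, which is immediate from the description of the action on the generators $x_i$. Thus the proof reduces to citing \Cref{pr.Hilbert}, \Cref{pr.noeth}, \Cref{cor.Koszul}, and \Cref{thm.AS-reg} in sequence.
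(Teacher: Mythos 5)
Your proof is correct and is essentially identical to the paper's own argument, which likewise cites the Tate--Van den Bergh results for $Q$ and then invokes \Cref{pr.Hilbert}, \Cref{pr.noeth}, \Cref{cor.Koszul}, and \Cref{thm.AS-reg}. The extra hypothesis checks you note (that $M_n(k)$ is an $H$-torsor and that $\Gamma$ preserves the grading) are exactly the verifications the paper relies on via \Cref{lem.Q.torsor} and the setup in \Cref{subse.n^2}.
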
  
\begin{proof}
By \cite[Thm. 1.1, Cor. 1.3]{TvdB96}, all the hypotheses of \Cref{pr.Hilbert,cor.Koszul,pr.noeth,thm.AS-reg} are satisfied.
\end{proof}

 \Cref{lem.Q.torsor} and \Cref{pr.tildeQ_regular} hold when $n=2$ and $k$ is any algebraically closed field of characteristic $\ne 2$. 
 See \Cref{se.Q-tilde}.


\section{Generators and relations for  $\widetilde{Q_4}$}
\label{se.Q-tilde}

Let $k$ be an algebraically closed field whose characteristic is  not 2. 

We now specialize the discussion from \Cref{se.exotic} to $n=2$, considering the action of the group $\G=\bZ/2\times\bZ/2$ on $Q=Q_{n^2}=Q_4$.

\subsection{}
Let $\a_1,\a_2,\a_3 \in k$ be such that $\a_1+\a_2+\a_3+\a_1\a_2\a_3=0$ and $ \{\a_1,\a_2,\a_3\} \cap\{0,\pm 1\}=\varnothing$. 
Often we write $\a=\a_1$, $\b=\a_2$, and $\c = \a_3$. 

We fix $a,b,c, i \in k$ such that $a^2=\a$, $b^2=\b$, $c^2=\c$, and $i^2=-1$. 

When $k=\CC$ and $E=\CC/\L$, $\a$, $\b$, and $\c$, are the values at $\tau$ of certain elliptic functions with period lattice $\L$ \cite[\S2]{Skl82},
\cite[\S2.10]{SS92}. 
Thus, when $k=\CC$ we can take
$$
a\,=\,\frac{\theta_{11}(\tau)\theta_{00}(\tau)}{\theta_{01}(\tau)\theta_{10}(\tau)}, \qquad 
b\, =\,  i \frac{\theta_{11}(\tau)\theta_{01}(\tau)}{\theta_{10}(\tau)\theta_{11}(\tau)}, \qquad 
c \, =\,  i \frac{\theta_{11}(\tau)\theta_{10}(\tau)}{\theta_{11}(\tau)\theta_{01}(\tau)},  
$$
where $\theta_{11}, \theta_{00}, \theta_{01}, \theta_{10}$ are Jacobi's four theta functions as defined at \cite[p.71]{Weber-vol3}.

\subsection{}
 Let $Q=k[x_0,x_1,x_2,x_3]$ be the quotient of the free algebra
$k \langle x_0,x_1,x_2,x_3\rangle$ by the six relations
\begin{equation}
\label{Q-relns}
x_0x_i-x_ix_0 \; = \; \a_i(x_jx_k+x_kx_j), \qquad \quad x_0x_i+x_ix_0 \; = \;  x_jx_k-x_kx_j, 
\end{equation}
where $(i,j,k)$ runs over the cyclic permutations of $(1,2,3)$.

\subsection{} 
The earlier results will be applied to the Hopf algebra $H$ of $k$-valued functions on  
$$
\G  \; =\; \{1,\c_1,\c_2,\c_3=\c_1\c_2\}\;  \cong \; \ZZ_2 \times \ZZ_2
$$
and its action as $k$-algebra automorphisms of $Q$ given by
\begin{table}[htdp]
\begin{center}
\begin{tabular}{|c|c|c|c|c|c|c|c|}
\hline
 & $x_0$ &  $x_1$ &  $x_2$ &  $x_3$
\\
\hline
$\c_1 $  & $x_0$ &  $x_1$ &  $-x_2$ &  $-x_3$
\\
\hline
$\c_2 $  & $x_0$ &  $-x_1$ &  $x_2$ &  $-x_3$
\\
\hline
$\c_3$  & $x_0$ &  $-x_1$ &  $-x_2$ &  $x_3$
\\
\hline
\end{tabular}
\end{center}
\vskip .12in
\caption{The action of $\G$ as automorphisms of $Q$}
 \label{Gamma.action}
\end{table}
\newline
\noindent
The irreducible characters of $\G$ are labelled $\chi_0, \chi_1,\chi_2,\chi_3$  in such a way that
$\c(x_j)=\chi_j(\c)x_j$ for all $\c \in \G$ and $j=0,1,2,3$.

\subsection{A quaternionic basis for $M_2(k)$ and the conjugation action of $\G$ on $M_2(k)$}
\label{ssect.quat.basis}
Define
\begin{equation}
\label{defn.a_i}
q_0=\begin{pmatrix} 1 & 0 \\ 0 & 1 \end{pmatrix}, \qquad  q_1=\begin{pmatrix} i & 0 \\ 0 & -i \end{pmatrix}, \qquad q_2= \begin{pmatrix} 0 & i \\ i & 0 \end{pmatrix}, \qquad q_3=  \begin{pmatrix} 0 & -1 \\ 1 & 0 \end{pmatrix}. 
\end{equation}
Then $q_1^2=q_2^2=q_3^2 = -1$ and, if $(i,j,k)$ is a cyclic permutation of $(1,2,3)$, $q_iq_j=q_k$ and $q_iq_j+q_jq_i=0$.

Define an action of $\G$ as automorphisms of $M_2(k)$ by 
$\c_j(a):=q_jaq_j^{-1}$, i.e.,  $g(q_j)=\chi_j(g)q_j$. 

As before, $\wtQ=(Q \otimes M_2(k))^\G$. If $\c \in \G$, then 
$\c(x_iq_j)=\chi_i(\c)\chi_j(\c) x_iq_j
$
so 
$$
y_0:=x_0, \quad y_1:=x_1q_1, \quad y_2:=x_2q_2, \quad y_3:=x_3q_3,
$$
are $\G$-invariant elements of $Q \otimes M_2(k)$.

\begin{proposition}
\label{prop.Q-tilde}
The algebra $\widetilde{Q}$ is generated by $y_0,y_1,y_2,y_3$ modulo the relations
\begin{equation}
\label{Q-tilde-relns}
y_0y_i-y_iy_0 \;=\; \a_i(y_jy_k-y_ky_j) \qquad \hbox{and} \qquad y_0y_i+y_iy_0  \;=\;  y_jy_k+y_ky_j,
\end{equation}
were $(i,j,k)$ is a cyclic permutation of $(1,2,3)$. 
The function  $y_j \mapsto -y_j$, $j=0,1,2,3$, extends to an algebra anti-automorphism of $\wtQ$. 
\end{proposition}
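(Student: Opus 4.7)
The plan is to first pin down the degree-1 $\Gamma$-invariants, verify the six claimed relations by a direct computation inside $Q\otimes M_2(k)$, and then appeal to \Cref{cor.m-homog,pr.tildeQ_regular} to conclude that these six relations already present $\wtQ$. The anti-automorphism comes at the end by a symmetry check on the relations themselves.

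First, I identify $\wtQ_1=(Q_1\otimes M_2(k))^\G$. The $x_j$ form a basis of $Q_1$ on which $\G$ acts by the character $\chi_j$, and by \Cref{ssect.quat.basis} the $q_j$ form a basis of $M_2(k)$ on which $\G$ acts by the \emph{same} character $\chi_j$. Hence the $\G$-invariants of $Q_1\otimes M_2(k)$ are precisely the four vectors $y_j=x_j q_j$, matching the dimension predicted by \Cref{pr.Hilbert}. Next, I would verify the six quadratic identities \Cref{Q-tilde-relns} using the quaternionic relations $q_0q_i=q_iq_0=q_i$, $q_jq_k=q_i$, $q_kq_j=-q_i$ for cyclic $(i,j,k)$:
\begin{equation*}
y_0y_i-y_iy_0 = (x_0x_i-x_ix_0)\otimes q_i,\qquad y_jy_k-y_ky_j = (x_jx_k+x_kx_j)\otimes q_i,
\end{equation*}
so the first Sklyanin relation in \Cref{Q-relns} produces the first relation in \Cref{Q-tilde-relns}; analogously,
\begin{equation*}
y_0y_i+y_iy_0 = (x_0x_i+x_ix_0)\otimes q_i,\qquad y_jy_k+y_ky_j = (x_jx_k-x_kx_j)\otimes q_i,
\end{equation*}
so the second Sklyanin relation delivers the second identity in \Cref{Q-tilde-relns}.

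To conclude that these six relations suffice, I invoke \Cref{cor.m-homog}: since $Q\cong TV/(R)$ with $\dim V=4$ and $\dim R=6$ (and $Q$ is Koszul, so $R\subseteq V^{\otimes 2}$), the same numerical data hold for $\wtQ$. It remains to observe that the six relations produced above are linearly independent in $\wtQ_1^{\otimes 2}$; this is transparent because, for each cyclic permutation $(i,j,k)$ of $(1,2,3)$, the two relations live inside the $4$-dimensional subspace spanned by $\{y_0y_i,y_iy_0,y_jy_k,y_ky_j\}$, and these three subspaces (as $i$ varies) are pairwise disjoint. Hence they span the full $6$-dimensional space of quadratic relations, giving the claimed presentation.

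For the anti-automorphism, I would simply verify that the six relations in \Cref{Q-tilde-relns} are preserved under the assignment $y_j\mapsto -y_j$ combined with reversal of multiplication. The anti-commutators in the second relation are manifestly invariant, while the first relation picks up a global sign on both sides (since both $y_0y_i-y_iy_0$ and $y_jy_k-y_ky_j$ become their negatives). Thus $y_j\mapsto -y_j$ extends from the free algebra on the $y_j$ to an anti-automorphism of the quotient $\wtQ$.

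The main obstacle is essentially cosmetic: keeping the sign conventions in the quaternion multiplication table consistent with the cyclic ordering in the Sklyanin relations. Once the computation $q_kq_j=-q_jq_k$ is fixed, every other step is bookkeeping.
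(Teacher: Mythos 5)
Your proof is correct and follows essentially the same route as the paper: use the Koszulity and Hilbert series of $\wtQ$ (via \Cref{pr.tildeQ_regular} or equivalently \Cref{cor.m-homog}) to know a priori there are $4$ generators and $6$ quadratic relations, then produce them by rewriting the Sklyanin relations multiplied by the appropriate $q_i$. You are a bit more careful than the paper in two respects: you explicitly identify $\wtQ_1=(Q_1\otimes M_2(k))^\G$ as the span of the $y_j$ by matching characters of $\G$, and you verify the linear independence of the six relations in $V^{\otimes 2}$; both are harmless fill-ins the paper leaves implicit. You also supply the verification that $y_j\mapsto -y_j$ extends to an anti-automorphism, which the paper's proof actually omits entirely, so that part is a genuine addition.
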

\begin{proof}
Because $\widetilde{Q}$ is Koszul with Hilbert series $(1-t)^{-4}$, it is generated by 4 degree-one elements subject to 6 degree-two
relations. Since $y_0,y_1,y_2,y_3$ are $\G$-invariant elements of degree one, they generate $\widetilde{Q}$. It follows from the quadratic
relations for $Q_4$ that  $(x_0x_i-x_ix_0)q_i = \a_i(x_jx_k+x_kx_j)q_jq_k $ and $(x_0x_i+x_ix_0)q_i =  (x_jx_k-x_kx_j)q_jq_k$. Rewriting 
these relations in terms of $y_0,y_1,y_2,y_3$ gives the relations in (\ref{Q-tilde-relns}).
\end{proof} 

Since $\wtQ$ is a regular noetherian algebra of global dimension and GK-dimension 4, it is a domain by \cite[Thm.3.9]{ATV2}.

\begin{proposition}
\label{prop.Q-tilde-tilde}
There is an action of $\G$ as graded $k$-algebra automorphisms of $\wtQ$ given by
\begin{table}[htdp]
\begin{center}
\begin{tabular}{|c|c|c|c|c|c|c|c|}
\hline
 & $y_0$ &  $y_1$ &  $y_2$ &  $y_3$
\\
\hline
$\c_1 $  & $y_0$ &  $y_1$ &  $-y_2$ &  $-y_3$
\\
\hline
$\c_2 $  & $y_0$ &  $-y_1$ &  $y_2$ &  $-y_3$
\\
\hline
$\c_3$  & $y_0$ &  $-y_1$ &  $-y_2$ &  $y_3$
\\
\hline
\end{tabular}
\end{center}
\vskip .12in
\caption{The action of $\G$ as automorphisms of $\wtQ$}
\label{autom}
\end{table}
\newline
\noindent
Using the conjugation action of  $\G$ as automorphisms of $M_2(k)$, this gives an action of $\G$ as automorphisms of $\wtQ\otimes M_2(k)$.
The invariant subalgebra $(\wtQ \otimes M_2(k))^\G$ is generated by 
 $$
z_0:=y_0, \quad z_1:=y_1q_1, \quad z_2:=y_2q_2, \quad z_3:=y_3q_3
$$
and is isomorphic to $Q$ via  $z_j\mapsto x_j$.
\end{proposition}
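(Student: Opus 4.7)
The plan is to verify three things in order: (i) the formulas in Table~\ref{autom} extend to a graded algebra action of $\G$ on $\wtQ$; (ii) the elements $z_j$ generate the fixed subalgebra $(\wtQ\otimes M_2(k))^\G$; and (iii) the $z_j$ satisfy exactly the defining relations (\ref{Q-relns}) of $Q$, so that $x_j\mapsto z_j$ is the claimed isomorphism.

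For (i) there are two natural arguments. Conceptually: because $\G$ is abelian, the ``first-factor'' action $\gamma\cdot(x\otimes m):=\gamma(x)\otimes m$ on $Q\otimes M_2(k)$ commutes with the diagonal action whose fixed subalgebra is $\wtQ$, so it restricts to an action on $\wtQ$; evaluating on the generators $y_j=x_jq_j$ reproduces Table~\ref{autom}. Computationally: each side of each relation in (\ref{Q-tilde-relns}) is $\G$-homogeneous of weight $\chi_i$, using the elementary identity $\chi_i=\chi_j\chi_k$ for cyclic $(i,j,k)$ in the character group of $\G$, so the relations are preserved.

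For (ii), first observe that each $z_j=y_j\otimes q_j$ is $\G$-fixed because both tensorands carry the character $\chi_j$ and $\chi_j^2=1$ since $\G=(\bZ/2)^2$. To see the $z_j$ exhaust the fixed subalgebra, I would apply the machinery of \Cref{se.torsors,se.homological} a second time: $\wtQ$ is itself a connected graded $H$-comodule algebra by (i), and $M_2(k)$ remains an $H$-torsor by \Cref{lem.Q.torsor}, so \Cref{pr.Hilbert,cor.Koszul,pr.noeth,thm.AS-reg} imply that $(\wtQ\otimes M_2(k))^\G$ is a noetherian, Koszul, AS-regular algebra of Hilbert series $(1-t)^{-4}$; in particular it is generated in degree $1$. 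A weight-space count in $\wtQ_1\otimes M_2(k)$, whose $16$ basis vectors $y_j\otimes q_k$ carry weight $\chi_j\chi_k$, identifies the trivial weight space as exactly $k\{z_0,z_1,z_2,z_3\}$.

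For (iii), I would expand $z_0z_i\pm z_iz_0$ and $z_jz_k\pm z_kz_j$ using the relations (\ref{Q-tilde-relns}) together with the quaternionic identities $q_0=1$, $q_jq_k=q_i$, and $q_jq_k+q_kq_j=0$ for cyclic $(i,j,k)$. A short calculation then yields
\begin{align*}
  z_0z_i-z_iz_0 &= \alpha_i(z_jz_k+z_kz_j),\\
  z_0z_i+z_iz_0 &= z_jz_k-z_kz_j,
\end{align*}
matching (\ref{Q-relns}) exactly. Hence $x_j\mapsto z_j$ extends to a surjective graded map $Q\twoheadrightarrow(\wtQ\otimes M_2(k))^\G$, which must be an isomorphism because both algebras have Hilbert series $(1-t)^{-4}$. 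The only mildly delicate point is keeping track of the sign swaps in the quaternion products, which cause the symmetric and antisymmetric parts of the relations to trade places between those of $\wtQ$ and those of $Q$; once that bookkeeping is set up, no further difficulty arises.
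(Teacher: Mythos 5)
Your proposal is correct and follows essentially the same route as the paper: verify the action respects the relations \Cref{Q-tilde-relns}, invoke the Koszulity and Hilbert series of $(\wtQ\otimes M_2(k))^\G$ to get generation in degree one, identify the degree-one invariants as the span of $z_0,\dots,z_3$, and recover the relations \Cref{Q-relns} from \Cref{Q-tilde-relns} via the quaternion identities. Your conceptual justification of step (i) (the first-factor action commutes with the diagonal action because $\G$ is abelian) is a tidy alternative to the paper's direct calculation, but the overall structure of the argument is the same.
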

\begin{proof}
A calculation shows that the action of $\G$ respects the relations (\ref{Q-tilde-relns}). 
Because $(\wtQ \otimes M_2(k))^\G$ is Koszul with Hilbert series $(1-t)^{-4}$, 
it is generated by 4 degree-one elements subject to 6 degree-two relations. The elements $z_0,z_1,z_2,z_3$ are 
$\G$-invariant so generate $(\wtQ \otimes M_2(k))^\G$. It follows from the quadratic
relations for $\wtQ$ that  $(y_0y_i-y_iy_0)q_i = \a_i(y_jy_k-y_ky_j)q_jq_k $ and $(y_0y_i+y_iy_0)q_i =  (y_jy_k+y_ky_j)q_jq_k$. Rewriting 
these relations in terms of $z_0,z_1,z_2,z_3$ gives the relations 
$z_0z_i-z_iz_0 = \a_i(z_jz_k+z_kz_j)$ and $z_0z_i+z_iz_0=  z_jz_k-z_kz_j$. 
\end{proof}

\subsection{Central elements in $\wtQ$}
In  \cite[Thm.2]{Skl82}, Sklyanin proved that 
\begin{equation}
\label{central.elts}
\Omega:=-x_0^2+x_1^2+x_2^2+x_3^2
\qquad \hbox{and} \qquad
\Omega':= x_1^2+ \bigg(\frac{1+\a_1}{1-\a_2}\bigg)x_2^2+\bigg(\frac{1-\a_1}{1+\a_3}\bigg)x_3^2
\end{equation}
belong to the center of $Q$ when $k=\CC$. 
By the Principle of Permanence of Algebraic Identities, $\Omega$ and $\Omega'$ are central for all $k$.

The elements $x_0^2,x_1^2,x_2^2,x_3^2$ are fixed by the action of $\G$. Since $y_j^2=-x_j^2$ for $j=1,2,3$, the elements 
$$
\Theta:=y_0^2+y_1^2+y_2^2+y_3^2 
\qquad \hbox{and} \qquad
\Theta':= y_1^2+ \bigg(\frac{1+\a_1}{1-\a_2}\bigg)y_2^2+\bigg(\frac{1-\a_1}{1+\a_3}\bigg)y_3^2
$$
belong to the center of $\wtQ$.  We note that $\Theta=-\Omega$ and $\Theta'=-\Omega'$.

\section{$\G$ acts on $E$ as translation by the 2-torsion subgroup}
\label{se.translation}

\subsection{}
If we use $x_0,x_1,x_2,x_3$ as an ordered set of coordinate functions on $Q_1^*$, then the action of $\G$ on $Q_1^*$ induced by its action on $Q_1$ is given by the formulas 
\begin{equation}
\label{Gamma-action}
\begin{cases}
\c_1(\d_0,\d_1,\d_2,\d_3) \;=\; (\d_0,\d_1,-\d_2,-\d_3) & \\
\c_2(\d_0,\d_1,\d_2,\d_3)  \;=\; (\d_0,-\d_1,\d_2,-\d_3) & \\
\c_3(\d_0,\d_1,\d_2,\d_3)  \;=\; (\d_0,-\d_1,-\d_2,\d_3). &
\end{cases}
\end{equation}

We will write $\PP^3$ for $\PP(Q^*_1)$, the projective space of lines in $Q_1^*$. The action of $\G$ on $Q_1^*$ induces an 
action of $\G$ as automorphisms of $\PP^3$ given by the formulas in \Cref{Gamma-action}.

The relations for $Q$, which are elements of $Q_1 \otimes Q_1$, are bi-homogeneous forms on $\PP^3 \times \PP^3$. 
We write $R=\ker(Q_1 \otimes Q_1 \stackrel{\rm{mult}}{\longrightarrow} Q_2)$
 and define the subscheme
$$
V \; :=\; \{(\bfu,\bfv) \; | \; r(\bfu,\bfv)=0 \; \hbox{for all $r \in R$}\} \; \subseteq \; \PP^3 \times \PP^3.
$$
Let $\pr_i:\PP^3 \times \PP^3\to \PP^3$, $i=1,2$, be the projections of $V$ onto the left and right copies of $\PP^3$.

\begin{proposition}
\cite[Props. 2.4, 2.5]{SS92}
\label{prop.SS}
With the above notation,  
$$
\pr_1(V)\;=\; \pr_2(V) \; = \; E\,  \cup\, \big\{(1,0,0,0),\,(1,0,0,0),\,(1,0,0,0),\,(1,0,0,0)\big\}
$$
where $E$ is the intersection of the quadrics
\begin{align*}
 x_0^2+x_1^2+x_2^2+x_3^2 &\;=\; 0,
\\
 (1-\c)x_1^2+(1+\a\c)x_2^2+(1+\a)x_3^2 &\;=\;  0.
\end{align*}
Furthermore, $E$ is an elliptic curve.
\end{proposition}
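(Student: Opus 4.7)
The plan is to reduce $\pr_1(V)$ to a determinantal variety. Viewing each of the six generators of $R$ as a linear form in $\mathbf{v}=(v_0,\ldots,v_3)$ whose coefficients are linear forms in $\mathbf{u}=(u_0,\ldots,u_3)$, I assemble them into a $6\times 4$ matrix $M(\mathbf{u})$ with entries in $k[u_0,u_1,u_2,u_3]_1$. Then $\mathbf{u}\in\pr_1(V)$ if and only if $\ker M(\mathbf{u})\neq 0$, equivalently $\mathrm{rank}\,M(\mathbf{u})\le 3$, equivalently all fifteen $4\times 4$ minors of $M(\mathbf{u})$ vanish at $\mathbf{u}$. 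These minors are quartic forms in $\mathbf{u}$, and the goal is to show that their common zero locus is the union of $E$ and the four coordinate vertices of $\PP^3$.

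I would first replace the six relations of \Cref{Q-relns} by the six combinations $R_i\pm R_{i+3}$ ($i=1,2,3$); these decouple the $u_0v_i,u_iv_0$ terms from the $u_jv_k,u_kv_j$ pairs and put $M(\mathbf{u})$ into a block form compatible with the $\G$-action of \Cref{Gamma-action}, so that $\G$-equivariance sharply cuts the number of independent minors one has to compute. Expanding a few such minors, I expect to recognize the two quadratic forms in the proposition statement (essentially the central quadrics $\Omega,\Omega'$ from \Cref{central.elts}) as common factors of all the minors on the complement of the vertex locus; this would yield the inclusion $\pr_1(V)\subseteq E\cup\{(1,0,0,0),(0,1,0,0),(0,0,1,0),(0,0,0,1)\}$.

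For the reverse containment I would handle the two pieces separately. For each coordinate vertex $e_j$, direct substitution shows that $\mathbf{v}=e_j$ itself lies in $\ker M(e_j)$, so the vertex belongs to $\pr_1(V)$; and none of these vertices lies on $E$, since $\Omega$ and $\Omega'$ do not vanish simultaneously at any of them. For $\mathbf{u}\in E$, I would exhibit $\mathbf{v}=\tau(\mathbf{u})$ with $M(\mathbf{u})\mathbf{v}=0$, where $\tau$ is the translation automorphism of $E$ encoded by the Sklyanin data; this is the standard point-module construction for $Q$, for which the graph of $\tau$ is precisely the scheme $V\cap(E\times E)$. The assertion about $\pr_2(V)$ then follows by symmetry, either via the anti-automorphism $x_j\mapsto -x_j$ of $Q$ or by a parallel argument with the roles of $\mathbf{u}$ and $\mathbf{v}$ exchanged.

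Finally, to see that $E$ is an elliptic curve I would inspect the pencil of quadrics spanned by the two defining forms. A discriminant computation (or a direct Jacobian check) shows that this pencil contains exactly four singular members whose vertices are the four coordinate points $(1,0,0,0),\ldots,(0,0,0,1)$, and that no member degenerates to a union of planes; hence the base locus $E$ is a smooth irreducible complete intersection of bidegree $(2,2)$ in $\PP^3$, whose arithmetic and geometric genus both equal one by adjunction. The main obstacle is the minor computation identifying the radical ideal generated by the quartic minors with the ideal of $E\cup\{$four vertices$\}$: this is routine but lengthy, and the key practical step is using the $\G$-symmetry and the $R_i\pm R_{i+3}$ basis to reduce it to a manageable piece of linear algebra.
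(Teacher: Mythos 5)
The paper offers no proof of this statement: it is quoted verbatim from Smith--Stafford \cite[Props.~2.4, 2.5]{SS92}, so there is no internal argument to compare against. Your determinantal strategy is, however, exactly the standard one, and it is the same method the paper itself uses for the analogous (harder) computation for $\wtQ$ in \Cref{thm.pt.mods.Q-tilde}: form the $6\times 4$ matrix $\sM(\bfu)$ of linear forms, observe $\pr_1(V)=\{\bfu : \operatorname{rank}\sM(\bfu)\le 3\}$, and compute the zero locus of the $4\times 4$ minors. Note also that once you know every minor vanishes on $E$, the containment $E\subseteq\pr_1(V)$ is automatic (vanishing of all minors \emph{is} the condition $\ker \sM(\bfu)\ne 0$), so your separate step exhibiting $\bfv=\tau(\bfu)$ is redundant --- and would be circular if you took the existence of the translation $\tau$ on this particular quartic model of $E$ as given, since that is part of what \cite{SS92} establishes. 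The ellipticity argument via the pencil of quadrics (four distinct rank-3 members, guaranteed by $\{\a,\b,\c\}\cap\{0,\pm1\}=\varnothing$ and $\a+\b+\c+\a\b\c=0$; then adjunction for a smooth $(2,2)$ complete intersection) is correct.

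The one substantive inaccuracy is your expectation that the two quadrics in the statement appear as \emph{common} factors of all fifteen minors off the vertex locus. That is not how the factorization goes: as the explicit computation for $\wtQ$ in \Cref{thm.pt.mods.Q-tilde} illustrates, different minors are divisible by \emph{different} members of the pencil of quadrics through $E$ (notably the four singular ones of \Cref{lem.4.quadrics}, whose vertices are the coordinate points), with cofactors that are quadrics \emph{not} containing $E$. The argument that $Z(\text{minors})\subseteq E\cup\{e_0,\dots,e_3\}$ then proceeds by a case analysis: since the relevant pencil quadrics span the same space as $x_0^2,\dots,x_3^2$, at most three of them vanish at any $\bfu$, and if $\bfu$ avoids one of them the corresponding cofactors must all vanish at $\bfu$, which pins $\bfu$ down. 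Your plan would still go through, but as written the key computational claim is both unverified and stated in a form that the actual minors do not satisfy; you should replace ``two common quadratic factors'' with the pencil-plus-case-analysis structure.
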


The reader will notice that we use the same notation for elements in $Q$ as for elements in the symmetric algebra $S(Q_1)$.
Thus, in \Cref{prop.SS},  $x_0^2+x_1^2+x_2^2+x_3^2$ is an element in $S(Q_1)$, i.e., a degree-two form on $\PP^3$, whereas in \Cref{central.elts}, $-x_0^2+x_1^2+x_2^2+x_3^2$ denotes an element in $Q$.

It is clear that $\G$ fixes the points in 
$\{(1,0,0,0),\,(1,0,0,0),\,(1,0,0,0),\,(1,0,0,0)\}$. It is also clear that $E$ is stable under the action of $\G$ (indeed, that must be so because $R$
is $\G$-stable). 
The map $\G \to \Aut(E)$ is injective so we will identity $\G$ with a subgroup of $\Aut(E)$. Once we have fixed a group law $\oplus$ on $E$
we will identify $E$ with the subgroup of $\Aut(E)$ consisting of the translation automorphisms, i.e., $E \to \Aut(E)$ sends a point $\bfv \in E$
to the automorphism $\bfu \mapsto \bfu\oplus \bfv$.

Once we have defined the group $(E,\oplus)$ we will write $o$ for its identity element and
$$
E[2]\; := \; \{\bfv \in E \; | \; \bfv \oplus \bfv=o\}.
$$
The next main result,   \Cref{thm.gp.law}, 
shows we can define $\oplus$ such that $\G=E[2]$ as subgroups of $\Aut(E)$. We will then identify $\G$ with $E[2]$. 
In anticipation of that result we define an involution $\ominus:E \to E$ and a distinguished point $o \in E$ by
\begin{equation}
\label{defn.minus}
\ominus (w,x,y,z) \; :=  \; (-w,x,y,z)
\end{equation}
and
$$
o\; := \; \big(0, \sqrt{\nu -1}, \, \sqrt{1-\mu},\, \sqrt{\mu-\nu}\,\big) 
$$
where 
$$
\mu:= \frac{1-\c}{1+\a} \qquad \hbox{and} \qquad \nu:=\frac{1+\c}{1-\b}
$$
and $\sqrt{\nu -1}$, $\sqrt{1-\mu}$, and $\sqrt{\mu-\nu}$ are some fixed square roots.\footnote{The choice of square root doesn't matter---as one takes the different square roots one obtains 4 different candidates for $o$. But, as we will see, with the choice of $\oplus$ 
we eventually make, those 4 points are the points in $E[2]$. The situation is analogous to that of a smooth plane cubic: there are nine inflection points and if one chooses the group law so that one of those points is the identity, then the inflection points are the points in $E[3]$, the 3-torsion 
subgroup.} The restrictions on the values of $\a$, $\b$, $\c$, imply that $|\{1,\mu,\nu\}|=3$. We use this fact in the proof of \Cref{lem.4.quadrics}.

\begin{lemma}
$E \cap \{x_0=0\}   \;=\; \Big\{p\in E \; \Big\vert \; p=\ominus p\Big\}  \; = \; \Big\{\big(0,\pm \sqrt{\nu -1}, \pm \sqrt{1-\mu}, \pm \sqrt{\mu-\nu}\,\big) \Big\}.
$
 \end{lemma}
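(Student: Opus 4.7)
My plan is to establish the two equalities separately. The first, $E\cap\{x_0=0\}=\{p\in E\mid p=\ominus p\}$, is just projective linear algebra. Writing a point of $E$ as $p=[w:x:y:z]$, the identity $[w:x:y:z]=[-w:x:y:z]$ in $\PP^3$ requires a scalar $\l$ with $w=-\l w$, $x=\l x$, $y=\l y$, $z=\l z$. Either $\l=1$, forcing $w=0$ (since $\cha(k)\neq 2$), or $\l\neq 1$, forcing $x=y=z=0$; the latter case yields the point $[1:0:0:0]$, which fails the equation $x_0^2+x_1^2+x_2^2+x_3^2=0$ defining $E$ and so is excluded. Thus $p=\ominus p$ is equivalent to $x_0(p)=0$.

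For the second equality I would substitute $x_0=0$ into the two quadrics from \Cref{prop.SS}, reducing to the system
\begin{equation*}
x_1^2+x_2^2+x_3^2 \;=\; 0, \qquad (1-\c)x_1^2+(1+\a\c)x_2^2+(1+\a)x_3^2 \;=\; 0
\end{equation*}
in $\PP^2$. Because $\a\neq 0$ and $\c\neq 0$, the two defining forms are linearly independent, so this system determines a unique ratio $[x_1^2:x_2^2:x_3^2]$. Eliminating $x_3^2$ between the two equations (say by subtracting $(1+\a)$ times the first from the second) gives $x_1^2/x_2^2=\a(\c-1)/(\a+\c)$ and similarly for the other ratio.

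The final step is to verify that the candidate point $\bigl(0,\sqrt{\nu-1},\sqrt{1-\mu},\sqrt{\mu-\nu}\bigr)$ realises this ratio. Direct computation with the formulas $\mu=(1-\c)/(1+\a)$ and $\nu=(1+\c)/(1-\b)$ gives
\begin{equation*}
\nu-1 \;=\;\frac{\b+\c}{1-\b}, \qquad 1-\mu \;=\;\frac{\a+\c}{1+\a}, \qquad \mu-\nu \;=\; -\bigl((\nu-1)+(1-\mu)\bigr),
\end{equation*}
so the first quadric vanishes automatically, and the equality of ratios in the second reduces, after clearing denominators, to the defining identity $\a+\b+\c+\a\b\c=0$; this is the single substantive calculation of the proof and will be the main step to carry out carefully. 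The hypothesis $|\{1,\mu,\nu\}|=3$ guarantees that each of $\nu-1$, $1-\mu$, and $\mu-\nu$ is nonzero, so the three sign choices are all meaningful; the eight sign patterns collapse to four projective points by the overall sign ambiguity $[\pm u:\pm v:\pm w]\leftrightarrow[\mp u:\mp v:\mp w]$, giving the four points listed in the statement.
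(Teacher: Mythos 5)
Your proof is correct and takes essentially the same route as the paper: reduce the intersection $E\cap\{x_0=0\}$ to the intersection of two conics in the plane $\{x_0=0\}\cong\PP^2$ and check that the listed points satisfy both forms. The paper's version is terse and works with the reduced second quadric $\mu x_1^2+\nu x_2^2+x_3^2=0$ (the original form divided by $1+\a$ after using $\a+\b+\c+\a\b\c=0$), so the verification of the candidate point collapses to the two identities $(\nu-1)+(1-\mu)+(\mu-\nu)=0$ and $\mu(\nu-1)+\nu(1-\mu)+(\mu-\nu)=0$, both trivially true. You instead keep the original quadric, eliminate $x_3^2$, and unwind $\mu$ and $\nu$ back to $\a,\b,\c$, which again reduces to the constraint $\a+\b+\c+\a\b\c=0$; that calculation checks out. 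You also spell out the first equality ($p=\ominus p\iff x_0(p)=0$), which the paper treats as immediate; your argument is correct. One small nitpick: the justification you give for linear independence of the two conic forms, namely $\a\neq 0$ and $\c\neq 0$, is not by itself the right condition — proportionality of $(1,1,1)$ and $(1-\c,\,1+\a\c,\,1+\a)$ would require $1-\c=1+\a\c=1+\a$, which is excluded by $\{\a,\b,\c\}\cap\{0,\pm 1\}=\varnothing$ (in particular $\c\neq 0$ and $\a\neq -1$ suffice); the conclusion is unaffected.
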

\begin{proof}
It follows from the definition of $\ominus$ that $E \cap \{x_0=0\} = \{p\in E \; | \; p=\ominus p\}$.
Computing $E \cap \{x_0=0\}$ reduces to computing the intersection of the plane conics $x_1^2+x_2^2+x_3^2=0$ and 
$\mu x_1^2+\nu x_2^2 +x_3^2=0$. The conics meet at four points, namely 
$
\big(\pm \sqrt{\nu -1}, \, \pm \sqrt{1-\mu}, \, \pm \sqrt{\mu-\nu}\big) \in \PP^2.
$
The result follows.
\end{proof}

\begin{lemma}
\label{lem.C}
There is a degree-two  morphism  $\pi:E \to \PP^1$ such that $\pi(p)=\pi(\ominus p)$ for all $p \in E$, i.e., the fibers of $\pi$ are the sets 
$\{p,\ominus p\}$, $p \in E$. In particular, the ramification locus of $\pi$ is $\{p \in E \; | \; p=\ominus p\}= \{o,\xi_1,\xi_2,\xi_3\}$ where
\begin{align*}
o  & \; := \; \big(0, \sqrt{\nu -1}, \, \sqrt{1-\mu},\, \sqrt{\mu-\nu}\,\big) 
\\
\xi_1 \; := \; \c_1(o) & \;=\; \big(0,\, \sqrt{\nu -1}, \, -\sqrt{1-\mu}, \, \sqrt{\mu-\nu}\,\big) 
\\
\xi_2  \; := \; \c_2(o)& \;=\; \big(0,\, - \sqrt{\nu -1}, \,\sqrt{1-\mu}, \,\sqrt{\mu-\nu}\,\big) 
\\
\xi_3  \; := \; \c_3(o) & \; = \;  \big(0,\, - \sqrt{\nu -1}, \,- \sqrt{1-\mu}, \, \sqrt{\mu-\nu}\,\big).
\end{align*}
\end{lemma}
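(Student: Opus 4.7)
The plan is to construct $\pi$ explicitly as a projection from the point $(1:0:0:0) \in \PP^3$ onto the hyperplane $\{x_0 = 0\}$, and exploit the fact that the image of $E$ lies on a smooth conic. First, note that $(1:0:0:0) \notin E$ because the first defining quadric $x_0^2+x_1^2+x_2^2+x_3^2$ evaluates to $1$ there, so projection $[x_0:x_1:x_2:x_3] \mapsto [x_1:x_2:x_3]$ gives a well-defined morphism $\rho:E\to\PP^2$. Because the second defining quadric of $E$ involves only $x_1,x_2,x_3$, the image of $\rho$ is contained in the conic
\[
C \;:=\; \{(1-\c)x_1^2 + (1+\a\c)x_2^2 + (1+\a)x_3^2 = 0\} \;\subset\; \PP^2.
\]
The hypotheses $\a,\b,\c \notin\{0,\pm1\}$ together with the Sklyanin identity $\a+\b+\c+\a\b\c = 0$ force all three coefficients to be nonzero (the non-obvious case $1+\a\c\ne 0$ reduces, via the Sklyanin identity, to $\a^2 \ne 1$), so $C$ is a smooth plane conic and hence isomorphic to $\PP^1$.

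Next, let $\pi:E\to C\cong\PP^1$ be the corestriction of $\rho$. The relation $\pi(p)=\pi(\ominus p)$ is immediate from \Cref{defn.minus}, since $\ominus$ negates only the $x_0$-coordinate. To pin down the degree and fibers, note that for any $[a:b:c]\in C$,
\[
\pi^{-1}([a:b:c]) \;=\; \big\{(w,a,b,c)\in E \; \big\vert \; w^2 = -(a^2+b^2+c^2) \big\}.
\]
For a generic point of $C$ the right-hand side is nonzero, yielding two distinct points $p,\ominus p$; hence $\deg(\pi)=2$ and the general fiber is the $\ominus$-orbit $\{p,\ominus p\}$. The map ramifies precisely at points where the two roots collapse, i.e.\ at $w=0$, which is exactly $E\cap\{x_0=0\}$.

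By the preceding lemma, $E\cap\{x_0=0\}$ consists of the four projective points $(0,\pm\sqrt{\nu-1},\pm\sqrt{1-\mu},\pm\sqrt{\mu-\nu})$; the assumption $|\{1,\mu,\nu\}|=3$ (which follows from the conditions on $\a,\b,\c$) ensures these are four genuinely distinct points of $\PP^3$. A direct check from \Cref{Gamma-action}, using the projective identification $(x_0:\cdots:x_3) = (-x_0:\cdots:-x_3)$, verifies that $o, \c_1(o), \c_2(o), \c_3(o)$ realize all four sign-classes and agree with the listed formulas for $o,\xi_1,\xi_2,\xi_3$.

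The main obstacle is spotting the correct construction: one must notice that the second quadric defining $E$ is independent of $x_0$, so that the projection from $(1:0:0:0)$ collapses $E$ onto a smooth rational conic. Once this is in place, every remaining step is a straightforward bookkeeping of coordinates, with the smoothness verification for $C$ being the only nontrivial algebraic check.
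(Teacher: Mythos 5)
Your proof is correct and follows essentially the same approach as the paper: the paper likewise defines $\pi$ as the coordinate projection $(w,x,y,z)\mapsto(x,y,z)$ onto the smooth conic $C$ (written there in the proportional form $\mu x_1^2+\nu x_2^2+x_3^2=0$), and then declares the rest ``obvious.'' Your version simply supplies the bookkeeping the paper omits: the base point $(1:0:0:0)\notin E$, the smoothness check for $C$ via the Sklyanin identity, the explicit fiber description showing $\deg\pi=2$ with fibers $\{p,\ominus p\}$, and the identification of the ramification locus with $E\cap\{x_0=0\}$ from the preceding lemma.
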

\begin{proof}
The conic $C$, given by $\mu x_1^2+\nu x_2^2 +x_3^2=0$, is smooth so isomorphic to $\PP^1$. 
Define $\pi:E \to C$ by $\pi(w,x,y,z)=(x,y,z)$. The result is now obvious.
\end{proof}

\begin{proposition} 
\label{prop.E}
Let $E' \subseteq \PP^2$ be the curve $y^2z=x(x-z)(x-\l z)$ where
$$
\l  \; :=\; \frac{\nu- \mu\nu}{\nu-\mu} \; = \; \frac{1}{\c}\Bigg(\frac{1+\c}{1+\a}\Bigg)\Bigg(\frac{\a+\c}{1-\b}\Bigg), 
$$
and consider the group  $(E' ,\oplus)$ in which $(0,1,0)$ is the identity and three points of $E'$ sum to zero if and only if they are collinear.
\begin{enumerate}
\item{}
There is an isomorphism of varieties $g:E \to E'$ such that 
$$
g(o)=\infty=(0,1,0), \quad g(\xi_1)=(0,0,1), \quad g(\xi_2)=(1,0,1), \quad g(\xi_3)=(\l,0,1).
$$
\item{}
If $(E,\oplus)$ is the unique group law such that $g:(E,\oplus) \to (E',\oplus)$ is an isomorphism of groups, then
$E[2]=\{p \; | \; p=\ominus p\}=\{o,\xi_1,\xi_2,\xi_3\}$, and
\item{}
$p \oplus(\ominus p)=o$ for all $p \in E$, and
\item{}
4 points on $E$ are coplanar if and only if their sum is zero.
\end{enumerate}
\end{proposition}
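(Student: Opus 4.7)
The plan is to combine the double-cover structure furnished by \Cref{lem.C} with the classical theory of elliptic curves in Legendre form. For part (1), I would first fix an isomorphism $\pi_0\colon C\xrightarrow{\sim}\PP^1$ so that the composition $\pi_0\circ\pi\colon E\to\PP^1$ sends $o,\xi_1,\xi_2$ to $\infty,0,1$ respectively, and define $\lambda:=\pi_0(\pi(\xi_3))$. By Riemann-Hurwitz, $E$ has genus $1$, and any double cover of $\PP^1$ branched at $\{\infty,0,1,\lambda\}$ is isomorphic to the Legendre curve $E'$ above. An explicit isomorphism $g\colon E\to E'$ will be built by taking $x/z$ to be $\pi_0\circ\pi$ and $y/z$ to be a suitable rational function on $E$ whose square equals $\tfrac{x}{z}\bigl(\tfrac{x}{z}-1\bigr)\bigl(\tfrac{x}{z}-\lambda\bigr)$; such a function can be constructed from $x_0$ using the defining equations of $E$. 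The bulk of the computation then lies in verifying that $\lambda$ equals $(\nu-\mu\nu)/(\nu-\mu)$, which amounts to the cross-ratio of the four points $\pi(o),\pi(\xi_1),\pi(\xi_2),\pi(\xi_3)$ on the conic $C$; this cross-ratio can be evaluated directly from the explicit coordinates in \Cref{lem.C}, and the second form of $\lambda$ in the statement is a purely algebraic simplification using $\alpha+\beta+\gamma+\alpha\beta\gamma=0$.

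For parts (2) and (3), the key observation is that the involution $\ominus\colon(w,x,y,z)\mapsto(-w,x,y,z)$ is exactly the hyperelliptic involution of the double cover $\pi$. Indeed, its fixed locus is $E\cap\{x_0=0\}=\{o,\xi_1,\xi_2,\xi_3\}$ by \Cref{lem.C}, and over a generic point of $C$ it swaps the two preimages $(\pm w,x,y,z)$. Under the isomorphism $g$, this hyperelliptic involution corresponds to $(x,y,z)\mapsto(x,-y,z)$ on $E'$, which with $\infty$ as identity is precisely the group inversion on $(E',\oplus)$; this gives (3). Part (2) then follows because the fixed points of inversion on any elliptic curve are exactly the $2$-torsion subgroup, and we have already identified them with $\{o,\xi_1,\xi_2,\xi_3\}$.

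For (4), I would use the classical fact that an elliptic curve $E\subset\PP^3$ arising as a smooth complete intersection of two quadrics is projectively normal and embedded by the complete linear system of a degree-$4$ divisor. Consequently, four points $p_1,\dots,p_4\in E$ are coplanar if and only if the divisor $[p_1]+\cdots+[p_4]$ is linearly equivalent to a (hence any) hyperplane section. Taking $H=\{x_0=0\}$, the preceding lemma identifies $H\cdot E$ with $[o]+[\xi_1]+[\xi_2]+[\xi_3]$; since $\{o,\xi_1,\xi_2,\xi_3\}=E[2]\cong(\ZZ/2)^2$ by (2) and the sum of all elements of this group vanishes, this divisor sums to $o$ in the group law. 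Hence coplanarity of $p_1,\dots,p_4$ is equivalent to $p_1\oplus p_2\oplus p_3\oplus p_4=o$.

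The main technical obstacle is the explicit verification of the formula for $\lambda$: while conceptually just a cross-ratio computation on a conic, translating between the quartic model of $E$, the parametrization of $C$, and the parameters $\alpha,\beta,\gamma$ will require careful bookkeeping with the square roots defining $o,\xi_1,\xi_2,\xi_3$ and with the constraint $\alpha+\beta+\gamma+\alpha\beta\gamma=0$.
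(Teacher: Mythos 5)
Your proposal is correct and follows essentially the same route as the paper: realize $E$ as a double cover of $\PP^1$ via $\pi$ and the conic $C$, match the four branch points (equivalently their cross-ratio $\lambda$) with those of the Legendre curve $E'$ to obtain $g$, identify $\ominus$ with the deck/hyperelliptic involution, i.e.\ with group inversion on $E'$, and prove (4) by observing that coplanarity is linear equivalence to the hyperplane section $\{x_0=0\}\cdot E=(o)+(\xi_1)+(\xi_2)+(\xi_3)$, whose group-sum is $o$. The only (immaterial) differences are that you normalize three branch points to $\infty,0,1$ and read off $\lambda$ as the fourth rather than computing all four branch points and their cross-ratios explicitly, and that you deduce (2) from (3) instead of reading $E[2]$ off from the images $g(\xi_i)$.
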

\begin{proof} 
(1)
Let $\pi:E \to C=\{\mu x_1^2+\nu x_2^2 +x_3^2=0\}$ be the morphism $\pi(x_0,x_1,x_2,x_3) =(x_1,x_2,x_3)$ in \Cref{lem.C} and $f:C \to \PP^1$ the isomorphism  
$$
f(x_1,x_2,x_3) = (\sqrt{-\nu} x_2 + \sqrt{\mu}x_1, \, x_3) =  (x_3,\, \sqrt{-\nu} x_2 - \sqrt{\mu}x_1)
$$ 
 with inverse 
$$
f^{-1}(s,t)= \Big(\hbox{$\frac{1}{\sqrt{\mu}}$}(s^2-t^2), \hbox{$\frac{1}{\sqrt{-\nu}}$}(s^2+t^2),2st\Big).
$$
Let $h=f\circ \pi:E \to \PP^1$. The ramification locus of $\pi$, and hence of $h$, is obviously $\{p \in E \; | \; p=\ominus p\}$. 
Let $E'$ be the plane cubic $y^2z=x(x-z)(x-\l z)$ and $h':E' \to \PP^1$ the morphism $h'(x,y,z) = (x,z)$.

Consider the following diagram:
\begin{equation}
  \begin{tikzpicture}[auto,baseline=(current  bounding  box.center)]
    \path[anchor=base] (0,0) node (E) {$E$} +(2,.5) node (E') {$E'$} +(2,-.5) node (C) {$C$} +(4,0) node (P1) {$\PP^1$};
         \draw[dashed,->] (E) to[bend left=10] node[auto,pos=.5] {$\scriptstyle g$} (E');
         \draw[->] (E) to[bend right=10] node[auto,pos=.5,swap] {$\scriptstyle \pi$} (C);
         \draw[->] (E') to[bend left=10] node[auto,pos=.5] {$\scriptstyle h'$} (P1);
         \draw[->] (C) to[bend right=10] node[auto,pos=.5,swap] {$\scriptstyle f$} (P1);
  \end{tikzpicture}
\end{equation}
The following result is implicit in \cite[Ch.4, \S4]{Hart}: If $E$ and $E'$ are elliptic curves and $h:E \to \PP^1$ and $h':E' \to \PP^1$
are degree 2 morphisms having the same branch points, then there is an isomorphism of varieties $g:E \to E'$ such that $h'g=h$.

The four branch points for $h$ are
 $$
 \Big(\pm\sqrt{\mu\nu-\nu} \pm \sqrt{\mu\nu-\mu}, \, \sqrt{\mu -\nu}\Big)= 
 \Big(\sqrt{\mu -\nu}, \, \pm\sqrt{\mu\nu-\nu} \mp \sqrt{\mu\nu-\mu}\Big).
 $$
 The cross-ratios of these four points are 
 $\big\{\l,\frac{1}{\l},  1-\l, \frac{1}{1-\l},\frac{\l}{\l-1},\frac{\l-1}{\l}\big\}$ where 
$$
\l  \; :=\; \frac{\nu- \mu\nu}{\nu-\mu} \; = \; \frac{1}{\c}\Bigg(\frac{1+\c}{1+\a}\Bigg)\Bigg(\frac{\a+\c}{1-\b}\Bigg).
$$

 The four branch points for $h':E' \to \PP^1$ have the same cross-ratios so $E \cong E'$. In particular, 
 there is an isomorphism of varieties $g:E \to E'$ such that 
$$
g(o)=\infty=(0,1,0), \quad g(\xi_1)=(0,0,1), \quad g(\xi_2)=(1,0,1), \quad g(\xi_3)=(\l,0,1).
$$

(2)
Let $\oplus$ be the unique group law on $E$ such that $g(p\oplus p')=g(p)\oplus g(p')$ for all $p,p' \in E$. Then $g$ is an isomorphism
of algebraic groups. 
Since $E'[2]=\{0,1,0),(0,0,1), (1,0,1), (\l,0,1)\}$, $E[2]=\{o,\xi_1,\xi_2,\xi_3\}=\{p \in E \; | \; p=\ominus p\}$.

(3)
Since $g:E \to E'$ is a group isomorphism it suffices to show that 
$g(p)\oplus g(\ominus p)=o$. The fibers of $h$ consist of points that sum
to zero so it suffices to show that $h(g(p)) = h(g(\ominus p))$. However, $hg=f\pi$ and $\pi(p)=\pi(\ominus p)$
so $hg(p)=hg(\ominus p)$. 

(4)
Let $\Phi:\Div(E) \to E$ be the map $\Phi\big((q_1)+\ldots+(q_m)-(r_1)\ldots -(r_n)\big):=q_1 \oplus \ldots \oplus q_m \ominus r_1 \ominus r_n$.
It is easy to show that if $D$ and $D'$ are divisors of the same degree, then $D\sim D'$ if and only if $\Phi(D)=\Phi(D')$. 
The points $\{o,\xi_1,\xi_2,\xi_3\}$ are coplanar. Four points $q_0,\ldots,q_3 \in E$ are coplanar if and only if 
$(o) +(\xi_1)+(\xi_2)+(\xi_3) \sim (q_0) +(q_1)+(q_2)+(q_3)$. Since $o\oplus \xi_1 \oplus \xi_2 \oplus \xi_3=o$, 
$q_0,\ldots,q_3 \in E$ are coplanar if and only if $q_0\oplus q_1 \oplus q_2 \oplus q_3=o$. 
\end{proof}

\begin{lemma} 
\label{lem.4.quadrics}
There are exactly four singular quadrics that contain $E$, namely
\begin{align*}
Q_0 & \; = \; \{\mu x_1^2+\nu x_2^2+x_3^2=0\},
\\
Q_1 & \; = \; \{\mu x_0^2+(\mu-\nu)x_2^2+(\mu-1)x_3^2=0\},
\\
Q_2 & \; = \; \{\nu x_0^2 +(\nu-\mu)x_1^2+(\nu-1)x_3^2=0\},
\\
Q_3 & \; = \; \{x_0^2+(1-\mu)x_1^2+(1-\nu)x_2^2=0\}.
\end{align*}
Let $p \in E$. For each $i$, the line through $\ominus p$ and $\c_i(p)$ lies on $Q_i$. 
\end{lemma}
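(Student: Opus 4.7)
The plan is to handle the two assertions separately. For the enumeration of singular quadrics, I would use that $E$ is a smooth elliptic quartic and hence a complete intersection of two quadrics in $\PP^3$, so the linear system of quadrics through $E$ is $2$-dimensional and coincides with the pencil spanned by the two defining forms from \Cref{prop.SS}. A convenient normalized basis for this pencil is $F_0 := x_0^2+x_1^2+x_2^2+x_3^2$ and $F_\infty := \mu x_1^2+\nu x_2^2+x_3^2$, the second form being a scalar multiple of Sklyanin's second defining form after using $\a+\b+\c+\a\b\c=0$ to recognise $(1+\a\c)/(1+\a)$ as $\nu$. Both $F_0$ and $F_\infty$ are diagonal in the standard coordinates, so every member $sF_0+tF_\infty$ of the pencil has diagonal matrix $\mathrm{diag}(s,\,s+t\mu,\,s+t\nu,\,s+t)$ and is singular precisely at the four parameters $(s:t)=(0{:}1),\,(-\mu{:}1),\,(-\nu{:}1),\,(-1{:}1)$. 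Substitution and rescaling reproduce the four quadrics $Q_0,Q_1,Q_2,Q_3$ in the statement, and they are distinct because $1,\mu,\nu$ are pairwise distinct (a consequence of the standing hypothesis $\{\a,\b,\c\}\cap\{0,\pm 1\}=\varnothing$, as used also in the proof of \Cref{prop.E}).

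For the line assertion, I would parametrise the line through $\ominus p$ and $\c_i(p)$ as $s(\ominus p)+t\c_i(p)$ and substitute into the defining form of $Q_i$, reading $\c_i(p)$ off \Cref{Gamma.action} and interpreting $\c_0$ as the identity. The key combinatorial observation is that in each of the four cases the coordinate slots on which $\ominus p$ and $\c_i(p)$ \emph{agree} are exactly the slots \emph{absent} from the defining form of $Q_i$: slots $1,2,3$ for $i=0$, and the single slot $i$ for $i\in\{1,2,3\}$. Consequently, every slot entering $Q_i$ contributes a term in which $s(\ominus p)_j+t\c_i(p)_j$ equals $(t-s)$ or $(s-t)$ (or $(s+t)$ in the $i=0$ case) times the $j$-th coordinate of $p$, with a uniform sign throughout the slots that matter. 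Squaring and collecting yields $Q_i\bigl(s(\ominus p)+t\c_i(p)\bigr)=(s-t)^2\,Q_i(p)$ for $i\in\{1,2,3\}$ and $(s+t)^2\,Q_0(p)$ for $i=0$; each vanishes identically in $(s,t)$ because $p\in E\subseteq Q_i$.

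The two steps are essentially routine; the only real work is the bookkeeping required to confirm the sign-matching observation above, case by case. Once one sees that the slots where $\ominus p$ agrees with $\c_i(p)$ are exactly the ones that $Q_i$ ignores, the four substitutions are practically identical and the factorisation through $Q_i(p)$ is immediate.
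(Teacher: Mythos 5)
Your proof is correct; the two halves relate to the paper's argument differently. For the enumeration of the singular quadrics you and the paper do essentially the same thing: both identify the quadrics containing $E$ with the pencil spanned by the two diagonal defining forms from \Cref{prop.SS} and count the degenerate members using $|\{1,\mu,\nu\}|=3$ (the paper delegates the count to \cite[Prop.~3.4]{LS93}, you make it explicit via the diagonal determinant $s(s+t\mu)(s+t\nu)(s+t)$). For the line assertion your route is genuinely different. The paper exploits the cone structure of $Q_i$: it takes the line joining $\ominus p$ to the vertex $e_i$, which lies on $Q_i$ automatically, and then computes that this line meets $E$ a second time exactly at $\c_i(p)$, so the line $\overline{\ominus p\,\c_i(p)}$ \emph{is} that vertex line. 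You instead substitute the parametrization $s(\ominus p)+t\c_i(p)$ directly into the form defining $Q_i$ and factor out $(s\pm t)^2\,Q_i(p)$. Both work; the paper's version has the side benefit of exhibiting the residual intersection point with $E$ (whence the tangency interpretation when $\ominus p=\c_i(p)$ recorded just after the lemma), while yours is a shorter, purely formal verification that avoids discussing where the line meets $E$. One phrasing slip worth fixing: your ``agree-slots equal absent-slots'' dictum fails for $i=0$ --- there $\ominus p$ and $p$ agree precisely on the slots \emph{present} in $Q_0$ and differ only in slot $0$, which is the absent one. Your subsequent formula $(s+t)^2Q_0(p)$ is nevertheless the correct outcome of the substitution, so this is cosmetic rather than a gap; similarly the ``uniform sign'' remark is unnecessary (for $i=1$ slot $0$ contributes $(t-s)w$ while slots $2,3$ contribute $(s-t)y,(s-t)z$), since squaring each term makes the signs irrelevant.
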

\begin{proof}
Since the equation defining each $Q_i$ is a linear combination of the equations in \Cref{prop.SS}, $Q_i$ contains $E$.
Each $Q_i$ has a unique singular point, namely $e_i$ where 
$$
e_0:=(1,0,0,0), \quad e_1:=(0,1,0,0), \quad e_2:=(0,0,1,0), \quad e_3:=(0,0,0,1). 
$$ 
Thus $Q_i$ is a union of lines and every line on $Q_i$ passes through $e_i$. 

Let $f_1,f_2$ be quadratic forms such that $E=\{f_1=f_2=0\}$. A quadric contains $E$ if and only if it is the zero locus of 
$\l_1f_1+\l_2f_2$ for some $(\l_1,\l_2) \in \PP^1$; conversely, for all $(\l_1,\l_2) \in \PP^1$ the zero locus of $\l_1f_1+\l_2f_2$ is a
quadric that contains $E$.  Since $|\{1,\mu,\nu\}|=3$, there are exactly 4 singular quadrics in the pencil of quadrics that contain $E$;
these are the quadrics $Q_i$ (see \cite[Prop. 3.4]{LS93}).

Let $p=(w,x,y,z) \in E$. 
Let $L$ be line through $\ominus p$ and $e_0$. Thus $L=\{(t-sw,sx,sy,sz)\; | \; (s,t) \in \PP^1\}$. 
The line $L$ lies on $Q_0$ and meets $E$ when 
$$
 (t-sw)^2+(sx)^2 +(sy)^2+(sz)^2 \; = \; \mu (sx)^2+\nu (sy)^2 +(sz)^2\;=\;0.
 $$
 The second expression is zero for all $s$. The first expression is zero if and only if $t^2-2stw=0$; one solution to this is $t=0$ and
 it corresponds to  the  point $\ominus p \in L \cap E$. The other solution occurs when $t-2sw=0$ and corresponds to the point
 $(w,x,y,z)=p$.  In other words, if  $w \ne 0$, then the line through $p$ and $\ominus p$ lies on  $Q_0$. 
  
 The line through $\ominus p$ and $e_1$ is $\{(-sw,sx+t,sy,sz)\; | \; (s,t) \in \PP^1\}$. It lies on $Q_1$ and meets $E$ when 
$$
(-sw)^2+(sx+t)^2 +(sy)^2+(sz)^2 \; = \; \mu (-sw)^2+\nu (sy)^2 +(sz)^2\;=\;0.
 $$
 The second expression is zero for all $s$ and the first is zero if and only if $t^2+2stx=0$. The solution $t=0$ to this equation corresponds to the 
 point $\ominus p \in L \cap E$. The other solution occurs when $t+2sx=0$ and gives the point
 $(-w,-x,y,z)= \c_1(p)$.  Another way of saying this is that if  $x \ne 0$, then the line through $(-w,x,y,z)$ and $(w,x,-y,-z)$ lies on  $Q_1$;
 i.e., the line through $\ominus p$ and $\c_1(p)$ lies on $Q_1$.
 
 Similar calculations show that the line through $\ominus p$ and $\c_i(p)$ lies on $Q_i$ for $i=2,3$.
\end{proof}

 The statement of \Cref{lem.4.quadrics} doesn't quite make sense if $\ominus p=\c_i(p)$. It should be changed to say that the 
 line through $e_i$ and $\ominus(p)$ meets $E$ again at $\c_i(p)$, i.e., the line is tangent to $E$.

\begin{theorem}
\label{thm.gp.law}
There is a group law $\oplus$ on $E$ such that each element in $\G$ acts as translation by a point in $E[2]$.
\end{theorem}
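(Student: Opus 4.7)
The plan is to take the group law $\oplus$ on $E$ already furnished by \Cref{prop.E} and verify that, for $i=1,2,3$, the automorphism $\gamma_i\in\G$ coincides with translation by the $2$-torsion point $\xi_i\in E[2]$. Because $\gamma_0 = \id = $ translation by $o$ and $\G = \langle\gamma_1,\gamma_2\rangle$, this suffices.

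The key input is \Cref{lem.4.quadrics}: for every $p\in E$, the line $\ell_p$ through $\ominus p$ and $\gamma_i(p)$ lies on the singular quadric $Q_i$ and therefore passes through its vertex $e_i$. Specializing to $p=o$ and using $\ominus o = o$ together with $\xi_i = \gamma_i(o)$, the line $\ell_o$ through $o$ and $\xi_i$ also passes through $e_i$. Hence $\ell_p$ and $\ell_o$ are two lines through the common point $e_i$, and they span a plane $\Pi_p\subset \PP^3$ containing the four points
\[
\ominus p,\ \gamma_i(p),\ o,\ \xi_i.
\]
Applying \Cref{prop.E}(4) to this coplanar quadruple yields
\[
\ominus p\ \oplus\ \gamma_i(p)\ \oplus\ o\ \oplus\ \xi_i \;=\; o
\]
in $(E,\oplus)$. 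Using that $o$ is the identity, that $\ominus p = -p$ by \Cref{prop.E}(3), and that $\xi_i\in E[2]$ so $-\xi_i = \xi_i$, this rearranges to $\gamma_i(p) = p\oplus \xi_i$, as required.

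The main subtlety is the degenerate configuration in which $\ell_p = \ell_o$, so that the ``plane $\Pi_p$'' is not uniquely determined by the two coinciding lines. A line on $Q_i$ meets $E = Q_i\cap Q_j$ (for any other quadric $Q_j$ in the pencil) in exactly two points, so $\ell_p = \ell_o$ forces $\{\ominus p,\gamma_i(p)\} = \{o,\xi_i\}$ and hence $p\in\{o,\xi_i\}$, where the formula $\gamma_i(p) = p\oplus\xi_i$ is checked by hand from $\gamma_i(o)=\xi_i$ and $\gamma_i(\xi_i) = \gamma_i^2(o) = o$. To avoid enumerating any further corner cases, one invokes the morphism
\[
f_i\colon E\longrightarrow E,\qquad p\longmapsto \gamma_i(p)\ominus p,
\]
which is a regular map of the irreducible variety $E$. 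The coplanarity argument above shows $f_i(p) = \xi_i$ on the nonempty Zariski-open locus $U\subset E$ where $\ell_p \ne \ell_o$; since a morphism to a separated scheme that is constant on a dense open is constant, $f_i\equiv \xi_i$ on all of $E$. This gives $\gamma_i(p) = p\oplus\xi_i$ for every $p\in E$, completing the proof.
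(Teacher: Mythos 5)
Your proof is correct and follows essentially the same route as the paper's: the paper likewise uses \Cref{lem.4.quadrics} to conclude that the lines through $\ominus p,\gamma_i(p)$ and through $\ominus q,\gamma_i(q)$ both pass through $e_i$, hence span a plane, applies coplanarity (part (4) of \Cref{prop.E}), and then sets $q=o$. Your explicit treatment of the degenerate case $\ell_p=\ell_o$ --- which the paper's ``the lines therefore span a plane'' silently skips --- is a correct and welcome extra precaution, whether via the direct check $p\in\{o,\xi_i\}$ or the Zariski-density argument.
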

\begin{proof}
Let $\c_i$ be the automorphism in \Cref{Gamma.action} and let $\xi_i$ be the point in \Cref{lem.C}. 
We will show that $\c_i$ is translation by $\xi_i$, i.e., $\xi_i=\c_i(o)$.
 
Let $p$ and $q$ be arbitrary points of $E$. The line through $\ominus p$ and $\c_i(p)$ lies on $Q_i$. So does the line
through $\ominus q$ and $\c_i(q)$. Because these lines are on $Q_i$ they meet at $e_i$. The lines therefore span a plane, i.e., 
 $\ominus p$, $\c_i(p)$, $\ominus q$, and $\c_i(q)$, are coplanar. Therefore $(\ominus p) \oplus \c_i(p) \oplus( \ominus q) \oplus  \c_i(q)=o$.
 Taking $q=o$ and rearranging the equation gives  $p = \c_i(p) \oplus  \c_i(o)$ or, $\c_i(p)=p \ominus \c_i(o)=p \oplus \c_i(o)$.  
\end{proof}

\subsection{Twisting a $Q$-module by $\c_i$}
\label{sse.twist}

Let $\c\in \G$ and $M$ a graded left $Q$-module. We define $\c^*M$ to be the graded $Q$-module which is equal to $M$ as a graded vector space and has the new $Q$-action
$$
r\hdot_\c m \; := \; \c^{-1}(r)m
$$
for $r \in Q$ and $m \in \c^*M=M$. We make $\c^*$ into an auto-equivalence of $\Gr(Q)$ in the obvious way and we note that these auto-equivalences have the property that $\c^*\d^*=(\c\d)^*$. 

\begin{proposition}
\label{prop.twist} 
Let $p,q \in E$ and let $M_p$ and $M_{p,q}$ be the associated point and line modules. Then $\c_i^*M_p \cong M_{p+\xi_i}$ and 
$\c_i^*M_{p,q} \cong M_{p+\xi_i,q+\xi_i}$.
\end{proposition}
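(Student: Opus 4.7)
The plan is to reduce both isomorphisms to the fact, proved in Theorem~\ref{thm.gp.law}, that $\gamma_i$ acts on $E$ as translation by $\xi_i$. The point and line modules for the Sklyanin algebra $Q$ are determined by their degree-wise annihilator subspaces in $Q_1$, and twisting by $\gamma$ simply transports these annihilators via $\gamma$. I would therefore first identify, for each $p\in E$, the sequence of kernels cut out by the degree-$1$ generators inside $M_p$; then compute the corresponding sequence for $\gamma^*M_p$; and finally match the result against $M_{p+\xi_i}$.

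Concretely, write $M_p=\bigoplus_{n\ge 0}km_n$ and let $V_{p_n}\subseteq Q_1$ denote the kernel of the multiplication map $Q_1\otimes km_n\to km_{n+1}$, so that $V_{p_n}=\{f\in Q_1\mid f(p_n)=0\}$ with $p_n=p+n\tau$. Unwinding the definition $r\cdot_{\gamma}m=\gamma^{-1}(r)m$, the analogous kernel in $\gamma^*M_p$ at level $n$ is $\gamma(V_{p_n})$. The dual action of $\gamma$ on $\PP(Q_1^*)$ is characterized by $(\gamma\cdot\phi)(v)=\phi(\gamma^{-1}v)$, and from this one reads off the identity $\gamma(V_q)=V_{\gamma\cdot q}$ for every $q\in\PP(Q_1^*)$. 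By Theorem~\ref{thm.gp.law}, $\gamma_i\cdot q=q\oplus\xi_i$ for $q\in E$; because translations on $E$ commute, $\gamma_i\cdot p_n=(p+\xi_i)+n\tau$. Thus the sequence of kernels for $\gamma_i^*M_p$ coincides with that for $M_{p+\xi_i}$. Since a point module is determined by this sequence, $\gamma_i^*M_p\cong M_{p+\xi_i}$.

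The line-module case proceeds in exactly the same way, using that $M_{p,q}$ is determined by the $2$-dimensional subspace $V_\ell\subseteq Q_1$ of linear forms vanishing on the secant line $\ell$ through $p$ and $q$. The identity $\gamma(V_\ell)=V_{\gamma\cdot\ell}$ together with the fact that $\gamma_i$ maps the line through $p,q$ to the line through $p+\xi_i, q+\xi_i$ yields $\gamma_i^*M_{p,q}\cong M_{p+\xi_i,q+\xi_i}$. The only real bookkeeping obstacle is keeping the two dualities straight, namely the contragredient convention for $\gamma$ acting on $Q_1^*$ and the $\gamma^{-1}$ occurring in the twist $r\cdot_\gamma m=\gamma^{-1}(r)m$; these two inverses cancel so that the action on the parameter variety is, as one expects, the original action of $\gamma$ on $E$ rather than its inverse.
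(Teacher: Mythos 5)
Your proposal is correct and follows essentially the same route as the paper: both reduce the claim to the observation that twisting by $\gamma_i$ transports annihilator subspaces in $Q_1$ via $\gamma_i$, that the contragredient action identifies $\gamma_i(p^\perp)$ with $\gamma_i(p)^\perp$, and then invoke Theorem~\ref{thm.gp.law} to rewrite $\gamma_i(p)$ as $p+\xi_i$. The paper is a bit more economical---it only examines the degree-zero annihilator and uses the presentation $M_p=Q/Qp^\perp$, which already determines the module---whereas you track the full sequence of kernels $V_{p_n}$; this is equivalent (and arguably more explicit) but not a genuinely different argument.
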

\begin{proof}
Let $r \in Q_1$ and $p \in \PP^3=\PP(Q_1^*)$. 
The action of $\c_i$ on $Q_1$ and $Q_1^*$ is such that $\c_i(r)(p)=r(\c_i^{-1}(p))=r(\c_i(p))$. Thus, $r(p)=0$ if
and only if $\c_i(r)$ vanishes at $\c_i(p)$. Since $M_p =Q/Qp^\perp$ where 
$p^\perp$ is the subspace of $Q_1$ vanishing at $p$, $\c_i^*M_p = Q/Q(p+\xi_i)^\perp$. A similar argument works for line modules.
\end{proof}


\section{Properties of $\wtB$}\label{sec.wtB}

By \cite[\S3.9]{SS92}, $Q/(\Omega,\Omega')$ is isomorphic to the twisted homogeneous coordinate ring $B(E,\tau,\cL)$. 
Since $\Omega$ and $\Omega'$  are fixed by $\G$, there is an induced action of $\G$ on $Q/(\Omega,\Omega')$. 

The quotient $\wtQ/(\Omega,\Omega')$ is isomorphic to $\wtB:=(B(E,\tau,\cL) \otimes M_2(k))^\G$.

\subsection{The category $\QGr(\widetilde{B})$}

Let $B=B ( E, \tau , \cL )$, $B'=B \otimes M_2(k)$, $\widetilde{B} = (B')^{\G}$, and $\cB=M_2(\cO_E)$. The main result in this subsection is

\begin{theorem}\label{th.wtB_Azumaya}
There is an equivalence of categories $\QGr(\wtB)\equiv \qcoh(E/E[2])$.   
\end{theorem}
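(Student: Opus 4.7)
The plan is to chain together four equivalences of categories, at each step tracking the residual $\G$-action:
\begin{equation*}
\QGr(\wtB) \; \equiv \; \QGr^{\G}(B') \; \equiv \; \QGr^{\G}(B) \; \equiv \; \qcoh^{E[2]}(E) \; \equiv \; \qcoh(E/E[2]).
\end{equation*}

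First, I would invoke the graded analogue of \Cref{pr.desc} to identify $\Gr(\wtB)$ with ${}_{\Gr(B')}\cM^H$. Since $H$ is the function algebra of the finite group $\G$, ${}_{\Gr(B')}\cM^H$ is the category of $\G$-equivariant graded $B'$-modules. Torsion is detected by the underlying $B'$-module structure (equivalently, the underlying $B$-module structure, see below), so passing to the quotient by $\Fdim$ gives $\QGr(\wtB) \equiv \QGr^{\G}(B')$. The appendix on equivariant categories is the right reference for the clean formulation of ``$\G$-equivariant objects of a quotient category.''

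Next, since $B' = B\otimes M_2(k)$, standard Morita theory in the graded setting yields $\QGr(B') \equiv \QGr(B)$. Combined with the Artin--Van den Bergh equivalence $\QGr(B)\equiv \qcoh(E)$, we obtain $\QGr(B')\equiv \qcoh(E)$. The main subtlety here is the $\G$-action: the action of each $\c_i$ on $B'$ is the tensor product of the action on $B$ with conjugation by $q_i$ on $M_2(k)$. Conjugation by a unit is an inner automorphism, hence it induces an auto-equivalence of $\Mod(M_2(k))$ naturally isomorphic to the identity (via multiplication by $q_i$). Using this, and the compatibility of the natural isomorphisms with compositions (which is a cocycle verification one should be explicit about, since the $q_i$ do not commute in $M_2(k)$), the $\G$-action on $\QGr(B')$ transports under Morita to the original $\G$-action on $\QGr(B)$. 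Under $\QGr(B)\equiv \qcoh(E)$, the action of $\c_i$ on $B$ corresponds to the pullback by translation by $\xi_i\in E[2]$, as was established in \Cref{thm.gp.law} and \Cref{prop.twist}. Hence $\QGr^{\G}(B')\equiv \qcoh^{E[2]}(E)$, the category of $E[2]$-equivariant quasi-coherent sheaves on $E$.

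Finally, since $E[2]$ acts on $E$ by translations, the action is free, so faithfully flat descent (or the fact that $E\to E/E[2]$ is an étale Galois cover with Galois group $E[2]$) yields $\qcoh^{E[2]}(E)\equiv \qcoh(E/E[2])$.

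The main obstacle is the bookkeeping in the second step: one must verify that the $\G$-action on $B'$ corresponds via Morita to the $\G$-action on $B$, not merely pointwise (which is automatic from the inner-automorphism observation) but coherently as a genuine categorical action of $\G$. This requires one to check that the natural isomorphisms witnessing $\operatorname{Ad}(q_i)\cong \id$ compose appropriately for products in $\G$; because the $q_i$ satisfy quaternionic relations rather than commuting, this is where the subtlety lives. With the generalities on equivariant categories in the appendix in hand, however, the verification becomes a routine cocycle manipulation.
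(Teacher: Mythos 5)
Your chain breaks at the second equivalence, and it breaks exactly at the point you flag as "where the subtlety lives": the cocycle manipulation does \emph{not} work out. Write $F(N)=N\otimes k^2$ for the Morita equivalence $\Mod(B)\to\Mod(B')$. For each $\gamma\in\G$ one does get a natural isomorphism $\phi_\gamma:\gamma^*F\cong F\gamma^*$, given on $N\otimes k^2$ by $n\otimes v\mapsto n\otimes q_\gamma^{-1}v$, so the functors match pointwise as you say. But comparing $\phi_\gamma\circ\gamma^*(\phi_\delta)$ with $\phi_{\gamma\delta}$ produces the scalar $\mu(\gamma,\delta)\in\{\pm1\}$ defined by $q_\delta q_\gamma=\mu(\gamma,\delta)q_{\gamma\delta}$. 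This $\mu$ is a $2$-cocycle on $\G$ with values in $k^\times$, and its class in $H^2(\G,k^\times)\cong\ZZ/2$ is \emph{nontrivial}: if it were a coboundary, the twisted group algebra $\bigoplus_\gamma kq_\gamma=M_2(k)$ would be isomorphic to the commutative algebra $k\G$. The only freedom in the $\phi_\gamma$ is rescaling by elements of $k^\times$, which changes $\mu$ only by a coboundary, so no choice of trivializations $\operatorname{Ad}(q_\gamma)\cong\id$ is coherent. Consequently the Morita equivalence carries the $\G$-action on $\QGr(B')$ to the $\mu$-\emph{twisted} $\G$-action on $\QGr(B)\equiv\qcoh(E)$, and your chain only yields $\QGr(\wtB)\equiv\qcoh^{E[2],\mu}(E)$, the category of $\mu$-twisted equivariant sheaves, not $\qcoh^{E[2]}(E)$. (That these two are genuinely different structures is visible already over a point: $\Vect^{\G}=\Rep(k\G)$ has four simples while $\Vect^{\G,\mu}=\Mod(M_2(k))$ has one; it is also why $\wtB$ has fat points rather than point modules.)

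The theorem is still true, but finishing from $\qcoh^{E[2],\mu}(E)$ requires an input your proposal never supplies: descent along the free action turns the twisted equivariant category into modules over a sheaf of Azumaya algebras on $E/E[2]$ whose Brauer class is the image of $[\mu]$, and one must then argue this class is trivial --- which holds only because $E/E[2]$ is a curve over an algebraically closed field, so the Azumaya algebra is $\cE nd(\cV)$ for a vector bundle $\cV$ and Morita-trivial. This is precisely the fourth step of the paper's argument, which sidesteps your coherence problem by \emph{not} Morita-trivializing early: it keeps the matrix factor as the sheaf of algebras $\cB=M_2(\cO_E)$ on $E$, on which $\G$ acts by honest algebra automorphisms (so the equivariant structure is unproblematic), descends $\cB$ to an Azumaya algebra $\cB^\G$ on $E/E[2]$ via \Cref{pr.Mum}, and only then invokes triviality of the Brauer class. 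You should either adopt that order of operations or explicitly carry the gerbe/cocycle through your version and kill it with the same Azumaya argument at the end.
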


\begin{corollary}\label{pr.QGr_wtB_simples}
The set of isomorphism classes of simple $\QGr(\wtB)$-objects is in natural bijection with $E/E[2]$. 
\end{corollary}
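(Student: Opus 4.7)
The plan is to deduce the corollary immediately from \Cref{th.wtB_Azumaya}. An equivalence of abelian categories preserves simple objects and isomorphism classes thereof, so it suffices to identify the simple objects of $\qcoh(E/E[2])$ and check that they are in natural bijection with the points of $E/E[2]$.

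First I would recall that for any noetherian scheme $X$, the simple objects of $\qcoh(X)$ are exactly the skyscraper sheaves $\iota_{p,*}\kappa(p)$ at the closed points $p\in X$, where $\iota_p:\{p\}\hookrightarrow X$ is the inclusion and $\kappa(p)$ is the residue field. Since $E$ is a smooth projective curve over the algebraically closed field $k$, so is the quotient $E/E[2]$, and hence all its closed points have residue field $k$. Two such skyscrapers are isomorphic if and only if they are supported at the same closed point, so the isomorphism classes of simple objects in $\qcoh(E/E[2])$ are in natural bijection with the (closed) points of $E/E[2]$, i.e.\ with the set $E/E[2]$ itself.

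Combining this with the equivalence $\QGr(\wtB)\equiv \qcoh(E/E[2])$ supplied by \Cref{th.wtB_Azumaya} yields the asserted bijection. No further step is needed; in particular, there is no serious obstacle here since the only non-formal input, the equivalence of categories, has already been established, and the description of simples on a smooth curve is standard.
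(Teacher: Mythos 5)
Your argument is correct and is exactly the (implicit) argument the paper intends: the corollary is stated as an immediate consequence of \Cref{th.wtB_Azumaya}, using that an equivalence of abelian categories preserves simples and that the simples of $\qcoh(E/E[2])$ are the skyscraper sheaves at closed points of the smooth curve $E/E[2]$ over the algebraically closed field $k$. No gap, and no difference in approach worth noting.
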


The plan is to work our way through the chain of equivalences
\begin{equation}\label{eq.wtB_Azumaya}
\QGr(\wtB) \equiv \QGr(B')^\G\equiv \qcoh(\cB)^\G\equiv \qcoh(\cB^\G)\equiv \qcoh(E/E[2]).  
\end{equation}
The notation needs some unpacking. 

First, $\G$ acts on the categories $\QGr(B')$ and $\qcoh(\cB)$. Such an action comprises an auto-equivalence $\gamma^*$ of the respective category for each $\gamma\in\G$ together with natural isomorphisms $t_{\gamma,\delta}: \gamma^*  \circ \delta^* \cong(\gamma\delta)^*$ for $\gamma,\delta\in\G$ such that 
\begin{equation}
\label{Gamma.action.compatibility}
\xymatrix{
\gamma^* \circ \d^* \circ \ve^* \ar[r]  \ar[d]   & (\c\d)^* \circ \ve^* \ar[d]
\\
\c^* \circ(\d\ve)^* \ar[r] & (\gamma\d\ve)^*
}
\end{equation}
commutes for all $\gamma,\delta,\ve \in \Gamma$.

The action of $\G$  as automorphisms of $B'$ induces an action of $\G$ on $\Gr(B')$ as described in \S\ref{sse.twist}. 
Since the subcategory $\Fdim(B')$ is stable under each $\gamma^*$, 
the $\G$-action passes to the quotient category $\QGr(B')$. The action on $\qcoh(\cB)$ comes from translation on $E$ by $E[2]$ together with twisting via the $\G$-action on the $M_2(k)$ tensorand in $\cB=\cO_E\otimes M_2(k)$. 

If $\G$ acts on a category $\cC$ we can then form the category of $\G$-equivariant objects $\cC^\G$. The objects of $\cC^\G$ are objects $c\in\cC$ equipped with isomorphisms $\varphi_\gamma:c\to \gamma^*c$ for $\gamma\in \G$ such that 
\begin{equation}\label{eq.equiv_diagram}
  \begin{tikzpicture}[auto,baseline=(current  bounding  box.center)]
    \path[anchor=base] (0,0) node (c) {$c$} +(2,.6) node (gc) {$\gamma^*c$} +(5,.7) node (gdc1) {$\gamma^*(\delta^*c)$} +(5,-.7) node (gdc2) {$(\gamma\delta)^*c$};
    \draw[->] (c) to[bend left=10] node[pos=.5] {$\scriptstyle \varphi_\gamma$} (gc);
    \draw[->] (gc) to[bend left=10] node[pos=.6] {$\scriptstyle \gamma^*(\varphi_\delta)$} (gdc1);
    \draw[->] (c) to[bend right=10] node[pos=.5,swap] {$\scriptstyle \varphi_{\gamma\delta}$} (gdc2);
    \draw[->] (gdc1) to node[pos=.5] {$\scriptstyle t_{\gamma,\delta}$} (gdc2);
  \end{tikzpicture}
\end{equation}
commutes and the morphisms are those in $\cC$ that preserve all the structure. Explicitly, if $(\varphi_\gamma)_{\gamma \in \G}$
and $(\varphi'_\gamma)_{\gamma \in \G}$ are equivariant structures on objects $c$ and $c'$, respectively, a morphism $f:(\varphi_\gamma)_{\gamma \in \G} \to (\varphi'_\gamma)_{\gamma \in \G}$ is a morphism $f:c \to c'$ in $\cC$ such that $\a^*(f)\varphi_\gamma = \varphi'_\gamma f$ for all $\gamma \in \Gamma$. 
This elucidates the notation $\cC^\G$ in \Cref{eq.wtB_Azumaya} for $\cC=\QGr(B')$ or $\qcoh(\cB)$.

Finally, $\cB^\G$ denotes the sheaf of algebras on $E/E[2]$ obtained by descent from $\cB$. To make sense of this, let $\rho:E \to E/E[2]$ be the \'etale quotient morphism. Now recall

\begin{proposition} \cite[Prop. 2, p.70]{Mum70}
\label{pr.Mum} 
The functors
\[
\cG  \rightsquigarrow \rho^*\cG \qquad \hbox{and} \qquad 
\cF \rightsquigarrow (\rho_*\cF)^\G 
\] 
are mutually inverse equivalences between $\qcoh(E/E[2])$ and $\qcoh(E)^\G$.
\end{proposition}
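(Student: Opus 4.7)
The statement is a special case of faithful flat (indeed \'etale) descent for a free finite group action, so my plan is to deduce it from that general framework.

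First, I would establish the geometric setup. Since $\G = E[2]$ acts on $E$ by translation, and a translation by a nonzero element of $E$ fixes no point, the action is free. As $\G$ is finite, the quotient morphism $\rho:E\to E/E[2]$ is a finite \'etale Galois cover of degree $|\G|=4$. In particular $\rho$ is affine (so $\rho_*$ is exact and preserves quasi-coherence) and faithfully flat (so $\rho^*$ is exact and faithful). Freeness of the action gives the crucial isomorphism
\begin{equation*}
\mu:\G\times E\;\xrightarrow{\ \sim\ }\;E\times_{E/E[2]}E,\qquad (\gamma,x)\mapsto (x,\gamma\cdot x),
\end{equation*}
under which the two projections $p_1,p_2:E\times_{E/E[2]}E\to E$ become the second projection and the action map respectively.

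Second, I would check that the two functors land in the right categories and produce natural transformations in both directions. For $\cG\in\qcoh(E/E[2])$, the identity $\gamma\circ\rho=\rho$ gives canonical isomorphisms $\rho^*\cG\cong \gamma^*\rho^*\cG$; the cocycle condition \Cref{Gamma.action.compatibility} follows from the associativity of the $\G$-action, so $\rho^*\cG\in\qcoh(E)^\G$. Conversely, for $\cF\in\qcoh(E)^\G$ the equivariant structure on $\cF$ combined with the $\G$-action on $E$ makes $\rho_*\cF$ a $\G$-equivariant sheaf on $E/E[2]$ (where $\G$ acts trivially on the base), and the subsheaf $(\rho_*\cF)^\G$ is quasi-coherent since taking $\G$-invariants is exact in characteristic coprime to $|\G|$ (or, in any characteristic, by affineness and flatness of $\rho$).

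Third, I would verify that the unit and counit of the evident adjunction are isomorphisms. For the unit $\cG\to (\rho_*\rho^*\cG)^\G$, the projection formula and flatness give $\rho_*\rho^*\cG\cong \cG\otimes_{\cO_{E/E[2]}}\rho_*\cO_E$, and the fundamental identity $(\rho_*\cO_E)^\G=\cO_{E/E[2]}$—the defining property of a categorical quotient by a free finite action—makes the unit an isomorphism. For the counit $\rho^*(\rho_*\cF)^\G\to\cF$, faithful flatness of $\rho$ reduces the check to after pulling back along $\rho$; then the isomorphism $\mu$ above identifies $\rho^*\rho_*\cF$ with $\bigoplus_{\gamma\in\G}\gamma^*\cF$, and after $\G$-invariants are pulled back one recovers $\cF$ itself via the equivariant structure isomorphisms $\varphi_\gamma$. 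Alternatively, one can identify $\G$-equivariant quasi-coherent sheaves on $E$ with descent data for $\rho$ through $\mu$, in which case the equivalence is Grothendieck's faithfully flat descent theorem applied to $\rho$.

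The main obstacle is the freeness computation $\G\times E\xrightarrow{\sim}E\times_{E/E[2]}E$ together with the invariants identity $(\rho_*\cO_E)^\G=\cO_{E/E[2]}$; once these are in hand the equivalence is formal from \'etale descent. Both rest on the elementary observation that translation by a nontrivial $2$-torsion point of $E$ has no fixed locus.
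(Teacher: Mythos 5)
Your argument is correct, and in fact the paper offers no proof of \Cref{pr.Mum} at all---it is quoted verbatim from Mumford (\emph{Abelian Varieties}, Prop.~2, p.~70), whose own proof is exactly the descent argument you outline: freeness of the translation action gives $\G\times E\xrightarrow{\ \sim\ }E\times_{E/E[2]}E$, so $\rho$ is a finite \'etale $\G$-Galois cover, equivariant quasi-coherent sheaves on $E$ are identified with descent data for $\rho$, and faithfully flat descent (or the direct unit/counit check) finishes the proof. Two small points worth stating more carefully if you write this up: quasi-coherence of $(\rho_*\cF)^\G$ follows simply because it is the kernel of the map $\rho_*\cF\to\bigoplus_{\gamma\in\G}\rho_*\cF$, $s\mapsto(\varphi_\gamma(s)-s)_\gamma$, not from any exactness of invariants; and in the unit computation the identification $(\cG\otimes\rho_*\cO_E)^\G\cong\cG\otimes(\rho_*\cO_E)^\G$ needs a reason---either the averaging idempotent (available here since $\operatorname{char}k\neq 2$ and $|\G|=4$) or the local triviality of the cover, under which $\rho_*\cO_E$ is locally $\cO^{\oplus\G}$ with $\G$ permuting the summands, so the invariants are a locally split direct summand and commute with $\cG\otimes-$ in any characteristic.
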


The equivalences in \Cref{pr.Mum} are  monoidal, because $\rho^*$ is, so they identify $\G$-equivariant sheaves of algebras on $E$ with sheaves of algebras on $E/E[2]$. Keeping this in mind, $\cB^\G$ is simply shorthand for the sheaf of algebras on $E/E[2]$ corresponding to $\cB\in\qcoh(E)^\G$, i.e. $(\rho_* \cB)^\G$.

\begin{proof}[Proof of \Cref{th.wtB_Azumaya}]
We go through the equivalences in \Cref{eq.wtB_Azumaya} one by one, moving rightward.
 
{\bf First equivalence.} The graded version of \Cref{pr.desc} (applied to $B'$ coacted upon by the function algebra of $\G$) provides the equivalence $\Gr(\wtB)$ and $\Gr(B)^\G$. The equivalence restricts to the subcategories $\Fdim(\wtB)$ and $\Fdim(B')^\G$ so descends to the quotient categories $\QGr$.  

{\bf Second equivalence.} By \cite[Thm. 3.12]{AV90},  $\QGr(B)\equiv \qcoh(\cO_E)$. Since $\cB=\cO_E\otimes M_2(k)$, Morita equivalence lifts this to 
\begin{equation}\label{eq.AV90}
\QGr(B')\equiv \qcoh(\cB).   
\end{equation}
Now note that $\G$ acts on the geometric data $(E,\tau,\cL)$ that gives rise to $B=B(E,\tau,\cL)$ in the sense that it acts on $E$, commutes with $\tau$, and there is an $\G$-equivariant structure on $\cL$. Moreover, it acts in the same way on the $M_2(k)$ tensorands in $B'=B\otimes M_2(k)$ and $\cB=\cO_E\otimes M_2(k)$. 
This observation together with the precise description of the equivalence $\QGr(B)\equiv \qcoh(E)$ from \cite[Thm. 3.12]{AV90} shows that \Cref{eq.AV90} intertwines the $\G$-actions on the two categories. This implies the desired result that it lifts to an equivalence between the respective categories of $\G$-equivariant objects.   

{\bf Third equivalence.} This  also follows from \Cref{pr.Mum}. As observed before that equivalence is monoidal, and it identifies $\cB\in \qcoh(E)^\G$ with $\cB^\G\in \qcoh(E/E[2])$. The monoidality then ensures that it implements an equivalence between the categories of modules over $\cB$ and $\cB^\G$ internal to $\qcoh(E)^\G$ and $\qcoh(E/E[2])$ respectively.  

{\bf Fourth equivalence.} 
Because $\rho:E \to E/E[2]$ is \'etale and $\rho^*(\cB^\G) \cong M_2(\cO_E)$, $\cB^\G$ is a sheaf of Azumaya algebras on $E/E[2]$. 
The fourth equivalence now follows from Morita equivalence and the fact that $\cB^\G$ is Azumaya and hence (because we are working over an algebraically closed field) of the form $\cE nd(\cV)$ for some vector bundle $\cV$ on $E/E[2]$. 
\end{proof}

We can actually find an explicit vector bundle $\cV$ on $E/E [ 2 ]$ such that
$\mathcal{B}^{\G} \cong \mathcal{E} n d ( \cV )$. 

\begin{proposition}
Let $\mathcal{V}$ be the unique non-split extension
$0 \longrightarrow \mathcal{O}_{E/E [ 2 ]} \longrightarrow \mathcal{V}
   \longrightarrow \mathcal{O}_{E/E [ 2 ]} \longrightarrow 0$.
  There is an isomorphism of $\mathcal{O}_{E/E [ 2 ]}$-algebras
  $\mathcal{B}^{\G} \cong \mathcal{E} n d ( \mathcal{V} )$.
\end{proposition}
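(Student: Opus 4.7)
The plan is to exploit that $\cB^\G$ is an Azumaya $\cO_{E/E[2]}$-algebra of degree $2$ (from the proof of \Cref{th.wtB_Azumaya}) together with the vanishing of $\operatorname{Br}(E/E[2])$ (Tsen's theorem for a smooth projective curve over an algebraically closed field). These two facts force $\cB^\G\cong\cE nd(\cV')$ for some rank-$2$ vector bundle $\cV'$ on $E/E[2]$, unique up to tensoring by a line bundle, so the task reduces to identifying $\cV'$ with the Atiyah extension $\cV$ of the statement.

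To construct a concrete representative $\cV'$, I would descend the tautological $\cB$-module $\cO_E^{\oplus 2}$ on $E$. The $\G$-action on $\cB$ (translation on the $\cO_E$ tensorand and conjugation by the quaternions $q_\gamma$ on the $M_2(k)$ tensorand) induces a \emph{projective} $\G$-action $v\mapsto q_\gamma\cdot t_\gamma^\ast(v)$ on $\cO_E^{\oplus 2}$. The obstruction to promoting this to a genuine $\G$-equivariant structure is the Schur multiplier class of $\G=(\ZZ/2)^2$ in $H^2(\G,k^\times)\cong\ZZ/2$, namely the quaternion cocycle. Because $\operatorname{Br}(E/E[2])=0$, this obstruction can be cancelled by tensoring with a suitable $\G$-equivariant line bundle $\cW$ on $E$; the resulting bundle $\cW\otimes\cO_E^{\oplus 2}$ then carries an honest $\G$-equivariant structure compatible with that on $\cB$, and descends via \Cref{pr.Mum} to a rank-$2$ bundle $\cV'$ on $E/E[2]$ with $\cE nd(\cV')\cong\cB^\G$ by construction.

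It remains to identify $\cV'$ with $\cV$. After twisting by a further line bundle to normalise $\det\cV'=\cO_{E/E[2]}$ (which does not affect $\cE nd(\cV')$), it suffices to verify (i) that $\cV'$ is indecomposable, and (ii) that $H^0(E/E[2],\cV')\ne 0$; Atiyah's classification of vector bundles on an elliptic curve then pins $\cV'$ down uniquely as the non-split extension of $\cO_{E/E[2]}$ by itself. Indecomposability should follow from the irreducibility of the projective quaternion representation $\G\to PGL_2(k)$: any splitting $\cV'\cong\cL_1\oplus\cL_2$ would pull back to a $\G$-equivariant splitting of $\cW\otimes\cO_E^{\oplus 2}$, corresponding to a $\G$-invariant line in the $2$-dimensional quaternion representation, of which there is none. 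Non-vanishing of $H^0(\cV')$ should be read off from the $\G$-invariants of $H^0(E,\cW\otimes\cO_E^{\oplus 2})$ for the specific $\cW$ chosen.

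The principal obstacle is the final identification step: establishing indecomposability via the quaternion representation argument and checking that the descent produces both the correct determinant normalisation and a non-zero global section require careful bookkeeping of the $\G$-equivariant structure on $\cW\otimes\cO_E^{\oplus 2}$, and in particular a precise choice of the line bundle $\cW$ cancelling the quaternion cocycle.
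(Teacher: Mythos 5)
Your opening move (trivial Azumaya plus Tsen's theorem, hence $\cB^\G\cong\cE nd(\cV')$ for some rank-$2$ bundle, then Atiyah's classification) is exactly how the paper begins, but the mechanism you propose for pinning down $\cV'$ has two genuine gaps, and it misses the one-line computation that makes the identification easy. The paper rules out decomposability by counting global sections: a splitting $\cV'\cong\cL_1\oplus\cL_2$ exhibits $\cO_{E/E[2]}^{\oplus 2}$ as a direct summand of $\cE nd(\cV')$, so $\dim H^0(\cB^\G)\ge 2$; on the other hand the descent equivalence gives $H^0(E/E[2],\cB^\G)=H^0(E,\cB)^\G=M_2(k)^\G=k$, since every character of $\G$ occurs in $M_2(k)$ with multiplicity one. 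Indecomposability plus Atiyah then yields $\cV'\cong\cV\otimes\cL$ and $\cE nd(\cV')\cong\cE nd(\cV)$, and one is done. In particular your final step (showing $H^0(\cV')\ne 0$ to distinguish $\cV$ from $\cV\otimes\cL$) is not needed for the statement, and you leave it unverified in any case.

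As for the gaps: first, a genuinely $\G$-equivariant line bundle $\cW$ cannot cancel the quaternion cocycle. The obstruction class of the tensor product of a projective action with an honest equivariant structure is unchanged in $H^2(\G,k^\times)\cong\ZZ/2$, and the quaternion class is the nontrivial element. What you need is a line bundle carrying a \emph{projective} $\G$-action realizing that same nontrivial class --- such $\cW$ do exist, e.g.\ via the theta group of a degree-two line bundle on $E$, whose commutator pairing on $E[2]$ is the Weil pairing --- but the vanishing of $\operatorname{Br}(E/E[2])$ does not hand you one as stated; it has to be constructed. Second, the indecomposability argument is broken: a splitting $\cV'\cong\cL_1\oplus\cL_2$ pulls back to $\G$-stable line subbundles of $\cW\otimes\cO_E^{\oplus 2}$, but these need not be of the form $\cW\otimes(\text{fixed line in }k^2)$; line subbundles of a rank-two bundle on a curve vary in positive-dimensional families and can have any sufficiently negative degree, so the absence of a $\G$-invariant line in the two-dimensional quaternion representation rules out nothing. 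Both problems disappear if you replace this machinery with the $\dim H^0(\cB^\G)=1$ count above.
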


\begin{proof}
  We already know that $\mathcal{B}^{\G}$ is trivial Azumaya, hence
  $\mathcal{B}^{\G} \cong \mathcal{E} n d ( \cV )$ for some
  rank $2$ vector bundle $\cV$. By Atiyah's classification of vector bundles on
  elliptic curves, either $\cV$ is decomposable, or isomorphic to $\mathcal{V}
  \otimes \cL$ for some $\cL \in \mathrm{Pic} (E/E [ 2 ])$. If $\cV$ is decomposable, the $\mathcal{O}_{E/E [ 2 ]}$-module
  $\mathcal{B}^{\G}$ contains two copies of $\mathcal{O}_{E/E [ 2 ]}$ as
  direct summands, whence $\dim  H^{0} ( \mathcal{B}^{\G} ) \geqslant 2$.
  Since $\dim  H^{0} ( \mathcal{B}^{\G} ) = \dim  H^{0} ( \mathcal{B}
  )^{\G} =1$, we must have $\mathcal{B}^{\G} \cong \mathcal{E} n d (
  \mathcal{V} \otimes \cL ) \cong \mathcal{E} n d ( \mathcal{V} )$. 
\end{proof}

\subsection{$E/E[2]$ is a closed subvariety of $\Projnc(\wtQ)$}
The title of this subsection is made precise in the following way.  In \cite[\S3.4]{VdB-blowup}, a subcategory $\cB$ of an
abelian category $\cD$ is said to be {\sf closed} if  it is closed under  subquotients and the inclusion functor $i_*:\cB \to \cD$ is fully faithful and has a left adjoint $i^*$ and a right adjoint $i^!$. In \cite[Thm. 1.2]{SPS15}, which corrects an error in \cite{SPS04},
it is shown that if $J$ is a two-sided ideal in an $\NN$-graded $k$-algebra $A$, then the inclusion functor $\Gr(A/J) 
\to \Gr(A)$ induces a fully faithful functor $i_*:\QGr(A/J) \to \QGr(A)$ whose essential image is closed in the sense of
\cite[\S3.4]{VdB-blowup}. In particular, since $\wtB$ is a quotient of $\wtQ$, this result in conjunction with Theorem
\ref{th.wtB_Azumaya} shows that the essential image of the composition $\qcoh(E/E[2]) \to \QGr(\wtB) \to \QGr(\wtQ)$ is closed in the sense of
\cite[\S3.4]{VdB-blowup}.

\subsection{Fat point modules for $\wtB$}
Let $p \in E$. Let $p^\perp \subset Q_1$ be the subspace of $Q_1$ vanishing at $p$. 
We call $M_p: =Q/Qp^\perp$ the {\sf point module}   associated to $p$. 
We view $k^2$ as a left $M_2(k)$-module in the natural way. Then $M_p \otimes k^2$ is a 
left $Q \otimes M_2(k)$-module, and hence a left $\wtQ$-module.

Since $(\Omega,\Omega')$ annihilates $M_p$, $M_p \otimes k^2$ is a $\wtB$-module.

\begin{lemma}
\label{lem.pts.of.E}
If $p \in E$, then at most one of $\{x_0,x_1,x_2,x_3\}$ vanishes at $p$. 
\end{lemma}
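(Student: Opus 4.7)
The plan is to argue by contradiction: suppose $p \in E$ has two distinct vanishing coordinates, say $x_i(p) = x_j(p) = 0$ with $i \neq j$, and derive a contradiction using the pencil of quadrics containing $E$.

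The key tool is \Cref{lem.4.quadrics}, which produces four singular quadrics $Q_0, Q_1, Q_2, Q_3$ containing $E$, with the crucial structural feature that the equation of $Q_k$ involves only the three coordinates $\{x_\ell : \ell \neq k\}$. So, given the pair $\{i,j\}$, I would pick any index $k \notin \{i,j\}$ and evaluate the equation of $Q_k$ at $p$. Writing $\ell$ for the unique index in $\{0,1,2,3\} \setminus \{i,j,k\}$, the equation of $Q_k$ collapses to $D \cdot x_\ell(p)^2 = 0$, where $D$ is one of the scalars appearing as coefficients in the list of $Q_k$'s in \Cref{lem.4.quadrics}, namely one of $\mu$, $\nu$, $\mu-\nu$, $\mu-1$, $\nu-1$, $1-\mu$, or $1-\nu$.

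The next step is to check that $D \neq 0$ in every case. This follows from the standing hypotheses $\{\a,\b,\c\} \cap \{0,\pm 1\} = \emptyset$ together with the defining identity $\a+\b+\c+\a\b\c = 0$: these imply $|\{1,\mu,\nu\}|=3$ (as was already used in the proof of \Cref{lem.4.quadrics}), which is precisely what rules out the vanishing of any of the scalars above. Therefore $x_\ell(p) = 0$ as well, so three of the four coordinates of $p$ vanish, forcing $p = e_k = (\delta_{0k},\delta_{1k},\delta_{2k},\delta_{3k}) \in \PP^3$.

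The final step is to observe that none of the points $e_k$ lies on $E$: substituting $e_k$ into the first defining equation $x_0^2 + x_1^2 + x_2^2 + x_3^2 = 0$ from \Cref{prop.SS} gives $1 = 0$, a contradiction. The argument is completely routine; there is no real obstacle since the combinatorial structure of the four singular quadrics does all the work, and the only slightly non-trivial point is verifying that the relevant coefficients $\mu-1$, $\nu-1$, $\mu-\nu$ are all nonzero, which is automatic from the hypotheses on $\a,\b,\c$.
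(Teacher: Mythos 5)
Your proof is correct and follows essentially the same route as the paper: the paper's proof also observes that for any third index $t$ there is a quadric in the pencil containing $E$ that involves only $x_r^2, x_s^2, x_t^2$ with nonzero coefficients (these are exactly the singular quadrics of \Cref{lem.4.quadrics}), forcing a third coordinate to vanish and then reaching the same absurdity via $x_0^2+x_1^2+x_2^2+x_3^2=0$. The only cosmetic difference is that you invoke \Cref{lem.4.quadrics} explicitly and finish by noting $e_k\notin E$, whereas the paper concludes that all four coordinates would vanish; both endings are fine.
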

\begin{proof}
Suppose $x_r(p)=x_s(p)=0$ and $r \ne s$. Let $t \in \{0,1,2,3\}-\{r,s\}$. 
There are non-zero scalars $\l,\mu,\nu$ such that $\l x_r^2+ \mu x_s^2+\nu x_t^2$  vanishes on $E$ so $x_t(p)=0$ also. 
But $x_0^2+x_1^2+x_2^2+x_3^2$ vanishes on $E$ so it would follow that $x_j(p)=0$ for all $j$. That is absurd. 
\end{proof}

\begin{proposition}
Let $p \in E$. If $m \otimes v$ is a non-zero element in $(M_p \otimes k^2)_n$, then $\wtQ(m \otimes v) \supseteq (M_p\otimes k^2)_{\ge n+1}$.
In particular, every quotient of $M_p \otimes k^2$ by a non-zero graded $\wtQ$-submodule has finite dimension; i.e., $M_p \otimes k^2$
is 1-critical.
\end{proposition}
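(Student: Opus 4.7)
The plan is to reduce the claim to showing that $\wtQ_1 w = (M_p\otimes k^2)_{n+1}$ for every nonzero homogeneous $w\in(M_p\otimes k^2)_n$. Once this one-step surjectivity is in hand, iterating one degree at a time gives $\wtQ\cdot w\supseteq(M_p\otimes k^2)_{\ge n+1}$, and the 1-criticality conclusion is then immediate: any nonzero graded submodule contains a nonzero homogeneous element and hence a cofinite subspace, so the quotient is finite-dimensional.

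Since $M_p$ is a point module for $Q$, $(M_p)_n=km_n$ is $1$-dimensional, so every nonzero element of $(M_p\otimes k^2)_n$ can be written $w=m_n\otimes v$ with $0\ne v\in k^2$. Introducing scalars $\lambda_j(n)\in k$ by $x_jm_n=\lambda_j(n)m_{n+1}$ (geometrically, the value of the linear form $x_j$ at the point $\tau^n p\in E$, up to the choice of $m_n$), the $\wtQ_1$-generators $y_j=x_jq_j$ from \Cref{prop.Q-tilde} act by
\[
y_j(m_n\otimes v) \;=\; \lambda_j(n)\,m_{n+1}\otimes q_j v.
\]
Hence $\wtQ_1 w$ is the span, inside $m_{n+1}\otimes k^2$, of those $q_jv$ with $\lambda_j(n)\ne 0$. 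Applying \Cref{lem.pts.of.E} to the point $\tau^n p\in E$ shows that at most one of $\lambda_0(n),\lambda_1(n),\lambda_2(n),\lambda_3(n)$ vanishes, so at least three of the vectors $q_jv$ contribute.

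The problem therefore boils down to the linear-algebraic statement that any three of $q_0v,q_1v,q_2v,q_3v$ span $k^2$ whenever $v\ne 0$. The main tool is the observation that because $q_1,q_2,q_3$ from \Cref{ssect.quat.basis} pairwise anticommute, are invertible, and square to $-1$, no nonzero vector can be a simultaneous eigenvector of two distinct ones: if $q_jv=\mu_jv$ and $q_kv=\mu_kv$ for $j\ne k\in\{1,2,3\}$, the relation $q_jq_k=-q_kq_j$ forces $\mu_j\mu_k=0$, contradicting invertibility (this is where $\mathrm{char}\,k\ne 2$ enters). Consequently, at most one of $q_1v,q_2v,q_3v$ is proportional to $v$. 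If $\lambda_0(n)\ne 0$, then $q_0v=v$ itself is in the span and can be paired with at least one non-proportional $q_jv$. If $\lambda_0(n)=0$, then all three of $q_1v,q_2v,q_3v$ are available; multiplying by $q_1^{-1}=-q_1$ and using $q_1q_2=q_3$, $q_1q_3=-q_2$ replaces this triple by $\{v,-q_3v,q_2v\}$, which again spans $k^2$ by the same eigenvector dichotomy.

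The main obstacle I anticipate is precisely this quaternionic linear-algebra check — ensuring that the spanning works uniformly for every nonzero $v\in k^2$, rather than only generically. Everything else in the argument (the description of the $y_j$-action, the appeal to \Cref{lem.pts.of.E}, and the iteration in degree yielding 1-criticality) is formal bookkeeping once the spanning lemma is in place.
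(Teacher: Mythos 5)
Your proof is correct, and while it follows the same skeleton as the paper's (compute $y_j(e_n\otimes v)=\lambda_j\,e_{n+1}\otimes q_jv$, invoke \Cref{lem.pts.of.E}, reduce to a spanning statement about the vectors $q_jv$, then induct on degree), the key linear-algebra step is genuinely different --- and yours is the one that actually works. The paper asserts that every non-zero matrix in $kq_0+kq_2$, $kq_0+kq_3$, $kq_1+kq_2$, $kq_1+kq_3$ has rank $2$, and deduces $(kq_i+kq_j)v=k^2$ after choosing one non-vanishing coordinate from $\{x_0,x_1\}$ and one from $\{x_2,x_3\}$. But over an algebraically closed field each of these pencils has determinant of the form $a^2+b^2$, so it contains non-zero singular matrices (e.g.\ $q_0+iq_2$ has rank $1$, and indeed $q_0v$ and $q_2v$ are proportional for $v=(1,-1)^{\sT}$); two available vectors are therefore not enough. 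Your argument instead uses the full strength of \Cref{lem.pts.of.E} --- \emph{at most one} of the four coordinates vanishes at $p+n\tau$, so at least \emph{three} of $q_0v,q_1v,q_2v,q_3v$ lie in $\wtQ_1(e_n\otimes v)$ --- together with the anticommutation observation that no non-zero $v$ is a simultaneous eigenvector of two distinct $q_j$, $j\in\{1,2,3\}$ (this is where $\operatorname{char}k\ne 2$ enters). That correctly shows any three of the four vectors span $k^2$, and the rest is the same bookkeeping as in the paper. In short: same strategy, but your spanning lemma repairs a gap in the published argument.
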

\begin{proof}
Let $N$ be a non-zero graded $\wtQ$-submodule of $M_p \otimes k^2$. Let $e_n \otimes v$ be a non-zero element in $N$
where $\{e_n\}$ is a basis for the degree-$n$ component of $M_p$ and $v \in k^2-\{0\}$. 

Every non-zero matrix in $(kq_0+kq_2)\cup(kq_0+kq_3)\cup(kq_1+kq_2) \cup (kq_1+kq_3)$ has rank 2 so
$$
(kq_0+kq_2)v \; = \; (kq_0+kq_3)v \; = \;   (kq_1+kq_2)v \; = \;  ( kq_1+kq_3)v \; = \; k^2.
$$

If $p+n\tau = (\lambda_0,\lambda_1,\lambda_2,\lambda_3)$ with respect to the coordinates $x_0,\ldots,x_3$, 
then there is a basis $\{e_{n+1}\}$ for the degree-$(n+1)$ component of $M_p$ such that $x_j e_n=\lambda_j e_{n+1}$ for $j=0,\ldots,3$.

By  Lemma \ref{lem.pts.of.E}, at least one element in $\{x_0,x_1\}$ and at least one element in $\{x_2,x_3\}$ does not vanish at $p+n\tau$.
Suppose, for the sake of argument, that $x_1(p+n\tau) \ne 0$ and $x_2(p+n\tau) \ne 0$. Then $x_1e_n$ and $x_2e_n$ are non-zero.
It follows that $(kx_1\otimes q_1+kx_2\otimes q_2) \cdot(e_n \otimes v)=e_{n+1} \otimes k^2$. 
Thus, $\wtQ_1(e_n \otimes v)=e_{n+1} \otimes k^2$. The same sort of argument can be used in the other cases
(for example, if $x_0(p+n\tau)$ and $x_2(p+n\tau)$ are non-zero) to show that $\wtQ_1(e_n \otimes v)$ is always
equal to $e_{n+1} \otimes k^2$. 

It now follows by induction on $n$ that $\wtQ (e_n \otimes v)\supseteq (M_p)_{\ge n+1}  \otimes k^2$. The result follows.
\end{proof}

\begin{corollary}
Every simple object in $\QGr(\wtB)$ is isomorphic to $\pi^*(M_p \otimes k^2)$ for some $p \in E$.  
\end{corollary}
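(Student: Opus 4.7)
The plan is to show that $p\mapsto \pi^*(M_p\otimes k^2)$ descends through $E\to E/E[2]$ to a bijection onto the set of isomorphism classes of simple objects of $\QGr(\wtB)$. Since by \Cref{pr.QGr_wtB_simples} this set is already identified with $E/E[2]$, it will suffice to construct a well-defined map $\phi:E/E[2]\to\{\text{simples}\}/\!\cong$ and verify that it is injective; cardinality then forces $\phi$ to be a bijection, which is exactly the statement of the corollary.

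Simplicity of $\pi^*(M_p\otimes k^2)$ is immediate from the preceding proposition, which shows $M_p\otimes k^2$ is $1$-critical over $\wtQ$: every nonzero graded submodule is cofinite, so $\pi^*(M_p\otimes k^2)$ admits no proper nonzero subobject in $\QGr(\wtB)$. To see that $\phi$ is well-defined, fix $\xi_i\in E[2]$ and the corresponding $\gamma_i\in\Gamma$. By \Cref{prop.twist}, $\gamma_i^*M_p\cong M_{p+\xi_i}$ as $Q$-modules; and $\gamma_i^*k^2\cong k^2$ as $M_2(k)$-modules (via $v\mapsto q_iv$), since $k^2$ is the unique simple $M_2(k)$-module up to isomorphism. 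Hence $\gamma_i^*(M_p\otimes k^2)\cong M_{p+\xi_i}\otimes k^2$ over $B'=B\otimes M_2(k)$. The crucial observation is that $\wtB=(B')^\Gamma$ is pointwise $\Gamma$-fixed, so the twist $\gamma_i^*$ is the identity after restriction of scalars to $\wtB$; thus $M_p\otimes k^2\cong M_{p+\xi_i}\otimes k^2$ as $\wtB$-modules.

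The delicate step is injectivity of $\phi$, which I will obtain by tracing through the chain of equivalences in~\eqref{eq.wtB_Azumaya}. Under $\QGr(\wtB)\equiv\QGr(B')^\Gamma$ from \Cref{pr.desc}, the simple $\pi^*(M_p\otimes k^2)$ lifts to the $\Gamma$-equivariant object $B'\otimes_{\wtB}\pi^*(M_p\otimes k^2)$, whose underlying $B'$-module should be the $\Gamma$-orbit sum $\bigoplus_{\xi\in E[2]}\pi^*(M_{p+\xi}\otimes k^2)$ in $\QGr(B')$. Morita together with Artin--Van den Bergh then translates this into a sheaf on $E$ supported exactly on the orbit $p+E[2]$, and distinct orbits give non-isomorphic supports, hence non-isomorphic simples in $\QGr(\wtB)$. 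The main obstacle is making this orbit-sum identification precise: it uses that $B'$ is free of rank $|\Gamma|=4$ over $\wtB$ (from the torsor structure on $T=M_2(k)$) and that the forgetful functor $\QGr(B')^\Gamma\to\QGr(B')$ carries $B'\otimes_{\wtB}\bullet$ to the $\Gamma$-orbit sum construction on simples.
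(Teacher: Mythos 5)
The central logical move is invalid: you construct $\phi:E/E[2]\to\{\text{simples}\}/\!\cong$, argue it is injective, and then assert ``cardinality then forces $\phi$ to be a bijection.'' That inference is false for infinite sets---an injective self-map of an infinite set (or of an elliptic curve, as a set map) need not be surjective. The corollary is a \emph{surjectivity} statement, and nothing in your write-up actually establishes that every simple is in the image of $\phi$. If you wanted to salvage a counting-style argument you would have to show that $\phi$, transported through the equivalence $\QGr(\wtB)\equiv\qcoh(E/E[2])$, is a \emph{morphism} $E/E[2]\to E/E[2]$ of varieties and then invoke, say, that an injective endomorphism of a smooth projective curve is an isomorphism---but you do not say this, and it would be a roundabout route.

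The irony is that the material in your third paragraph, if completed, would yield surjectivity directly and make the cardinality appeal unnecessary: if you verify that under the chain \eqref{eq.wtB_Azumaya} the object $\pi^*(M_p\otimes k^2)$ corresponds to the skyscraper sheaf $\cO_{[p]}$ on $E/E[2]$ (equivalently, that $B'\otimes_{\wtB}\pi^*(M_p\otimes k^2)$ is the orbit object supported on $p+E[2]$), then $\phi$ is identified with the tautological bijection of \Cref{pr.QGr_wtB_simples}, and you are done. But you flag this identification yourself as unverified (``should be the $\Gamma$-orbit sum,'' ``the main obstacle is making this ... precise''), so the step that does all the work is left as a gap. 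In short: drop the injectivity-plus-cardinality framing entirely, and instead prove the identification of $\pi^*(M_p\otimes k^2)$ with the skyscraper at $[p]$ by following the four equivalences through one at a time---that single computation gives both injectivity and surjectivity at once. (Your well-definedness argument via $\gamma_i^*$ acting trivially after restriction of scalars to $\wtB$ is fine.)
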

 
The previous result is the reason that $M_p \otimes k^2$ is called a {\it fat point module} for $\wtQ$: ``point'' because in algebraic geometry simple objects in $\qcoh(X)$ correspond to closed points, ``fat'' because $\Hom_{\QGr(\wtQ)}(\wtQ, \pi^*(M_p \otimes k^2))=2$, not 1.

\begin{proposition}
If $\omega \in E[2]$  and $p \in E$, then there is an isomorphism of $\wtQ$-modules 
$$
M_p \otimes k^2 \; \cong \; M_{p+\omega}\otimes k^2.
$$
\end{proposition}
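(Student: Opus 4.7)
The plan is to exhibit the desired isomorphism by twisting with an element of $\G = E[2]$, exploiting the fact that $\wtQ$ is fixed pointwise by the $\G$-action on $Q \otimes M_2(k)$.

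\medskip

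First I would handle the trivial case $\omega = o$ with the identity map. For a non-identity $\omega \in E[2]$, by \Cref{thm.gp.law} there is a unique $\gamma \in \G$ acting on $E$ as translation by $\omega$; say $\omega = \xi_j$ and $\gamma = \gamma_j$. Let $\sigma$ denote the corresponding algebra automorphism of $Q \otimes M_2(k)$, namely $\sigma(r \otimes a) = \gamma_j(r) \otimes q_j a q_j^{-1}$. Consider the twist $\sigma^*(M_p \otimes k^2)$ as a $Q \otimes M_2(k)$-module. On the $Q$-factor, \Cref{prop.twist} provides a $Q$-module isomorphism $\gamma_j^* M_p \cong M_{p+\omega}$. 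On the $M_2(k)$-factor, the twist by the inner automorphism $\mathrm{Ad}(q_j^{-1})$ is trivialized by the map $v \mapsto q_j v$: a direct check using $q_j^2 = -1$ shows that this is an $M_2(k)$-module isomorphism from the standard $k^2$ onto the twisted one. Tensoring these two isomorphisms gives an isomorphism
\[
\sigma^*(M_p \otimes k^2) \;\cong\; M_{p+\omega} \otimes k^2
\]
of $Q \otimes M_2(k)$-modules.

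\medskip

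Next, I would observe that restriction of scalars along the inclusion $\wtQ \hookrightarrow Q \otimes M_2(k)$ annihilates the twist. Indeed, $\wtQ = (Q \otimes M_2(k))^\G$, so $\sigma$ fixes every element of $\wtQ$; the twisted and untwisted $\wtQ$-actions on the underlying vector space $M_p \otimes k^2$ therefore coincide. Combining this observation with the isomorphism above yields
\[
M_p \otimes k^2 \;=\; \sigma^*(M_p \otimes k^2) \;\cong\; M_{p+\omega} \otimes k^2
\]
as $\wtQ$-modules, which is the desired conclusion.

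\medskip

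The main obstacle is the bookkeeping in the middle step: one has to verify that the tensor product of the two module isomorphisms on $Q$ and on $M_2(k)$ really does intertwine the diagonal twisted action of $Q \otimes M_2(k)$, and that the map $v \mapsto q_j v$ genuinely trivializes the inner twist on $k^2$ (this is where $q_j^2 = -1$ enters). Once this is checked, the rest is formal.
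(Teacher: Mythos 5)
Your proof is correct, and it reaches the conclusion by a genuinely different route from the paper's. The paper constructs the isomorphism explicitly: it fixes compatible bases $\{e_n\}$ of $M_p$ and $\{f_n\}$ of $M_{p+\xi_i}$ and verifies by hand that $e_n\otimes v\mapsto f_n\otimes q_iv$ intertwines the action of each $y_j$, the check resting on the sign bookkeeping $\xi'_{nj}=\pm\xi_{nj}$ together with the relations $q_iq_j=\pm q_jq_i$. You instead note that the automorphism $\sigma=\gamma_j\otimes\mathrm{Ad}(q_j)$ of $Q\otimes M_2(k)$ restricts to the identity on $\wtQ=(Q\otimes M_2(k))^{\G}$, so $\sigma^*N$ and $N$ carry literally the same $\wtQ$-action, and then identify $\sigma^*(M_p\otimes k^2)$ with $M_{p+\omega}\otimes k^2$ as a module over the full tensor product by splitting the twist across the two factors: $\gamma_j^*M_p\cong M_{p+\omega}$ by \Cref{prop.twist}, and the inner twist on $k^2$ is trivialized by $v\mapsto q_jv$ (where $q_j^{-1}=-q_j$ enters). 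All the intermediate claims check out, and after unwinding, your composite is essentially the same map the paper writes down. What your derivation buys is generality and economy: it replaces the computation by the principle that restriction along $\wtQ\hookrightarrow Q\otimes M_2(k)$ is insensitive to twisting by $\G$, which is precisely the mechanism the paper itself uses later (\Cref{ssect.twisting.2}, \Cref{se.line_modules}). What the paper's computation buys is an explicit formula for the isomorphism, in the spirit of what is needed again in \Cref{ssect.explicit.equivariant}.
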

\begin{proof}
Write $E[2]=\{o,\xi_1,\xi_2,\xi_3\}$. If $\omega=o$ the identity map is an isomorphism. Fix $i \in \{1,2,3\}$. 

Let $\{e_n \; | \; n \ge 0\}$ be a homogeneous basis for $M_p$ with $\deg(e_n)=n$. 
For each $n$, let $\xi_{nj} \in k$, $j=0,1,2,3$, be the unique scalars such that
$$
x_je_n=\xi_{nj}e_{n+1}.
$$
Thus, $(\xi_{n0},\xi_{n1},\xi_{n2},\xi_{n3})=p+n\tau$. Let $\xi'_{n0}=\xi_{n0}$, $\xi'_{ni}=\xi_{ni}$, and $\xi'_{nj}=-\xi_{nj}$ when 
$j \in \{1,2,3\}-\{i\}$.
Therefore $p+n\tau+\xi_i=(\xi_{n0}',\xi'_{n1},\xi'_{n2},\xi'_{n3})$. 
Let   $\{f_n \; | \; n \ge 0\}$ be the unique homogeneous basis  for $M_{p+\xi_i}$ 
with $\deg(f_n)=n$ such that $x_jf_n=\xi_{nj}'f_{n+1}$ for $j=0,1,2,3$.

Define $\varphi_i:M_p \otimes k^2 \; \longrightarrow  M_{p+\xi_i} \otimes k^2$ by $\varphi_i(e_n \otimes v) := f_n \otimes q_iv$.
It follows that 
$$
\varphi_i\big(y_j \cdot(e_n \otimes v)\big) = \varphi_i(x_je_n \otimes q_jv)= \varphi_i(\xi_je_{n+1} \otimes q_jv)= \xi_jf_{n+1} \otimes q_iq_j v
$$
and
$$
y_j \cdot \varphi_i(e_n \otimes v)= y_j \cdot (f_n \otimes q_iv)= x_jf_{n+1} \otimes q_jq_iv)= \xi_j'f_{n+1} \otimes q_jq_i v.
$$
For all $j$, $\xi_jf_{n+1} \otimes q_iq_j v=\xi_j'f_{n+1} \otimes q_jq_i v$  because
\begin{itemize}
  \item 
  if $j\in \{0,i\}$, then $\xi_j=\xi_j'$ and $q_iq_j=q_jq_i$;
  \item 
 if $j \in \{1,2,3\}-\{i\}$, then $\xi_j=-\xi_j'$ and $q_iq_j=-q_jq_i$. 
\end{itemize}
Therefore $\varphi_i\big(y_j \cdot(e_n \otimes v)\big) = y_j \cdot \varphi_i(e_n \otimes v)$ for $j=0,1,2,3$. This proves that $\varphi_i$ is 
a homomorphism of graded $\wtQ$-modules. It is obviously bijective so the proof is complete.
\end{proof}

\subsection{$\wtB$ is a prime ring}

 Davies \cite[Cor. 5.3.21]{Davies-arXiv} proved that $\wtB$ is a prime ring when $\tau$ has infinite order 
\cite[Hypothesis 5.0.2]{Davies-arXiv}. We use a different method to prove the result without any restriction on $\tau$. 

\begin{proposition}
Let $I_1$ and $I_2$ be graded ideals in an $\NN$-graded left and right noetherian $k$-algebra $A$. 
Suppose there is a projective scheme $X$ and an equivalence of categories $\Phi:\QGr(A) \to \Qcoh(X)$.  
By \cite{SPS15}, there are functors $\a_{1*}$ and $\a_{2*}$, and closed subschemes $Z_1,Z_2 \subseteq X$ 
such that the essential image of $\Phi \a_{i*}$ is equal to $\Qcoh(Z_i)$, and  there is a commutative diagram 
\begin{equation}
\label{closed.subspace}
\xymatrix{
\Gr (A/I_1) \ar[r]^{f_{1*}}  \ar[d]_{\pi_1}  & \Gr (A)  \ar[d]^{\pi} & \ar[l]_{f_{2*}} \Gr(A/I_2) \ar[d]^{\pi_2}
\\
\QGr(A/I_1)  \ar[r]_{\a_{1*}}  &  \QGr(A) \ar[d]^\Phi & \ar[l]^{a_{2*}} \QGr(A/I_2)
\\
&  \Qcoh(X).
}
\end{equation}
in which $f_{i*}:\Gr(A/I_i) \to \Gr(A)$, $i=1,2$, are the natural inclusion functors, and $\pi_1$, $\pi_2$,
and $\pi$ denote the quotient functors. 
If $I_1 \cap I_2=0$ and $X$ is reduced and irreducible, then $Z_1 \cup Z_2=X$.
\end{proposition}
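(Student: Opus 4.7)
The plan is to argue by contradiction. Suppose $Z_1 \cup Z_2$ is a proper closed subscheme of $X$; I will deduce that the ideal sheaf product $\mathcal{I}_{Z_1}\mathcal{I}_{Z_2}$ annihilates every quasi-coherent sheaf on $X$, and then exploit the reducedness of $X$ to reach a contradiction.

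The key input is the diagonal embedding of $A$-modules. Since $I_1 \cap I_2 = 0$, for each $n \in \bZ$ the canonical map
\[
A(n) \; \hookrightarrow \; f_{1*}\bigl((A/I_1)(n)\bigr) \oplus f_{2*}\bigl((A/I_2)(n)\bigr)
\]
is a monomorphism in $\Gr(A)$. First I would apply the exact functor $\pi$, invoke the commutativity $\pi\circ f_{i*} = \a_{i*}\circ\pi_i$ from the given diagram, and transport through the equivalence $\Phi$ to obtain a monomorphism
\[
\Phi\pi A(n) \; \hookrightarrow \; \Phi\a_{1*}\pi_1\bigl((A/I_1)(n)\bigr) \oplus \Phi\a_{2*}\pi_2\bigl((A/I_2)(n)\bigr)
\]
in $\Qcoh(X)$. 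Its target lies in $\Qcoh(Z_1) \oplus \Qcoh(Z_2)$ and so is annihilated by $\mathcal{I}_{Z_1}\mathcal{I}_{Z_2}$; since the class of sheaves annihilated by a fixed ideal sheaf is closed under subobjects, each $\Phi\pi A(n)$ itself lies in $\Qcoh(Z_1 \cup Z_2)$.

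Next comes a generation step. Every graded $A$-module is a quotient of a direct sum of shifts $A(n)$, so the family $\{\pi A(n)\}_{n\in\bZ}$ generates $\QGr(A)$ in the sense that every object is a quotient of a coproduct of members of this family. Applying $\Phi$, the same holds for the family $\{\Phi\pi A(n)\}_{n\in\bZ}$ in $\Qcoh(X)$. Because the class of sheaves annihilated by $\mathcal{I}_{Z_1}\mathcal{I}_{Z_2}$ is also closed under arbitrary coproducts and quotients, \emph{every} quasi-coherent sheaf on $X$ is annihilated by $\mathcal{I}_{Z_1}\mathcal{I}_{Z_2}$. Applied to $\cO_X$ this forces $\mathcal{I}_{Z_1}\mathcal{I}_{Z_2} = 0$, and because $X$ is reduced this in turn gives $Z_1 \cup Z_2 = V(\mathcal{I}_{Z_1}\mathcal{I}_{Z_2}) = X$, contradicting the assumption that $Z_1 \cup Z_2 \subsetneq X$.

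The main obstacle is bookkeeping rather than a conceptual hurdle: one must check that the diagonal embedding is genuinely monomorphic at every shift and that the property of being annihilated by $\mathcal{I}_{Z_1}\mathcal{I}_{Z_2}$ propagates from the generators $\Phi\pi A(n)$ to all of $\Qcoh(X)$ via coproducts and quotients. I note in passing that the argument uses only reducedness of $X$; the irreducibility hypothesis in the statement appears not to be strictly necessary.
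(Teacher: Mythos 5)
Your argument is correct, and it takes a genuinely different route from the paper's. The paper argues pointwise: for each closed point $x \in X$ it produces an $A$-module $M$ with $\Phi\pi M \cong \cO_x$ and with $\pi M$ simple (so $\pi(M/N)=0$ for all nonzero $N\subseteq M$), then splits into the cases $I_2M=0$ (giving $x\in Z_2$) and $I_2M\ne 0$ (giving $\pi(I_2M)\cong\cO_x$, and since $I_1I_2M\subseteq(I_1\cap I_2)M=0$ the submodule $I_2M$ is an $A/I_1$-module, giving $x\in Z_1$); reducedness then upgrades ``every closed point lies in $Z_1\cup Z_2$'' to the scheme-theoretic equality. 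Your argument works instead at the level of generators: the monomorphism $A(n)\hookrightarrow (A/I_1)(n)\oplus(A/I_2)(n)$ coming from $I_1\cap I_2=0$, pushed through the exact functor $\Phi\pi$, shows each $\Phi\pi A(n)$ is killed by $\cI_{Z_1}\cI_{Z_2}$; since $\{\Phi\pi A(n)\}_n$ generates $\Qcoh(X)$ and the annihilated subcategory is closed under coproducts and quotients, $\cI_{Z_1}\cI_{Z_2}=0$. This is cleaner in that it avoids the slightly delicate construction of a module $M$ representing $\cO_x$ with the required criticality property. One small wording adjustment: after $\cI_{Z_1}\cI_{Z_2}=0$, the step to the scheme-theoretic union is that $(\cI_{Z_1}\cap\cI_{Z_2})^2\subseteq\cI_{Z_1}\cI_{Z_2}=0$, and reducedness kills the nilpotent ideal $\cI_{Z_1}\cap\cI_{Z_2}$; the identification ``$Z_1\cup Z_2=V(\cI_{Z_1}\cI_{Z_2})$'' is only set-theoretic as written, since the scheme-theoretic union is cut out by the intersection, not the product, of the ideal sheaves. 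Also, the reductio framing is superfluous --- the same chain is a direct proof. Finally, you are right that irreducibility of $X$ is not used in either proof of this proposition; it is invoked only in the subsequent \Cref{thm.B-tilde_prime} to conclude that one of $Z_1$, $Z_2$ equals $X$.
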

\begin{proof}
Let $\cO_x$ be the skyscraper sheaf at a closed point  $x \in X$ and $M$ an $A$-module such that 
$\Phi \pi M \cong \cO_x$ and $\pi^*(M/N)=0$ for all non-zero $N \subseteq M$. 
If $I_2M=0$, then $\cO_x \cong \Phi i_{2*}\pi_2M$ so $x \in Z_2$. 
On the other hand, suppose $I_2M \ne 0$.
Then  $\pi(M/I_2M)=0$ so $\pi(I_2M) \cong \cO_x$. Since  $I_1I_2M=0$, 
$\pi(I_2M)=\pi f_{1*}(I_2M)=i_{1*}\pi_1M$ which implies that $i_{1*}\pi_1M \cong \cO_x$.
Hence $x \in Z_1$. 

Thus, every closed point of $X$ belongs to $Z_1 \cup Z_2$. The proposition now follows 
from the fact that $X$ is reduced and irreducible.
\end{proof}

\begin{theorem}
\label{thm.B-tilde_prime}
Let $A$ be a connected, $\NN$-graded, left and right noetherian $k$-algebra 
Suppose there is a projective scheme $X$ and an equivalence of categories $\Phi:\QGr(A) \to \Qcoh(X)$.  
If $A$ is semiprime and  $X$ is reduced and irreducible, then $A$ is a prime ring.
\end{theorem}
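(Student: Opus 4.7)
The plan is to argue by contradiction: assuming $A$ is not prime, we produce nonzero graded two-sided ideals $I_1, I_2 \subseteq A$ with $I_1 \cap I_2 = 0$ and feed them into the preceding proposition. Recall that for a connected $\NN$-graded noetherian $k$-algebra the property of being prime is equivalent to being \emph{graded} prime (i.e., $IJ \ne 0$ for all nonzero graded two-sided $I, J$), and likewise for semiprimeness, so it suffices to falsify graded primeness. If $A$ is not graded prime there are nonzero graded two-sided ideals $I, J$ with $IJ = 0$; then $(I \cap J)^2 \subseteq IJ = 0$, and the graded-semiprime hypothesis forces $I \cap J = 0$. Set $I_1 := I$ and $I_2 := J$.

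Applying the preceding proposition to $I_1, I_2$ yields closed subschemes $Z_1, Z_2 \subseteq X$ with $Z_1 \cup Z_2 = X$. Since $X$ is reduced and irreducible, we must have $Z_1 = X$ or $Z_2 = X$; without loss of generality $Z_1 = X$. The fully faithful functor $\alpha_{1*} \colon \QGr(A/I_1) \to \QGr(A)$ provided by \cite{SPS15} then has essential image $\Phi^{-1}\QCoh(Z_1) = \QGr(A)$, so $\alpha_{1*}$ is an equivalence of categories.

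The plan for the remainder is to extract from this equivalence the estimate $I_1 \cdot A_{\ge n} = 0$ for some integer $n$ and then conclude that $I_1$ is nilpotent. Essential surjectivity of $\alpha_{1*}$ means that $\pi A \in \QGr(A)$ is isomorphic to $\alpha_{1*} \pi_1 N = \pi f_{1*} N$ for some $N \in \Gr(A/I_1)$, viewed as an $A$-module with $I_1 N = 0$. In the Serre quotient $\Gr(A)/\Fdim(A)$ this isomorphism is represented by a graded $A$-module map $f \colon A' \to N/N''$ in which $A/A'$, $N''$, $\ker f$, and $\operatorname{coker} f$ all lie in $\Fdim(A)$. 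Because $I_1$ annihilates $N/N''$ and hence the image of $f$, we obtain $I_1 A' \subseteq \ker f \in \Fdim(A)$. Combined with $A' \supseteq A_{\ge m}$ for some $m$ (which follows from $A/A' \in \Fdim(A)$), this forces $I_1 A_{\ge m}$ to be finite-dimensional, and taking $n$ larger than its top degree gives $I_1 \cdot A_{\ge n} = 0$.

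Finally, since $I_1$ is a proper graded ideal in the connected algebra $A$, we have $I_1 \subseteq A_{\ge 1}$, so $I_1^{n+1} \subseteq I_1 \cdot A_{\ge n} = 0$ and $I_1$ is nilpotent. Semiprimeness of $A$ then forces $I_1 = 0$, contradicting $I_1 \ne 0$. The main obstacle to carrying this plan out rigorously is the bookkeeping in the Serre-quotient step; a cleaner alternative would use the right adjoint $\pi_*$ of $\pi$ and exploit the fact that $\alpha_{1*}$ intertwines the torsion and saturation functors on the two sides to directly compare $\pi_*\pi A$ with a module annihilated by $I_1$.
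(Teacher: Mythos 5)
Your proof is correct and its skeleton coincides with the paper's: pass to nonzero graded ideals $I_1,I_2$ with $I_1I_2=0$ and (by semiprimeness) $I_1\cap I_2=0$, feed them into the preceding proposition to get $Z_1\cup Z_2=X$, use irreducibility to conclude that (WLOG) the fully faithful functor $\QGr(A/I_1)\to\QGr(A)$ is an equivalence, and then force $I_1=0$. The one genuine difference is the final extraction step. The paper applies Gabriel's section functor $\omega$ to the isomorphism $\pi A\cong \pi f_{1*}M$ and invokes the compatibility $\omega\pi f_{1*}M\cong f_{1*}\omega'\pi'M$ from Step 2 of the proof of \cite[Thm.~1.2]{SPS15} to conclude that $I_1$ annihilates $\omega\pi A$; since the torsion submodule $T\subseteq A_{\ge 1}$ is nilpotent and hence zero, $A$ embeds in $\omega\pi A$ and $I_1A=0$ outright. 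You instead unwind the isomorphism at the level of a representative $f:A'\to N/N''$, and your bookkeeping is sound: $I_1A'\subseteq\ker f$, which is finite-dimensional because it is a finitely generated torsion submodule of the noetherian module $A$, so $I_1A_{\ge n}=0$ for $n\gg 0$ and $I_1^{n+1}=0$, whence $I_1=0$ by semiprimeness. Your version avoids the external appeal to the saturation-functor compatibility at the cost of the Serre-quotient calculus you flag, and the nilpotency trick you use at the end is exactly the one the paper uses to show $T=0$; the alternative you sketch in your last sentence is precisely the paper's argument.
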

\begin{proof}
Suppose the result is false. Then there are non-zero elements $x$ and $y$ such that $xAy=0$.
If $x_m$ and $y_n$ are the top-degree components of $x$ and $y$, then $x_mAy_n=0$. Let $I_1=Ax_mA$ and
$I_2=Ay_nA$. Then $I_1$ and $I_2$ are graded ideals such that $I_1I_2=0$. Since $(I_1 \cap I_2)^2 \subseteq I_1I_2$,
the fact that $A$ is semiprime implies $I_1 \cap I_2 =0$. Hence $Z_1 \cup Z_2=X$. But $X$ is irreducible so 
either $Z_1=X$ or $Z_2=X$.

Without loss of generality suppose that $Z_1=X$. Then the functor $i_{1*}:\QGr(A/I_1) \to \QGr(A)$ is an equivalence.
In particular, there is a module $M \in \Gr(A/I_1)$ such that  $\pi A \cong i_{1*}\pi_1M=\pi f_{1*}M$.
Hence, if $\omega$ is the right adjoint to $\pi$ constructed by Gabriel, 
$\omega\pi A \cong \omega \pi f_{1*}M$. By Step 2 in the proof of \cite[Thm. 1.2]{SPS15}, 
$ \omega \pi  f_{1*}M \cong f_{1*}\omega'\pi' M$ where $\omega'$ is right adjoint to $\pi'$. It follows that $I_1$ 
annihilates $\omega\pi A$.

There is an exact sequence $0 \to T \to A \to \omega\pi A$ where $T$ is the largest finite dimensional submodule of $A$. 
Since $A_0=k$, $T\subseteq A_{\ge 1}$. It follows that $T^n=0$ for $n \gg 0$. But $A$ is semiprime so $T=0$. 
Therefore $I_1$ annihilates $A$. Hence $I_1=0$.  
\end{proof}

\begin{corollary}
$\wtB$ is a prime ring.
\end{corollary}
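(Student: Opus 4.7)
The plan is to invoke Theorem~\ref{thm.B-tilde_prime} with $A = \wtB$ and $X = E/E[2]$, verifying its hypotheses in turn.

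First, $\wtB = \wtQ/(\Theta,\Theta')$ is connected $\bN$-graded and noetherian on both sides, since $\wtQ$ is (by Proposition~\ref{pr.tildeQ_regular}) and quotients preserve these properties. Second, Theorem~\ref{th.wtB_Azumaya} already supplies the required equivalence $\QGr(\wtB) \equiv \qcoh(E/E[2])$, and $E/E[2]$ is an elliptic curve, hence projective, reduced, and irreducible.

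The substantive task will be to show that $\wtB$ is semiprime. The plan is as follows: the ring $B = B(E,\tau,\cL)$, being the twisted homogeneous coordinate ring of the integral projective variety $E$, is a noetherian domain, and so $B' := B \otimes M_2(k)$ is prime (being Morita equivalent to $B$). The group $\G \cong (\bZ/2)^2$ acts by automorphisms on $B'$ with $(B')^\G = \wtB$, and $|\G| = 4$ is invertible in $k$ since $\operatorname{char}(k) \ne 2$. I will then invoke the classical theorem of Bergman--Isaacs (see e.g.\ Montgomery, \emph{Fixed Rings of Finite Automorphism Groups of Associative Rings}, Ch.~2): the fixed subring of a semiprime ring under a finite group action whose order is invertible is itself semiprime. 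Applied to $B'$, this yields that $\wtB$ is semiprime, and Theorem~\ref{thm.B-tilde_prime} then delivers the primeness of $\wtB$.

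The main (essentially only) obstacle is the semiprimeness step. A direct argument from the faithfully flat subring inclusion $\wtB \hookrightarrow B'$ alone is insufficient: in general, a subring of a prime ring need not even be semiprime (witness the diagonal embedding $\bZ \times \bZ \hookrightarrow M_2(\bZ)$). The invertibility of $|\G|$ in $k$ is thus indispensable, and it is exactly the input that makes the averaging argument underlying Bergman--Isaacs go through.
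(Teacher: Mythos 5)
Your proof is correct and follows essentially the same route as the paper. The paper likewise reduces the primeness of $\wtB$ to \Cref{thm.B-tilde_prime} together with the equivalence $\QGr(\wtB)\equiv\qcoh(E/E[2])$ from \Cref{th.wtB_Azumaya}, and establishes the needed semiprimeness by observing that $B$ is a domain, hence $B\otimes M_2(k)$ is prime, and then citing \cite[Cor.~1.5(1)]{Mon80} (Montgomery's lecture notes on fixed rings, the same source you name for the Bergman--Isaacs-type result) to conclude that $\wtB=(B\otimes M_2(k))^\G$ is semiprime when $|\G|$ is invertible.
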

\begin{proof}
As observed in \cite[Cor. 5.1.8]{D}, because $B$ is a domain $B \otimes M_2(k)$ is a prime ring, so \cite[Cor. 1.5(1)]{Mon80}
shows that  $(B \otimes M_2(k))^\G$, which is $\wtB$, is a semiprime ring. Therefore 
 \Cref{th.wtB_Azumaya,thm.B-tilde_prime} imply that $\wtB$ is a prime ring.
\end{proof}

\subsubsection{Remark}
The hypothesis in \Cref{thm.B-tilde_prime} that the algebra $A$ is connected was needed to show that 
$A$ does not contain a non-zero left ideal of finite dimension. For $\wtB$, one can prove that without
appealing to the fact that $\wtB$ is connected.
Since $\wtB = \wtQ/(\Theta,\Theta')$ where $\Theta,\Theta'$ is a regular sequence on $\wtQ$ of length 2,
the projective dimension of $\wtB$ as a left $\wtQ$-module is 2. 
Hence, by \cite[Prop. 2.1(e)]{LS93}, $\wtB$ does not contain a non-zero left ideal of finite dimension.

\subsubsection{}
The twisted homogeneous coordinate ring of a reduced and irreducible variety, in particular $B(E,\tau,\cL)$, is a domain.
 
 \begin{proposition}
\label{prop.B-tilde.not.domain}
$\wtB$ is not a domain. In particular, in $\wtB$, $0=y_0^2+y_1^2+y_2^2+y_3^2  = $
$$ 
(y_0-y_1-y_2-y_3)^2
 \; = \; (y_0-y_1+y_2+y_3)^2
  \; = \; (y_0+y_1-y_2+y_3)^2
\; = \; (y_0+y_1+y_2-y_3)^2.
$$
\end{proposition}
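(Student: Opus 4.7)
My plan is to prove the identities directly and then deduce the failure of the domain property.

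First, the initial equation $y_0^2 + y_1^2 + y_2^2 + y_3^2 = 0$ holds in $\widetilde{B}$ by definition, since $\widetilde{B} = \widetilde{Q}/(\Theta, \Theta')$ and $\Theta = y_0^2 + y_1^2 + y_2^2 + y_3^2$ by construction. The substance of the proposition is the four remaining equalities.

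I will prove the uniform statement: for any signs $\varepsilon_0, \varepsilon_1, \varepsilon_2, \varepsilon_3 \in \{\pm 1\}$ with $\varepsilon_0 \varepsilon_1 \varepsilon_2 \varepsilon_3 = -1$, one has
\[
\bigl(\varepsilon_0 y_0 + \varepsilon_1 y_1 + \varepsilon_2 y_2 + \varepsilon_3 y_3\bigr)^2 \; = \; y_0^2 + y_1^2 + y_2^2 + y_3^2
\]
already in $\widetilde{Q}$. The four sign patterns appearing in the proposition, namely $(+,-,-,-)$, $(+,-,+,+)$, $(+,+,-,+)$, $(+,+,+,-)$, all satisfy this parity constraint. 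Expanding the square, the diagonal part is $\sum_i \varepsilon_i^2 y_i^2 = \sum_i y_i^2$, and the cross terms are
\[
\sum_{0 \le i < j \le 3} \varepsilon_i \varepsilon_j \bigl(y_i y_j + y_j y_i\bigr).
\]
Using the three ``symmetric'' relations from \Cref{Q-tilde-relns}, namely $y_0 y_i + y_i y_0 = y_j y_k + y_k y_j$ for cyclic $(i,j,k)$, I will replace each term $y_0 y_i + y_i y_0$ by the corresponding $y_j y_k + y_k y_j$. The cross terms then collect into
\[
(\varepsilon_0 \varepsilon_1 + \varepsilon_2 \varepsilon_3)(y_2 y_3 + y_3 y_2) + (\varepsilon_0 \varepsilon_2 + \varepsilon_1 \varepsilon_3)(y_1 y_3 + y_3 y_1) + (\varepsilon_0 \varepsilon_3 + \varepsilon_1 \varepsilon_2)(y_1 y_2 + y_2 y_1).
\]
Each of the three scalar coefficients vanishes precisely when $\varepsilon_0 \varepsilon_1 \varepsilon_2 \varepsilon_3 = -1$: for instance, $\varepsilon_0 \varepsilon_1 + \varepsilon_2 \varepsilon_3 = 0$ iff $\varepsilon_0 \varepsilon_1 \varepsilon_2 \varepsilon_3 = -(\varepsilon_2\varepsilon_3)^2 = -1$. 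Hence all four specific squares equal $\Theta$, and modulo $(\Theta,\Theta')$ they vanish in $\widetilde{B}$.

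It remains to observe that each linear combination $\varepsilon_0 y_0 + \varepsilon_1 y_1 + \varepsilon_2 y_2 + \varepsilon_3 y_3$ is itself non-zero in $\widetilde{B}$, so $\widetilde{B}$ has non-zero nilpotent elements and is not a domain. For this, I invoke \Cref{pr.Hilbert}: since $\widetilde{Q}$ has Hilbert series $(1-t)^{-4}$ and $(\Theta,\Theta')$ is concentrated in degree $\ge 2$, the degree-one piece $\widetilde{B}_1$ has dimension $4$ with basis $y_0, y_1, y_2, y_3$, so none of the four sign-twisted linear combinations vanishes in $\widetilde{B}$.

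There is no real obstacle here; this is essentially a bookkeeping exercise with the six defining relations. The only conceptual point worth flagging is the parity observation that picks out exactly the four sign patterns appearing in the statement.
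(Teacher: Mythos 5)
Your proof is correct and follows essentially the same route as the paper's: expand the square and cancel the cross terms using the symmetric relations $y_0y_i+y_iy_0=y_jy_k+y_ky_j$, leaving $\Theta$, which is zero in $\wtB$. The only cosmetic difference is that you treat all four sign patterns uniformly via the parity condition $\varepsilon_0\varepsilon_1\varepsilon_2\varepsilon_3=-1$, where the paper computes one case and disposes of the rest by "similar calculations" or by noting the four elements form a $\G$-orbit; you also make explicit the (easy) point that the four linear forms are non-zero in $\wtB_1$.
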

\begin{proof}
This is a straightforward calculation:  $(y_0-y_1-y_2-y_3)^2$ equals
$$
 y_0^2+y_1^2+y_2^2+y_3^2 -\sum_{i=1}^3(y_0y_i+y_iy_0 - y_jy_k-y_ky_j)
 $$
 where $(i,j,k)$ is a cyclic permutation of $1,2,3$. But $y_0y_i+y_iy_0 = y_jy_k+y_ky_j$ when $(i,j,k)$ is a cyclic permutation of $1,2,3$
 and $ y_0^2+y_1^2+y_2^2+y_3^2 =-\Omega$ which is zero in $\wtB$.
Similar calculations show that the squares of the other 3 elements are zero in $\wtB$; alternatively, one can use the fact that $\G$ acts as automorphisms of  $\wtB$ and these four elements in $\wtB_1$ form a $\G$-orbit. 
\end{proof}

\section{Point modules for $\wtQ$}
\label{sect.pt.modules}

A {\sf point module} for  a connected graded algebra $A$ is a graded left $A$-module $M$ such that $M=AM_0$ and $\dim_k(M_i)=1$ for all $i \ge 0$. The importance of point modules is that they are simple objects in $\QGr(A)$. 

\subsection{}
Suppose $M$ is a point module for $\wtQ$. Its degree-zero component, $M_0$, is annihilated by a 3-dimensional subspace of $\wtQ_1$. 
That 3-dimensional subspace determines and is determined by a point in $\PP^3$, its vanishing locus. 
We will show that the only points in $\PP^3$ that arise in this way are those in Table \ref{table.20.pts}  where 
the coordinates are written with respect to the coordinate system $(y_0,y_1,y_2,y_3)$. We write $\fP$ for this set of points. 

Recall that $a,b,c,i$ are fixed square roots of $\a,\b,\c,-1$.

\begin{table}[htdp]
\begin{center}
\begin{tabular}{|l||l|l|l|l|l|}
\hline
$\quad \fP_\infty$ & $\qquad \fP_0$ & $\qquad \fP_1$ & $\qquad \fP_2$ & $\qquad \fP_3$ & $\G$
\\
\hline
\hline
$(1,0,0,0)$ & $(1,1,1,1)$ & $(bc,-i,-ib,-c)$ & $(ac,-a,-i,-ic)$&   $(ab, -ia,-b,-i)$  & 
\\
\hline
$(0,1,0,0)$ &$(1,1,-1,-1)$   & $(bc,-i,ib,c)$  &  $(ac,-a,i,ic)$&  $(ab, -ia,b,i)$  & $\c_1$
\\
\hline
 $(0,0,1,0)$ &$(1,-1,1,-1)$   & $(bc,i,-ib,c)$  &  $(ac,a,-i,ic)$&  $(ab, ia,-b,i)$  & $\c_2$
  \\
\hline
 $(0,0,0,1)$ & $(1,-1,-1,1)$ &$(bc,i,ib,-c)$  & $(ac,a,i,-ic)$  & $(ab, ia,b,-i)$  & $\c_3$
  \\
\hline
\end{tabular}
\end{center}
\vskip .12in
\caption{The points in $\fP$.}
\label{table.20.pts}
\end{table}
The points in $\fP_\infty$ are fixed by $\G$ and every other $\fP_i$ is a $\G$-orbit. 
If $\bfu$ is the topmost point in one of the columns $\fP_i$, $i=0,1,2,3$,
 the other points in that column are $\c_1(\bfu)$, $\c_2(\bfu)$, and $\c_3(\bfu)$, in that order.

We define a permutation $\theta$ of $\fP$ with the property  $\theta^2=\id_{\fP}$ by
\begin{equation}
\label{eq.theta}
\theta(\bfu):= 
\begin{cases} 
\bfu & \text{if $\bfu \in \fP_\infty \cup \fP_0$}
\\
\c_i(\bfu) & \text{if $\bfu \in \fP_i$, $i=1,2,3$.}
\end{cases}
\end{equation}

\subsection{The point scheme, $\cP$}
\label{ssect.pt.mods.Gamma}
\label{ssect.Gamma}
Let $V$ denote the linear span of $y_0,y_1,y_2,y_3$. The defining relations for $\widetilde{Q}$ belong to $V^{\otimes 2}$. 
Non-zero elements in $V^{\otimes 2}$ are forms of bi-degree $(1,1)$ on $\PP(V^*) \times \PP(V^*)=\PP^3 \times \PP^3$.
Let
$$
\mathcal{P}: \; =\; \hbox{the subscheme of $\PP^3 \times \PP^3$ where the quadratic relations for $\wtQ$ vanish}.
$$
We will show that $\mathcal{P}$ is a reduced scheme consisting of 20 points.

\begin{lemma}
\label{lem.Gamma.symm}
If $(\bfu,\bfv) \in \mathcal{P}$, then $(\bfv,\bfu) \in \mathcal{P}$.
\end{lemma}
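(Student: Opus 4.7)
The plan is to observe that the six defining relations for $\widetilde{Q}$ exhibited in \Cref{prop.Q-tilde} split cleanly, under the flip involution $\sigma$ on $V\otimes V$ sending $y_a\otimes y_b$ to $y_b\otimes y_a$, into antisymmetric and symmetric parts. Concretely, for each cyclic permutation $(i,j,k)$ of $(1,2,3)$, the relation
\[
y_0y_i - y_iy_0 - \alpha_i(y_jy_k - y_ky_j)
\]
is a sum of commutators, hence lies in the $(-1)$-eigenspace of $\sigma$, while
\[
y_0y_i + y_iy_0 - y_jy_k - y_ky_j
\]
is a sum of anticommutators, hence lies in the $(+1)$-eigenspace of $\sigma$.

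Now, evaluating a bihomogeneous form $r\in V\otimes V$ at a pair $(\mathbf{u},\mathbf{v})\in \PP^3\times\PP^3$ amounts to pairing $r$ with $\mathbf{u}\otimes \mathbf{v}\in V^*\otimes V^*$; swapping the two arguments corresponds exactly to applying $\sigma$ to $r$. Therefore $r(\mathbf{v},\mathbf{u}) = (\sigma r)(\mathbf{u},\mathbf{v}) = \pm r(\mathbf{u},\mathbf{v})$ according to whether $r$ is symmetric or antisymmetric. In either case, $r(\mathbf{u},\mathbf{v})=0$ if and only if $r(\mathbf{v},\mathbf{u})=0$.

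Since all six defining relations are $\sigma$-eigenvectors, the vanishing of every relation at $(\mathbf{u},\mathbf{v})$ is equivalent to the vanishing of every relation at $(\mathbf{v},\mathbf{u})$. Thus $(\mathbf{u},\mathbf{v})\in\cP$ implies $(\mathbf{v},\mathbf{u})\in\cP$. There is no real obstacle here; the proof is essentially the observation that the subspace $R\subseteq V\otimes V$ of relations happens to be $\sigma$-stable, which is immediate by inspection of the generators listed in \Cref{Q-tilde-relns}.
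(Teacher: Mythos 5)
Your proof is correct and is essentially the paper's argument: both reduce the claim to the observation that the relation space $R\subseteq V\otimes V$ is stable under the flip $\sigma$, and then conclude that the zero locus $\cP$ is symmetric. The only difference is cosmetic: the paper derives $\sigma(R)=R$ by citing the anti-automorphism $y_j\mapsto -y_j$ from \Cref{prop.Q-tilde}, whereas you verify directly that each of the six relations in \Cref{Q-tilde-relns} is a $\sigma$-eigenvector, which is the same computation in slightly different clothing.
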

\begin{proof}
As remarked in Proposition \ref{prop.Q-tilde}, there is an anti-automorphism of $\wtQ$ given by $y_i \mapsto -y_i$ for $i=0,1,2,3$.
Thus, if $r=\sum \mu_{ij}y_i\otimes y_j$ is a quadratic relation for $\wtQ$ so is $r'=\sum \mu_{ij}y_j\otimes y_i$. Obviously, $r$ vanishes at  
$(\bfu,\bfv)\in \PP^3 \times \PP^3$ if and only if $r'$ vanishes at $(\bfv,\bfu)$. 
The lemma now follows from the fact that $\mathcal{P}$ is the zero locus of the set of quadratic relations for $\wtQ$. 
\end{proof}

\subsubsection{From point modules to points in $\cP$}
\label{sssect.pt-mods.pts}
Suppose $M$ is a point module for $\wtQ$. Let $e_0,e_1,\ldots$ be a basis for $M$ with $\deg(e_n)=n$. 
Define $\l_{nj} \in k$ by the requirement that $y_je_n=\l_{nj} e_{n+1}$. Because $M$ is a point module, for each $n$, some $\l_{nj}$ is non-zero. 
The point $p_n:=(\l_{n0},\l_{n1},\l_{n2},\l_{n3})   \in \PP^3$ does not depend on the basis $\{e_n\}_{n \ge 0}$. 
Since $y_j(p_n)=\l_{nj}$, the $p_n$'s belong to $\PP(V^*)$. 

Because $M$ is a $\wtQ$-module, each quadratic relation $r \in V^{\otimes 2}$ has the property that $r\cdot e_n=0$ for all $n$.
Thus, $r$ viewed  as a (1,1) form on $\PP^3 \times \PP^3$ vanishes at $(p_{n+1},p_n)$. 
Hence $(p_{n+1},p_n) \in \mathcal{P}$.  

\subsection{The point modules $M_\bfu$, $\bfu \in \fP$} 

\begin{proposition}
\label{prop.pt.mods}
Let $\bfu \in \fP$. Let $\theta$ be the function defined at \Cref{eq.theta} and for each $n \ge 0$ write 
$\theta^n(\bfu)=(\l_{n0},\l_{n1},\l_{n2},\l_{n3})$ where the coordinates are written with respect to $(y_0,y_1,y_2,y_3)$. There is a point module,
$M_\bfu$,  with homogeneous basis $e_0,e_1,\ldots$, $\deg(e_n)=n$, and action 
\begin{equation}
\label{eq.pt.mod}
y_j e_n := \l_{nj} e_{n+1}.
\end{equation}
These 20 point modules are pair-wise non-isomorphic. 
\end{proposition}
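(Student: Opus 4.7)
The plan has two parts: (i) verify that for each $\bfu \in \fP$ the formulas $y_j e_n := \lambda_{nj}\, e_{n+1}$ define a graded $\wtQ$-module structure on $M_\bfu = \bigoplus_{n \ge 0} k e_n$; and (ii) show that distinct $\bfu \in \fP$ give pairwise non-isomorphic modules.

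For (i), as observed above in \S\ref{ssect.Gamma}, the quadratic relations of $\wtQ$ annihilate $M_\bfu$ if and only if each consecutive pair $(p_{n+1},p_n) = (\theta^{n+1}(\bfu),\theta^n(\bfu))$ lies in the point scheme $\cP \subseteq \PP^3 \times \PP^3$. Since $\theta^2 = \id_\fP$, these pairs alternate between $(\theta(\bfu),\bfu)$ and $(\bfu,\theta(\bfu))$, and by \Cref{lem.Gamma.symm} the two membership conditions are equivalent. The task therefore reduces to showing $(\theta(\bfu),\bfu) \in \cP$ for every $\bfu \in \fP$. Now $\G$ acts as graded algebra automorphisms of $\wtQ$ by \Cref{prop.Q-tilde-tilde} and so preserves the subspace $R \subseteq V \otimes V$ of defining relations; hence $\cP$ is stable under the diagonal $\G$-action on $\PP^3 \times \PP^3$. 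Since each $\fP_i$ is $\G$-stable and $\theta$ is $\G$-equivariant (the restriction of $\gamma_i$ to $\fP_i$ commutes with the action of $\G$ because $\G$ is abelian), it suffices to check $(\theta(\bfu),\bfu) \in \cP$ for a single representative in each $\G$-orbit: the four fixed points in $\fP_\infty$, and one representative each in the single orbits $\fP_0,\fP_1,\fP_2,\fP_3$.

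For representatives in $\fP_\infty \cup \fP_0$, where $\theta$ acts trivially, the six relations plainly vanish at $(\bfu,\bfu)$: at $(1,0,0,0)$ because every relation is supported on monomials $y_iy_j$ with $i \ne j$, and at $(1,1,1,1)$ by the pairwise cancellations $1\cdot 1 - 1\cdot 1 = 0$ and $1 + 1 - 1 - 1 = 0$. For $\bfu \in \fP_i$ ($i=1,2,3$), where $\theta(\bfu)=\gamma_i(\bfu)$, the verification of $(\gamma_i(\bfu),\bfu)\in\cP$ is a short direct computation exploiting the identities $a^2=\alpha$, $b^2=\beta$, $c^2=\gamma$, $i^2=-1$. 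This mechanical check is the only step requiring real calculation, and the main obstacle is simply sign bookkeeping; no conceptual difficulty arises.

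For (ii), a graded $\wtQ$-module isomorphism $M_\bfu \to M_\bfv$ must carry $e_0$ to a non-zero scalar multiple of its counterpart (both degree-zero components are one-dimensional), so it identifies the kernels of the multiplication maps $V \to (M_\bfu)_1$ and $V \to (M_\bfv)_1$. These kernels are precisely the 3-dimensional subspaces of $V$ defining $\bfu$ and $\bfv$ as points of $\PP(V^*)$, so $M_\bfu \cong M_\bfv$ forces $\bfu = \bfv$ in $\PP^3$. It remains to observe that the 20 entries of Table~\ref{table.20.pts} are pairwise distinct in $\PP^3$ under the standing hypotheses $\{\alpha,\beta,\gamma\}\cap\{0,\pm 1\} = \varnothing$ (which force $a,b,c$ non-zero and $a^2,b^2,c^2 \notin \{\pm 1\}$): points in different columns are separated by incompatible zero-patterns or incompatible projective ratios---the distinguished first coordinates $bc,\ ac,\ ab$ generically distinguish $\fP_1,\fP_2,\fP_3$---and the four points within any common column are distinguished by the sign patterns of their last three coordinates.
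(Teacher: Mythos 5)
Your argument is correct and follows the same overall route as the paper: reduce to checking that each quadratic relation, viewed as a $(1,1)$-form, vanishes at $(\theta(\bfu),\bfu)$ for all $\bfu\in\fP$; then observe that distinct points of $\fP$ give non-isomorphic modules because the annihilator in $\wtQ_1$ of the degree-zero generator recovers the point in $\PP(\wtQ_1^*)$.

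Where you go beyond the paper is the $\G$-equivariance reduction. The paper simply records the matrix $\sM_1$, evaluates $\sM_1(\bfv)\theta(\bfv)^{\sT}$ for a general $\bfv\in\fP_1$ as a sample computation, and leaves the remaining cases to the reader. You instead observe that $R\subseteq V\otimes V$ is a $\G$-subrepresentation, hence $\cP$ is stable under the diagonal $\G$-action on $\PP^3\times\PP^3$, and that $\theta$ is $\G$-equivariant (trivially on $\fP_\infty\cup\fP_0$, and on $\fP_i$ because $\G$ is abelian), so that vanishing needs to be checked only at one representative per $\G$-orbit. This is a nice structural cleanup: it explains why the paper's ``check one representative per column'' shortcut is legitimate, rather than just asserting that the other cases are similar. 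Two caveats, both of which also apply to the paper's own write-up: (a) you still have to actually carry out the matrix computation for one representative in each of $\fP_1,\fP_2,\fP_3$ --- asserting ``a short direct computation'' is no more a proof than the paper's ``routine calculation,'' though it is at the same level of rigor; (b) the pairwise distinctness of the 20 points of Table~\ref{table.20.pts} in $\PP^3$ deserves the explicit verification you sketch: for example, a point of $\fP_1$ and a point of $\fP_2$ can coincide only if $ab=\pm i$, i.e.\ $\a\b=-1$, which combined with $\a+\b+\c+\a\b\c=0$ forces $\a^2=1$, excluded. Your sketch is in the right spirit but would benefit from one such worked exclusion rather than the word ``generically,'' which wrongly suggests a Zariski-open condition rather than one that holds for all admissible parameters.
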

\begin{proof}
It is clear that $M_\bfu$ is generated by $e_0$ so it suffices to show  that (\ref{eq.pt.mod}) really does define a left $\wtQ$-module. To do this 
we must show that every relation for $\wtQ$ annihilates every $e_n$. In other words, we must show that every quadratic relation for $\wtQ$, when viewed as a form of bi-degree $(1,1)$ on $\PP^3 \times \PP^3$, vanishes at $\big((\l_{n+1,0},\l_{n+1,1},\l_{n+1,2},\l_{n+1,3}),(\l_{n0},\l_{n1},\l_{n2},\l_{n3})\big) \in \PP^3 \times \PP^3$ for all $n \ge 0$.; i.e., it suffices to show that these forms vanish at $(\theta(\bfv),\bfv)$ for all $\bfv \in \fP$. Since $\theta^2=1$, this is equivalent to showing they vanish at $(\bfv,\theta(\bfv))$ for all $\bfv \in \fP$. 

The relations for $\wtQ$ are the entries in the matrix $\sM_1\sfy$ where 
$$
\sM_1=\begin{pmatrix}
-y_1 & y_0 & \a y_3 & - \a y_2 \\
-y_2 & -\b y_3 & y_0 & \b y_1 \\
-y_3 & \c y_2 & -\c y_1 & y_0 \\
y_1 & y_0 & -y_3 & - y_2 \\
y_2 & -y_3 & y_0 & -y_1 \\
y_3 &  -y_2 & -y_1 & y_0 \\
\end{pmatrix}
\quad \hbox{and} \quad  \sfy = \begin{pmatrix} y_0 \\ y_1 \\ y_2 \\ y_3 \end{pmatrix}.
$$
We must therefore show that $\sM_1(\bfv)\theta(\bfv)^\sT=0$ for all $\bfv \in \fP$. This is a routine calculation. We give one example to illustrate the 
process. 

Let $\bfv=(\d_0,\d_1,\d_2,\d_3) \in \fP_1$. Then $\theta(\bfv)=\c_1(\bfv) = (\d_0,\d_1,-\d_2,-\d_3)$
so
$$
\sM_1(\bfv)\theta(\bfv)^\sT = 
\begin{pmatrix}
-\d_1 & \d_0 & \a \d_3 & - \a \d_2 \\
-\d_2 & -\b \d_3 & \d_0 & \b \d_1 \\
-\d_3 & \c \d_2 & -\c \d_1 & \d_0 \\
\d_1 & \d_0 & -\d_3 & - \d_2 \\
\d_2 & -\d_3 & \d_0 & -\d_1 \\
\d_3 &  -\d_2 & -\d_1 & \d_0 \\
\end{pmatrix}
\begin{pmatrix} \d_0 \\ \d_1 \\ -\d_2 \\- \d_3 \end{pmatrix}
=
2\begin{pmatrix} 0\\ -\d_0\d_2-\b \d_3\d_1 \\-\d_0\d_3+\c \d_1\d_2\\ \d_0\d_1+\d_2\d_3 \\ 0 \\ 0 \end{pmatrix}.
$$
It is easy to check that this $6 \times 1$ matrix is 0 for all  $\bfv\in \fP_1$. 

The annihilator of $e_0$ in $\wtQ_1$ is the subspace that vanishes at $\bfu$. Hence if $\bfu$ and $\bfv$ are different points of $\fP$,
$M_\bfu \not\cong M_\bfv$. 
\end{proof}

\begin{theorem}
\label{thm.pt.mods}
The 20 point modules $M_\bfu$, $\bfu \in \fP$, in Proposition \ref{prop.pt.mods} are all the $\wtQ$-point modules.
\end{theorem}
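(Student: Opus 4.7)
The plan is to reduce the classification to a complete description of the scheme $\mathcal{P}\subseteq\PP^3\times\PP^3$. By \S\ref{sssect.pt-mods.pts}, any point module $M$ for $\wtQ$ with a fixed homogeneous basis $\{e_n\}_{n\ge 0}$ determines a sequence $p_0,p_1,\ldots\in\PP^3$ satisfying $(p_{n+1},p_n)\in\mathcal{P}$, and conversely such a sequence reconstructs $M$ via $y_je_n=(p_n)_je_{n+1}$. Hence it suffices to show that $\mathcal{P}$ consists precisely of the 20 pairs $\{(\bfu,\theta(\bfu)):\bfu\in\fP\}$: once this is known, every point-module sequence is forced to take the form $p_n=\theta^n(p_0)$ with $p_0\in\pr_2(\mathcal{P})=\fP$, so $M\cong M_{p_0}$ in the notation of \Cref{prop.pt.mods} and the 20 modules constructed there exhaust all possibilities.

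The inclusion $\{(\bfu,\theta(\bfu)):\bfu\in\fP\}\subseteq\mathcal{P}$ is contained in \Cref{prop.pt.mods} and \Cref{lem.Gamma.symm}, so the task is to establish the reverse inclusion. We exploit the explicit $6\times 4$ matrix $\sM_1$ displayed in the proof of \Cref{prop.pt.mods}: the six defining relations of $\wtQ$ are the entries of $\sM_1\sfy$, so $(\bfu,\bfv)\in\mathcal{P}$ if and only if $\bfv^\sT\in\ker\sM_1(\bfu)$. Therefore $\pr_1(\mathcal{P})$ is the rank-$\le\!3$ locus of $\sM_1$, cut out in $\PP^3$ by the $\binom{6}{4}=15$ quartic $4\times 4$ minors of $\sM_1$. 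The argument then splits into two rank computations: (i) the common vanishing locus of these fifteen quartics is exactly $\fP$; and (ii) at each $\bfu\in\fP$ the matrix $\sM_1(\bfu)$ has rank exactly $3$, so that $\ker\sM_1(\bfu)$ is the one-dimensional line spanned by $\theta(\bfu)$. Item (ii) is the easier half: for each $\bfu\in\fP$ one need only exhibit a single non-vanishing $3\times 3$ minor, and the $\G$-equivariance reduces this to a few orbit representatives.

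The hard part is (i). The plan is again to use the $\G$-action: both $\fP$ and the rank-$\le\!3$ locus are $\G$-stable, so stratifying $\PP^3$ by the $\G$-orbits and by the vanishing pattern of the $y_j$ cuts the problem into a handful of cases. On the coordinate strata (one or more $y_j=0$) the matrix $\sM_1$ simplifies enough that the relevant minors can be evaluated by hand, ruling out extraneous zeros directly. On the open stratum where every $y_j$ is non-zero, a judicious choice of a few of the fifteen quartics should pin the solution set down to the four $\G$-orbits $\fP_0,\fP_1,\fP_2,\fP_3$ of Table~\ref{table.20.pts}. With (i) and (ii) in place, $\mathcal{P}$ is the prescribed 20-point set, and the theorem follows from the reduction in the first paragraph.
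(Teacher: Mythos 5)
Your proposal is correct and follows essentially the same route as the paper: reduce via \S\ref{sssect.pt-mods.pts} and \Cref{lem.Gamma.symm} to the identification $\mathcal{P}=\{(\bfu,\theta(\bfu)):\bfu\in\fP\}$, then prove this by computing the $4\times 4$ minors of $\sM_1$ (the paper's \Cref{thm.pt.mods.Q-tilde}). The only minor divergence is in your item (ii): rather than exhibiting a non-vanishing $3\times 3$ minor at each $\bfu\in\fP$, the paper observes that $\pr_2(\mathcal{P})=\fP$ contains no line, which forces $\rank\sM_1(\bfu)=3$ all at once.
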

\begin{proof}
 Let $M$ be a point module for $\wtQ$. Let $\{e_n\; | \; n \ge 0\}$ be a homogeneous basis for $M$ with $\deg(e_n)=n$. 
 Let $p_n$, $n \ge 0$, be the points in $\PP^3$ determined by  the procedure described in \S\ref{sssect.pt-mods.pts}.
 Then  $(p_{n+1},p_n) \in \mathcal{P}$ for all $n \ge 0$. By \Cref{lem.Gamma.symm}, $(p_n,p_{n+1}) \in \mathcal{P}$. Thus, to prove the 
 Theorem it suffices to show that $\mathcal{P}= \big\{\big(\bfu,\theta(\bfu)\big) \; \big\vert \; \bfu \in \fP \big\}$. This is what we do in
 \Cref{thm.pt.mods.Q-tilde} below. 
 \end{proof}

\begin{theorem}
\label{thm.pt.mods.Q-tilde} 
Let $\mathcal{P} \subseteq \PP^3 \times \PP^3$ be the subscheme defined in \S\ref{ssect.Gamma}.  Then 
$$
\mathcal{P}\;=\; \{(\bfu,\bfv) \in \PP^3 \times \PP^3 \; | \; \sM_1(\bfu)\bfv=0\}  
\; =\;  \big\{\big(\bfu,\theta(\bfu)\big) \; \big\vert \; \bfu \in \fP \big\}.
$$ 
In particular, $\mathcal{P}$ is the graph of the automorphism $\theta$ of $\fP$. 
\end{theorem}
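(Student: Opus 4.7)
Since \Cref{prop.pt.mods} and \Cref{lem.Gamma.symm} are already in hand, the plan is to show that $\mathcal{P}$ contains no points beyond the $20$ pairs $(\bfu,\theta(\bfu))$ already exhibited, and that $\pr_1:\mathcal{P}\to\PP^3$ restricts to a bijection onto $\fP$. The first identity of the theorem, $\mathcal{P}=\{(\bfu,\bfv):\sM_1(\bfu)\bfv=0\}$, is essentially definitional: the six defining relations of $\wtQ$ are, by construction, the entries of the column $\sM_1\cdot\bfy$, and as bilinear forms on $V\times V$ they vanish at $(\bfu,\bfv)$ precisely when $\sM_1(\bfu)\bfv=0$.

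What remains are two assertions: (a) $\pr_1(\mathcal{P})\subseteq\fP$, and (b) for each $\bfu\in\fP$ the kernel of the $6\times 4$ matrix $\sM_1(\bfu)$ is one-dimensional. Combined with \Cref{prop.pt.mods}, which places $\theta(\bfu)$ in that kernel, (a) and (b) give $\mathcal{P}=\{(\bfu,\theta(\bfu)):\bfu\in\fP\}$ as a set; the scheme-theoretic version follows either from a Hilbert-series count on the ideal generated by the six relations, or simply by noting that the $20$ point modules of \Cref{prop.pt.mods} are pairwise non-isomorphic so correspond to distinct closed points. For (b), a direct rank computation suffices: for each of the four $\G$-orbit representatives from the columns $\fP_0,\fP_1,\fP_2,\fP_3$ of Table \ref{table.20.pts}, I would exhibit three rows of $\sM_1(\bfu)$ that are visibly linearly independent, forcing $\dim\ker\sM_1(\bfu)\le 1$; $\G$-equivariance of $\sM_1$ extends the bound to the full orbit, while $\fP_\infty$ is handled by inspection.

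The substantive step is (a). Non-triviality of $\ker\sM_1(\bfu)$ is equivalent to the vanishing of all fifteen $4\times 4$ minors of $\sM_1(\bfu)$, cutting out a closed subscheme $Z\subseteq\PP^3$ which must be identified with $\fP$. I would organize the enumeration by the $\G$-action: on the locus where some coordinate vanishes, a brief inspection of $\sM_1(\bfu)$ restricts the solutions to the four $\G$-fixed points of $\fP_\infty$; on the open stratum $\{y_0y_1y_2y_3\ne 0\}$, I would reduce the minor equations to polynomial relations in the $\G$-invariants $y_i^2/y_0^2$ and $y_1y_2y_3/y_0^3$, and solve them using the Sklyanin constraint $\a+\b+\c+\a\b\c=0$, expecting four $\G$-orbits of four points each matching $\fP_0,\fP_1,\fP_2,\fP_3$.

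The main obstacle is the casework in this final step: the minors of $\sM_1$ are bulky, and the resulting system, though made tractable by the $\G$-symmetry, still requires careful elimination. A more conceptual shortcut would be to invoke Van den Bergh's result \cite{VdB88} that a $4$-dimensional AS-regular algebra with Hilbert series $(1-t)^{-4}$ has at most $20$ point modules; combined with the $20$ distinct point modules already produced in \Cref{prop.pt.mods}, this would force $|\pr_1(\mathcal{P})|\le 20$ and, together with (b), conclude the proof without the explicit minor computation.
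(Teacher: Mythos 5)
Your main line of attack — compute the fifteen $4\times 4$ minors of $\sM_1$, show they cut out exactly $\fP$ on $\pr_1(\mathcal{P})$, then use \Cref{prop.pt.mods} to identify each fiber — is exactly what the paper does, including the acknowledgment that the casework, while tamed by $\G$-symmetry, is genuinely tedious. (For step (b), the paper avoids the per-orbit rank check you propose by noting that $\pr_2(\mathcal{P})=\fP$ by \Cref{lem.Gamma.symm}, so the fiber $\{\bfv : \sM_1(\bfu)\bfv=0\}\subseteq\PP^3$, being a linear space contained in a 20-point set, cannot contain a line; this is a bit slicker but your version would also work.)

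The proposed shortcut, however, has a real gap. Van den Bergh's note does \emph{not} say that every $4$-dimensional AS-regular algebra with Hilbert series $(1-t)^{-4}$ has at most $20$ point modules; it says a \emph{generic} one has exactly $20$. The underlying B\'ezout argument (reproduced in the paper right after this theorem, via the Segre embedding $\PP^3\times\PP^3\hookrightarrow\PP^{15}$ and the degree-$20$ variety of rank-one matrices cut by six hyperplanes) only bounds $|\mathcal{P}|$ by $20$ once you already know the intersection is zero-dimensional. That finiteness hypothesis is precisely what the explicit minor computation is needed to establish, and it cannot be waived: the $4$-dimensional Sklyanin algebra $Q$ itself is AS-regular with Hilbert series $(1-t)^{-4}$ but has a $1$-dimensional point scheme ($E$ plus four points). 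So without first proving $\dim\mathcal{P}=0$ the inequality $|\pr_1(\mathcal{P})|\le 20$ simply isn't available, and the shortcut does not conclude the argument. (A related side issue: pairwise non-isomorphism of the $20$ point modules shows the $20$ closed points of $\mathcal{P}$ are distinct, but by itself says nothing about whether $\mathcal{P}$ is reduced; the paper establishes reducedness separately via B\'ezout, again \emph{after} finiteness is known.)
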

\begin{proof}
 Let $\pr_1,\pr_2:\mathcal{P} \to \PP^3$ denote the projections onto the first and second factors of $\PP^3 \times \PP^3$. We will show that $\pr_1(\mathcal{P})=\fP$.
 Let $\bfu \in \pr_1(\mathcal{P})$. There is a point $\bfv \in \PP^3$ such that $(\bfu,\bfv) \in \mathcal{P}$, i.e., such that $\sM_1(\bfu)\bfv=0$.
 This implies that $\rank(\sM_1(\bfu)) \le 3$. Thus the $4 \times 4$ minors of $\sM_1$  vanish at $\bfu$. 
We used SAGE \cite{sage} to compute these minors. After removing a common factor of 2, they are
 \begin{align*}
&
 -b\c y_0y_1^3 - \a \c y_0y_1y_2^2 + \b \c y_1^2y_2y_3 + a\c y_2^3y_3 - \a \b y_0y_1y_3^2 + \a \b y_2y_3^3 - y_0^3y_1 + y_0^2y_2y_3
 \\
 &
 \qquad = (y_2y_3-y_0y_1)(y_0^2+\b\c y_1^2+\a\c y_2^2+\a\b y_3^2),
\\
&
 -\b \c y_0y_1^2y_2 - \a \c y_0y_2^3 + \b \c y_1^3y_3 + \a \c y_1y_2^2y_3 - \a \b y_0y_2y_3^2 + \a \b y_1y_3^3 - y_0^3y_2 + y_0^2y_1y_3
 \\
  &
 \qquad = (y_1y_3-y_0y_2)(y_0^2+\b\c y_1^2+\a\c y_2^2+\a\b y_3^2),
\\
&
 \b \c y_1^3y_2 + \a \c y_1y_2^3 - \b \c y_0y_1^2y_3 - \a \c y_0y_2^2y_3 + \a \b y_1y_2y_3^2 - \a \b y_0y_3^3 + y_0^2y_1y_2 - y_0^3y_3
  \\
  &
 \qquad = (y_1y_2-y_0y_3)(y_0^2+\b\c y_1^2+\a\c y_2^2+\a\b y_3^2),
\\
&
 -\a \b y_1^2y_3^2 + \a \b y_2^2y_3^2 - \b y_0^2y_1^2 - \a y_0^2y_2^2 + \b y_1^2y_3^2 + \a y_2^2y_3^2 - y_0^2y_1^2 + y_0^2y_2^2, 
\\
&
-\a \b y_1^2y_2y_3 + \a \b y_2y_3^3 + \b y_0y_1^3 - \a y_0^2y_2y_3 + \a y_2^3y_3 - \b y_0y_1y_3^2 + y_0^3y_1 - y_0y_1y_2^2
\\
&
\qquad=(y_0 y_1-\a y_2y_3)(y_0^2+\b y_1^2-y_2^2-\b y_3^2),
\\
&
-\a \b y_1y_2^2y_3 + \a \b y_1y_3^3 - \a y_0y_2^3 + \b y_0^2y_1y_3 - \b y_1^3y_3 + \a y_0y_2y_3^2 + y_0^3y_2 - y_0y_1^2y_2
\\
&
\qquad=(y_0 y_2+\b y_1y_3)(y_0^2 - y_1^2- \a y_2^2 + \a y_3^2),
\\
&
 \a \c y_1^2y_2y_3 - \a \c y_2^3y_3 + \c y_0y_1^3 - \c y_0y_1y_2^2 - \a y_0^2y_2y_3 + \a y_2y_3^3 - y_0^3y_1 + y_0y_1y_3^2,
\\
&
\qquad=(y_0 y_1+\a y_2y_3)(-y_0^2 +\c y_1^2- \c y_2^2 +  y_3^2),
\\
&
 \a \c y_1^2y_2^2 - \a \c y_2^2y_3^2 - \c y_0^2y_1^2 + \c y_1^2y_2^2 - \a y_0^2y_3^2 + \a y_2^2y_3^2 + y_0^2y_1^2 - y_0^2y_3^2,
\\
&
 \a \c y_1y_2^3 - \a \c y_1y_2y_3^2 - \c y_0^2y_1y_2 + \c y_1^3y_2 - \a y_0y_2^2y_3 + \a y_0y_3^3 + y_0^3y_3 - y_0y_1^2y_3
\\
&
\qquad=(y_0 y_3 -\c y_1y_2)(y_0^2 - y_1^2-\a y_2^2 +\a  y_3^2),
\end{align*}
\begin{align*}
&
 \a y_0y_1y_2^2 + \a y_2^3y_3 - \a y_0y_1y_3^2 - \a y_2y_3^3 - y_0^3y_1 + y_0y_1^3 - y_0^2y_2y_3 + y_1^2y_2y_3
\\
&
\qquad = (y_0y_1+y_2y_3) (-y_0^2 + y_1^2 +\a y_2^2 - \a y_3^2),
\\
&
 -\b \c y_1^3y_3 + \b \c y_1y_2^2y_3 + \c y_0y_1^2y_2 - \c y_0y_2^3 + \b y_0^2y_1y_3 - \b y_1y_3^3 - y_0^3y_2 + y_0y_2y_3^2
\\
&
\qquad = (y_0y_2-\b y_1y_3) (-y_0^2 + \c y_1^2 -\c y_2^2 + y_3^2),
\\
&
 -\b \c y_1^3y_2 + \b \c y_1y_2y_3^2 - \c y_0^2y_1y_2 + \c y_1y_2^3 - \b y_0y_1^2y_3 + \b y_0y_3^3 - y_0^3y_3 + y_0y_2^2y_3
\\
&
\qquad = (y_0y_3 +\c y_1y_2) (-y_0^2 - \b y_1^2 + y_2^2 +\b  y_3^2),
\\
&
 -\b \c y_1^2y_2^2 + \b \c y_1^2y_3^2 - \c y_0^2y_2^2 + \c y_1^2y_2^2 - \b y_0^2y_3^2 + \b y_1^2y_3^2 - y_0^2y_2^2 + y_0^2y_3^2,
\\
&
 -\b y_0y_1^2y_ - \b y_1^3y_3 + \b y_0y_2y_3^2 + \b y_1y_3^3 - y_0^3y_2 + y_0y_2^3 - y_0^2y_1y_3 + y_1y_2^2y_3
\\
&
\qquad = (y_0y_2+y_1y_3) (-y_0^2 -\b y_1^2 + y_2^2 +\b y_3^2),
\\
&
 \c y_1^3y_2 - \c y_1y_2^3 + \c y_0y_1^2y_3 - \c y_0y_2^2y_3 - y_0^2y_1y_2 - y_0^3y_3 + y_1y_2y_3^2 + y_0y_3^3 
\\
&
\qquad = (y_0y_3+y_1y_2) (-x_0^2 +\c y_1^2 -\c y_2^2+y_3^2).
 \end{align*} 
Some reorganization and changes of sign show that the linear span of the above 15 polynomials is the same as the linear span of the 
following 15 polynomials:  
  \begin{align*}
&
 (y_2y_3-y_0y_1)(y_0^2+\b\c y_1^2+\a\c y_2^2+\a\b y_3^2)
\\
&
  (y_1y_3-y_0y_2)(y_0^2+\b\c y_1^2+\a\c y_2^2+\a\b y_3^2)
\\
&
 (y_1y_2-y_0y_3)(y_0^2+\b\c y_1^2+\a\c y_2^2+\a\b y_3^2)
\\
&
(y_0y_1+y_2y_3) (y_0^2 - y_1^2 -\a y_2^2 + \a y_3^2)
\\
&
(y_0 y_2+\b y_1y_3)(y_0^2 - y_1^2- \a y_2^2 + \a y_3^2)
\\
&
(y_0 y_3 -\c y_1y_2)(y_0^2 - y_1^2-\a y_2^2 +\a  y_3^2)
\\
&
(y_0 y_1-\a y_2y_3)(y_0^2+\b y_1^2-y_2^2-\b y_3^2)
\\
&
(y_0y_2+y_1y_3) (y_0^2+\b y_1^2-y_2^2-\b y_3^2)
\\
&
 (y_0y_3 +\c y_1y_2) (y_0^2+\b y_1^2-y_2^2-\b y_3^2)
\\
&
(y_0 y_1+\a y_2y_3)(y_0^2 -\c y_1^2+ \c y_2^2 -  y_3^2)
\\
&
(y_0y_2-\b y_1y_3)(y_0^2 -\c y_1^2+ \c y_2^2 -  y_3^2)
\\
&
 (y_0y_3+y_1y_2) (y_0^2 -\c y_1^2+ \c y_2^2 -  y_3^2)
\\
&
 \a \b y_1^2y_3^2 - \a \b y_2^2y_3^2 + \b y_0^2y_1^2 - \b y_1^2y_3^2 + \a y_0^2y_2^2  - \a y_2^2y_3^2 + y_0^2y_1^2 - y_0^2y_2^2, 
\\
&
 \b \c y_1^2y_2^2 - \b \c y_1^2y_3^2 + \c y_0^2y_2^2 - \c y_1^2y_2^2 + \b y_0^2y_3^2 - \b y_1^2y_3^2 + y_0^2y_2^2 - y_0^2y_3^2,
\\
&
 \a \c y_1^2y_2^2 - \a \c y_2^2y_3^2 + \a y_2^2y_3^2 - \c y_0^2y_1^2 + \c y_1^2y_2^2 - \a y_0^2y_3^2  + y_0^2y_1^2 - y_0^2y_3^2.
 \end{align*}

The proof of Proposition \ref{prop.pt.mods} showed that $\sM_1(\bfu)\theta(\bfu)^\sT=0$ for all $\bfu \in \fP$ so 
these 15 polynomials vanish at the points in $\fP$. One can also check this directly by evaluating these 
quartic polynomials at $\bfu \in \fP$. For example, it is obvious that $y_iy_j$ vanishes on $\fP_\infty$ if $i \ne j$ from which it immediately follows
that all 15 polynomials vanish on $\fP_\infty$. As another example, $y_2y_3-y_0y_1$, $y_1y_3-y_0y_2$, and $y_1y_2-y_0y_3$,
vanish on $\fP_0$, whence the first 3 of the 15 polynomials vanish on $\fP_0$; the other twelve polynomials belong to the ideal 
$(y_0^2-y_1^2,y_0^2-y_2^2,y_0^2-y_3^2)$ so they too vanish on $\fP_0$. As a final example, consider $\fP_2$. The first three quartics
vanish on $\fP_2$ because $y_0^2+\b\c y_1^2+\a\c y_2^2+\a\b y_3^2$ does. The second three quartics vanish on $\fP_2$ because
$y_0^2 - y_1^2 -\a y_2^2 + \a y_3^2$ does. The third three quartics vanish on $\fP_2$ because
the ideal $(y_0y_1-\a y_2y_3, y_0y_2+ y_1y_3, y_0y_3+\c y_1y_2)$ does. The fourth three quartics
vanish on $\fP_2$ because $y_0^2 -\c y_1^2+ \c y_2^2 -  y_3^2$  does. A calculation shows the last three quartics vanish on $\fP_2$.

Suppose these 15 quartics vanish at a point $\bfu\in \PP^3$. To complete the proof we will show that $\bfu \in \fP$. 
 
The determinant 
 $$
\det \begin{pmatrix}
1& \b\c &  \a\c & \a\b 
\\
1& - 1 &  -\a & \a 
\\
1& \b &-1 & -\b
\\
1&   -\c & \c& -1
\end{pmatrix} 
\; = \; -(1+\a\b+\b\c+\c\a)^2
$$
is non-zero: the hypothesis that $\a+\b+\c+\a\b\c=0$ implies $1+\a\b+\b\c+\c\a=(1+\a)(1+\b)(1+\c)$ which is non-zero
 because we are assuming that $\{\a,\b,\c\} \cap \{0,\pm 1\}=\varnothing$.  
Because the determinant is non-zero the polynomials 
\begin{equation}
\label{4.quadrics}
\begin{cases}
& y_0^2+\b\c y_1^2+\a\c y_2^2+\a\b y_3^2, 
\\
&y_0^2 - y_1^2 -\a y_2^2 + \a y_3^2, 
\\
& y_0^2 +\b y_1^2 - y_2^2 -\b y_3^2,  
\\
&y_0^2 -\c y_1^2 +\c y_2^2 - y_3^2,
\end{cases}
\end{equation}
 are linearly independent.  
Their linear span is therefore the same as that of $\{y_0^2,y_1^2,y_2^2,y_3^2\}$. 
Hence the common zero locus of the  polynomials in \Cref{4.quadrics} is empty and at most three of  them vanish at $\bfu$. 
  
We now do some case-by-case analysis to show that $\bfu$ belongs to some $\fP_i$.
 
\underline{$\fP_\infty  \cup \fP_0$.}
Suppose $\bfu$ is not in the zero locus of $y_0^2+\b\c y_1^2+\a\c y_2^2+\a\b y_3^2$.
 Then
\begin{equation}
\label{eq.1.01.23}
y_0y_1 -  y_2y_3=y_0y_2- y_1y_3=y_0y_3-y_1y_2=0
  \end{equation}
 at $\bfu$. If one of the coordinate functions $y_0,y_1,y_2,y_3$ vanishes at $\bfu$, then three of do so
  \begin{equation}
  \label{eq.coord.pts}
 \bfu \; \in \; \{(1,0,0,0),(0,1,0,0),(0,0,1,0),(0,0,0,1)\} \; =\; \fP_\infty.
 \end{equation}
If none of $y_0,y_1,y_2,y_3$ vanishes at $\bfu$, then it follows from (\ref{eq.1.01.23}) that 
 $$
 \bfu \; \in \; \{(1,1,1,1),(1,1,-1,-1),(1,-1,1,-1),(1,-1,-1,1)\} \; = \; \fP_0.
 $$
 
 \underline{$\fP_1$.}
Suppose $\bfu$ is not in the zero locus of $y_0^2 - y_1^2 -\a y_2^2 + \a y_3^2$ and not in $\fP_\infty \cup \fP_0$.
Then
\begin{equation}
\label{eq.2.01.23}
y_0y_1+y_2y_3=y_0 y_2+\b y_1y_3=y_0 y_3 -\c y_1y_2=0
 \end{equation}
 at $\bfu$. If one of $y_0,y_1,y_2,y_3$ vanishes at $\bfu$, then three of them do so $\bfu \in \fP_\infty$. This is not the case so
none of $y_0,y_1,y_2,y_3$ vanishes at $\bfu$.  Without loss of generality we can, and do, assume that   $\bfu=(bc,y_1,y_2,y_3)$.  
It follows from (\ref{eq.2.01.23})  that $y_0^3(y_1y_2y_3)=\b\c(y_1y_2y_3)^2$. Therefore $bc=y_1y_2y_3$.
It also  follows from (\ref{eq.2.01.23})  that $\b \c y_1^2= \c y_2^2 = -\b y_3^2$.  Some case-by-case analysis shows that 
 $$
 \bfu \; \in \; \{(bc,-i,ib,c),(bc,-i,-ib,-c),(bc,i,ib,-c),(bc,i,-ib,c)\} \; = \; \fP_1.
 $$
 
 \underline{$\fP_2$.}
Suppose  $\bfu$ is not in the zero locus of $y_0^2 +\b y_1^2 - y_2^2 -\b y_3^2$ and not in $\fP_\infty \cup \fP_0$. Then
\begin{equation}
\label{P2-quadrics}
y_0y_1-\a y_2y_3=y_0 y_2+ y_1y_3=y_0 y_3 +\c y_1y_2=0
 \end{equation}
at $\bfu$. As in the previous paragraph, $y_0y_1y_2y_3$ does not vanish at $\bfu$. Without loss of generality we can, and do, assume that   $\bfu=(ac,y_1,y_2,y_3)$.  The same sort of analysis as that in the previous paragraph shows that
$$
 \bfu \; \in \; \{ (ac,a,-i,ic),(ac,a,i,-ic),  (ac,-a,-i,-ic),(ac,-a,i,ic) \; = \; \fP_2.
 $$
 
  \underline{$\fP_3$.}
Suppose $\bfu$ is not in the zero locus of $y_0^2 -\c y_1^2 +\c y_2^2 - y_3^2$ and not in $\fP_\infty \cup \fP_0$. Then
\begin{equation}
\label{P2-quadrics}
y_0y_1+\a y_2y_3=y_0 y_2-\b  y_1y_3=y_0 y_3 + y_1y_2=0
 \end{equation}
at $\bfu$. Proceeding as before, we eventually see that  
 $$
 \bfu \; \in \; \{ (ab, ia,b,-i),(ab, ia,-b,i),(ab, -ia,b,i), (ab, -ia,-b,-i) \} \; =\; \fP_3.
 $$
This completes the proof that $\pr_1(\mathcal{P}) \subset \fP$. Thus $\pr_2(\mathcal{P})=\fP$. 
 
By Lemma \ref{lem.Gamma.symm}, $\pr_2(\mathcal{P})=\fP$ also. Since $\pr_2(\mathcal{P})$ does not contain a line, the rank of $\sM_1(\bfu)$ is 3 for all
$\bfu \in \pr_1(\mathcal{P})$. Let $\bfu \in \fP$. Since $\sM_1(\bfu)\theta(\bfu)^\sT=0$, $\theta(\bfu)^\sT$ is the only $\bfv \in \PP^3$ such that  $\sM_1(\bfu)\bfv^\sT=0$. Hence $(\bfu,\theta(\bfu))$ is the only point in $\pr_1^{-1}(\bfu)$. It follows that $\mathcal{P}=\{(\bfu,\theta(\bfu)) \; | \; \bfu \in \fP\}$. 
\end{proof}
 
 \begin{proposition}
 \label{prop.no.B-tilde.points}
 The central element $\Theta=y_0^2+y_1^2+y_2^2+y_3^2$ does not annihilate any point modules for $\widetilde{Q}$.
 Consequently, $\widetilde{B}$ has no point modules.
\end{proposition}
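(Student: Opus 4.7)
The plan is to use Theorem~\ref{thm.pt.mods}, which classifies all point modules of $\widetilde{Q}$ as the twenty modules $M_\bfu$ with $\bfu \in \fP$, and then compute the action of $\Theta$ on the degree-zero generator of each such module.

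Recall that if $e_0, e_1, \ldots$ is the distinguished basis for $M_\bfu$ and $\theta^n(\bfu) = (\lambda_{n0}, \lambda_{n1}, \lambda_{n2}, \lambda_{n3})$, then $y_j e_n = \lambda_{nj} e_{n+1}$. Therefore
\[
\Theta \cdot e_0 \;=\; \sum_{j=0}^{3} y_j^2 e_0 \;=\; \Big(\sum_{j=0}^{3} \lambda_{0j}\lambda_{1j}\Big) e_2,
\]
so the task reduces to showing that the scalar $\sum_j \lambda_{0j}\lambda_{1j}$ is non-zero for every $\bfu \in \fP$. Since $\Gamma$ permutes point modules (cf.\ \Cref{prop.twist} applied to $\wtQ$) and, as is visible from Table~\ref{table.20.pts}, each $\fP_i$ is a single $\Gamma$-orbit, I only need to check one representative in each of the five families $\fP_\infty, \fP_0, \fP_1, \fP_2, \fP_3$.

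For $\bfu \in \fP_\infty$ and $\bfu \in \fP_0$, one has $\theta(\bfu) = \bfu$, so the sum becomes $\sum_j \lambda_{0j}^2$, which is $1$ for $(1,0,0,0)$ and $4$ for $(1,1,1,1)$. For $\bfu \in \fP_1$, take $\bfu = (bc,-i,-ib,-c)$ so $\theta(\bfu) = (bc,-i,ib,c)$; the sum is $b^2c^2 - 1 + b^2 - c^2 = (\b-1)(\c+1)$. For $\fP_2$, a representative gives $(\a+1)(\c-1)$, and for $\fP_3$ one obtains $(\a-1)(\b+1)$ (or an analogous product), all of which are non-zero by the standing hypothesis $\{\a,\b,\c\}\cap\{0,\pm 1\} = \varnothing$. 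I expect this verification to be a bookkeeping exercise; the main (though still minor) care needed is to ensure the right coordinates are used consistently in each column of Table~\ref{table.20.pts}.

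For the final sentence, note that any point module for $\wtB = \wtQ/(\Theta,\Theta')$ lifts to a point module for $\wtQ$ on which both $\Theta$ and $\Theta'$ act as zero. Since we have just shown that $\Theta$ fails to annihilate every $\wtQ$-point module, $\wtB$ has no point modules at all. \qed
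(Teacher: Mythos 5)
Your proposal is correct and follows essentially the same route as the paper: reduce to the twenty modules $M_\bfu$ via \Cref{thm.pt.mods}, compute $\Theta e_0=\bigl(\sum_j\lambda_{0j}\lambda_{1j}\bigr)e_2$ using the coordinates of $\bfu$ and $\theta(\bfu)$, and observe that the resulting scalars $1$, $4$, $(\b-1)(\c+1)$, $(\a+1)(\c-1)$, $(\a-1)(\b+1)$ are all non-zero under the standing hypothesis on $\a,\b,\c$. The only (harmless) difference is that you invoke the $\G$-invariance of $\Theta$ to check a single representative per orbit, whereas the paper verifies a second representative of $\fP_1$ explicitly and treats the rest as "similar calculations."
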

\begin{proof}
Let $\bfu \in \fP$. 

To describe the action of $\Theta$ on  $M_\bfu$ we must  fix a basis for $M_\bfu$.  
We pick a basis for $M_\bfu$ that is compatible with the entries in Table \ref{table.20.pts}. To do this it is helpful, for a moment, to think of
the entries in Table \ref{table.20.pts} as points in $k^4$. Suppose $\bfu =(\d_0,\d_1,\d_2,\d_3)$. 
Let $e_0$ be any non-zero element in $(M_\bfu)_0$. 
Let $e_1$ be the unique element in $(M_\bfu)_1$ such that $y_ie_0=\d_ie_1$ for $i=0,1,2,3$. 
Likewise, if $(\d_0',\d_1',\d_2',\d_3')$ is the entry in  Table \ref{table.20.pts} for $\theta(\bfu)$, 
there is a unique element $e_2 \in (M_\bfu)_2$ such that $y_ie_1=\d_i'e_2$ for $i=0,1,2,3$.

If $\bfu \in \fP_\infty$, then $\Theta e_0=e_2$. If $\bfu \in \fP_0$, then $\Theta e_0=4e_2$.

Let  $\bfu=(bc,-i,-ib,-c) \in \fP_1$. Then $\theta(\bfu)=(bc,-i,ib,c)$. Therefore
\begin{align*}
\Theta e_0 & \;=\; (y_0^2+y_1^2+y_2^2+y_3^2)e_0 
\\
& \;=\; (bcy_0-iy_1-iby_2-cy_3)e_1
 \\
&   \;=\; \big((bc)^2-1+b^2-c^2\big)e_2
 \\
&  \;=\; (\b-1)(\c+1)e_2
  \end{align*}
  Likewise, if $\bfu=  (bc,i,-ib,c) \in \fP_1$, then $\theta(\bfu)=(bc,i,ib,-c)$ and a similar calculation shows that 
$\Theta e_0 = (\b-1)(\c+1)e_2$. Thus, $\Theta e_0 = (\b-1)(\c+1)e_2$ for all $\bfu \in \fP_1$.

Similar calculations show that $\Theta e_0= (\a+1)(\c-1)e_2$ for all $\bfu \in \fP_2$.
Finally, if $\bfu \in \fP_3$, then $\Theta e_0= (\a-1)(\b+1)e_2$. 
\end{proof}

\subsection{}
Not only do the relations for $\wtQ$ determine $\mathcal{P}$, but $\mathcal{P}$ determines the defining relations for $\wtQ$:
the quadratic relations for $\wtQ$ are precisely the elements of $V^{\otimes 2}$ that vanish at $\mathcal{P}$. 
This is a consequence of the following remarkable result.

\begin{theorem}
[Shelton-Vancliff]
\cite{ShV02}
Let $V$ be a 4-dimensional vector space and $R \subseteq V^{\otimes 2}$ a 6-dimensional subspace. Let $TV$ denote the tensor algebra on $V$
and let $\mathcal{P} \subset \PP(V^*) \times \PP(V^*)$ be the scheme-theoretic zero locus of $R$. If $\dim(\mathcal{P})=0$, then
$$
R=\{ f \in V^{\otimes 2} \; | \; f|_\mathcal{P}=0\}.
$$
\end{theorem}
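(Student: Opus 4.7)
The inclusion $R \subseteq R':=\{f \in V^{\otimes 2} : f|_\mathcal{P} = 0\}$ is immediate from the definition of $\mathcal{P}$, so the substance is the reverse inclusion, equivalently $\dim R' \leq 6$.

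I would begin by packaging $R$ as a matrix: fix a basis $r_1,\ldots,r_6$ of $R$ and bases of $V$ and $V^*$. For $u \in V^*$, assemble the linear functionals $r_k(u,-) \in V^*$ into a $6 \times 4$ matrix $M(u)$ of linear forms in the coordinates of $u$. Then $\mathcal{P}$ is identified with the incidence scheme $\{(u,v) \in \mathbb{P}(V^*) \times \mathbb{P}(V^*) : M(u)v = 0\}$, and the first projection $\pi_1:\mathcal{P}\to\mathbb{P}(V^*)$ has fibre $\mathbb{P}(\ker M(u))$ over $u$. The hypothesis $\dim \mathcal{P} = 0$ forces $M(u)$ to have rank exactly $3$ on the finite reduced set $\pi_1(\mathcal{P})$, with one-dimensional kernel there, rank $4$ generically, and rank $\leq 2$ nowhere (such a point would give a positive-dimensional fibre).

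Next, I would argue by contradiction: suppose $r_7 \in R' \setminus R$. For each $u \in \pi_1(\mathcal{P})$, the condition $r_7|_\mathcal{P} = 0$ yields $r_7(u, v_u) = 0$ where $v_u$ spans $\ker M(u)$; hence $r_7(u, -) \in \mathrm{ann}(v_u) = \mathrm{rowspan}\, M(u) \subset V^*$. At generic $u$ the containment is automatic since $M(u)$ has full rank and its row span is all of $V^*$. The goal is then to upgrade this universal pointwise containment to a genuine $k$-linear identity $r_7 = \sum_{k=1}^6 c_k r_k$ in $V^{\otimes 2}$, which contradicts the choice of $r_7 \notin R$ and completes the proof.

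The main obstacle, and the heart of the argument, is this pointwise-to-global upgrade. Sheaf-theoretically, the rows of $M$ define a morphism $\mathcal{O}_{\mathbb{P}(V^*)}^6 \to V^* \otimes \mathcal{O}_{\mathbb{P}(V^*)}(1)$, and the universal containment says that $r_7$, viewed as a global section of $V^* \otimes \mathcal{O}(1)$, maps to zero in every fibre of the cokernel sheaf. Turning this into an actual global lift requires controlling the cohomology of that cokernel, and it is precisely here that the hypothesis $\dim \mathcal{P} = 0$ is genuinely used: the ideal of $4\times 4$ minors of $M$ cuts out $\pi_1(\mathcal{P})$ with the expected codimension $3$ in $\mathbb{P}(V^*)$, so the Eagon--Northcott complex resolves the cokernel, and a short calculation with this resolution shows the relevant $H^1$ vanishes. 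This yields the global lift, forces $r_7 \in R$, and delivers the desired contradiction.
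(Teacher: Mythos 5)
This theorem is not proved in the paper --- it is quoted from Shelton--Vancliff --- so the comparison below is with the standard argument, which is the one implicit in the Van den Bergh proposition reproduced a few lines later: under the Segre embedding $\mathcal{P}$ is a proper linear section of the arithmetically Cohen--Macaulay sixfold $X=\PP^3\times\PP^3\subset\PP^{15}$ of degree $20$, so the six linear forms cutting out $Z(R)$ form a regular sequence on the homogeneous coordinate ring of $X$, the degree-one Hilbert function of $\mathcal{P}$ is $16-6=10$, and hence exactly a $6$-dimensional space of $(1,1)$-forms vanishes on the scheme $\mathcal{P}$. Your matrix route is genuinely different, and its cohomological half is correct: the $4\times 4$ minors of $M$ cut out a finite set, which is the expected codimension $3$ in $\PP^3$, so the Buchsbaum--Rim complex $0\to\cO(-5)^4\to\cO(-4)^6\to\cO^6\xrightarrow{M}\cO(1)^4\to\mathcal{C}\to 0$ is exact (this, rather than the Eagon--Northcott complex, is the one resolving the cokernel), and $H^1(\cO_{\PP^3}(-4))=H^2(\cO_{\PP^3}(-5))=0$ kills $H^1$ of the kernel of $\cO^6\to\operatorname{im}M$, so every global section of $\operatorname{im}M$ lifts to a constant vector in $H^0(\cO^6)=k^6$, i.e.\ to a relation $r_7=\sum c_kr_k$ in $V^{\otimes 2}$.

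The gap is the step immediately before the lift: you pass from ``$r_7$ maps to zero in every fibre of $\mathcal{C}$'' to ``$r_7$ is a section of $\operatorname{im}M$,'' which requires the induced global section $\bar r_7\in H^0(\mathcal{C})$ to be zero, not merely its image in each fibre $\mathcal{C}\otimes k(u)$. A section of a torsion sheaf can vanish in every fibre without vanishing (think of $\epsilon\in k[\epsilon]/(\epsilon^2)$); fibrewise vanishing only gives $\bar r_7\in\mathfrak{m}_u\mathcal{C}_u$ for each $u$. What you need is that every stalk of $\mathcal{C}$ has length one. That holds precisely when $\mathcal{P}$ is reduced: the resolution gives $\operatorname{length}(\mathcal{C})=\chi(\mathcal{C})=16-6+6(-1)-4(-4)=20$, and since rank $\le 2$ is excluded the projection $\pi_1$ is injective on closed points of $\mathcal{P}$, so $20$ distinct points force $\mathcal{C}\cong\bigoplus_u k(u)$. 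But the theorem does not assume $\mathcal{P}$ reduced, and your argument only ever invokes vanishing of $r_7$ at the closed points of $\mathcal{P}$, which for non-reduced $\mathcal{P}$ is strictly weaker than the hypothesis $f|_{\mathcal{P}}=0$. To close the gap you must use the scheme structure, e.g.\ by identifying $\mathcal{P}$ with $\operatorname{Proj}_{\PP^3}\operatorname{Sym}(\mathcal{C})$, under which $f|_{\mathcal{P}}=0$ translates directly into $\bar f=0$ in $H^0(\mathcal{C})$; with that inserted, your proof goes through.
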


\subsection{}
 There has been some interest in Artin-Schelter regular algebras with Hilbert series $(1-t)^{-4}$ that have only
finitely many point modules \cite{VVW98}, \cite{SV99}, \cite{SV06}, \cite{SV07}. The interest arises because this phenomenon does not occur
for Artin-Schelter regular algebras with Hilbert series $(1-t)^{-3}$;  the point modules for the latter algebras are parametrized 
either by a cubic divisor in $\PP^2$ or by $\PP^2$. In 1988, M. Van den Bergh circulated a short note showing that a generic 
4-dimensional Artin-Schelter regular algebra with Hilbert series $(1-t)^{-4}$ has exactly 20 point modules
\cite{VdB88}. Van den Bergh's example is a generic Clifford algebra. In particular, it is a finite module over its center. 

Davies \cite[\S5.1]{D}  shows, when the translation automorphism has infinite order, that $\widetilde{Q}$ is not isomorphic to any of the 
previously found examples of 4-dimensional regular algebras having 20 point modules.

\begin{proposition}
\label{prop.pt.mod.rings}
The point modules $M_\bfu$ for $\bfu \in \fP_\infty \cup \fP_0$ are quotient rings of $\wtQ$. If $\bfu=(\l_0,\l_1,\l_2,\l_3) \in  \fP_\infty \cup \fP_0$, then
$$
M_\bfu \; \cong \; \frac{\wtQ}{(\l_jy_i-\l_iy_j \; | \; 0 \le i,j \le 3)} \; \cong \; k[t].
$$
\end{proposition}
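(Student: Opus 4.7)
The plan rests on the observation that for $\bfu = (\lambda_0, \lambda_1, \lambda_2, \lambda_3) \in \fP_\infty \cup \fP_0$, the involution $\theta$ of \Cref{eq.theta} fixes $\bfu$. Consequently $\theta^n(\bfu) = \bfu$ for all $n \ge 0$, and the action of $\wtQ$ on $M_\bfu$ given by \Cref{prop.pt.mods} reduces to the stationary formula $y_j e_n = \lambda_j e_{n+1}$. From this one reads off immediately that $(\lambda_j y_i - \lambda_i y_j) e_0 = \lambda_j \lambda_i e_1 - \lambda_i \lambda_j e_1 = 0$ for all $i,j$, and because $e_0$ generates $M_\bfu$ the two-sided ideal $J := (\lambda_j y_i - \lambda_i y_j \mid 0 \le i,j \le 3)$ annihilates $M_\bfu$. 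This produces a graded surjection of left $\wtQ$-modules $\pi : \wtQ/J \twoheadrightarrow M_\bfu$.

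The heart of the argument is then a Hilbert series squeeze. Fix any index $m$ with $\lambda_m \neq 0$; at least one such exists since $\bfu$ represents a point of $\PP^3$. Modulo $J$ the relation $\lambda_m y_i - \lambda_i y_m = 0$ gives $y_i = (\lambda_i/\lambda_m) y_m$ for every $i$, so $\wtQ/J$ is generated as a $k$-algebra by the single element $y_m$ and is therefore a graded quotient of the polynomial ring $k[y_m]$. This forces $\dim_k(\wtQ/J)_n \le 1$ for every $n$. Since $\dim_k(M_\bfu)_n = 1$ for every $n$, the surjection $\pi$ must be an isomorphism in each degree, and the same dimension count shows that the canonical map $k[y_m] \twoheadrightarrow \wtQ/J$ is also an isomorphism.

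Stringing the two isomorphisms together yields $M_\bfu \cong \wtQ/J \cong k[y_m] \cong k[t]$ as graded $k$-algebras, with $M_\bfu$ inheriting its ring structure from $\wtQ/J$. I do not anticipate any serious obstacle here: the fixed-point property of $\theta$ on $\fP_\infty \cup \fP_0$ is built into \Cref{eq.theta}, and everything that follows is a matter of counting dimensions. In particular, one never needs to inspect the six quadratic relations of $\wtQ$ by hand to verify that they vanish in the quotient, because their vanishing in $\wtQ/J$ is guaranteed a posteriori by the pre-existing $\wtQ$-module structure on $M_\bfu$ furnished by \Cref{prop.pt.mods}.
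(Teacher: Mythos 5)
The paper states \Cref{prop.pt.mod.rings} without proof, so there is no argument of the authors' to measure yours against; judged on its own terms, your argument is correct and essentially complete. The one place where the wording outruns the logic is the inference ``$(\l_j y_i-\l_i y_j)e_0=0$ and $e_0$ generates $M_\bfu$, hence the two-sided ideal $J$ annihilates $M_\bfu$.'' Over a noncommutative ring this implication fails in general: writing $S$ for the generating set, one has $JM_\bfu=\wtQ S\wtQ e_0=\wtQ S M_\bfu$, so you need $S$ to annihilate every $e_n$, not just $e_0$. (What is true is that a two-sided ideal already known to kill $e_0$ kills all of $\wtQ e_0$; but $Se_0=0$ does not yet give $Je_0=0$.) Fortunately your own computation supplies the missing step for free: because $\theta$ fixes every $\bfu\in\fP_\infty\cup\fP_0$, the action is stationary and $(\l_j y_i-\l_i y_j)e_n=\l_j\l_i e_{n+1}-\l_i\l_j e_{n+1}=0$ holds verbatim for every $n$, whence $SM_\bfu=0$ and $JM_\bfu=0$. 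With that one-line repair the Hilbert-series squeeze between $k[y_m]$, $\wtQ/J$ and $M_\bfu$ goes through exactly as you describe, and your observation that the six quadratic relations of $\wtQ$ never need to be inspected modulo $J$ --- their compatibility being witnessed a posteriori by the module structure already constructed in \Cref{prop.pt.mods} --- is a genuine economy.
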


\begin{proposition}
The scheme-theoretic zero locus in $\PP^3 \times \PP^3$ of the relations for $\widetilde{Q}$ is a reduced scheme with 20 points.
\end{proposition}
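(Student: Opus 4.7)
The plan is to deduce reducedness from the set-theoretic count already carried out in \Cref{thm.pt.mods.Q-tilde} by a short intersection-theoretic degree calculation, thereby avoiding twenty separate local computations.

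First, I would invoke \Cref{thm.pt.mods.Q-tilde} to conclude that, as a set, $\mathcal{P}$ is the graph of $\theta$ and hence consists of exactly $20$ closed points; in particular $\mathcal{P}$ is zero-dimensional as a scheme.

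Second, since $\mathcal{P}$ is cut out in the six-dimensional variety $\PP^3\times\PP^3$ by six bihomogeneous forms of bidegree $(1,1)$, and its codimension (namely $6$) matches the number of defining equations, $\mathcal{P}$ is a (global) complete intersection. Consequently, its class in the Chow ring $\ZZ[H_1,H_2]/(H_1^4,H_2^4)$ of $\PP^3\times\PP^3$ equals the product of the classes of the six defining divisors, each of class $H_1+H_2$. Expanding
\[
(H_1+H_2)^6 \;=\; \sum_{k=0}^{6}\binom{6}{k}H_1^k H_2^{6-k}
\]
and discarding every term that vanishes modulo $(H_1^4,H_2^4)$, only the $k=3$ summand survives, giving $\binom{6}{3}H_1^3 H_2^3 = 20\,[\mathrm{point}]$. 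Hence the total length of the scheme $\mathcal{P}$ is exactly $20$.

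Finally, the underlying reduced scheme already has $20$ closed points, each of which contributes length at least one to the total. Since the total length is exactly $20$, each local ring of $\mathcal{P}$ must coincide with its residue field, and so $\mathcal{P}$ is reduced. I do not anticipate any real obstacle in this approach: the heavy lifting has been done in \Cref{thm.pt.mods.Q-tilde}, and the remaining Bezout-style count reduces to a one-line computation in the Chow ring of $\PP^3\times\PP^3$. A more pedestrian alternative would be to verify by hand that the Jacobian of the six relations has full rank $6$ at each of the twenty points, but the intersection-theoretic argument is considerably cleaner.
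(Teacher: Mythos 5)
Your proof is correct and is essentially the paper's argument: both deduce reducedness by combining the set-theoretic count of $20$ points from \Cref{thm.pt.mods.Q-tilde} with a B\'ezout computation showing the total length of $\mathcal{P}$ is exactly $20$. The only (cosmetic) difference is that the paper performs the degree count via the Segre embedding $\PP^3\times\PP^3\hookrightarrow\PP^{15}$ and the fact that the Segre variety has degree $\binom{6}{3}=20$, whereas you compute $(H_1+H_2)^6=\binom{6}{3}H_1^3H_2^3$ directly in the Chow ring of $\PP^3\times\PP^3$ --- the same calculation in different clothing.
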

\begin{proof}
(Van den Bergh \cite{VdB88}.)
We have already seen that the relations for $\wtQ$ vanish at 20 points in $\PP^3 \times \PP^3$.
Let $X$ denote the image of the Segre embedding $\PP^3 \times \PP^3 \to \PP^{15}$. If we view $\PP^{15}$ as the space of $4 \times 4$
matrices, then $X$ is the space of rank-one matrices. By \cite[\S18.15]{H92}, for example, the degree of $X$ is ${6 \choose 3} =20$.
The 6 defining relations for $\wtQ$ are linear combinations of terms $x_ix_j$ which, under the Segre embedding, become linear
combinations of the coordinate functions $x_{ij}$. Hence the vanishing locus of the relations in $\PP^{15}$ is the vanishing locus of 6 linear forms,
hence a linear subspace, $L$ say, of dimension 9. Hence, by B\'ezout's Theorem, if the scheme-theoretic intersection $L \cap X$ is finite it has degree 20. But, $L \cap X$ consists of 20 different points so it is reduced. 
\end{proof}

\section{Secant lines to $E$ and line modules for $Q$}
\label{se.line_modules_for_Q}

The relevance of this section will become apparent in \S\ref{se.line_modules} when we construct some line modules for 
$\wtQ$ that are parametrized by certain lines in $\PP(Q_1^*)$. To make the word ``parametrized'' precise we 
will show that the parametrizing space is a closed subvariety of the Grassmannian of lines in $\PP(Q_1^*)$.

\subsection{Secant lines}

The second symmetric power of $E$ is the quotient variety $S^2E:=(E \times E)/\ZZ_2$ where $\ZZ_2$ acts by 
$(p,q) \mapsto (q,p)$. We think of the points in $S^2E$ as effective divisors of degree 2 on $E$ and write 
$(p)+(q)$ for the image of  $(p,q) \in E \times E$. 

Because the quartic curve $E \subset \PP(Q_1^*)=\PP^3$ has no trisecants, there is a well-defined 
morphism $E \times E \to \GG(1,3)$ that sends $(p,q) \in E \times E$ to $\overline{pq}$, the line in $ \PP(Q_1^*)=\PP^3$ whose scheme-theoretic intersection with $E$ is $(p)+(q)$. By the universal property of the quotient  $(E \times E)/\ZZ_2$
this morphism factors through a morphism $\c:S^2E \to \GG(1,3)$. The image of $\c$ is a closed subscheme
of $\GG(1,3)$ called the variety of secant lines to $E$. See \cite[Ex. 8.3]{H92}, for example.

\begin{proposition}\label{pr.S2E-G}
The map $\c:S^2E \to \GG(1,3)$  defined by $\c\big((p)+(q)\big):=\overline{pq}$ is a closed immersion.
\end{proposition}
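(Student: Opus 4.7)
The plan is to verify that $\gamma$ is proper, injective on closed points, and unramified at every closed point; these three conditions together imply $\gamma$ is a closed immersion. Properness is automatic since $S^2E$ is projective. Injectivity on closed points is immediate from the definition of $\gamma$: if $\gamma(D)=\gamma(D')=L$, then both divisors coincide with the scheme-theoretic intersection $L\cap E$, which has length exactly $2$ by the no-trisecant property (this can in fact be sharpened via Riemann--Roch for $\cL$ of degree $4$, giving $h^0(\cL(-3p-q))=1$ for all $p,q\in E$, so no line meets $E$ with total multiplicity $\ge 3$).

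For $d\gamma$, the case off the diagonal is straightforward. At $D=(p)+(q)$ with $p\ne q$, one has $T_DS^2E=T_pE\oplus T_qE$, and a tangent vector $(v,w)$ lies in $\ker(d\gamma)$ iff moving $p$ by $v$ and $q$ by $w$ leaves $L=\overline{pq}$ unchanged to first order, equivalently $v\in T_pL$ and $w\in T_qL$. If $v\ne 0$, then $T_pE=T_pL$ as $1$-dimensional subspaces of $T_p\bP^3$, so $L$ is tangent to $E$ at $p$; combined with $q\in L\cap E$, this forces $\mathrm{length}(L\cap E)\ge 3$, contradicting the no-trisecant property.

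The main work is the diagonal point $D=2(p)$. Pick a local coordinate $t$ at $p$ on $E$, so $s=t_1+t_2$ and $\pi=t_1t_2$ give local coordinates on $S^2E$ near $2(p)$. Taylor-expand an affine lift of the parametrization of $E$ as $p(t)=p+tv+\tfrac{t^2}{2}w+\tfrac{t^3}{6}u+\cdots$ in $Q_1^*$. The line $\overline{p(t_1)p(t_2)}$ is spanned by the symmetric vectors $X=\tfrac12(p(t_1)+p(t_2))$ and $Y=(p(t_2)-p(t_1))/(t_2-t_1)$, both polynomial in $s$ and $\pi$; expanding $X\wedge Y$ to first order in the Pl\"ucker embedding gives, modulo the line spanned by $p\wedge v$,
\[
d\gamma(\partial_s)\equiv \tfrac12\,p\wedge w,\qquad
d\gamma(\partial_\pi)\equiv -\tfrac16\,p\wedge u+\tfrac12\,v\wedge w.
\]
Because $E$ is a smooth curve and not a line, its tangent line at $p$ meets $E$ with multiplicity exactly $2$, so $w\notin\mathrm{span}(p,v)$ in $Q_1^*$; hence $\{p,v,w\}$ is linearly independent. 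The $v\wedge w$-component of $d\gamma(\partial_\pi)$ then distinguishes it from any scalar multiple of $d\gamma(\partial_s)=\tfrac12\,p\wedge w$, giving injectivity.

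I expect the main obstacle to be the bookkeeping at the diagonal: choosing the right $\ZZ/2$-invariant parametrization around $2(p)$, identifying the tangent directions in the Pl\"ucker picture correctly, and carrying out the computation of $X\wedge Y$ to the correct order in $(s,\pi)$. The off-diagonal case and the purely schematic consequences of having no trisecants are largely formal.
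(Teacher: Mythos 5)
Your proof is correct, but it takes a genuinely different route from the paper's. The paper disposes of the proposition in two lines: injectivity follows from the absence of trisecants, and for the rest it cites the standard description of the singular locus of the secant variety of a space curve (Harris, around Exercise 16.11) to conclude that the image of $\c$ is smooth, whence the bijective morphism between smooth projective curves is an isomorphism onto its image. You instead verify the closed-immersion criterion (proper $+$ injective on closed points $+$ injective on tangent spaces) entirely by hand, which amounts to reproving the relevant piece of the cited fact. The payoff of your version is self-containedness, and in particular the explicit Pl\"ucker computation at a diagonal point $2(p)$ --- using the symmetric coordinates $s=t_1+t_2$, $\pi=t_1t_2$ and the symmetric spanning vectors $X,Y$ with $X\wedge Y=(t_2-t_1)^{-1}\,p(t_1)\wedge p(t_2)$ --- is exactly right: modulo $p\wedge v$ the two partials are $\tfrac12 p\wedge w$ and $\tfrac12 v\wedge w-\tfrac16 p\wedge u$, and linear independence of $p,v,w$ separates them via the $v\wedge w$-coordinate. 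The cost is length and some bookkeeping that the paper avoids by citation.

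Two small imprecisions you should fix. First, the Riemann--Roch aside: to rule out lines meeting $E$ with total multiplicity $\ge 3$ you need $h^0(\cL(-D))=1$ for every effective divisor $D$ of degree $3$ (so that at most a pencil, not a net, of hyperplanes contains $D$); the divisor $3p+q$ you wrote has degree $4$, and $h^0(\cL(-3p-q))$ can be $0$. Second, ``$E$ is smooth and not a line, hence its tangent line meets $E$ with multiplicity exactly $2$'' is false in general (smooth curves have flexes); what you actually need is that the scheme-theoretic no-trisecant property applies to the divisor $3(p)$ as well, i.e.\ $w\notin\mathrm{span}(p,v)$ because otherwise the tangent line at $p$ would meet $E$ with multiplicity $\ge 3$. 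Both points are covered by the corrected degree-$3$ Riemann--Roch computation (or by the observation that a line meeting the quadrics cutting out $E$ in length $\ge 3$ would lie on both of them), so the structure of your argument is unaffected.
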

\begin{proof}
 The morphism $\c$ is injective because $E$ has no trisecants, so it
 suffices to argue that the image of the morphism is smooth. This
 follows from the standard description of the singular points of a
 secant variety: a line in the image of $\c$ is singular if and only
 if it is a trisecant (see e.g. the discussion on page 312 of \cite{H92} regarding Exercise 16.11 in that book). 
 \end{proof}

\subsection{The line modules $M_{p,q}$}
A {\sf line module} for $Q$, or $\wtQ$, is a cyclic graded module whose Hilbert series is $(1-t)^{-2}$. 

\begin{theorem}
\cite[Thm. 4.5]{LS93}
The function that sends $(p)+(q) \in S^2E$ to $Q/Qx+Qx'$ where $\overline{pq}=\{x=x'=0\}$ is a bijection from $S^2E$
to the set of isomorphism classes of line modules for $Q$.
\end{theorem}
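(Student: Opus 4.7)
The plan is to mirror the argument of Levasseur--Smith, which proceeds by constructing a line module for each secant and then showing every line module arises this way.

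First, for existence: given $(p)+(q)\in S^2E$ with $\overline{pq}=\{x=x'=0\}$, set $W_{p,q}:=kx+kx'\subset Q_1$ and define $M_{p,q}:=Q/QW_{p,q}$. The module is cyclic by construction, so it suffices to compute its Hilbert series. I would first check that the central pair $\Omega,\Omega'$ from \Cref{central.elts} acts as a regular sequence on $M_{p,q}$; since $\Omega,\Omega'$ is a regular sequence on $Q$ itself and the quotient $Q/(\Omega,\Omega')$ is the twisted homogeneous coordinate ring $B=B(E,\tau,\cL)$, this is a homological check that can be done by verifying $M_{p,q}$ has projective dimension $2$ (from a Koszul-type resolution). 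One then passes to $\overline{M_{p,q}}:=M_{p,q}/(\Omega,\Omega')M_{p,q}$, a cyclic graded $B$-module. Under the equivalence $\QGr(B)\equiv \qcoh(E)$ of \cite{AV90}, the image of $\overline{M_{p,q}}$ is identified with the $\tau$-twisted structure sheaf of the degree-$2$ subscheme $(p)+(q)\subset E$, whose global sections tensored with $\cL^{\otimes n}|_{(p)+(q)}$ have dimension $2$ for all sufficiently large $n$. Combining with the factor $(1-t^2)^{-2}$ coming from the regular sequence $(\Omega,\Omega')$ produces Hilbert series $(1-t)^{-2}$ for $M_{p,q}$.

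Second, for the converse: let $M=Q/QW$ be an arbitrary line module with $W\subset Q_1$ two-dimensional, and let $\ell_W\subset \PP(Q_1^*)=\PP^3$ be the line on which $W$ vanishes. The condition that $M$ has Hilbert series $(1-t)^{-2}$ forces a sequence of dimension equalities, the first of which is $\dim(Q_1 W+WQ_1+R)=2\dim Q_1-3=5$ inside $Q_1^{\otimes 2}$, where $R$ denotes the $6$-dimensional space of relations. Viewing $R$ as bihomogeneous forms on $\PP^3\times\PP^3$ and recalling \Cref{prop.SS}, the intersection of the bi-locus of $R$ with $\ell_W\times\ell_W$ must have positive-dimensional projection onto each factor. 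Since \Cref{prop.SS} identifies these projections with $E\cup\{e_0,e_1,e_2,e_3\}$, and since the four isolated points cannot create a line (Bézout on the quadrics defining $E$), the line $\ell_W$ must meet $E$ in a subscheme of length $\ge 2$; by Bézout again the intersection has length exactly $2$, i.e., $\ell_W$ is a secant. Hence $W=W_{p,q}$ for the unique divisor $(p)+(q)=\ell_W\cap E$, proving surjectivity. Injectivity follows from \Cref{pr.S2E-G}: distinct divisors give distinct lines, hence distinct subspaces $W$ and non-isomorphic line modules (cyclic modules with different annihilators in degree $1$).

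The main obstacle is the dimension bookkeeping in the converse: translating the purely algebraic Hilbert-series hypothesis into the geometric statement that $\ell_W\cap E$ has length $2$. The key input is the explicit description of the bi-locus of $R$ in \Cref{prop.SS}, together with the fact that $Q$ is Koszul and Artin--Schelter regular, which forces the higher-degree Hilbert-series conditions to follow automatically once the degree-$2$ constraint is matched. Everything else reduces to applying \cite{AV90} and the Koszul machinery to compute Hilbert series.
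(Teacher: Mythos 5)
The paper does not prove this statement: it is quoted directly from \cite{LS93}, so what you are really doing is reconstructing the Levasseur--Smith argument, and the reconstruction has two serious problems. First, the existence half is internally inconsistent. You assert that $\Omega,\Omega'$ is a regular sequence on $M_{p,q}$ and simultaneously that $\overline{M_{p,q}}:=M_{p,q}/(\Omega,\Omega')M_{p,q}$ corresponds under $\QGr(B)\equiv\qcoh(E)$ to the length-two subscheme $(p)+(q)$, with $\dim (\overline{M_{p,q}})_n=2$ for $n\gg 0$. These cannot both hold: if the sequence were regular and $h_{M_{p,q}}(t)=(1-t)^{-2}$, then $h_{\overline{M_{p,q}}}(t)=(1-t^2)^2(1-t)^{-2}=(1+t)^2$, so $\overline{M_{p,q}}$ would be finite-dimensional and hence zero in $\QGr(B)$; conversely, feeding eventual dimension $2$ for $\overline{M_{p,q}}$ back through the regular sequence yields quadratic growth for $\dim (M_{p,q})_n$, not a line module. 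In fact a central point of \cite{LS93} is that every line module is annihilated by a nonzero element of the pencil $k\Omega+k\Omega'$ (the element depending on $p\oplus q$), so the pair is never a regular sequence on a line module. The actual existence proof proceeds instead via exact sequences of the form $0\to M_{\ell'}(-1)\to M_{\ell}\to M_p\to 0$ relating a line module to a point module and a shifted line module, which determine the Hilbert series inductively.

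Second, the converse half. For a left module $Q/QW$ the term $WQ_1$ plays no role, and the correct degree-two condition is $\dim\big(R\cap(Q_1\otimes W)\big)=1$, coming from $\dim M_2=2+\dim\big(R\cap(Q_1\otimes W)\big)$; your identity $\dim(Q_1W+WQ_1+R)=5$ is not the right bookkeeping. More importantly, the geometric translation is wrong: the variety $V$ of Proposition \ref{prop.SS} is, away from the four vertices, the graph of translation by $\tau$ on $E$, not a symmetric correspondence. A relation lying in $Q_1\otimes W$ is the same as a $(1,1)$-form vanishing on $\PP^3\times\ell_W$, which says nothing about $V\cap(\ell_W\times\ell_W)$; the incidence condition it actually imposes couples $\ell_W$ with a $\tau$-translate of its intersection with $E$, not with $\ell_W$ itself. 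As written, the step from the Hilbert-series hypothesis to ``$\ell_W$ meets $E$ in length $\ge 2$'' is not established. The injectivity argument at the end is fine.
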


If $(p)+(q) \in S^2E$ and $\overline{pq}=\{x=x'=0\}$  we write $M_{p,q}:=Q/Qx+Qx'$.

\subsection{}
In \S\ref{se.line_modules} we will show that if $y=y'=0$ is a line in $\PP(\wtQ_1^*)=\PP(\wtQ_1^*)$ that meets 
$E$ at $(p)+(p+\xi)$ for some $p \in E$ and $\xi \in E[2]-\{o\}$, then $\wtQ/\wtQ y + \wtQ y'$ is a line modules for $\wtQ$.
Such lines will be parametrized by the subscheme of $\GG(1,3)$ that is the image of the composition
$E/\langle \xi \rangle \to S^2E \to \GG(1,3)$.

\begin{lemma}\label{le.E-S2E}
The morphism $\b:E/\langle \xi \rangle \to S^2E$ defined by $\b(p+\langle \xi \rangle)=(p)+(p+\xi)$ is a closed immersion.
\end{lemma}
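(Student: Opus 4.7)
The plan is to verify that $\beta$ is a proper morphism that is injective on closed points and on tangent spaces; for a morphism between varieties over the algebraically closed field $k$, these properties together imply that $\beta$ is a closed immersion. Note first that $\beta$ is well-defined and is indeed a morphism because it is induced via the universal property of the quotient $E \to E/\langle \xi \rangle$ from the morphism $\varphi: E \to S^2E$, $\varphi(p) := (p) + (p+\xi)$, which is $\langle\xi\rangle$-invariant since $\varphi(p+\xi) = (p+\xi) + (p+2\xi) = (p+\xi) + (p) = \varphi(p)$ because $\xi \in E[2]$. Since $E/\langle\xi\rangle$ is projective and $S^2E$ is separated, $\beta$ is automatically proper.

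For injectivity on closed points: if $\beta([p]) = \beta([q])$, then $(p) + (p+\xi) = (q) + (q+\xi)$ as effective divisors, so the supports agree as multisets, i.e.\ $\{p, p+\xi\} = \{q, q+\xi\}$. Using $\xi \ne o$, this forces $q \in \{p, p+\xi\}$, hence $[p] = [q]$ in $E/\langle \xi \rangle$.

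For injectivity on tangent spaces: because $\xi \ne o$ we have $p \ne p+\xi$, so the divisor $(p)+(p+\xi)$ lies in the open locus of $S^2 E$ over which the quotient map $E \times E \to S^2E$ is étale. Under the resulting identification $T_{\beta([p])} S^2 E \cong T_p E \oplus T_{p+\xi} E$, and using that $E \to E/\langle\xi\rangle$ is étale so $T_{[p]}(E/\langle\xi\rangle)\cong T_p E$, the differential $d\beta_{[p]}$ is the diagonal-type map $v \mapsto (v, (d\tau_\xi)_p(v))$, where $\tau_\xi$ denotes translation by $\xi$. Since $\tau_\xi$ is an automorphism of $E$, the first coordinate alone already makes this injective.

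Combining these facts: $\beta$ is a proper, injective morphism that is unramified in the scheme-theoretic sense. Over the algebraically closed field $k$, injectivity on points yields that $\beta$ is radicial, and together with being proper and unramified this implies $\beta$ is a closed immersion (equivalently, a proper monomorphism of finite type between Noetherian schemes is a closed immersion). There is no serious obstacle here; the only point demanding a little care is identifying the tangent space of $S^2E$ at the off-diagonal divisor $(p)+(p+\xi)$, which is handled by the étale quotient $E\times E\to S^2E$ away from the diagonal.
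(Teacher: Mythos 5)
Your proof is correct and follows essentially the same route as the paper's: injectivity on closed points plus injectivity on tangent spaces (the latter checked by precomposing with the \'etale cover $E\to E/\langle\xi\rangle$ and using that $E\times E\to S^2E$ is \'etale off the diagonal, where $(p,p+\xi)$ lives since $\xi\ne o$). You spell out the properness and the ``proper monomorphism $\Rightarrow$ closed immersion'' criterion a bit more explicitly than the paper does, but the argument is the same.
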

\begin{proof}
It is clear that $\b$ is injective as a set map on the closed points
of $E/\langle \xi \rangle$, so it suffices to prove that its
derivative is one-to-one on each tangent space, or equivalently that
the composition of $\b$ with the \'etale map $\pi:E\to
E/\langle\xi\rangle$ has this same property. 

The composition $\beta\pi$ is
\begin{equation*}
  E\to  E\times E\to S^2E,
\end{equation*}
where the left hand arrow sends $p$ to $(p,p+\xi)$ and the right hand arrow
is the quotient morphism. Since the latter is \'etale off the diagonal $\Delta\subset E\times E$ and the former is a closed immersion into $E\times E\setminus \Delta$ the conclusion follows. 
\end{proof}

\section{Line Modules for $\wtQ$}
\label{se.line_modules}

\subsection{}
\label{ssect.lines1}

In this section we exhibit three families of line modules for $\wtQ$ parametrized by the disjoint union of the 
three elliptic curves $E/\langle \xi \rangle$ as $\xi$ ranges over the three 2-torsion points of $E$. The isomorphism classes
of the line modules parametrized by $E/\langle \xi \rangle$ are in natural bijection with the lines 
$\overline{p,p+\xi}$, $p \in E$; the union of these lines is an elliptic scroll in $\PP(S_1^*)$.

These are {\it not} all the line modules for $\wtQ$.

\subsection{}
\label{ssect.twisting.2}
By \Cref{prop.twist}, the elements in $\G=\{\c_0,\c_1,\c_2,\c_3\}$ and $E[2]=\{\xi_0,\xi_1,\xi_2,\xi_3\}$ 
may be labelled in such a way that $\c_i^*M_{p,q} \cong M_{p+\xi_i,q+\xi_i}$. Thus, $\c^*\big(M_{p,q} \oplus M_{r,s}\big)
\cong M_{p,q} \oplus M_{r,s}$ for all $\gamma \in \G$ if and only if  $\{p,q,r,s\}$ is an $E[2]$-coset.

\subsection{}
Recall that $Q'=Q\otimes M_2(k)$. The next result follows from \Cref{pr.desc}.

\begin{proposition}\label{pr.descent_lines_wtQ}
The function $M \mapsto M^\G$ is a bijection from isomorphism classes of $\G$-equivariant $Q'$-modules with Hilbert series $4(1-t)^{-2}$ to isomorphism classes of $\wtQ$-line modules.  
\end{proposition}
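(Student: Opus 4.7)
The plan is to apply the graded version of \Cref{pr.desc} with $A=Q$, $H = k(\G)$, $T = M_2(k)$, and $A' = Q'$. This yields an equivalence of categories $\Gr(\wtQ) \simeq {}_{\Gr(Q')}\cM^H$, effected by the mutually quasi-inverse functors $L\mapsto Q'\otimes_{\wtQ}L$ and $M \mapsto M^{\rm{co} H}$. Because $H = k(\G)$ is the function algebra of a finite abelian group, an $H$-comodule structure is the same as a $\G$-grading, and under the standard translation between comodule and action conventions the coinvariants $M^{\rm{co} H}$ agree with the $\G$-fixed subspace $M^\G$. Hence this equivalence restricts to a bijection between isomorphism classes of graded $\wtQ$-modules and isomorphism classes of $\G$-equivariant graded $Q'$-modules, and it remains only to match the line-module condition on the $\wtQ$-side with the Hilbert-series condition on the $Q'$-side.

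The Hilbert-series count is the core calculation. For any $L\in\Gr(\wtQ)$ and $M = Q'\otimes_{\wtQ}L$, I claim $\dim_k M_i = 4\dim_k L_i$ for all $i$. The torsor property $T\cong H$ in $\cM^H$ gives the argument from the proof of \Cref{pr.Hilbert}: for any $W\in \cM^H$, the map $W\otimes H \to W\otimes H$, $w\otimes h\mapsto w_0\otimes w_1 h$, identifies the diagonal $H$-coaction with the regular coaction on the right tensorand, so $\dim_k(W\otimes T)^{\rm{co} H} = \dim_k W$. Applying this degree-by-degree shows that the Hilbert series of $M = Q'\otimes_{\wtQ}L$ is exactly $|\G| = 4$ times that of $L = M^\G$.

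Combining these observations, a $\wtQ$-line module $L$ (a graded cyclic module with Hilbert series $(1-t)^{-2}$, hence generated by its one-dimensional degree-$0$ component) corresponds under the equivalence to a $\G$-equivariant graded $Q'$-module $M$ generated by $M_0 = T\otimes_k L_0$, which is $4$-dimensional, and with Hilbert series $4(1-t)^{-2}$. Conversely, a $\G$-equivariant $Q'$-module $M$ with Hilbert series $4(1-t)^{-2}$ has $L = M^\G$ with Hilbert series $(1-t)^{-2}$, and the cyclicity of $L$ follows from that of $M$ via the exactness of $(-)^\G$, which holds because $|\G| = 4$ is invertible in $k$. The main obstacle in this proof is merely the Hilbert-series bookkeeping; once that is in place the claim reduces directly to \Cref{pr.desc}.
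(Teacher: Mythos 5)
Your proposal takes exactly the paper's route: the paper's entire proof of this proposition is the single remark that it follows from (the graded version of) \Cref{pr.desc}, and your elaboration---the descent equivalence, the identification of $M^{\rm{co} H}$ with $M^\G$, and the factor-of-$|\G|=4$ Hilbert-series bookkeeping via $T\cong H$ in $\cM^H$---is the correct way to fill that in. Two small cautions: for $H=k(\G)$ the function algebra, an $H$-comodule is a $\G$-\emph{action} rather than a $\G$-grading (though for abelian $\G$ with $|\G|$ invertible the two are interchangeable via the $\widehat{\G}$-eigenspace decomposition, as the paper itself notes), and in the converse direction you justify cyclicity of $L=M^\G$ by appealing to ``cyclicity of $M$,'' which is not among the stated hypotheses on $M$---a looseness the paper's own formulation shares, since strictly one should either restrict to equivariant modules generated in degree zero or argue separately that the relevant modules have this property.
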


By Morita equivalence, a $\G$-equivariant $Q'$-module $M$ with Hilbert series $4(1-t)^{-2}$ is isomorphic to 
$N\otimes k^2$ for some $Q$-module $N$ with Hilbert series $2(1-t)^{-2}$ (a ``fat line'' of multiplicity two over $Q$). Moreover, by the remark in \Cref{ssect.twisting.2}, 
the equivariance ensures/requires that the isomorphism class of $M$ is invariant under translation by the 2-torsion subgroup. 

The main ingredient in constructing $\wtQ$-lines will be $Q$-modules 
with Hilbert series $2(1-t)^{-2}$. The obvious  such modules are those of the form 
$M_{p,q}\oplus M_{r,s}$ where the invariance condition requires $\{p,q,r,s\}$ to be an $E[2]$-coset. 
\Cref{pr.lines_wtQ} will provide the examples announced in \Cref{ssect.lines1}.

\begin{lemma}
\label{lem.M.xi.1}
Let  $x,y \in E/E[2]$ and let $\xi$ and $\omega$ be 2-torsion points. Define
\begin{equation}
\label{defn.M.x.xi}
M_{x,\xi} \; := \; \big(M_{p,p+\xi} \oplus M_{p+\xi',p+\xi''}\big) \otimes k^2
\end{equation}
where $p$ is any point in $E$ such that $x=p+E[2]$. 
\begin{enumerate}
  \item 
  The $Q'$-module $M_{x,\xi}$ does not depend on the choice of $p$. 
  \item 
$M_{x,\xi} \cong M_{y,\omega}$ if and only if $(x,\xi)=(y,\omega)$.  
  \item
The map $\Phi:k^\times \times k^\times \to \Aut_{Q'}\big(M_{x,\xi}\big)$, $\Phi(\l,\l')(m,m'):=(\l m,\l'm')$, is an isomorphism.  
\end{enumerate}
\end{lemma}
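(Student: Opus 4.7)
The plan is to handle (1), (2), (3) in sequence. The two background tools are the classification of $Q$-line modules by effective degree-two divisors on $E$, so that $M_{a,b}\cong M_{c,d}$ in $\Gr(Q)$ iff $(a)+(b)=(c)+(d)$, and the Morita equivalence $\Gr(Q)\simeq \Gr(Q')$ implemented by $N\mapsto N\otimes k^2$, under which the underlying $Q$-module of $N\otimes k^2$ is $N\oplus N$ and $\End_{\Gr(Q')}(N\otimes k^2)\cong \End_{\Gr(Q)}(N)$.

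For (1), let $p'=p+\eta$ with $\eta\in E[2]$. Since $E[2]\cong \bZ/2\times\bZ/2$ we have $\xi+\xi'+\xi''=o$, so translation by $\eta$ either fixes each of the pairs $\{p,p+\xi\}$ and $\{p+\xi',p+\xi''\}$ individually (for $\eta\in\{o,\xi\}$) or interchanges them (for $\eta\in\{\xi',\xi''\}$). In either case the unordered pair of divisors $\{(p)+(p+\xi),\,(p+\xi')+(p+\xi'')\}$ is unaltered, and hence so is $M_{p,p+\xi}\oplus M_{p+\xi',p+\xi''}$, and therefore $M_{x,\xi}$.

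The key computation underlying (2) and (3) is that for arbitrary $a,b,c,d\in E$ the space of degree-preserving $\Gr(Q)$-maps $M_{a,b}\to M_{c,d}$ is either $0$ or $k$. Indeed, such a map must send the degree-zero cyclic generator of $M_{a,b}$ into $(M_{c,d})_0=k\cdot 1_{c,d}$, say to $\lambda\cdot 1_{c,d}$; compatibility with the defining relations of $M_{a,b}$ forces either $\lambda=0$ or the two-dimensional subspace $\overline{ab}^\perp\subseteq Q_1$ annihilating $1_{a,b}$ to be contained in the two-dimensional subspace $\overline{cd}^\perp\subseteq Q_1$. Since both are two-dimensional the containment is an equality, forcing $\overline{ab}=\overline{cd}$ as lines in $\bP^3$ and hence $(a)+(b)=(c)+(d)$ because $E$ has no trisecants. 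In particular each line module is indecomposable with endomorphism ring $k$, distinct line modules admit no nonzero graded maps in either direction, and Krull--Schmidt applies to finite direct sums of line modules in $\Gr(Q)$.

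Part (3) is then immediate: Morita, the direct sum decomposition of $M_{x,\xi}$, and vanishing of cross Homs give
\[
\End_{\Gr(Q')}(M_{x,\xi})\;\cong\;\End_{\Gr(Q)}(M_{p,p+\xi})\oplus \End_{\Gr(Q)}(M_{p+\xi',p+\xi''})\;\cong\;k\oplus k,
\]
whose unit group $k^\times\times k^\times$ is identified with $\Phi$ in the stated way. For (2), an isomorphism $M_{x,\xi}\cong M_{y,\omega}$ Morita-transports to a $\Gr(Q)$-isomorphism $(M_{p,p+\xi}\oplus M_{p+\xi',p+\xi''})^{\oplus 2}\cong (M_{q,q+\omega}\oplus M_{q+\omega',q+\omega''})^{\oplus 2}$; Krull--Schmidt identifies the unordered partitions $\{\{p,p+\xi\},\{p+\xi',p+\xi''\}\}=\{\{q,q+\omega\},\{q+\omega',q+\omega''\}\}$ of a common four-point subset of $E$. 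The union of these four points is the coset $p+E[2]=q+E[2]$, yielding $x=y$, and the common internal difference of either pair reads off $\xi=\omega$. The main obstacle is the degree-zero Hom computation for line modules; once that is in hand the rest is bookkeeping with the arithmetic of $E[2]$.
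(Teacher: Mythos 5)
Your proof is correct and follows essentially the same route as the paper's: the classification of $Q$-line modules by effective degree-two divisors on $E$, Morita equivalence for $Q\otimes M_2(k)$, and the arithmetic of $E[2]$. The only difference is that you make explicit the degree-zero $\Hom$ computation between line modules (and the resulting Krull--Schmidt argument), which the paper leaves implicit when it passes from an isomorphism of direct sums to an equality of divisor pairs and when it identifies $\Aut_{Q'}(M_{x,\xi})$ with $k^\times\times k^\times$.
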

\begin{proof}
Let $E[2]=\{o,\xi,\xi',\xi''\}$. 

(1) 
Suppose $x$ is also the image of $q \in E$. Since $\xi'+\xi''=\xi$,  
$$
\big\{\{q,q+\xi\}, \{q+\xi',q+\xi''\}\big\} \; = \; \big\{\{p,p+\xi\}, \{p+\xi',p+\xi''\}\big\}. 
$$
Therefore $M_{p,p+\xi} \oplus M_{p+\xi',p+\xi''}=M_{q,q+\xi} \oplus M_{q+\xi',q+\xi''}$. Hence $M_{x,\xi}$
does not depend on the choice of $p$. In particular, if $(x,\xi)=(y,\omega)$, then $M_{x,\xi} = M_{y,\omega}$. 

(2) 
Suppose that the $Q'$-modules $M_{x,\xi}$ and $M_{y,\omega}$ are isomorphic. Let $q\in E$ be
be such that $y=q+E[2]$. By Morita equivalence, there is an isomorphism of $Q$-modules
$$
 M_{p,p+\xi} \oplus M_{p+\xi',p+\xi''}  \; \cong \; M_{q,q+\omega} \oplus M_{q+\omega',q+\omega''} 
 $$
 where $E[2]=\{o,\omega,\omega',\omega''\}$. Since isomorphism classes of line modules for $Q$ are in natural bijection with effective divisors of degree 2 on $E$,
 $$
 \big\{\{q,q+\omega\}, \{q+\omega',q+\omega''\}\big\} \; = \; \big\{\{p,p+\xi\}, \{p+\xi',p+\xi''\}\big\}. 
 $$
It follows immediately from this equality that $q+E[2]=p+E[2]$, i.e., $x=y$. Since $\omega$ can be recovered from $ \big\{\{q,q+\omega\}, \{q+\omega',q+\omega''\}\big\} $
as the difference between the elements in $\{q,q+\omega\}$ and also as the difference between the elements in $\{q+\omega',q+\omega''\}$, it follows that $\omega=\xi$.

(3)
Every line module for $Q$ is cyclic so its graded automorphism group is isomorphic to $k^\times$, each $\l \in k^\times$ acting on
the line module by scalar multiplication.

By Morita equivalence,  $\Aut_{Q'}(M_{x,\xi}) = \Aut_{Q}(M_{p,p+\xi}\oplus M_{p+\xi',p+\xi''}) \cong k^\times \times k^\times$
where the isomorphism is because $M_{p,p+\xi} \not\cong M_{p+\xi',p+\xi''}$. An automorphism $(\l,\l') \in (k^\times)^2$ acts on 
$M_{x,\xi} = (M_{p,p+\xi} \otimes k^2) \oplus (M_{p+\xi',p+\xi''}\otimes k^2)$ as multiplication by $\l$ on the first summand and 
multiplication by $\l'$ on the second summand.
\end{proof}

\begin{lemma}
\label{lem.M.xi.2}
Let $E[2]=\{o,\xi,\xi',\xi''\}$. Let $x \in E/E[2]$ and write $M=M_{x,\xi}$. 
\begin{enumerate}
  \item 
  If $\gamma \in \G$, then $\gamma^* M \cong M $ as $Q'$-modules.
  \item
  If $\gamma \in \G$ and $a \in \Aut_{Q'}(M)$, then there is a unique element $\c  \triangleright a  \in \Aut_{Q'}(M)$ such that
 \begin{equation*}
  \begin{tikzpicture}[auto,baseline=(current  bounding  box.center)]
    \path[anchor=base] (0,0) node (1) {$M$} +(2,0) node (2) {$\gamma^*M$} +(0,-2) node (3) {$M$} +(2,-2) node (4) {$\gamma^*M$};
    \draw[->] (1) to node[pos=.5] {$\scriptstyle \varphi_\gamma$} (2);
    \draw[->] (3) to node[pos=.5,swap] {$\scriptstyle \varphi_\gamma$} (4);    
    \draw[->] (1) to node[pos=.5,swap] {$\scriptstyle \c\triangleright a$} (3);    
    \draw[->] (2) to node[pos=.5] {$\scriptstyle \gamma^*(a)$} (4);
  \end{tikzpicture}  
\end{equation*}
commutes for all isomorphisms $\varphi_\gamma:M  \to \gamma^*M$.
\item{}
The map $(\c,a) \mapsto \c  \triangleright a$ defines a left action of $\G$ on $ \Aut_{Q'}(M)$.
\item{}
If we identify $k^\times \times k^\times$ with $\Aut_{Q'}\big(M_{x,\xi}\big)$ via the isomorphism $\Phi$ in \Cref{lem.M.xi.1}, then the 
$\G$-action on $ \Aut_{Q'}(M)$ is
$$
\xi  \triangleright  (\l,\l')=(\l,\l')
\qquad \hbox{and} \qquad
\xi'  \triangleright  (\l,\l')=\xi''  \triangleright  (\l,\l')=(\l',\l)
$$
for all $(\l,\l') \in k^\times \times k^\times$.
\end{enumerate}
\end{lemma}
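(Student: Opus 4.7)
The plan is to use \Cref{prop.twist} (through Morita equivalence, since everything is tensored with $k^2$) to track how each $\c\in \G$ permutes the two isomorphism classes of summands making up $M=M_{x,\xi}$, and then to deduce the action on $\Aut_{Q'}(M)$ by naturality. The abelianness of $\Aut_{Q'}(M)\cong k^\times\times k^\times$ (part (3) of \Cref{lem.M.xi.1}) will do most of the heavy lifting, making all choices of intermediate isomorphisms irrelevant.

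For (1), let $\xi_\c \in E[2]$ correspond to $\c$ and write $E[2]=\{o,\xi,\xi',\xi''\}$ with $\xi'+\xi''=\xi$. Taking $\c=\xi$, I compute $\xi^*M_{p,p+\xi}\cong M_{p+\xi,p}\cong M_{p,p+\xi}$ and $\xi^*M_{p+\xi',p+\xi''}\cong M_{p+\xi'',p+\xi'}\cong M_{p+\xi',p+\xi''}$, so $\xi^*$ preserves each summand. For $\c=\xi'$, the same bookkeeping gives $\xi'^*M_{p,p+\xi}\cong M_{p+\xi',p+\xi''}$ and $\xi'^*M_{p+\xi',p+\xi''}\cong M_{p,p+\xi}$, i.e.\ $\xi'^*$ swaps the two summands; the case $\c=\xi''$ is analogous. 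In every case the unordered pair of summand-isomorphism-classes is preserved, so $\c^*M\cong M$.

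For (2) and (3), given any isomorphism $\varphi_\c\colon M\to\c^*M$, the commutativity of the square forces $\c\triangleright a := \varphi_\c^{-1}\circ \c^*(a)\circ \varphi_\c$. This is well defined independently of the choice of $\varphi_\c$: any other isomorphism $\varphi_\c'$ differs from $\varphi_\c$ by right multiplication by some $c\in\Aut_{Q'}(M)$, so the new $\c\triangleright a$ is the $c$-conjugate of the old one, but $\Aut_{Q'}(M)$ is abelian so the two agree. The identity $1\in\G$ acts trivially via $\varphi_1=\id$; for $\c,\d\in\G$ the composition $t_{\c,\d}\circ\c^*(\varphi_\d)\circ\varphi_\c$ is an admissible choice of $\varphi_{\c\d}$, and a direct diagram chase (absorbing any scalar ambiguities into the abelian group) yields $(\c\d)\triangleright a=\c\triangleright(\d\triangleright a)$.

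For (4), the concrete computation is now a direct consequence of part (1). Because $\xi^*$ sends each summand to something isomorphic to itself, I may pick $\varphi_\xi$ to be block-diagonal with respect to the decomposition $M=(M_{p,p+\xi}\otimes k^2)\oplus (M_{p+\xi',p+\xi''}\otimes k^2)$. Conjugating the block-diagonal element $\Phi(\l,\l')$ by $\varphi_\xi$ preserves block-diagonality, and using the abelianness of $(k^\times)^2$ gives $\xi\triangleright(\l,\l')=(\l,\l')$. In contrast, $\xi'^*$ swaps the two summands, so any $\varphi_{\xi'}$ is purely off-diagonal; conjugating $\Phi(\l,\l')$ by such a map swaps the two scalar blocks and yields $\xi'\triangleright(\l,\l')=(\l',\l)$, and the same argument applies to $\xi''$. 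The main obstacle is purely bookkeeping: matching divisors $\{p,p+\xi\}$ and $\{p+\xi',p+\xi''\}$ under the $E[2]$-translation action and keeping track of which intermediate isomorphisms are block-diagonal versus off-diagonal; but once the abelianness of $\Aut_{Q'}(M)$ is invoked, the proof reduces to the set-theoretic action of $\G$ on the two-element set of divisor classes appearing in the direct-sum decomposition of $M$.
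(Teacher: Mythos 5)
Your proposal is correct and follows essentially the same route as the paper: part (1) via the translation formula $\c^*M_{p,q}\cong M_{p+\xi_\c,q+\xi_\c}$ of \Cref{prop.twist} applied to the $E[2]$-coset $\{p,p+\xi,p+\xi',p+\xi''\}$; parts (2)--(3) by setting $\c\triangleright a=\varphi_\c^{-1}\circ\c^*(a)\circ\varphi_\c$ and using the abelianness of $\Aut_{Q'}(M)\cong(k^\times)^2$ to kill the dependence on $\varphi_\c$ (the cocycle/diagram chase you sketch for associativity is exactly the paper's appendix \Cref{lem.action}); and part (4) by observing that $\varphi_\xi$ preserves the two non-isomorphic summands while $\varphi_{\xi'}$ and $\varphi_{\xi''}$ necessarily interchange them. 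No gaps.
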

\begin{proof}
Let $p \in E$ be such that $x=p+E[2]$. Thus $M=\big(M_{p,p+\xi} \oplus M_{p+\xi',p+\xi''}\big) \otimes k^2$. 

(1) 
This follows from the remark in \Cref{ssect.twisting.2}.

(2)
Choose an isomorphism $\varphi_\gamma:M  \to \gamma^*M$. Define $ \c  \triangleright a:=\varphi_\gamma^{-1} \gamma^*(a) \varphi_\gamma$.
Certainly the diagram commutes. If $\psi_\gamma:M\to \gamma^*M$ is another isomorphism, then 
$\psi_\gamma$ is a multiple of  $\varphi_\gamma$ by an element in $\Aut_{Q'}(\c^*M)$. But $\Aut_{Q'}(\c^*M)$ is abelian so 
$\psi_\gamma^{-1} \gamma^*(a) \psi_\gamma=\varphi_\gamma^{-1} \gamma^*(a) \varphi_\gamma$.

 (3)
 This is standard. See, for example, \Cref{lem.action}.
 
 (4)
 By \Cref{prop.twist}, $\xi^*M_{p,p+\xi} \cong M_{p,p+\xi}$ and $\xi^*M_{p+\xi',p+\xi''} \cong M_{p+\xi',p+\xi''}$ so $\varphi_\xi$
 preserves the summands $M_{p,p+\xi}\otimes k^2$ and $M_{p+\xi',p+\xi''}\otimes k^2$. Therefore $\xi$ acts on $(k^\times)^2$ trivially. 
 On the other hand, $(\xi')^*M_{p,p+\xi} \cong (\xi'')^* M_{p,p+\xi} \cong M_{p+\xi',p+\xi''}$ so $\xi'$ and $\xi''$ act on $(k^\times)^2$ by switching the
 two components. 
 \end{proof}

A $\G$-equivariant structure on a $Q'$-module
$M$  is the same thing as a left $Q'$-module $M$ endowed with a left action $\G \times M \to M$, 
$(\c,m) \mapsto m^\c$, such that $(xm)^\c=\c(x)m^\c$ for all $x \in Q'$, $m \in M$, and $\c \in \G$. 
We adopt this point of view several times in the rest of this section.

Recall that the action of $\G$ as automorphisms of $Q'$ is defined in terms of the actions of  $\G$ as automorphisms of $Q$ and $M_2(k)$
(see \Cref{ssect.quat.basis}). 

\begin{lemma}
\label{lem.equiv.structures}
Let $N$ be a graded left $Q$-module that is generated by $N_0$. 
The function that sends a $\G$-equivariant structure $\{\varphi_\c:N \otimes k^2  \longrightarrow \gamma^*(N \otimes k^2) \; | \; \c \in \G\}$  on the $Q'$-module $N \otimes k^2$ to the  $\G$-equivariant structure $\{\varphi_\c\big\vert_{N_0 \otimes k^2}:  N_0 \otimes k^2  \longrightarrow \gamma^*(N _0 \otimes k^2)  \; | \; \c \in \G\}$ on the $M_2(k)$-module  $N_0 \otimes k^2\,$ is injective. 
\end{lemma}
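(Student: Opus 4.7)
The plan is to exploit the fact that $N$ is generated by $N_0$ as a $Q$-module to reduce the question to one about morphisms of cyclic modules. Since $Q' = Q\otimes M_2(k)$ and $N\otimes k^2$ carries the diagonal action, if $N = Q\cdot N_0$ then $N\otimes k^2 = Q'\cdot(N_0\otimes k^2)$. So $N_0\otimes k^2$ is a generating subset of the $Q'$-module $N\otimes k^2$, and any $Q'$-module homomorphism out of $N\otimes k^2$ is determined by its restriction to $N_0\otimes k^2$.

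Concretely, I would argue as follows. Suppose $\{\varphi_\gamma\}_{\gamma\in\Gamma}$ and $\{\psi_\gamma\}_{\gamma\in\Gamma}$ are two $\Gamma$-equivariant structures on $N\otimes k^2$ whose restrictions to $N_0\otimes k^2$ coincide. Fix $\gamma\in\Gamma$. Both $\varphi_\gamma$ and $\psi_\gamma$ are $Q'$-module isomorphisms $N\otimes k^2 \to \gamma^*(N\otimes k^2)$, so their difference $\varphi_\gamma - \psi_\gamma$ is a $Q'$-module homomorphism. Given any $m\in N\otimes k^2$, write $m = \sum x_i n_i$ with $x_i\in Q'$ and $n_i\in N_0\otimes k^2$; then
\[
(\varphi_\gamma-\psi_\gamma)(m) = \sum \gamma^{-1}(x_i)\cdot(\varphi_\gamma-\psi_\gamma)(n_i) = 0
\]
because $\varphi_\gamma$ and $\psi_\gamma$ agree on each $n_i\in N_0\otimes k^2$. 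Hence $\varphi_\gamma = \psi_\gamma$ for every $\gamma\in\Gamma$.

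There is no real obstacle here: the entire content of the statement is that $N_0\otimes k^2$ generates $N\otimes k^2$ over $Q'$, which is immediate from the hypothesis that $N_0$ generates $N$ over $Q$. The only point that deserves a sentence of justification is the identification of the $Q'$-action on $\gamma^*(N\otimes k^2)$, used to verify that $\varphi_\gamma - \psi_\gamma$ is genuinely $Q'$-linear; this is built into the definition of the twist $\gamma^*$ from \S\ref{sse.twist} and the fact that $\gamma$ acts as an algebra automorphism of $Q'$.
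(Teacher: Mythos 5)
Your proof is correct and is essentially the paper's argument: both rest on the observation that $N_0\otimes k^2$ generates $N\otimes k^2$ over $Q'$, so an equivariant structure map, being $Q'$-linear for the twisted action, is determined by its restriction to degree zero. The paper phrases this as an induction on degree via $(xm)^\gamma=\gamma(x)m^\gamma$, while you subtract the two structures and note the difference is $Q'$-linear and kills the generators; these are the same idea.
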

\begin{proof}
Certainly, if the maps $\{\varphi_\c:N \otimes k^2  \longrightarrow \gamma^*(N \otimes k^2) \; | \; \c \in \G\}$  
 give $N\otimes k^2$ the structure of a $\G$-equivariant $Q'$-module, then their restrictions to the degree zero components give $N_0\otimes k^2$ the structure of a $\G$-equivariant $M_2(k)$-module.
 
Since $Q'$ is generated as an algebra by $Q_0'$ and $Q_1'$, 
the formula $(xm)^\c=\c(x)m^\c$ implies that the action of $\c$ on  $N_{n+1} \otimes k^2$ 
is completely determined by  the action of $\c$ on $N_{n} \otimes k^2$.
Thus, if two $\G$-equivariant structures on $N \otimes k^2$ agree on $N_0 \otimes k^2$, then they agree on $N$.
\end{proof}

\subsubsection{Warning} 
The result in \Cref{lem.equiv.structures} does {\it not} extend to a result saying that two equivariant structures on $N$ are isomorphic if and only
if their restrictions to $N_0 \otimes k^2$ are isomorphic. \Cref{prop.equiv.structures.isom} 
says that all $\G$-equivariant structures on $N_0 \otimes k^2$ are isomorphic to each other.

The group $\G$ acts as $k$-algebra automorphisms of $M_2(k)$, We fixed a basis for $k^2$ such that $\omega \in \G$ acts on $M_2(k)$ as conjugation by the quaternionic  basis element $q_\omega$ defined in \Cref{ssect.quat.basis}. We use that basis in the next result.

\begin{proposition}
\label{prop.equiv.structures.isom}
Fix $\zeta,\eta,\xi \in \G$ such that $q_\zeta,q_\eta,q_\xi$ is a cyclic permutation of $q_1,q_2,q_3$.
\begin{enumerate}
  \item 
  Let $\phi_\omega:M_2(k) \to M_2(k)$, $\omega \in \G$,  be the linear isomorphisms  that take the following values on the basis $1,q_\zeta,q_\eta,q_\xi$
  for $M_2(k)$:
\begin{table}[htdp]
\begin{center}
\begin{tabular}{|l||l|l|l|l|l|}
\hline
$\quad q$ & $\quad 1$ & $\quad  q_\zeta $ & $\quad q_\eta$ & $\quad  q_\xi$  
\\
\hline
\hline
$\phi_0(q)$ & $\quad 1$ & $\quad  q_\zeta $ & $\quad q_\eta$ & $\quad  q_\xi$ 
\\
\hline
$\phi_\zeta(q)$ & $\quad 1$ & $\quad  q_\zeta $ & $\, -q_\eta$ & $\,  -q_\xi$ 
\\
\hline
$\phi_\eta(q)$ &  $\quad 1$ & $\,  -q_\zeta $ & $\quad q_\eta$ & $\,  -q_\xi$ 
  \\
\hline
$\phi_\xi(q)$ & $\quad 1$ & $\,  -q_\zeta $ & $\, -q_\eta$ & $\quad  q_\xi$ 
  \\
\hline
\end{tabular}
\end{center}
\vskip .12in
\caption{Action of $\G$ on $M_2(k)$}
\label{table.20.pts}
\end{table}
\newline
The action of $\G$ on $M_2(k)$ given by the maps $\phi_\omega$, together with the action of $M_2(k)$ on $M_2(k)$ by left multiplication,
 gives $M_2(k)$ the structure of a $\G$-equivariant left $M_2(k)$-module.
  \item 
  Every $\G$-equivariant $M_2(k)$-module is isomorphic to a direct sum of copies of the $\G$-equivariant $M_2(k)$-module in (2).
  \item
  Let $V$ be a finite dimensional $\G$-equivariant $M_2(k)$-module. As a $\G$-module, $V$ is isomorphic to a direct sum of copies of
  the regular representation. If $\omega \in \{\zeta,\eta,\xi\}$, then the $(+1)$- and $(-1)$-eigenspaces for the action of $\omega$ on $V$ have dimension $\frac{1}{2}\dim_k(V)$.
\end{enumerate}
\end{proposition}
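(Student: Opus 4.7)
For part (1), I would first observe that each $\phi_\omega$ coincides with the inner automorphism $m\mapsto q_\omega m q_\omega^{-1}$ of $M_2(k)$; a direct check on the basis $\{1, q_\zeta, q_\eta, q_\xi\}$ using $q_\omega^2=-1$ (for $\omega$ nontrivial) and the anti-commutation $q_\omega q_\nu=-q_\nu q_\omega$ for distinct nontrivial $\omega,\nu$ verifies the entries of the table. Since $\phi_\omega$ is then an algebra automorphism of $M_2(k)$, the compatibility required of a $\G$-equivariant module structure, namely $\phi_\omega(xm)=\omega(x)\phi_\omega(m)$, reduces to the tautology $q_\omega(xm)q_\omega^{-1}=(q_\omega xq_\omega^{-1})(q_\omega mq_\omega^{-1})$, and the cocycle identity $\phi_{\omega_1\omega_2}=\phi_{\omega_1}\phi_{\omega_2}$ is inherited from the fact that conjugation is a group homomorphism.

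For part (2), my plan is to reinterpret the category of $\G$-equivariant $M_2(k)$-modules as modules over the smash product $A:=M_2(k)\sharp k\G$, an algebra of $k$-dimension $16$ that is semisimple since $|\G|=4$ is invertible in $k$. The module from (1) is a $4$-dimensional $A$-module, and I would argue it is simple as follows: any proper nonzero $\G$-equivariant $M_2(k)$-submodule would have $k$-dimension $2$ and therefore correspond to a line $\ell\subseteq k^2$ (any $2$-dimensional left $M_2(k)$-submodule of $M_2(k)$ is determined by such a line), and $\G$-stability would force $\ell$ to be invariant under every $q_\omega$---but the $q_\omega$'s act irreducibly on $k^2$. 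Consequently the representation map $A\to\End_k(M_2(k))\cong M_4(k)$ is injective and, by dimension, an isomorphism. The unique simple $M_4(k)$-module has $k$-dimension $4$, so every $A$-module, and therefore every $\G$-equivariant $M_2(k)$-module, is a direct sum of copies of the one in (1).

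For (3), the table in (1) displays $\{1, q_\zeta, q_\eta, q_\xi\}$ as simultaneous $\G$-eigenvectors realizing the four distinct characters of $\G\cong(\bZ/2)^2$ with multiplicity one apiece; hence the module in (1) is the regular representation $k\G$ as a $\G$-module. Combined with (2), this gives $V\cong (k\G)^{\oplus m}$ as a $\G$-module, where $m=\tfrac14\dim_k V$. For each nontrivial $\omega\in\G$, exactly two of the four characters take value $+1$ on $\omega$ (the trivial character and $\chi_\omega$) and the other two take value $-1$, so the $(+1)$- and $(-1)$-eigenspaces of $\omega$ on $k\G$ each have dimension $2$; summing over the $m$ summands yields eigenspaces of dimension $\tfrac12\dim_k V$ in $V$, as claimed.

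The principal obstacle is pinning down the algebra $A=M_2(k)\sharp k\G$; the simplicity-and-dimension route above is the shortest. Alternatively, one may describe $A$ as a twisted tensor product $M_2(k)\otimes k^\sigma\G$, where $\sigma$ is the $2$-cocycle determined by $q_\omega q_{\omega'}=\sigma(\omega,\omega')q_{\omega\omega'}$, and invoke the classical fact that $k^\sigma\G\cong M_2(k)$ (which holds because $\sigma$ represents the unique nontrivial class in $H^2(\G,k^\times)\cong\bZ/2$).
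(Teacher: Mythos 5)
Your proposal is correct and follows essentially the same route as the paper: part (1) via recognizing $\phi_\omega$ as conjugation by $q_\omega$ and invoking the general fact that a ring with a group acting by automorphisms is equivariant over itself, and part (3) via $M_2(k)$ being the regular representation of $\G$. For part (2) the paper's primary argument instead uses the descent equivalence $M\rightsquigarrow M^\G$ between $\G$-equivariant $M_2(k)$-modules and $\Vect$ (together with $M_2(k)^\G=k$), but it explicitly records as an alternative exactly your smash-product argument, namely that the $16$-dimensional algebra $M_2(k)\rtimes\G$ has a simple module of dimension $4$ and is therefore $M_4(k)$.
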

\begin{proof}
(1)
Whenever a group $\G$ acts as automorphisms of a ring $R$, $R$ viewed as left $R$-module via multiplication is a 
$\G$-equivariant $R$-module with respect to the action of $\G$ as automorphisms of $R$. The value of $\phi_\omega(q_{\omega'})$
in the table is $q_\omega q_{\omega'} q_\omega^{-1}$ so, by the previous sentence, this action of $\G$ makes $M_2(k)$ a $\G$-equivariant 
$M_2(k)$-module. 

(2)
By \Cref{le.descent}, there is an equivalence from the category of $\G$-equivariant $M_2(k)$-modules to the category of vector spaces, 
the functor implementing the equivalence being $M \rightsquigarrow M^\G$. Since $M_2(k)^\G\cong k$, the result follows.

Alternatively, a $\G$-equivariant left $M_2(k)$-module is the same thing as a left module over the skew group ring 
$M_2(k) \rtimes \G$ which has dimension16; the $\G$-equivariant $M_2(k)$-module in (2) is irreducible of dimension 4 so we conclude that
$M_2(k) \rtimes \G \cong M_4(k)$. The result follows. 

(3) follows from (2) because $M_2(k)$ is isomorphic as a $\G$-module to the regular representation. 
\end{proof}

\begin{theorem}
\label{pr.lines_wtQ}
Let $E[2]=\{o,\xi,\xi',\xi''\}$. Let $M$ be the $Q'$-module $(M_{p,p+\xi}\oplus M_{p+\xi',p+\xi'+\xi})\otimes k^2$.
\begin{enumerate}
  \item 
  There are exactly two $\G$-equivariant structures on $M$ up to isomorphism.
  \item 
   The group $H^1(\G,\Aut_{Q'}(M))$ acts simply transitively on this two-element set. 
  \item  
Up to isomorphism one equivariant structure is obtained from the other by interchanging the 
$(+1)$- and $(-1)$-eigenspaces for the action of $\xi$ on $M$ and simultaneously interchanging the $(+1)$- and $(-1)$-eigenspaces for the action of $\xi+\xi'$ on $M$, and leaving the $(+1)$- and $(-1)$-eigenspaces for the action of $\xi'$ unchanged. 
\end{enumerate}
\end{theorem}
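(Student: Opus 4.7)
The strategy is to exploit the standard principle that, once nonempty, the set of isomorphism classes of $\G$-equivariant structures on a fixed object $M$ of a $\G$-category is a torsor under $H^1(\G,\Aut(M))$, with the obstruction to nonemptiness lying in $H^2(\G,\Aut(M))$. The plan therefore breaks into three steps: first, exhibit one $\G$-equivariant structure on the $Q'$-module $M$; second, compute $H^1(\G,\Aut_{Q'}(M))$ and show it has order two; third, interpret the nontrivial cohomology class explicitly as the eigenspace swap described in~(3).

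For existence, observe that by Morita equivalence and \Cref{prop.twist} each $\c\in\G$ carries $M_{p,p+\xi}\oplus M_{p+\xi',p+\xi''}$ to itself up to isomorphism, so $\c^*M\cong M$ for every $\c$. To upgrade this to an honest family $\{\varphi_\c:M\to \c^*M\}$ satisfying \Cref{eq.equiv_diagram}, I will combine the natural identifications of \Cref{prop.twist} on the line-module factors with the multiplication-by-$q_\c$ automorphisms on the $k^2$ factor from \Cref{ssect.quat.basis}. Alternatively, one may argue abstractly that the obstruction class in $H^2(\G,\Aut_{Q'}(M))$ vanishes, by the cohomology computation carried out for the next step.

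For the cohomology, \Cref{lem.M.xi.2}(4) identifies $\Aut_{Q'}(M)$ with $(k^\times)^2$ on which $\xi$ acts trivially and $\xi',\xi''$ swap the two factors. I will run the Lyndon--Hochschild--Serre spectral sequence for the normal subgroup $\langle\xi\rangle\le\G$. The columns $E_2^{p,0}=H^p(\G/\langle\xi\rangle,(k^\times)^2)$ for $p\ge 1$ vanish by Shapiro's lemma, since $(k^\times)^2$ with the swap action is induced from the trivial subgroup of $\G/\langle\xi\rangle\cong\ZZ/2$; likewise $E_2^{1,1}=0$. Since $(k^\times)^2$ is divisible and $\mathrm{char}(k)\ne 2$, one also has $E_2^{0,2}=H^2(\langle\xi\rangle,(k^\times)^2)^{\G/\langle\xi\rangle}=0$. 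The surviving term $E_2^{0,1}=H^1(\langle\xi\rangle,(k^\times)^2)^{\G/\langle\xi\rangle}$ is the diagonal in $\{\pm 1\}\times\{\pm 1\}$, giving $H^1(\G,\Aut_{Q'}(M))\cong\ZZ/2$ and $H^2(\G,\Aut_{Q'}(M))=0$. Combined with the torsor principle this yields~(1) and~(2).

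Finally, a direct cocycle-versus-coboundary calculation in the two-variable group $\G$ (using the $\G$-action from \Cref{lem.M.xi.2}(4)) shows that the nontrivial class in $H^1(\G,\Aut_{Q'}(M))$ is represented by $c$ with $c(\xi)=(-1,-1)$, $c(\xi')=(1,1)$, $c(\xi'')=(-1,-1)$. Twisting a chosen equivariant structure by $c$ replaces $\varphi_\xi$ by $-\varphi_\xi$ and $\varphi_{\xi''}$ by $-\varphi_{\xi''}$ while leaving $\varphi_{\xi'}$ unchanged. Since $\omega^2=o$ in $E[2]$ the cocycle condition forces $\varphi_\omega^2=\id_M$, so each $\varphi_\omega$ decomposes $M$ into $\pm 1$-eigenspaces; negating $\varphi_\omega$ interchanges them, establishing~(3). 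The main obstacle I anticipate is the explicit construction in step one: pinning down the isomorphisms $\c^*M_{p,p+\xi}\cong M_{p+\xi_i,p+\xi+\xi_i}$ of \Cref{prop.twist} so that the cocycle equations hold on the nose rather than up to scalar requires careful normalization, although in principle the vanishing $H^2(\G,\Aut_{Q'}(M))=0$ already guarantees this can be done.
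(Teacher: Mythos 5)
Your proposal is correct and follows essentially the same route as the paper: existence via vanishing of the obstruction class in $H^2(\G,(k^\times)^2)$, classification as a torsor under $H^1(\G,(k^\times)^2)\cong\ZZ/2$ computed by the Hochschild--Serre spectral sequence for a $\ZZ/2$ subgroup of $\G$ (the paper takes $\langle\xi'\rangle$ normal where you take $\langle\xi\rangle$; for abelian $\G$ this is immaterial, and your Shapiro's-lemma phrasing of the vanishing is a clean equivalent), and the same explicit representative $c(\xi)=c(\xi'')=(-1,-1)$, $c(\xi')=(1,1)$ yielding the eigenspace-swap description in (3). The explicit construction of an equivariant structure that you flag as the main obstacle is indeed not needed, exactly as you note, and is carried out separately by the paper only later (in \Cref{thm.lines-and-line-modules}).
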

\begin{proof}
If $x=p+E[2]$, then $M$ is the module $M_{x,\xi}$  in \Cref{lem.M.xi.1,lem.M.xi.2}.

{\bf Step 1: Existence of an equivariant structure.} 
Let $\varphi_\gamma:M \to \gamma^* M$, $\gamma \in \G$, be arbitrary $Q'$-module isomorphisms. An arbitrary choice of such isomorphisms
need not  give an equivariant structure on $M$; i.e., there is no reason the diagrams \Cref{eq.equiv_diagram} should commute. 
The failure of  \Cref{eq.equiv_diagram} to commute is measured by the elements
\begin{equation}\label{eq.obstr}
 a_{\gamma,\delta} \; := \;  \varphi_{\gamma\delta}^{-1}\circ t_{\gamma,\delta} \circ \gamma^*(\varphi_\delta)\circ \varphi_\gamma,\quad 
 \gamma,\delta \in \G
\end{equation}
in $\Aut_{Q'}(M)$ where $t_{\gamma,\delta}$ is as in \Cref{eq.equiv_diagram} and the right-hand side of \Cref{eq.obstr} is the clockwise composition of the automorphisms in \Cref{eq.equiv_diagram}. 

A tedious calculation (see \Cref{lem.2-cocycle}) shows that the function $(\gamma,\delta) \mapsto a_{\gamma,\delta}$ 
is a 2-cocycle for $\G$ valued in the $\G$-module $\Aut_{Q'}(M)\cong(k^\times)^2$ defined in \Cref{lem.M.xi.2}. 
Let $\xi' \in \G-\langle \xi\rangle$. Since $\Gamma=\langle \xi \rangle \times \langle \xi' \rangle$ 
it follows from the Hochschild-Serre spectral sequence 
\begin{equation}\label{eq.HS}
E_2^{a,b} = H^a(\langle \xi \rangle,H^b( \langle \xi' \rangle,(k^\times)^2))\Rightarrow H^{a+b}(\G,(k^\times)^2)
\end{equation}
and the cohomology of $\ZZ/2$ that $H^2(\G,(k^\times)^2)$ is trivial. Hence the obstruction cocycle $(a_{\delta,\gamma})$ 
is cohomologous to zero. Thus $(a_{\delta,\gamma})$ is the coboundary of some function $\G \to \Aut_{Q'}(M)$, 
$ \gamma\mapsto a_\gamma$;  the isomorphisms $\varphi_\gamma a_\gamma^{-1}$ now
form an equivariant structure on $M$.

{\bf Step 2: Classification of equivariant structures.} 
By Step 1, there is at least one $\G$-equivariant structure on $M$. Suppose the maps $\varphi_\gamma:M\to \gamma^*M$, $\gamma\in \G$,
 provide such an equivariant structure. 

Let $(\psi_{\gamma})_{\gamma\in\G}$ be another equivariant structure on $M$. Running through the compatibility conditions comprising equivariance, the maps $a_\gamma=(\varphi_\gamma)^{-1}\psi_\gamma$ can be seen to form a 1-cocycle of $\G$ valued in the $\G$-module $\Aut_{Q'}(M)\cong (k^\times)^2$. We similarly leave it to the reader to check that cocycles $(a_\gamma)$ and $(a'_\gamma)$ give rise to isomorphic equivariant structures 
\[
 \psi_\gamma = \varphi_\gamma a_\gamma \text{ and } \psi'_\gamma = \varphi_\gamma a'_\gamma 
\]  
if and only if they are cohomologous. In other words, the set of isomorphism classes of equivariant structures on $M$ is acted upon simply and transitively by $H^1(\G,(k^\times)^2)$. Using the Hochschild-Serre spectral sequence once more we get $H^1(\G,(k^\times)^2)\cong \bZ/2$ (see the proof of (3) below).  

This completes the proof of (1) and (2).

(3) 
The Hochschild-Serre spectral sequence yields an isomorphism
\begin{equation}\label{eq.spectral_vanish}
H^1(\G,\Aut(M)) \; \cong \; H^1\big(\langle \xi\rangle,H^0(\langle \xi'\rangle,(k^\times)^2)\big)\;  \oplus \; H^0\big(\langle \xi\rangle,H^1(\langle \xi'\rangle,(k^\times)^2)\big).
\end{equation}
Since $\xi'$ interchanges the two copies of $k^\times$, the $H^1$ term in the second summand vanishes so we are left with a natural isomorphism
\begin{equation*}
H^1(\G,\Aut(M)) \; \cong \; H^1(\langle \xi\rangle,k^\times) \; \cong \; \Hom_{\ZZ}(\langle \xi\rangle,k^\times),
\end{equation*}
where this time $k^\times$ is the diagonal subgroup of $\Aut_{Q'}(M)$. 

The function $f:\G \to \Aut_{Q'}(M)$ defined by $f(\xi)=f(\xi'+\xi)=(-1,-1)$  and $f(o)=f(\xi')=(1,1)$ is a 1-cocycle whose class $[f]$ in 
$H^1(\G,\Aut(M))$ is non-trivial. 
If the $Q'$-module isomorphisms $\{\phi_\c:M \to \gamma^*M \; | \; \c \in \G\}$ give $M$ a $\G$-equivariant structure, then the $\G$-equivariant structure on $M$ associated to the result of $[f]$ acting on the given equivariant structure is given by the isomorphisms
$\{\phi_\c\circ f(\c):M \to \gamma^*M \; | \; \c \in \G\}$. Recall that $\c^*M$ is $M$ as a graded vector space. The $(+1)$-eigenspace for the action of
$\xi$ on $M$ with equivariant structure $\{\phi_\c\}_{\c \in \G}$ is $\{m \in M \; | \; \phi_\xi(m)=m\}$ which is the $(-1)$-eigenspace for 
$\phi_\xi \circ f(\xi)$. Likewise, the $(-1)$-eigenspace for the action of $\phi_{\xi+\xi'} \circ f(\xi+\xi')$ is  the $(+1)$-eigenspaces for 
the action of $\phi_{\xi+\xi'}$. On the other hand, the eigenspaces for $\xi'$ are the same for both equivariant structures on $M_{x,\xi}$. 
\end{proof}

\subsubsection{}
There is a lack of symmetry in part (3) of \Cref{pr.lines_wtQ}: the eigenspaces for $\xi+\xi'$ are switched but those for $\xi'$ are not. The explanation is that the equivariant structure obtained by interchanging the eigenspaces for $\xi'$ but not $\xi+\xi'$ (but still exchanging the eigenspaces for $\xi$) is isomorphic to that obtained by switching the eigenspaces for $\xi+\xi'$ but not those for $\xi'$.

 \subsubsection{}
The proof of \Cref{pr.lines_wtQ} illustrates a familiar pattern in obstruction theory. The class of structures we are interested in, isomorphism 
classes of equivariant structures in this case, is a \define{pseudotorsor} over a cohomology group. Whether or not it is empty is controlled by an obstruction living in a cohomology group, $H^2$ for us, as in Step 1 of the proof, and when this obstruction vanishes the cohomology group 
of one degree lower, $H^1$ in our case, acts on the class of structures simply transitively.

\subsection{An explicit equivariant  structure on $M_{x,\xi}$}
\label{ssect.explicit.equivariant}

Let $\{\xi_1,\xi_2,\xi_3\}$ denote both the 2-torsion points on $E$ and the corresponding elements in $\G$, 
labelled so that the action of $\G$ as automorphisms of $M_2(k)$ is such that each $\xi_j$ acts as conjugation by the element $q_j$ in
(\ref{defn.a_i}). 

Let $p \in E$ and let $x=p+\langle \xi_1 \rangle \in E/\langle \xi_1 \rangle$.
Let $M=M_{x,\xi_1}=(M_{p,p+\xi_1} \oplus M_{p+\xi_2,p+\xi_3}) \otimes k^2$.
Fix a basis $e$ for the degree-zero component of  $M_{p,p+\xi_1}$ and a basis $e'$ for the degree-zero component of  $M_{p+\xi_2,p+\xi_3}$.

If $u={\,0 \choose \,1}$ and $v={\,1 \choose \,0}$, then
\begin{align*}
&   q_1u= -iu,  \qquad  q_2u= iv, \qquad q_3u=  -v, \\
&   q_1v=iv,   \qquad \quad q_2v=iu ,  \qquad q_3v=u.
\end{align*}

\begin{lemma}
\label{lem.Q-tilde-line-module}
Let $\b_0x_0+\b_1x_1+\b_2x_2+\b_3x_3$ be a linear form that vanishes at $p$ and $p+\xi_1$. Then
\begin{enumerate}
  \item 
the line through $p$ and $p+\xi_1$ is  $\b_0x_0+\b_1x_1=\b_2x_2+\b_3x_3=0$,
  \item 
the line through  $p+\xi_2$ and $p+\xi_3$ is    $\b_0x_0-\b_1x_1=\b_2x_2-\b_3x_3=0$, 
  \item{}
$\b_0y_0+i\b_1y_1$ and $i\b_2y_2+\b_3y_3$ annihilate $e \otimes u+ e' \otimes v$ and are linearly independent, and
  \item{}
$\b_0y_0-i\b_1y_1$ and $i\b_2y_2-\b_3y_3$ annihilate $e \otimes v+ e' \otimes u$ and are linearly independent.
\end{enumerate}
\end{lemma}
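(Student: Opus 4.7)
The plan is as follows. First I would set up coordinates: write $p=(\d_0,\d_1,\d_2,\d_3)$ in the basis $(x_0,x_1,x_2,x_3)$. By the formulas in \Cref{Gamma-action} combined with the identification $\G=E[2]$ from \Cref{thm.gp.law}, the point $p+\xi_j$ is obtained from $p$ by the corresponding sign changes; in particular $p+\xi_1=(\d_0,\d_1,-\d_2,-\d_3)$, $p+\xi_2=(\d_0,-\d_1,\d_2,-\d_3)$, and $p+\xi_3=(\d_0,-\d_1,-\d_2,\d_3)$.

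For (1) and (2), the vanishing hypothesis at $p$ and $p+\xi_1$ yields the pair of equations $\b_0\d_0+\b_1\d_1+\b_2\d_2+\b_3\d_3=0$ and $\b_0\d_0+\b_1\d_1-\b_2\d_2-\b_3\d_3=0$. Adding and subtracting them produces $\b_0\d_0+\b_1\d_1=0=\b_2\d_2+\b_3\d_3$, which exhibits both $\b_0x_0+\b_1x_1$ and $\b_2x_2+\b_3x_3$ as linear forms vanishing on the line $\overline{p,p+\xi_1}$. Since $p\in E$ is not fixed by $\xi_1$ (the fixed loci of $\xi_1$ on $\PP^3$, namely $\{x_0=x_1=0\}$ and $\{x_2=x_3=0\}$, are disjoint from $E$ by the defining equations in \Cref{prop.SS}), these two forms actually cut out that line, giving (1). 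The same two scalar equations immediately imply that $\b_0x_0-\b_1x_1$ and $\b_2x_2-\b_3x_3$ vanish at $p+\xi_2$ and $p+\xi_3$, which is (2).

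For (3), recall $y_0=x_0$ and $y_j=x_jq_j$ for $j=1,2,3$. I would compute the action of the two proposed annihilators on each summand of $e\otimes u+e'\otimes v$ by simply applying the tabulated formulas $q_1u=-iu$, $q_1v=iv$, $q_2u=iv$, $q_3u=-v$, $q_2v=iu$, $q_3v=u$. Factor of $i$'s in the coefficients combine with those introduced by $q_j$ so that, on $e\otimes u$, $\b_0y_0+i\b_1y_1$ reduces to $(\b_0x_0+\b_1x_1)e\otimes u$ and $i\b_2y_2+\b_3y_3$ reduces to $-(\b_2x_2+\b_3x_3)e\otimes v$, both zero by (1); on $e'\otimes v$ the same elements reduce to $(\b_0x_0-\b_1x_1)e'\otimes v$ and $-(\b_2x_2-\b_3x_3)e'\otimes u$, both zero by (2). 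Linear independence follows because the two forms lie in the complementary subspaces $ky_0\oplus ky_1$ and $ky_2\oplus ky_3$ of $\wtQ_1$, provided each is nonzero; the main subtlety is this nonvanishing, which follows from the observation above that $p\in E$ avoids the $\xi_1$-fixed lines, forcing neither $(\b_0,\b_1)$ nor $(\b_2,\b_3)$ to be zero for a form genuinely transverse to $\overline{p,p+\xi_1}$. Part (4) is an identical calculation with $u$ and $v$ interchanged throughout, so the same argument applies verbatim after the obvious sign adjustments.
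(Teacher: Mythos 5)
Your proposal is correct and follows essentially the same route as the paper: parts (1) and (2) come from the sign action of $E[2]$ on the coordinates (the paper applies $\gamma_i$ to the linear forms rather than to the points, which is the same computation), and parts (3) and (4) are the same direct calculation using the quaternion action on $u$ and $v$ together with (1) and (2). Your justification that $(\beta_0,\beta_1)\neq(0,0)\neq(\beta_2,\beta_3)$ is hedged ("genuinely transverse"), but it is at the same level of precision as the paper's own appeal to \Cref{lem.pts.of.E}, so no further comment is needed.
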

\begin{proof}
By \Cref{lem.pts.of.E}, at least three of the coordinate functions $x_0.x_1,x_2,x_3$ are non-zero at $p$. Thus $(\b_0,\b_1) \ne (0,0)$
and $(\b_2,\b_3) \ne (0,0)$. Therefore the equations in (2) and (3) really do define lines in $\PP(Q_1^*)$. 
It also follows that $\b_0y_0+i\b_1y_1$ and $i\b_2y_2+\b_3y_3$ are linearly independent.

(1)
Translation by $\xi_1$ leaves the set $\{p,p+\xi_1\}$ stable so 
$\xi_1(\b_0x_0+\b_1x_1+\b_2x_2+\b_3x_3)$ also vanishes at $p$ and $p+\xi_1$. Since $\xi_1(\b_0x_0+\b_1x_1+\b_2x_2+\b_3x_3)=\b_0x_0+\b_1x_1-\b_2x_2-\b_3x_3$, (1) follows.

(2)
Since translation by $\xi_2$ sends  $\{p,p+\xi_1\}$ to  $\{p+\xi_2,p+\xi_3\}$,
$\xi_2(\b_0x_0+\b_1x_1)$ and $\xi_2(\b_2x_2+\b_3x_3)$ vanish at $p+\xi_2$ and $p+\xi_3$.
Thus (2) is true. 

(3)
Since
\begin{align*}
y_0 \cdot(e \otimes u+e' \otimes v)  \; =\; (x_0 \otimes q_0)\cdot(e \otimes u+e' \otimes v) & \; =\; x_0 e \otimes u + x_0 e' \otimes v,
\\
y_1 \cdot(e \otimes u+e' \otimes v)  \; =\; (x_1 \otimes q_1)\cdot(e \otimes u+e' \otimes v) & \; =\; -i x_1 e \otimes u + ix_1 e' \otimes v,
\\
y_2 \cdot(e \otimes u+e' \otimes v)  \; =\; (x_2 \otimes q_2)\cdot(e \otimes u+e' \otimes v)  & \; =\; ix_2 e \otimes v + ix_2 e' \otimes u,  \; \; \hbox{and}
\\
y_3 \cdot(e \otimes u+e' \otimes v)  \; =\; (x_3 \otimes q_3)\cdot(e \otimes u+e' \otimes v) & \; =\; - x_3 e \otimes v +  x_3 e' \otimes u,
\end{align*} 
$(\b_0y_0+i\b_1y_1-i\b_2y_2-\b_3y_3)\cdot(e \otimes u+ e' \otimes v)$ equals
$$
 (\b_0x_0 + \b_1x_1)e \otimes u \; + \; (\b_2x_2+\b_3x_3) e \otimes v \;  + \; (\b_2x_2-\b_3x_3)e'\otimes u \; + \; (\b_0x_0- \b_1x_1)e' \otimes v.
$$
Since $e \in (M_{p,p+\xi_1})_0$ it follows from (1) that $ (\b_0x_0 + \b_1x_1)e =(\b_2x_2+\b_3x_3) e=0$.
Since $e' \in (M_{p+\xi_2,p+\xi_3})_0$ it follows from (2) that $ (\b_0x_0 - \b_1x_1)e' =(\b_2x_2-\b_3x_3) e'=0$.
Therefore (3) is true. The proof of (4) is similar.
\end{proof}

Let $\phi_0$ be the identity map on $M_0$ and let $\phi_1,\phi_2 \in \GL(M_0)$ be the linear automorphisms which act on the basis 
$\{e \otimes u,e \otimes v,e'\otimes u,e'\otimes v\}$ as in Table \ref{Gamma.action2}.
\begin{table}[htdp]
\begin{center}
\begin{tabular}{|l||l|l|l|l|l|}
\hline
 & $e \otimes u$ & $\phantom{-}e \otimes v$ & $\phantom{-}e'\otimes u$ & $e'\otimes v$
\\
\hline
\hline
$\phi_1$ & $e \otimes u$ & $-e \otimes v$ & $-e'\otimes u$ & $e'\otimes v$
\\
\hline
$\phi_2$ &  $e' \otimes v$ & $\phantom{-}e' \otimes u$ & $\phantom{-}e\otimes v$ & $e\otimes u$ 
\\
\hline
\end{tabular}
\end{center}
\vskip .12in
\caption{Equivariant structure on $M_0$}
\label{Gamma.action2}
\end{table}

\noindent
Let $\phi_3=\phi_1\phi_2$. 

The following observation is elementary. 

\begin{lemma}
\label{lem.Z2.equiv.structure} 
Let $a$ be an element in a 
ring $R$ such that $a^2=1$. There is a group homomorphism $\ZZ/2 \to \Aut(R)$ given by sending the non-identity element to the automorphism
$b \mapsto aba^{-1}$. Let $M$ be a left $R$-module and define the group homomorphism $\ZZ/2 \to \Aut_\ZZ(M)$ by sending the non-identity element to the automorphism $m \mapsto am$. This action of  $\ZZ/2$ makes $M$ a $\ZZ/2$-equivariant $R$-module.
\end{lemma}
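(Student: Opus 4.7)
The proof is a direct verification of three compatibility conditions, all of which follow from the single identity $a^2 = 1$ (equivalently $a = a^{-1}$).

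First I would check that $b \mapsto aba^{-1}$ is indeed an order-dividing-two automorphism of $R$. It is plainly a ring homomorphism (conjugation always is), and since $a = a^{-1}$, applying it twice sends $b$ to $a(aba^{-1})a^{-1} = a^2 b a^{-2} = b$. So the assignment $\bar 1 \mapsto (b\mapsto aba^{-1})$ is a well-defined group homomorphism $\ZZ/2 \to \Aut(R)$.

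Next I would verify that $m \mapsto am$ is an involutive $\ZZ$-linear automorphism of $M$: it is $\ZZ$-linear because left multiplication by any ring element is, and it squares to the identity since $a(am) = a^2 m = m$. So $\bar 1 \mapsto (m \mapsto am)$ defines a group homomorphism $\ZZ/2 \to \Aut_\ZZ(M)$.

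Finally, the only substantive check is compatibility of the $\ZZ/2$-actions on $R$ and $M$ in the sense needed for an equivariant module, namely that the action map $R \otimes M \to M$ is $\ZZ/2$-equivariant. For the non-identity element $\sigma$ of $\ZZ/2$ and elements $b \in R$, $m \in M$, this reduces to checking
\[
\sigma(b \cdot m) \;=\; \sigma(b) \cdot \sigma(m),
\]
which unpacks as $a(bm) = (aba^{-1})(am)$; the right-hand side equals $aba^{-1}am = abm$ because $a^{-1}a = 1$, matching the left-hand side. No step here is a genuine obstacle — the proof is a one-line computation in each case, and I would simply present the three verifications in sequence.
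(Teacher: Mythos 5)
Your verification is correct and is exactly the routine check the paper has in mind — the paper states this lemma without proof, labelling it "elementary," and your three computations (conjugation by $a$ is an involutive automorphism since $a=a^{-1}$, the map $m\mapsto am$ is an involutive $\ZZ$-linear automorphism, and $a(bm)=(aba^{-1})(am)$ gives the compatibility $(xm)^\sigma=\sigma(x)m^\sigma$ required by the appendix's definition of an equivariant module) constitute the intended argument.
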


\begin{theorem}
\label{thm.lines-and-line-modules}
Let each $\xi_i$ act on $M_0$ as the linear map $\phi_i$ in Table \ref{Gamma.action2}.
\begin{enumerate}
  \item 
  This action of $\G$ on $M_0$  extends to an action of $\G$ on $M$ that makes $M$ a  $\G$-equivariant $Q'$-module. 
  \item 
  The $\wtQ$-line module $M^\G$ is generated by $e \otimes u+ e' \otimes v$.
  \item{}
  If $\b_0x_0+\b_1x_1 =\b_2x_2+\b_3x_3=0$ is the line in $\PP(Q_1^*)$ that passes through $p$ and $p+\xi_1$, then
  the line in $\PP(\wtQ_1^*)$ corresponding to $M^\G$ is $\b_0y_0+i\b_1y_1=i\b_2y_2+\b_3y_3=0$. 
\end{enumerate}
\end{theorem}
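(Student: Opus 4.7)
My plan is to handle the three parts in order, reducing each to concrete computations already prepared for in the paper.

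For part (1), the cleanest route is not to verify the axioms of an equivariant structure blindly on generators, but to recognize the $\phi_i$ as the degree-zero components of an equivariant structure built from \Cref{prop.twist}. Writing $N := M_{p,p+\xi_1} \oplus M_{p+\xi_2,p+\xi_3}$, translation by $\xi_1$ preserves the unordered pairs $\{p,p+\xi_1\}$ and $\{p+\xi_2,p+\xi_3\}$ while $\xi_2$ and $\xi_3$ interchange them, so \Cref{prop.twist} furnishes $Q$-module isomorphisms $\psi_i^N \colon N \to \xi_i^* N$. Tensoring with the map $v \mapsto q_i v$ on $k^2$ yields isomorphisms $M \to \xi_i^* M$ of $Q'$-modules (compatibility is a direct check from the definition of the $\Gamma$-action on $Q'$). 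After rescaling the $\psi_i^N$ summand-by-summand by constants in $k^\times$, the resulting maps on $M_0$ agree with the $\phi_i$ of Table~\ref{Gamma.action2}; that these scalars can be chosen is precisely the freedom in \Cref{pr.lines_wtQ}(2). Once these maps on $M_0$ are matched, \Cref{lem.equiv.structures} guarantees a unique extension to all of $M$, which is exactly the one above, finishing (1).

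For part (2), I will first check by inspection of Table~\ref{Gamma.action2} that $m := e\otimes u + e'\otimes v$ is fixed by $\phi_1$ and $\phi_2$, hence also by $\phi_3 = \phi_1\phi_2$, and so $m \in M^\Gamma$. Next I compute the Hilbert series of $M^\Gamma$: each homogeneous component $M_n$ is a $\Gamma$-equivariant $M_2(k)$-module, so by \Cref{prop.equiv.structures.isom}(3) one has $\dim_k M_n^\Gamma = \tfrac14 \dim_k M_n$, and since the Hilbert series of $M$ is $4(1-t)^{-2}$ this gives $\mathrm{Hilb}(M^\Gamma) = (1-t)^{-2}$. Finally, \Cref{lem.Q-tilde-line-module}(3) supplies two linearly independent elements of $\widetilde{Q}_1$ annihilating $m$, so the cyclic $\widetilde{Q}$-submodule $\widetilde{Q}m \subseteq M^\Gamma$ has Hilbert series at most $(1-t)^{-2}$; comparing with $\mathrm{Hilb}(M^\Gamma)$ forces $\widetilde{Q}m = M^\Gamma$.

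Part (3) is then essentially a reading of \Cref{lem.Q-tilde-line-module}(3): the annihilator of the generator $m$ in $\widetilde{Q}_1$ is a two-dimensional subspace, and it contains (hence equals, by dimension) the span of $\beta_0 y_0 + i\beta_1 y_1$ and $i\beta_2 y_2 + \beta_3 y_3$, whose vanishing locus in $\mathbb{P}(\widetilde{Q}_1^*)$ is exactly the line described in the statement.

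I expect the main obstacle to be part (1), specifically the bookkeeping of normalizing the maps $\psi_i^N$ so that the resulting maps on $M_0$ match Table~\ref{Gamma.action2} rather than differing from it by a nontrivial $1$-cocycle in $H^1(\Gamma, (k^\times)^2)$; \Cref{pr.lines_wtQ}(3) and the explicit description of the $\Gamma$-action on $\Aut_{Q'}(M)$ in \Cref{lem.M.xi.2}(4) pin down exactly which sign choices are required.
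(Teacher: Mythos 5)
The main gap is in part (2). You correctly show that $m=e\otimes u+e'\otimes v$ lies in $(M^\G)_0$ and that $\mathrm{Hilb}(M^\G)=(1-t)^{-2}$ (the appeal to \Cref{prop.equiv.structures.isom}(3) is fine), but the concluding step --- ``$\wtQ m$ has Hilbert series at most $(1-t)^{-2}$; comparing with $\mathrm{Hilb}(M^\G)$ forces $\wtQ m=M^\G$'' --- points the inequality the wrong way. An upper bound on the size of the submodule generated by $m$ can never show that submodule is everything; it is perfectly consistent with $\wtQ m\subsetneq M^\G$. Nor do two independent linear forms annihilating $m$ give the lower bound $\mathrm{Hilb}(\wtQ m)\ge (1-t)^{-2}$: a quotient of $\wtQ$ by a left ideal generated by two linear forms only satisfies the bound coming from $(1-2t)(1-t)^{-4}$, whose coefficients drop to $0$ in degree $3$. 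The missing input is descent: by \Cref{pr.descent_lines_wtQ}, $M^\G$ is a line module, hence cyclic and generated by its degree-zero component, which your Hilbert series computation identifies as the one-dimensional space $km$. That is exactly how the paper argues part (2), and it is the only point where the equivariant structure is actually cashed in. Part (3) is then fine, and matches the paper.

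Part (1) can be completed along your lines, but the justification at the crucial step is misattributed. After rescaling the maps $\psi_i^N\otimes q_i$ summand-by-summand so that their degree-zero restrictions equal the $\phi_i$ of Table~\ref{Gamma.action2} (such scalars do exist), you must still check that the rescaled maps satisfy the cocycle condition \Cref{eq.equiv_diagram}. This is not ``the freedom in \Cref{pr.lines_wtQ}(2)'': arbitrary rescalings form $C^1(\G,(k^\times)^2)$, only rescaling by a $1$-cocycle preserves equivariance, and \Cref{pr.lines_wtQ}(2) concerns the action of $H^1$ on isomorphism classes, not which specific tuples of maps are equivariant structures. The short correct reason is that the defect $\psi_{\gamma\delta}^{-1}\circ\gamma^*(\psi_\delta)\circ\psi_\gamma$ is a $Q'$-module automorphism of $M$ whose restriction to $M_0$ is $\phi_{\gamma\delta}^{-1}\phi_\delta\phi_\gamma=\id$, because $\{\phi_0,\phi_1,\phi_2,\phi_3\}$ is a Klein four subgroup of $\GL(M_0)$, and a module automorphism of a module generated in degree zero is determined by its degree-zero restriction. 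The paper proceeds differently: it verifies via \Cref{lem.Z2.equiv.structure} that the $\phi_i$ make $M_0$ a $\G$-equivariant $M_2(k)$-module and then extends by the formula $\phi_i(am):=\xi_i(a)\phi_i(m)$, checking well-definedness using $\xi_i^*M\cong M$. Either route works once the above point is supplied.
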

\begin{proof}
(1)
We will use \Cref{lem.Z2.equiv.structure} to show that $M_0$ is a $\G$-equivariant $M_2(k)$-module.

First, consider the action of $\xi_1$ by $\phi_1$ on $e \otimes k^2$. With respect to the ordered basis $\{e \otimes u,e\otimes v\}$,
$\xi_1$ acts on $e \otimes k^2$ as multiplication by $1 \otimes {1 \;\phantom{-} 0 \choose 0 \;-1}$. The action of $\xi_1$ on $M_2(k)$ is 
$b \mapsto q_1bq_1^{-1}$. Since conjugation by $q_1$ is the same as conjugation by ${1 \;\phantom{-} 0 \choose 0 \;-1}$, \Cref{lem.Z2.equiv.structure} tells us that $e \otimes k^2$ is a $\langle \xi_1 \rangle$-equivariant $M_2(k)$-module.

Now consider the action of  $\xi_1$ by $\phi_1$ on $e' \otimes k^2$. With respect to the ordered basis $\{e \otimes u,e\otimes v\}$,
$\xi_1$ acts on $e' \otimes k^2$ as multiplication by $1 \otimes {-1\; \;0 \choose \phantom{-} 0\; \;1}$.  Since conjugation by $q_1$ is the same as conjugation by $ {-1 \;\;0 \choose \phantom{-} 0\; \;1}$, \Cref{lem.Z2.equiv.structure} tells us that $e' \otimes k^2$ is a $\langle \xi_1 \rangle$-equivariant $M_2(k)$-module.

Thus, $M_0$ is $\langle \xi_1 \rangle$-equivariant $M_2(k)$-module. A similar argument shows that $M_0$ is a $\langle \xi_j \rangle$-equivariant $M_2(k)$-module for the other $j$'s.  Since $\{\phi_0,\phi_1,\phi_2,\phi_3\}$
is a subgroup of $\GL(M_0)$ isomorphic to $\G$, these $\ZZ/2$-equivariant structures fit together to make 
$M_0=(e \otimes k^2) \oplus (e' \otimes k^2)$ a $\G$-equivariant $M_2(k)$-module.

To extend the equivariant structure to all of $M$, simply define automorphisms $\phi_i$ of $M$ by 
\begin{equation*}
  \phi_i(am) = \xi_i(a)\phi_i(m),\quad \forall a\in Q',\ m\in M_0. 
\end{equation*}
That this action is well-defined boils down to checking that whenever $a\in Q'$ annihilates $m\in M_0$, $\xi_i(a)$ annihilates $\phi_i(m)$. For this it suffices to assume that $m$ is an eigenvector of $\phi_i$ (since $M_0$ breaks up as a direct sum of $\G$-eigenspaces), and hence to prove that
\begin{equation*}
  am = 0\Rightarrow \xi_i(a)m=0,\quad \forall a\in Q',\ m\in M_0.
\end{equation*}
The conclusion follows from the fact that all twists $\xi_i^*M$ are isomorphic to $M$ as $Q'$-modules (because we already know there are equivariant structures on $M$). 

(2) 
By \Cref{pr.descent_lines_wtQ}, $M^\G$ is a line module for $\wtQ$. One sees from \Cref{Gamma.action} that $e \otimes u+ e' \otimes v$
is in $M^\G_0$ so it generates the $\wtQ$-line module $M^\G$. 

(3)
The correspondence between line modules for $\wtQ$ and lines in  $\PP(\wtQ_1^*)$ is given by sending a line module $\wtQ/\wtQ y+\wtQ y'$
to the line $y=y'=0$. Thus, (3) follows from \Cref{lem.Q-tilde-line-module}(3).
\end{proof}

\subsection{3 elliptic curves parametrizing some line modules}

Let $\GG(1,3)$ be the Grassmannian of lines in $\PP(\wtQ_1^*)$. There is a bijection
$$
\GG(1,3) \; \longleftrightarrow \; \{\hbox{isomorphism classes of cyclic graded $\wtQ$-modules with Hilbert series $1+2t$}\}
$$
given by the function sending a line $y=y'=0$ to the module $\wtQ/\wtQ y+\wtQ y' + \wtQ_{\ge 2}$ and its inverse which sends a  cyclic graded 
$\wtQ$-module $N$ with Hilbert series $1+2t$ to the vanishing locus of the subspace of $\wtQ_1$ that annihilates $N_0$.  

Let $L$ be a line module for $\wtQ$. The Hilbert series for $L/L_{\ge 2}$  is $1+2t$ so $L$ determines a point in $\GG(1,3)$. Since $L \cong \wtQ/\wtQ y + \wtQ y'$ for some linearly independent elements $y,y' \in \wtQ_1$, the isomorphism class of $L$ is determined by the isomorphism class of $L/L_{\ge 2}$.
Thus, there is a well-defined map 
$$
 \{\hbox{isomorphism classes of line modules for $\wtQ$}\}  \; \longrightarrow \; \GG(1,3).
 $$

\begin{proposition}
\label{prop.G13}
Let $g: \PP(Q_1^*) \to \PP(\wtQ_1^*)$ be the isomorphism induced by the linear  isomorphism $\wtQ_1 \to Q_1$,
$$
y_0 \mapsto x_0, \qquad y_1 \mapsto -ix_1, \qquad y_2 \mapsto -ix_2, \qquad  y_3 \mapsto x_3.
$$
The function $f:E/\langle \xi_1\rangle \to \GG(1,3)$ defined by
$$
f\big(p+\langle \xi_1\rangle\big) := g(\hbox{the line in $\PP(Q_1^*)$ that passes  through $p$ and $p+\xi_1$})
$$
is a closed immersion and $f\big(E/\langle \xi_1\rangle\big)$ 
parametrizes the isomorphism classes of $\G$-equivariant $Q'$-modules of the form $M_{x,\xi_1}$, $x \in E/E[2]$.  
If $x=p+E[2]$, then the lines  $f\big(p+\langle \xi_1\rangle\big)$ and $ f\big(p+\xi_2+\langle \xi_1\rangle\big)$
correspond to the two non-isomorphic equivariant  structures on $M_{x,\xi_1}$. 
\end{proposition}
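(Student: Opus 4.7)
Well-definedness of $f$ is automatic, since the unordered pair $\{p,p+\xi_1\}$ is unchanged by replacing $p$ with $p+\xi_1$. For the closed immersion claim I would factor
\[
  f : E/\langle \xi_1 \rangle \xrightarrow{\,\beta\,} S^2E \xrightarrow{\,\chi\,} \GG(1,3) \xrightarrow{\,g_*\,} \GG(1,3),
\]
where $\beta$ sends $p+\langle\xi_1\rangle$ to the divisor $(p)+(p+\xi_1)$, $\chi$ is the secant map, and $g_*$ is the isomorphism of Grassmannians induced by $g$. Then \Cref{le.E-S2E} and \Cref{pr.S2E-G} supply the first two closed immersions, and $g_*$ is an isomorphism, so $f$ is a closed immersion.

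To identify the image, I would transport the equations of $\overline{p,p+\xi_1}$ across $g$. Writing this line as $\b_0 x_0+\b_1 x_1=\b_2 x_2+\b_3 x_3=0$ in the form provided by \Cref{lem.Q-tilde-line-module}(1), the inverse linear isomorphism $Q_1\to\wtQ_1$ underlying $g$ is $x_0\mapsto y_0,\ x_1\mapsto iy_1,\ x_2\mapsto iy_2,\ x_3\mapsto y_3$, so pulling back these forms gives
\[
  \b_0 y_0+i\b_1 y_1 \;=\; i\b_2 y_2+\b_3 y_3 \;=\; 0
\]
as equations for $g(\overline{p,p+\xi_1})\subseteq\PP(\wtQ_1^*)$. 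This is precisely the line attached by \Cref{thm.lines-and-line-modules}(3) to the $\wtQ$-line module $M_{x,\xi_1}^\G$, where $x=p+E[2]$ and $M_{x,\xi_1}$ carries the explicit equivariant structure built in \Cref{ssect.explicit.equivariant}. Combining this with \Cref{pr.descent_lines_wtQ}, which identifies isomorphism classes of $\G$-equivariant $Q'$-modules of Hilbert series $4(1-t)^{-2}$ with isomorphism classes of $\wtQ$-line modules, identifies each point of $f(E/\langle\xi_1\rangle)$ with an equivariant structure on some $M_{x,\xi_1}$.

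For the last assertion, note that the natural surjection $E/\langle\xi_1\rangle\twoheadrightarrow E/E[2]$ has two-element fibres; over $x=p+E[2]$ the fibre is $\{p+\langle\xi_1\rangle,\ p+\xi_2+\langle\xi_1\rangle\}$. By \Cref{lem.M.xi.1}(1), the underlying $Q'$-module $M_{x,\xi_1}=(M_{p,p+\xi_1}\oplus M_{p+\xi_2,p+\xi_3})\otimes k^2$ does not depend on which representative we pick, whereas the two lines $\overline{p,p+\xi_1}$ and $\overline{p+\xi_2,p+\xi_3}$ (hence their $g$-images) are distinct because $E$ has no trisecants---this is the input already used in \Cref{pr.S2E-G}. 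Distinct lines in $\PP(\wtQ_1^*)$ give non-isomorphic $\wtQ$-line modules, so the two fibre-points produce non-isomorphic equivariant structures on $M_{x,\xi_1}$; since by \Cref{pr.lines_wtQ} there are exactly two such structures, these must be all of them. The point I expect to require the most care is the coordinate bookkeeping in the middle step: verifying that the asymmetric signs/factors of $i$ in the definition of $g$ match exactly the equations produced by \Cref{thm.lines-and-line-modules}(3), which is what forces $g$ to be the particular linear isomorphism stated in the proposition rather than any of the sixteen similar-looking variants.
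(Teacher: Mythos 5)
Your proposal is correct and follows essentially the same route as the paper: factor $f$ as $g_*\circ\chi\circ\beta$ using \Cref{le.E-S2E} and \Cref{pr.S2E-G}, then transport the equations $\b_0x_0+\b_1x_1=\b_2x_2+\b_3x_3=0$ across $g$ and match them with \Cref{thm.lines-and-line-modules}(3). The only difference is that you spell out the final assertion (two fibre points over $x$ give distinct secant lines, hence non-isomorphic line modules, hence by \Cref{pr.lines_wtQ} the two equivariant structures), which the paper leaves implicit; that added detail is sound.
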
 
\begin{proof}
The map that sends a point $p \in E$ to the line through $p$ and $p+\xi_1$ is a morphism from $E$ to the Grassmanian of lines in 
$\PP(Q_1^*)$. Composing that map with $g$ gives a morphism $h:E \to \GG(1,3)$. Since $h(p)=h(p+\xi_1)$, $h$ factors as a composition 
\begin{equation}
\label{E.to.G}
E  \longrightarrow E/\langle \xi_1\rangle \longrightarrow \GG(1,3)
\end{equation}
where the first map is the quotient map and the second is $f$. By the universal property of the quotient map, $f$ is a morphism. In fact, $f$ is the composition $\c\b$ of the two maps from \Cref{pr.S2E-G,le.E-S2E} and hence is a closed immersion.  

The line in $\PP(Q_1^*)$ through $p$ and $p+\xi_1$ is of the form  $\b_0x_0+\b_1x_1=\b_2x_2+\b_3x_3=0$. Therefore $f\big(p+\langle \xi_1\rangle\big)$ is the line $g(\b_0x_0+\b_1x_1)=g(\b_2x_2+\b_3x_3)=0$, i.e., the line
$i\b_0y_0-\b_1y_1=\b_2y_2-i\b_3y_3=0$. Thus, $f(p+\langle \xi_1\rangle)$ is the line in $\PP(\wtQ_1^*)$
that corresponds to the $\wtQ$-line module, $M^\G$, that corresponds to the $\G$-equivariant structure on $M=M_{x,\xi_1}$ with the 
equivariant structure described in \Cref{thm.lines-and-line-modules}.
\end{proof}

There are versions of all the results in \Cref{ssect.explicit.equivariant} with $\xi_2$ and $\xi_3$ in place of $\xi_1$. 
In particular, by \Cref{prop.G13} there are morphisms $E/\langle \xi_1 \rangle \to \GG(1,3)$, $E/\langle \xi_2 \rangle \to \GG(1,3)$, 
and $E/\langle \xi_3 \rangle \to \GG(1,3)$. It is easy to see that these morphisms are injective but we have not yet shown that the images are 
smooth. It is clear that the images of these morphisms are disjoint from one another. 

\begin{theorem}
The set of $\Gamma$-equivariant $Q'$-modules in \Cref{pr.lines_wtQ} is parametrized by 
$$
\big( E/\langle \xi \rangle\big) \, \sqcup \, \big( E/\langle \xi' \rangle\big) \, \sqcup \, \big( E/\langle \xi'' \rangle\big)
$$
where $\{\xi,\xi',\xi''\}$ is the set of 2-torsion points on $E$. 
\end{theorem}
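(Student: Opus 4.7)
The plan is essentially one of assembly: all the genuine work has already been done, and the theorem follows by collating \Cref{lem.M.xi.1,pr.lines_wtQ,prop.G13}. I will proceed by first identifying the underlying $Q'$-modules, then layering in the equivariant structures, and finally checking that the three families are disjoint.

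First I would fix the discrete parameters. By \Cref{lem.M.xi.1}, the underlying $Q'$-module of any of the $\G$-equivariant $Q'$-modules occurring in \Cref{pr.lines_wtQ} is of the form $M_{x,\xi}$ for a uniquely determined pair $(x,\xi) \in (E/E[2]) \times (E[2]\setminus\{o\})$, and by \Cref{pr.lines_wtQ}(1) each such $M_{x,\xi}$ carries exactly two isomorphism classes of $\G$-equivariant structure. So the total count of isomorphism classes is $2\cdot |E[2]\setminus\{o\}| = 6$ per point of $E/E[2]$, distributed into three families indexed by the choice of non-trivial 2-torsion point $\xi \in \{\xi_1,\xi_2,\xi_3\}$.

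Next I would handle one family at a time. Fix $\xi = \xi_i$. The natural quotient $\pi_i:E/\langle\xi_i\rangle \to E/E[2]$ is a $2{:}1$ \'etale cover: for each $x = p+E[2]\in E/E[2]$ the fibre $\pi_i^{-1}(x)$ consists of the two cosets $p+\langle\xi_i\rangle$ and $p+\xi_j+\langle\xi_i\rangle$, where $\xi_j \in E[2]\setminus\langle\xi_i\rangle$. By \Cref{prop.G13} (applied to $\xi = \xi_i$ in place of $\xi_1$), the closed immersion $f_i : E/\langle\xi_i\rangle \to \GG(1,3)$ produced there parametrizes precisely the $\G$-equivariant $Q'$-modules with underlying module $M_{x,\xi_i}$ as $x$ ranges over $E/E[2]$, and the two points of the fibre $\pi_i^{-1}(x)$ correspond to the two inequivalent equivariant structures on $M_{x,\xi_i}$. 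Thus $E/\langle\xi_i\rangle$ is in natural bijection with the isomorphism classes of $\G$-equivariant $Q'$-modules whose underlying module is some $M_{x,\xi_i}$.

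Finally I would assemble. By \Cref{lem.M.xi.1}(2) the modules $M_{x,\xi}$ and $M_{y,\eta}$ are non-isomorphic as $Q'$-modules whenever $(x,\xi) \ne (y,\eta)$, so in particular the three families indexed by $\xi_1,\xi_2,\xi_3$ are pairwise disjoint: no equivariant module appears in two families. Taking the disjoint union of the bijections established above yields the claimed parametrization by $(E/\langle\xi_1\rangle) \sqcup (E/\langle\xi_2\rangle) \sqcup (E/\langle\xi_3\rangle)$. The main (mild) obstacle is simply notational: one must be precise about the fact that the three maps $f_i$ have disjoint images in $\GG(1,3)$ and that on each factor $E/\langle\xi_i\rangle$ the two preimages over $x\in E/E[2]$ really do correspond to \emph{distinct} equivariant structures on $M_{x,\xi_i}$ (rather than to isomorphic ones labelled twice). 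Both points are guaranteed by the explicit construction in \Cref{ssect.explicit.equivariant} and by \Cref{prop.G13}, so no further argument is needed.
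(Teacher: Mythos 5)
Your proposal is correct and is essentially the argument the paper intends: the theorem is stated as an immediate consequence of \Cref{prop.G13} (applied for each of $\xi_1,\xi_2,\xi_3$), \Cref{pr.lines_wtQ}(1), and \Cref{lem.M.xi.1}(2) for disjointness, which is exactly how you assemble it. No substantive difference.
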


In fact, we can say more about these three components of the scheme of line modules. We will say that a closed subscheme of a projective space $\PP^N$ is {\define spatial} if its inclusion factors through some linear $\PP^3\subset \PP^N$ but not through a linear $\PP^2\subset\PP^N$.

\begin{proposition}\label{pr.spatial_deg4}
  For each 2-torsion point $\xi$ the elliptic curve $E/\langle\xi\rangle\subset \GG(1,3)\subset \PP^5$ is spatial of degree four. 
\end{proposition}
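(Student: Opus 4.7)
The plan is to compute $\deg f^*\cO_{\GG(1,3)}(1)$ directly, and then combine this with dimensional constraints to pin down the linear span of the image in $\PP^5$.

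For the degree, I would pull back along the \'etale degree-two quotient $\pi\colon E\to E/\langle\xi\rangle$ and analyze the composition $\varphi := f\circ\pi \colon E \to \GG(1,3)$, which sends $p$ to the chord $\overline{p,\,p+\xi}$. If $a_0,\ldots,a_3$ denote the restrictions to $E$ of the homogeneous coordinates on $\PP(Q_1^*)$, then the Pl\"ucker coordinates of $\varphi(p)$ are the $2\times 2$ minors $a_i(p)a_j(p+\xi) - a_j(p)a_i(p+\xi)$. Since $E$ is an elliptic quartic, each $a_i$ is a section of the degree-four hyperplane bundle $M$ on $E$, and each function $p\mapsto a_i(p+\xi)$ is a section of the translate of $M$ by $\xi$, still of degree four. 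Hence the Pl\"ucker coordinates are sections of a line bundle of degree $8$ on $E$, giving $\deg\varphi^*\cO_{\GG(1,3)}(1) = 8$ and therefore $\deg f^*\cO_{\GG(1,3)}(1) = 8/2 = 4$.

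For spatiality, \Cref{pr.S2E-G} together with \Cref{le.E-S2E} imply that $f$ is a closed immersion, so its image is a smooth elliptic curve of degree four in $\PP^5$. Such a curve cannot lie in a $\PP^2$, because a smooth elliptic curve embedded in $\PP^2$ is necessarily a plane cubic of degree three, not four. On the other hand, letting $L := f^*\cO_{\GG(1,3)}(1)$, Riemann-Roch on the elliptic curve $E/\langle\xi\rangle$ gives $h^0(L) = \deg L = 4$ (since $\deg L > 2g-2 = 0$), so the image spans at most a $\PP^{3}\subset\PP^5$. Combining the two bounds, the image spans exactly a $\PP^3$, establishing spatiality.

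The only subtle step is correctly recognizing the Pl\"ucker coordinates as sections of the tensor product of $M$ with its translate by $\xi$; once that bundle identification is in hand, the degree calculation is immediate and spatiality follows from routine genus-and-degree bookkeeping on elliptic curves.
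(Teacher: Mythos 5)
Your proof is correct, but it takes a genuinely different route from the paper's. You compute the degree cohomologically: the composite $E\to\GG(1,3)\to\PP^5$ is given by the $2\times 2$ Pl\"ucker minors, which are global sections of $\cO_E(1)\otimes\tau_\xi^{\,*}\cO_E(1)$ (degree $8$, $\tau_\xi$ denoting translation); these have no common zero because $p\neq p+\xi$ on the quartic $E\subset\PP^3$, so the pullback of the Pl\"ucker bundle really is that degree-$8$ bundle and dividing by $\deg\pi=2$ gives degree $4$; containment in a $\PP^3$ then comes from $h^0=4$ via Riemann--Roch, and non-containment in a $\PP^2$ from the genus--degree formula for smooth plane curves. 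The paper instead argues enumeratively: it exhibits the $\PP^3$ explicitly (the Pl\"ucker coordinates $M_{01}$ and $M_{23}$ vanish identically on the image, by the explicit form of the lines in \Cref{thm.lines-and-line-modules}(3)), and computes the degree by intersecting with the Schubert hyperplane $H_\ell$ of lines meeting a fixed secant $\ell=\overline{pq}$, counting the four solutions and then verifying non-tangency via the description of tangent spaces to $\GG(1,3)$ as $\Hom(m,V/m)$, which requires choosing the auxiliary point in $E[4]$ and imposing the genericity conditions on $p,q$. Your approach buys a shorter argument that entirely sidesteps the transversality verification (the delicate part of the paper's proof); the paper's approach buys the explicit linear span and an enumerative description of which lines in the family meet a given secant. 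The only point worth making explicit in your write-up is the base-point-freeness of the minors noted above, since without it the pullback bundle could a priori have smaller degree.
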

\begin{proof}
That $E/\langle\xi\rangle$ is contained in a $\PP^3\subset\PP^5$ follows from its construction in \Cref{prop.G13}. Indeed, suppose in order to fix notation that $\xi=\xi_1$ and denote $\overline{E}=E/\langle\xi\rangle$. If the Pl\"ucker coordinates of the line 
\begin{equation*}
  \sum_{j=0}^3 \lambda_jy_j=\sum_{j=0}^3\lambda'_jy_j=0
\end{equation*}
are the minors $M_{ij}$, $0\le i<j\le 3$ of the matrix 
\begin{equation*}
  M = \begin{pmatrix}\lambda_0&\lambda_1&\lambda_2&\lambda_3\\\lambda'_0&\lambda'_1&\lambda'_2&\lambda'_3\end{pmatrix}
\end{equation*}
supported on columns $i$ and $j$, then the two coordinates $M_{01}$ and $M_{23}$ vanish on $\overline{E}$ by part (3) of \Cref{thm.lines-and-line-modules}.

The fact that $\overline{E}$ is not contained in a $\bP^2$ will follow once we prove that the degree of the embedding into $\PP^5$ is four, as claimed in the statement. 

To check the degree assertion we will intersect $\overline{E}$ with a hyperplane section of $\GG(1,3)\subset \PP^5$, judiciously chosen so that it is not tangent to $\overline{E}$ and the number of intersection points is clearly four.

For every line $\ell$ in $\PP^3$ the collection of all lines in $\GG(1,3)$ intersecting $\ell$ is a hyperplane section $H_\ell$ of $\GG(1,3)\subset \PP^5$. Let $\ell=\overline{pq}$ be a secant line of $E$. The points in $\overline{E}\cap H_\ell$ are the classes modulo $\langle \xi\rangle$ of those $u\in E$ for which the secant line $\overline{u(u+\xi)}$ intersects $\ell$. 

If 
\begin{equation}\label{eq.gen_cond}
  q\ne p+\xi\quad \text {and}\quad 3p+q+\xi\ne 0,\ p+3q+\xi\ne 0 
\end{equation}
then there are exactly four such classes modulo $\langle\xi\rangle$, namely those of $p$, $q$, $u$ and $u+\xi'$, where $u+(u+\xi)+p+q=0$ and $E[2]-\{0\}\ni \xi'\ne \xi$. 

It remains to check that $p,q\in E$ can be chosen so that $H_\ell$ is not tangent to $\overline{E}$ at any of the four points where they intersect, in addition to satisfying \Cref{eq.gen_cond}. 

Identify, as usual, the tangent space to $\GG(1,3)$ at some line $m$ (simultaneously regarded as a 2-plane in the 4-dimensional vector space $V$) with the space of linear maps $m\to V/m$. Generally, we will conflate linear subspaces of $V$ and their projectivized versions.  

For any $u\in E$, the tangent line to $\overline{E}\subset \GG(1,3)$ at $\overline{u(u+\xi)}$ can be identified with the space of linear maps $\overline{u(u+\xi)}\to V/\overline{u(u+\xi)}$ that send the lines $u$ and $u+\xi$ in $V$ to the 2-planes $T_uE$ and $T_{u+\xi}E$ in $V$ respectively modulo $\overline{u(u+\xi)}$. 

On the other hand, reverting to the notation introduced above for $u\in E$ so that $2u+\xi+p+q=0$, the tangent space at $\overline{u(u+\xi)}\in\GG(1,3)$ to $H_\ell$ consists of those linear maps $\overline{u(u+\xi)}\to V/\overline{u(u+\xi)}$ that send the intersection $s=\overline{pq}\cap\overline{u(u+\xi)}$ to $\overline{pq}$ modulo $\overline{u(u+\xi)}$ (see e.g. \cite[Example 16.6]{H92}).  

Since the line $s\subset V$ is in the span of $u$ and $u+\xi$, we would be certain that the tangent space in the previous paragraph does not contain the tangent line described two paragraphs up if we knew that the tangents to $E$ at $u$ and $u+\xi$ are coplanar. This is indeed the case if $4u=0$, so simply take $u\in E[4]$ and afterwards select $p$ and $q$ so that \Cref{eq.gen_cond} holds. 
\end{proof}

\subsubsection{}
There is another  perspective on the $\G$-equivariant $Q'$-modules parametrized by $E/\langle \xi \rangle$.
The family of $Q'$-modules $M_{x,\xi}$ is parametrized by $x \in E/E[2]$. 
The quotient of the fundamental groups, $\pi_1(E/E[2])/\pi_1(E/\langle \xi \rangle)$, which is naturally isomorphic to 
$E[2]/\langle \xi \rangle$, acts freely and transitively on each fiber of the natural map $E/\langle \xi \rangle \to E/E[2]$.
If we identify the fiber over $x$ with the set of isomorphism classes of equivariant structures on $M_{x,\xi}$, then 
$H^1(\Gamma,\Aut(M_{x,\xi}))$ also acts on the fiber over $x$. 
As the paragraph explains, these actions of  $E/\langle \xi \rangle$ and $H^1(\Gamma,\Aut(M_{x,\xi}))$ on the fibers are 
compatible in a natural way.

The Weil pairing $\langle \cdot,\cdot \rangle:E[2] \times E[2] \to \mu_2= \{\pm 1\} \subseteq k^\times$ is a non-degenerate skew-symmetric bilinear form on $E[2]$ viewed as a 2-dimensional vector space over $\FF_2$. 
Since $\langle \xi,\xi \rangle =1$,
there is an induced non-degenerate bilinear map $\langle \xi \rangle \times E[2]/ \langle \xi \rangle \to \mu_2$ or, 
what is essentially the same thing,  a group isomorphism 
$$
E[2]/\langle \xi \rangle \; \longrightarrow \; \Hom_\ZZ( \langle \xi \rangle,\mu_2) \; = \; \Hom_\ZZ( \langle \xi \rangle,k^\times)
\; \cong \;  H^1(\Gamma,\Aut(M_{x,\xi}))
$$
where the right-most isomorphism was established in the proof of \Cref{pr.lines_wtQ}(3).

 \subsection{}
 Under quite general conditions, which $\wtQ$ satisfies,  
Shelton and Vancliff prove that every irreducible component of the scheme parametrizing the line modules has dimension $\ge 1$
  \cite[Cor.2.6]{ShV02} and that every point module is a quotient of a line module \cite[Prop.3.1]{ShV02}.
  We will investigate this relationship in a subsequent paper. We also show there that the line modules for $\wtQ$ described above 
  are {\it not} all the line modules.


  \appendix
  
  \section{Equivariant structures}

 \subsection{Groups acting on categories}
 
 An action of a group $\G$ on a category $\sC$ consists of data $\{\a^*,t_{\a,\b} \; | \; \a,\b \in \G\}$ where each $\a^*:\sC \to \sC$ 
 is an auto-equivalence and each $t_{\a,\b}:\a^* \b^* \to (\a\b)^*$ is a natural isomorphism such that the diagrams
 $$
 \xymatrix{
 \a^* \circ \b^* \circ \c^* \ar[rr]^{\a^* \cdot t_{\b,\c}} \ar[d]_{t_{\a,\b}\cdot \c^*} && \a^* \circ (\b\c)^* \ar[d]^{t_{\a,\b\c}}
 \\
 (\a\b)^*\circ \c^* \ar[rr]_{t_{\a\b,\c}} && (\a\b\c)^*
 }
 $$
 commute for all $\a,\b,\c \in \G$.

 \begin{lemma}
 \label{lem.action}
 Let $x \in {\sf{Ob}}(\sC)$ and $\phi=\{ \phi_\a:x \to \a^* x \; | \; \a \in \G\}$ a set of isomorphisms.
  If $\Aut(x)$ is abelian, then there is an action of $\G$ on $\Aut(x)$ given by the formula
 \begin{align*}
\G \times \Aut(x)  & \;  \to  \;  \Aut(x) 
\\
 (\a,f) & \;  \mapsto \; \a\cdot f := \phi_\a^{-1}  \a^*(f)  \phi_\a. 
\end{align*}
This action   does not depend on the choice of the $\phi_\a$'s.
\end{lemma}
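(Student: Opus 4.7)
My plan is to verify in turn the three things that the statement implicitly requires: that $\a\cdot(-)$ is a group homomorphism on $\Aut(x)$ for each fixed $\a$, that the definition of $\a\cdot f$ is independent of the chosen $\phi_\a$, and that the associativity $(\a\b)\cdot f = \a\cdot(\b\cdot f)$ holds (together with the identity element acting trivially, assuming as is customary that $e^*=\id$ with $t$-data consistent with this). The first point is immediate: since $\a^*$ is a functor, $\a^*(fg)=\a^*(f)\a^*(g)$, so $\phi_\a^{-1}\a^*(fg)\phi_\a=(\phi_\a^{-1}\a^*(f)\phi_\a)(\phi_\a^{-1}\a^*(g)\phi_\a)$.

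For independence of $\phi_\a$, I would take another isomorphism $\psi_\a:x\to\a^*x$ and write $\psi_\a=h\phi_\a$ for a unique $h\in\Aut(\a^*x)$. Because $\a^*$ is an equivalence, $\Aut(\a^*x)\cong\Aut(x)$ is abelian, so $h$ commutes with $\a^*(f)\in\Aut(\a^*x)$. Then
\[
\psi_\a^{-1}\a^*(f)\psi_\a \;=\; \phi_\a^{-1}h^{-1}\a^*(f)h\phi_\a \;=\; \phi_\a^{-1}\a^*(f)\phi_\a,
\]
which is exactly what is needed.

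The associativity is the main point, and it is where the coherence isomorphisms $t_{\a,\b}$ come in. Given $\a,\b\in\G$ the composite
\[
\psi_{\a\b} \;:=\; t_{\a,\b}\circ \a^*(\phi_\b)\circ\phi_\a \colon x\longrightarrow \a^*x\longrightarrow \a^*\b^*x\longrightarrow (\a\b)^*x
\]
is an isomorphism, so by the independence just proved we may compute $(\a\b)\cdot f$ using $\psi_{\a\b}$ in place of $\phi_{\a\b}$. Since $t_{\a,\b}$ is a natural transformation $\a^*\b^*\Rightarrow(\a\b)^*$, one has $t_{\a,\b}^{-1}(\a\b)^*(f)\,t_{\a,\b} = \a^*\b^*(f)$, and substituting this into $\psi_{\a\b}^{-1}(\a\b)^*(f)\psi_{\a\b}$ collapses the $t_{\a,\b}$'s to give
\[
(\a\b)\cdot f \;=\; \phi_\a^{-1}\a^*(\phi_\b)^{-1}\,\a^*\b^*(f)\,\a^*(\phi_\b)\phi_\a \;=\; \phi_\a^{-1}\a^*\bigl(\phi_\b^{-1}\b^*(f)\phi_\b\bigr)\phi_\a \;=\; \a\cdot(\b\cdot f).
\]

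For the identity, take $\a=e$: under the standard convention $e^*=\id$ with trivially compatible $t$-data, $\phi_e\in\Aut(x)$ and $e\cdot f = \phi_e^{-1}f\phi_e = f$ by commutativity of $\Aut(x)$. The only subtlety in the whole proof is bookkeeping with the coherence isomorphisms, but since the formula is postcomposed and precomposed by the same $t_{\a,\b}$ (up to inverse), and naturality then converts $(\a\b)^*(f)$ into $\a^*\b^*(f)$, the $t$'s cancel cleanly; there is no serious obstacle.
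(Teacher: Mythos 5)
Your proof is correct, and it uses the same two essential ingredients as the paper's: the naturality of $t_{\a,\b}$ (giving $(t_{\a,\b})_x^{-1}(\a\b)^*(f)(t_{\a,\b})_x=\a^*\b^*(f)$) and the commutativity of the automorphism groups $\Aut(\a^*x)$. Where you differ is in the organization of the associativity step. The paper proves it by a direct computation: it observes that the composite $(t_{\a,\b})_x^{-1}\phi_{\a\b}\phi_\a^{-1}\a^*(\phi_\b)^{-1}$ lies in the abelian group $\Aut(\a^*\b^*x)$ and hence commutes with $\a^*\b^*(f)$, and unwinds that identity to match the two sides; the independence of the choice of $\phi_\a$ is then verified separately at the end. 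You instead prove independence of choice \emph{first} and then exploit it: since $(\a\b)\cdot f$ may be computed with \emph{any} isomorphism $x\to(\a\b)^*x$, you substitute the canonical composite $t_{\a,\b}\circ\a^*(\phi_\b)\circ\phi_\a$, after which naturality makes the $t$'s cancel and associativity falls out in one line. This is a cleaner logical architecture that avoids the paper's commutator bookkeeping, at the cost of no extra work since both proofs need the independence statement anyway. You also record two points the paper leaves implicit (that each $\a\cdot(-)$ is a group homomorphism, so $\Aut(x)$ is genuinely a $\G$-module, and that the identity acts trivially); both are handled correctly.
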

\begin{proof} 
 Because $t_{\a,\b}:\a^*\circ \b^* \to (\a\b)^*$ is a natural transformation, the diagram
 $$
 \xymatrix{
 \a^*(\b^*x) \ar[rr]^{(t_{\a,\b})_x} \ar[d]_{\a^*\b^*(f)} && (\a\b)^* x   \ar[d]^{(\a\b)^*(f)}
 \\
 \a^*(\b^*x)  \ar[rr]_{(t_{\a,\b})_x} &&  (\a\b)^*x
 }
 $$
 commutes for all $f \in \Aut(x)$ and all $\a,\b \in \G$. In other words,
 \begin{equation}
 \label{eq.nat.trans}
( \a\b)^*(f) =  (t_{\a,\b})_x \circ \a^*\b^*(f) \circ (t_{\a,\b})_x^{-1}.
  \end{equation}
  
  Since $\Aut(\a^*\b^*x)$ is abelian, $(t_{\a,\b})_x^{-1}  \phi_{\a\b} \phi_\a^{-1}  \a^*(\phi_\b)^{-1}$  commutes with $\a^* \b^*(f)$.
This fact can be expressed as 
$$
 \phi_\a^{-1}  \a^*( \phi_\b^{-1})  \circ \a^* \b^*(f) \circ \a^*(\phi_\b)  \phi_\a =  \phi_{\a\b}^{-1}   (t_{\a,\b})_x \circ \a^*\b^*(f) \circ (t_{\a,\b})_x^{-1}  \phi_{\a\b}
 $$
 which we re-write as 
 \begin{equation}
 \label{eq.lem.action}
 \phi_\a^{-1}  \a^*\Big( \phi_\b^{-1}  \b^*(f) \phi_\b\Big)  \phi_\a =  \phi_{\a\b}^{-1}   (t_{\a,\b})_x \circ \a^*\b^*(f) \circ (t_{\a,\b})_x^{-1}  \phi_{\a\b}.
\end{equation}
 The left-hand side of (\ref{eq.lem.action}) is $ \phi_\a^{-1}  \a^*(\b\cdot f)  \phi_\a = \a\cdot(\b\cdot f)$  and, by (\ref{eq.nat.trans}),  
 the right-hand side of (\ref{eq.lem.action})  is equal to
  $$
  \phi_{\a\b}^{-1}    (\a\b)^*(f)  \phi_{\a\b}
$$
which equals $(\a\b)\cdot f$. Thus $ \a\cdot(\b\cdot f) = (\a\b)\cdot f$.

To see that the action does not depend on the choice of the $\phi_\a$'s suppose that  $\{ \phi'_\a:x \to \a^* x \; | \; \a \in \G\}$ is another
collection of isomorphisms. There are automorphisms $\psi_\a \in \Aut(\a^* x)$ such that $\phi'_\a=\psi_\a\phi_\a $. The action of $\G$ on $\Aut(x)$
associated to the $\phi'_\a$, $\a \in \G$, is
$$
(\a,f) \; \mapsto \; ( \phi'_\a)^{-1}  \a^*(f) \phi'_\a \; = \;  \phi_\a^{-1} \psi_\a^{-1}  \a^*(f)  \psi_\a\phi_\a;
 $$
but $ \psi_\a^{-1}   \a^*(f)  \psi_\a =\a^*(f)$ because $\Aut(\a^*x)$ is abelian, so the right-hand side of the displayed equation is 
equal to $\a \cdot f$.
  \end{proof}
  
   \subsection{Equivariant objects}
 Suppose $\G$ acts on $\sC$. A {\sf $\G$-equivariant structure} on an object $x \in \sC$ is a set of isomorphisms 
 $\{\phi_\a:x \to \a^* x
 \; | \; \a \in \G\}$ such that the diagrams
  \begin{equation}
  \label{equiv.diag}
 \xymatrix{
 x\ar[rr]^{\phi_\a} \ar[d]_{\phi_{\a\b}} && \a^* x   \ar[d]^{\a^*(\phi_\b)}
 \\
 (\a\b)^*x && \ar[ll]^{(t_{\a,\b})_x}  \a^*(\b^*x)
 }
 \end{equation}
 commute for all $\a,\b,\c \in \G$. 
 
An arbitrary  set of  isomorphisms $\phi_\a:x \to \a^* x$, $\a \in \G$, will not usually give an equivariant structure on $x$.
Their failure to do so, i.e.,  the failure of (\ref{equiv.diag}) to commute,  is measured by the automorphisms 
 \begin{equation}
 \label{defn.a-alpha-beta}
 a_{\a,\b} \; := \; \phi^{-1}_{\a\b} \circ (t_{\a,\b})_x \circ \a^*(\phi_\b) \circ \phi_\a
\end{equation}
 of $x$. 
 
 \begin{lemma}
 \label{lem.2-cocycle}
 Let $x \in {\sf Ob}(\sC)$ and and let $\{\phi_\a:x \to \a^* x \; | \; \a \in \G\}$ be a set of isomorphisms. 
 If $\Aut(x)$ is abelian, then the function 
 $$
 a: \G \times \G \to \Aut(x),  \qquad (\a,\b)  \mapsto a_{\a,\b},
 $$
 is a 2-cocycle.
 \end{lemma}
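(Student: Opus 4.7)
The plan is to verify the multiplicative 2-cocycle identity
\[
\bigl(\alpha \cdot a_{\beta,\gamma}\bigr) \, a_{\alpha,\beta\gamma} \; = \; a_{\alpha\beta,\gamma} \, a_{\alpha,\beta}
\]
in $\Aut(x)$ for all $\alpha,\beta,\gamma \in \Gamma$, using the definition of $a_{\alpha,\beta}$ in \Cref{defn.a-alpha-beta}, the definition of the $\Gamma$-action $\alpha \cdot (-) = \phi_\alpha^{-1} \circ \alpha^*(-) \circ \phi_\alpha$ from \Cref{lem.action}, the naturality of each $t_{\alpha,\beta}$, the coherence pentagon for the $t$'s displayed at the start of the appendix, and the commutativity of $\Aut(x)$.

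First, I would expand $\alpha \cdot a_{\beta,\gamma}$ to obtain
\[
\alpha \cdot a_{\beta,\gamma} \;=\; \phi_\alpha^{-1}\circ \alpha^*\bigl(\phi_{\beta\gamma}^{-1}\circ (t_{\beta,\gamma})_x \circ \beta^*(\phi_\gamma) \circ \phi_\beta\bigr)\circ \phi_\alpha,
\]
and then, composing with $a_{\alpha,\beta\gamma} = \phi_{\alpha\beta\gamma}^{-1}\circ (t_{\alpha,\beta\gamma})_x \circ \alpha^*(\phi_{\beta\gamma})\circ \phi_\alpha$, see that the factors $\phi_\alpha \phi_\alpha^{-1}$ and $\alpha^*(\phi_{\beta\gamma}^{-1})\,\alpha^*(\phi_{\beta\gamma})$ cancel, leaving
\[
(\alpha\cdot a_{\beta,\gamma})\, a_{\alpha,\beta\gamma} \;=\; \phi_{\alpha\beta\gamma}^{-1}\circ (t_{\alpha,\beta\gamma})_x \circ \alpha^*(t_{\beta,\gamma})_x \circ \alpha^*\beta^*(\phi_\gamma)\circ \alpha^*(\phi_\beta)\circ \phi_\alpha.
\]
An analogous expansion of the right hand side gives
\[
a_{\alpha\beta,\gamma}\, a_{\alpha,\beta} \;=\; \phi_{\alpha\beta\gamma}^{-1}\circ (t_{\alpha\beta,\gamma})_x \circ (t_{\alpha,\beta})_{\gamma^* x}\circ \alpha^*\beta^*(\phi_\gamma)\circ \alpha^*(\phi_\beta)\circ \phi_\alpha,
\]
after using naturality of $t_{\alpha,\beta}$ to slide $(t_{\alpha,\beta})_{\gamma^*x}$ past the $\gamma$-occurrences and cancel $\phi_{\alpha\beta}\phi_{\alpha\beta}^{-1}$.

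Comparing the two expressions, the trailing strings $\alpha^*\beta^*(\phi_\gamma)\circ \alpha^*(\phi_\beta)\circ \phi_\alpha$ and the leading $\phi_{\alpha\beta\gamma}^{-1}$ agree, so the 2-cocycle identity reduces to the equality
\[
(t_{\alpha,\beta\gamma})_x \circ \alpha^*(t_{\beta,\gamma})_x \;=\; (t_{\alpha\beta,\gamma})_x \circ (t_{\alpha,\beta})_{\gamma^*x}
\]
of morphisms $\alpha^*\beta^*\gamma^*x \to (\alpha\beta\gamma)^*x$. But this is precisely the pentagon coherence diagram for the group action displayed at the start of the appendix, evaluated at $x$. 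The hypothesis that $\Aut(x)$ is abelian is used only implicitly, via \Cref{lem.action}, to guarantee that the formula $\alpha\cdot f = \phi_\alpha^{-1}\alpha^*(f)\phi_\alpha$ is a well-defined group action, and possibly to freely reorder factors during the cancellations above.

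The main obstacle, if any, is bookkeeping: one must be careful to identify at which object each instance of $t_{\alpha,\beta}$ is evaluated, and to apply naturality correctly to move these natural transformations past the maps $\alpha^*(\phi_\beta)$. Once the calculation is organized as above, the 2-cocycle condition is simply the coherence axiom in disguise.
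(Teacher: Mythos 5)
Your proposal is correct and follows essentially the same route as the paper's proof: expand both sides of the cocycle identity, cancel the $\phi$-factors, slide $(t_{\a,\b})$ past $\gamma^*(\phi_\c)$ by naturality, and reduce to the coherence square for the $\G$-action (which is a square, not a pentagon, but that is only a naming quibble). The only caveat is that the cancellation you describe on the left-hand side requires reading the product as $a_{\a,\b\c}\circ(\a\cdot a_{\b,\c})$, which is harmless here since $\Aut(x)$ is abelian.
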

 \begin{proof}
 We must show that 
 $
 a_{\a\b,\c}  \circ a_{\a,\b}=a_{\a,\b\c} \circ (\a\cdot a_{\b,\c})
 $
 for all $\a,\b,\c \in \G$.  
  
 First,  $a_{\a\b,\c}  \circ a_{\a,\b}$ equals
 \begin{align*}
 & \phi^{-1}_{\a\b\c} \circ (t_{\a\b,\c})_x \circ (\a\b)^*(\phi_\c) \circ \phi_{\a\b} 
 \circ \phi^{-1}_{\a\b} \circ (t_{\a,\b})_x \circ \a^*(\phi_\b) \circ \phi_\a
 \\
  \; = \;  & \phi^{-1}_{\a\b\c} \circ (t_{\a\b,\c})_x \circ (\a\b)^*(\phi_\c)  \circ (t_{\a,\b})_x \circ \a^*(\phi_\b) \circ \phi_\a
 \\
  \; = \;  & \phi^{-1}_{\a\b\c} \circ (t_{\a\b,\c})_x \circ  (t_{\a,\b})_{\c^*x} \circ  \a^*\b^*(\phi_\c)  \circ \a^*(\phi_\b) \circ \phi_\a
 \end{align*}
 where the last equality follows from the commutative diagram
 $$
 \xymatrix{
 \a^*\b^*x \ar[rr]^{(t_{\a,\b})_x} \ar[d]_{\a^*\b^*(\phi_\c)} && (\a\b)^* x   \ar[d]^{(\a\b)^*(\phi_\c)}
 \\
 \a^*\b^*(\c^*x)  \ar[rr]_{(t_{\a,\b})_{\c^*x}} &&  (\a\b)^*(\c^*x)
 }
 $$
 which exists by virtue of the fact that $t_{\a,\b}$ is a natural transformation (applied to the isomorphism $\phi_\c:x \to \c^* x$).

 On the other hand, $a_{\a,\b\c} \circ (\a\cdot a_{\b,\c})$ equals
  \begin{align*}
 & \phi^{-1}_{\a\b\c} \circ (t_{\a,\b\c})_x \circ \a^*(\phi_{\b\c}) \circ \phi_{\a} 
 \circ \phi^{-1}_{\a} \circ \a^*\big(\phi^{-1}_{\b\c} \circ (t_{\b,\c})_x \circ \b^*(\phi_\c) \circ \phi_\b\big) \circ \phi_\a
 \\
  \; = \;  & \phi^{-1}_{\a\b\c} \circ (t_{\a,\b\c})_x \circ  \a^*((t_{\b,\c})_x) \circ \a^*\b^*(\phi_\c) \circ \a^*(\phi_\b) \circ \phi_\a
 \\
  \; = \;  &  \phi^{-1}_{\a\b\c} \circ (t_{\a,\b\c})_x \circ  (\a^*\cdot t_{\b,\c})_x \circ \a^*\b^*(\phi_\c) \circ \a^*(\phi_\b) \circ \phi_\a
 \\
  \; = \;  &  \phi^{-1}_{\a\b\c} \circ (t_{\a\b,\c})_x \circ  (t_{\a,\b})_{\c^*x} \circ  \a^*\b^*(\phi_\c) \circ \a^*(\phi_\b) \circ \phi_\a
 \end{align*}
Thus,  $a_{\a,\b\c} \circ (\a\cdot a_{\b,\c})=a_{\a\b,\c}  \circ a_{\a,\b}$. 
 \end{proof}

\begin{proposition} 
Let $x \in {\sf Ob}(\sC)$ and suppose  $\Aut(x)$ is abelian. If the 2-cocycle $(\a,\b) \mapsto a_{\a,\b}$ defined in (\ref{defn.a-alpha-beta}) 
is the coboundary of the function  $f:\G \to \Aut(x)$, $\a \mapsto a_\a$, then the isomorphisms $\{\phi_\a a_\a^{-1}:x \to \a^*x \; | \; \a \in \G\}$
form an equivariant structure on $x$.
\end{proposition}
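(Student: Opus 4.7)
The plan is to verify directly that setting $\psi_\a := \phi_\a \circ a_\a^{-1}: x \to \a^* x$ for each $\a \in \G$ yields an equivariant structure, i.e. that $\psi_{\a\b} = (t_{\a,\b})_x \circ \a^*(\psi_\b) \circ \psi_\a$ holds for all $\a, \b \in \G$. The verification will rest on three ingredients: the definition $a_{\a,\b} = \phi_{\a\b}^{-1} \circ (t_{\a,\b})_x \circ \a^*(\phi_\b) \circ \phi_\a$, the defining formula $\a \cdot g = \phi_\a^{-1} \circ \a^*(g) \circ \phi_\a$ for the $\G$-action on $\Aut(x)$ from \Cref{lem.action}, and the coboundary hypothesis. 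Once we unpack the latter, writing it out using commutativity of $\Aut(x)$, it takes the form $a_{\a,\b} = a_\a \cdot (\a \cdot a_\b) \cdot a_{\a\b}^{-1}$, which is consistent with the multiplicative cocycle identity of \Cref{lem.2-cocycle}.

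The computation I would carry out is as follows. First expand
$(t_{\a,\b})_x \circ \a^*(\psi_\b) \circ \psi_\a = (t_{\a,\b})_x \circ \a^*(\phi_\b) \circ \a^*(a_\b^{-1}) \circ \phi_\a \circ a_\a^{-1}.$
The key move is to commute the automorphism $\a^*(a_\b^{-1})$ past $\phi_\a$ using the identity $\a^*(g) \circ \phi_\a = \phi_\a \circ (\a \cdot g)$, which is just a rearrangement of the action formula. This rewrites the expression as
$(t_{\a,\b})_x \circ \a^*(\phi_\b) \circ \phi_\a \circ (\a \cdot a_\b^{-1}) \circ a_\a^{-1},$
and substituting the definition of $a_{\a,\b}$ collapses the first three factors to $\phi_{\a\b} \circ a_{\a,\b}$, leaving
$\phi_{\a\b} \circ a_{\a,\b} \circ (\a \cdot a_\b^{-1}) \circ a_\a^{-1}.$
Finally, using that $\Aut(x)$ is abelian and substituting the coboundary formula $a_{\a,\b} = a_\a \cdot (\a \cdot a_\b) \cdot a_{\a\b}^{-1}$, the trailing three factors simplify to $a_{\a\b}^{-1}$, so the whole expression equals $\phi_{\a\b} \circ a_{\a\b}^{-1} = \psi_{\a\b}$, as required.

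I do not anticipate any genuine obstacle; the statement is essentially a restatement of the fact that coboundaries trivialize the obstruction cocycle defined in \Cref{eq.obstr}. The only point requiring mild attention will be confirming that the particular sign/order convention for the coboundary of a 1-cochain is the one that matches the cocycle identity of \Cref{lem.2-cocycle}; since $\Aut(x)$ is abelian this is purely a bookkeeping matter, handled by the computation that $(\a,\b) \mapsto a_\a \cdot (\a \cdot a_\b) \cdot a_{\a\b}^{-1}$ satisfies the multiplicative cocycle identity written in \Cref{lem.2-cocycle}.
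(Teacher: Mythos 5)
Your proposal is correct and is essentially the paper's own argument read in the opposite direction: the paper starts from the coboundary identity and rearranges it (using $\a^*(g)\phi_\a=\phi_\a(\a\cdot g)$ and commutativity of $\Aut(x)$) into the commutativity of the equivariance diagram for $\phi_\a a_\a^{-1}$, whereas you start from the composite $(t_{\a,\b})_x\circ\a^*(\psi_\b)\circ\psi_\a$ and collapse it to $\psi_{\a\b}$ using the same two ingredients. The bookkeeping point you flag is handled the same way in the paper, which takes $(df)(\a,\b)=(\a\cdot a_\b)\circ a_{\a\b}^{-1}\circ a_\a$, agreeing with your convention up to reordering in the abelian group.
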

\begin{proof}
The hypothesis says that 
$$
\phi^{-1}_{\a\b} \circ (t_{\a,\b})_x \circ \a^*(\phi_\b) \circ \phi_\a \; = \;  (df)(\a,\b)  \; = \;   (\a\cdot a_\b) \circ a^{-1}_{\a\b} \circ a_\a
$$
for all $\a,\b \in \G$. Since $\Aut(x)$ is abelian, we can rewrite this as
\begin{align*}
\phi^{-1}_{\a\b} \circ (t_{\a,\b})_x \circ \a^*(\phi_\b) \circ \phi_\a   &\; = \; a^{-1}_{\a\b} \circ a_\a \circ (\a\cdot a_\b) 
\\
&\; = \; a^{-1}_{\a\b} \circ a_\a \circ \phi_\a^{-1} \a^*(a_\b) \phi_\a
\end{align*}
whence $(t_{\a,\b})_x \circ \a^*(\phi_\b)   = \phi_{\a\b}  a^{-1}_{\a\b} \circ a_\a\phi_\a^{-1}  \circ  \a^*(a_\b) $. In other words, the diagram
$$
\xymatrix{
 x\ar[rr]^{\phi_\a a_\a^{-1}} \ar[d]_{\phi_{\a\b} a_{\a\b}^{-1}} && \a^* x   \ar[d]^{\a^*(\phi_\b a_\b^{-1})}
 \\
 (\a\b)^*x && \ar[ll]^{(t_{\a,\b})_x}  \a^*(\b^*x)
 } 
 $$
 commutes; i.e.,  the maps $\{\phi_\a a_\a^{-1} :x \to \a^*x \; | \; \a \in \G\}$ form an equivariant structure on $x$.
 \end{proof}

 \subsection{Classification of equivariant structures}
 
 In order to classify equivariant structures we must first say what it means for two equivariant structures  to be the ``same''. 
 
 Suppose that $\G$ acts on $\sC$. The objects in the category $\sC^\G$ of {\sf $\G$-equivariant objects in $\sC$} are pairs $(x,\phi)$ 
 consisting of an object $x$ in $\sC$ and a set of isomorphisms $\phi=\{\phi_\a:x \to \a^*x \; | \; \a \in \G\}$ that give $x$ the structure of a $\G$-equivariant object. A morphism $f:(x,\phi) \to (y,\psi)$ in $\sC^\G$ is a morphism $f:x \to y$ in $\sC$ such that the diagram
$$
\xymatrix{
 x\ar[rr]^{\phi_\a} \ar[d]_{f}  && \a^* x   \ar[d]^{\a^*(f)}
 \\
y \ar[rr]_{\psi_\a} && \a^*y
 } 
 $$
 commutes for all $\a \in \G$.
 
 We will classify equivariant structures on an $x \in {\sf Ob}(\sC)$ up to isomorphism in the special case when $\Aut(x)$ is abelian.

 \begin{lemma}
Let $x \in {\sf Ob}(\sC)$. Suppose that $\{\phi_\a:x \to \a^* x \; | \;  \a \in \G\}$ and
 $\{\psi_\a:x \to \a^* x \;  | \; \a \in \G\}$ are equivariant structures on $x$.  If $\Aut(x)$ is abelian, then 
 the function $f:\G \to \Aut(x)$, $f(\a)  := \psi^{-1}_\a \phi_\a$, is a 1-cocycle.
 \end{lemma}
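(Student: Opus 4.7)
The plan is to verify directly the 1-cocycle identity
\[
f(\alpha\beta) \;=\; f(\alpha) \cdot \bigl(\alpha\cdot f(\beta)\bigr), \qquad \alpha,\beta\in\Gamma,
\]
where the $\Gamma$-action on $\Aut(x)$ is the one from \Cref{lem.action}, namely $\alpha\cdot g = \phi_\alpha^{-1}\circ \alpha^*(g)\circ \phi_\alpha$. By that same lemma the action does not depend on the chosen family of isomorphisms $x\to\alpha^*x$, so we may use either $\phi$ or $\psi$ to compute it; this flexibility is what makes the calculation come out cleanly.

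First I would rewrite the equivariance condition \Cref{equiv.diag} for each of the two equivariant structures as the identities
\[
\phi_{\alpha\beta} \;=\; (t_{\alpha,\beta})_x\circ \alpha^*(\phi_\beta)\circ \phi_\alpha, \qquad
\psi_{\alpha\beta} \;=\; (t_{\alpha,\beta})_x\circ \alpha^*(\psi_\beta)\circ \psi_\alpha.
\]
Taking the inverse of the second and composing with the first, the natural isomorphisms $(t_{\alpha,\beta})_x$ cancel, and I obtain
\[
f(\alpha\beta) \;=\; \psi_{\alpha\beta}^{-1}\circ \phi_{\alpha\beta} \;=\; \psi_\alpha^{-1}\circ \alpha^*(\psi_\beta^{-1}\circ \phi_\beta)\circ \phi_\alpha \;=\; \psi_\alpha^{-1}\circ \alpha^*(f(\beta))\circ \phi_\alpha,
\]
where I used that $\alpha^*$ is a functor in the middle step.

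Next I would insert $\phi_\alpha\circ\phi_\alpha^{-1}=\id$ just before $\alpha^*(f(\beta))$ and regroup:
\[
\psi_\alpha^{-1}\circ \alpha^*(f(\beta))\circ \phi_\alpha \;=\; \bigl(\psi_\alpha^{-1}\circ\phi_\alpha\bigr)\circ \bigl(\phi_\alpha^{-1}\circ\alpha^*(f(\beta))\circ \phi_\alpha\bigr) \;=\; f(\alpha)\circ \bigl(\alpha\cdot f(\beta)\bigr),
\]
which is exactly the cocycle identity. Note that here I do \emph{not} need to invoke the abelian hypothesis on $\Aut(x)$ at all for the computation itself; the abelian hypothesis was used earlier in \Cref{lem.action} to ensure that $\Gamma$ genuinely acts on $\Aut(x)$ and that the action is independent of the choice of $\phi_\alpha$, so that it makes sense to speak of a 1-cocycle condition in the first place. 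The only ``obstacle'' is bookkeeping with the natural isomorphisms $t_{\alpha,\beta}$, and the saving grace is that they appear symmetrically in $\phi_{\alpha\beta}$ and $\psi_{\alpha\beta}$ and therefore cancel; essentially no calculation remains.
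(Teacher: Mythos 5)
Your proof is correct and follows essentially the same route as the paper: both arguments use the equivariance identities for $\phi$ and $\psi$ to cancel the $(t_{\a,\b})_x$ and obtain $f(\a\b)=\psi_\a^{-1}\circ\a^*(f(\b))\circ\phi_\a$, and then identify this with the cocycle condition (you verify $f(\a\b)=f(\a)\circ(\a\cdot f(\b))$ directly, the paper equivalently shows $(df)(\a,\b)=\id_x$). Your remark that commutativity of $\Aut(x)$ enters only through \Cref{lem.action}, not through the computation itself, is also accurate.
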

 \begin{proof}
By definition,
 \begin{equation}
 \label{compare.equiv}
 (df)(\a,\b)= ( \a \cdot \psi^{-1}_\b \phi_\b) \circ  \big(\psi^{-1}_{\a\b} \phi_{\a\b}\big)^{-1} \circ  \psi^{-1}_\a \phi_\a.
\end{equation}
 Because the $\phi$'s and $\psi$'s define equivariant structures, 
\begin{align*}
 \psi^{-1}_{\a\b} \phi_{\a\b} & \; = \; \Big(t_{\a,\b} \a^*(\psi_\b) \psi_\a \Big)^{-1} \circ \Big(t_{\a,\b} \a^*(\phi_\b) \phi_\a\Big)
 \\
 & \; = \;  \psi_\a^{-1}  \a^*(\psi_\b^{-1}\phi_\b) \phi_\a
 \end{align*}
 Therefore 
\begin{align*}
(df)(\a,\b) & \; = \;  \phi_\a^{-1}  \a^*(\psi_\b^{-1}\phi_\b)\phi_\a  \circ  \big(  \psi_\a^{-1}  \a^*(\psi_\b^{-1}\phi_\b) \phi_\a \big)^{-1} \circ  \psi^{-1}_\a \phi_\a
\\
& \; = \;   \id_x.
 \end{align*}
 Thus, $f$ is a 1-cocycle as claimed.
\end{proof}

 Let $x \in {\sf{Ob}}(x)$. We write $\Phi(x)$ for the set of equivariant structures on $x$ and $\Phi(x)_{\sf Isom}$ for the set of isomorphism
 classes of equivariant structures on $x$. If $\phi=\{\phi_\a:x \to \a^*x \; | \; \a \in \G\} \in \Phi(x)$ we write $[\phi]$ for the isomorphism class of
 $\phi$; i.e., $\phi \mapsto [\phi]$ denotes the obvious function $\Phi(x) \to \Phi(x)_{\sf Isom}$.

\begin{proposition}
Let $x \in {\sf Ob}(\sC)$ and suppose $\Aut(x)$ is abelian. If $\phi=\{\phi_\a:x \to \a^*x \; | \; \a \in \G\}$ is an equivariant structure on $x$ and 
$f:\G \to \Aut(x)$, $\a \mapsto f_\a$, a 1-cocycle, then 
$$
(f \cdot \phi) \; := \; \{\phi_\a f_\a:x \to \a^* x \; | \;  \a \in \G\}
$$ 
is an equivariant structure on $x$ that depends only on the class of $f$ in $H^1(\G,\Aut(x))$.  This gives an action of 
$H^1(\G,\Aut(x))$ on $\Phi(x)_{\sf Isom}$. Furthermore, if $\Phi(x) \ne \varnothing$, then  
$H^1(\G,\Aut(x))$ acts simply transitively on $\Phi(x)_{\sf Isom}$.
 \end{proposition}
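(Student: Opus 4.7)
The plan is to verify the four assertions in sequence: that $f\cdot\phi$ is an equivariant structure; that cohomologous cocycles produce isomorphic equivariant structures; that this yields a group action; and finally that the action is simply transitive when $\Phi(x)\neq\varnothing$. Throughout I will exploit two features: that $\Aut(x)$ is abelian, so the cocycle calculus behaves as in ordinary group cohomology, and that for every $a\in\Aut(x)$ the equality $\a^*(\phi_\a^{-1} a\phi_\a)=\a^*(a)$ is replaced by the defining identity $\phi_\a^{-1}\a^*(a)\phi_\a = \a\cdot a$ from \Cref{lem.action}.

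First I would check that $(f\cdot\phi)_\a := \phi_\a f_\a$ satisfies the diagram \Cref{equiv.diag}. Unfolding gives
\[
(t_{\a,\b})_x \circ \a^*(\phi_\b f_\b)\circ(\phi_\a f_\a)
\; = \; \bigl[(t_{\a,\b})_x \circ \a^*(\phi_\b)\circ\phi_\a\bigr]\circ\bigl[\phi_\a^{-1}\a^*(f_\b)\phi_\a\bigr]\circ f_\a,
\]
and the bracketed factor on the left equals $\phi_{\a\b}$ because $\phi$ is equivariant, while the middle bracket is $\a\cdot f_\b$ by \Cref{lem.action}. Thus the composite equals $\phi_{\a\b}(\a\cdot f_\b)f_\a$, which coincides with $\phi_{\a\b}f_{\a\b}=(f\cdot\phi)_{\a\b}$ precisely by the 1-cocycle condition $(\a\cdot f_\b)f_\a=f_{\a\b}$. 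So $f\cdot\phi\in\Phi(x)$.

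Next I would show that if $f$ and $f'$ are cohomologous, say $f'_\a = f_\a\,(\a\cdot a)\,a^{-1}$ for some $a\in\Aut(x)$, then $f\cdot\phi$ and $f'\cdot\phi$ are isomorphic in $\sC^\G$. The candidate isomorphism is $a\colon x\to x$: using abelianness of $\Aut(x)$ and \Cref{lem.action} one computes
\[
(f'\cdot\phi)_\a\circ a \; = \; \phi_\a f_\a(\a\cdot a)a^{-1}a \; = \; \phi_\a (\a\cdot a) f_\a \; = \; \a^*(a)\circ(f\cdot\phi)_\a,
\]
exhibiting $a$ as a morphism $(x,f\cdot\phi)\to(x,f'\cdot\phi)$. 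This shows the assignment $[f]\cdot[\phi]:=[f\cdot\phi]$ is well defined, and the identities $((fg)\cdot\phi)_\a=\phi_\a f_\a g_\a=\phi_\a g_\a f_\a=(f\cdot(g\cdot\phi))_\a$ (again using abelianness) confirm that $H^1(\G,\Aut(x))$ acts on $\Phi(x)_{\sf Isom}$.

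Finally, assuming $\Phi(x)\neq\varnothing$, fix $\phi\in\Phi(x)$. Transitivity of the action is the content of the previous lemma: for any $\psi\in\Phi(x)$, the cocycle $f_\a=\phi_\a^{-1}\psi_\a$ satisfies $f\cdot\phi=\psi$, so every class in $\Phi(x)_{\sf Isom}$ lies in the orbit of $[\phi]$. For freeness, suppose $[f]\cdot[\phi]=[\phi]$, i.e.\ there exists $h\in\Aut(x)$ with $(f\cdot\phi)_\a\circ h=\a^*(h)\circ\phi_\a$ for all $\a$. Rewriting as $\phi_\a f_\a h=\a^*(h)\phi_\a=\phi_\a(\a\cdot h)$ and cancelling $\phi_\a$ gives $f_\a=(\a\cdot h)h^{-1}$, so $f$ is the coboundary of $h$ and $[f]=0$. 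The only step requiring any genuine care is the well-definedness in paragraph two, where the interplay between the $\G$-action on $\Aut(x)$ and the conjugation formula from \Cref{lem.action} must be handled correctly; everything else is formal manipulation in an abelian group.
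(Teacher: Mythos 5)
Your proof is correct and follows essentially the same route as the paper's: the same use of the identity $\a\cdot a=\phi_\a^{-1}\a^*(a)\phi_\a$, the cocycle condition, and abelianness of $\Aut(x)$ at each step. The only organizational difference is that the paper establishes the biconditional ``$(x,f\cdot\phi)\cong(x,f'\cdot\phi)$ iff $[f]=[f']$'' in a single computation, covering well-definedness and freeness at once, whereas you verify the two directions separately; the content is identical.
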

 \begin{proof}
 Let $[f] \in H^1(\G,\Aut(x))$ where $f$ is a 1-cocycle. Let $\phi=\{\phi_\a\} \in \Phi(x)$. 
 Because $f$ is a 1-cocycle, $(\a\cdot f_\b)f_{\a\b}^{-1}f_\a = \id_x$. Because $\Aut(x)$ is abelian this equality can be rewritten as
 $$
 f_{\a\b} \; = \;  (\a\cdot f_\b) f_\a   \; = \;  \phi_\a^{-1} \a^*(f_\b)\phi_\a f_\a .
 $$
Since the $\phi_\a$'s form an equivariant structure on $x$, 
 $$
  \phi_{\a\b}  = (t_{\a,\b})_x \a^*(\phi_\b ) \phi_\a 
 $$
 for all $\a,\b \in \G$.  Therefore 
 $$
  \phi_{\a\b} f_{\a\b} =\Big( (t_{\a,\b})_x \a^*(\phi_\b ) \phi_\a \Big) \circ \Big( \phi_\a^{-1} \a^*(f_\b)\phi_\a f_\a \Big) =  (t_{\a,\b})_x \a^*(\phi_\b  f_\b)\phi_\a f_\a.
  $$
In other words, the diagram
  $$
\xymatrix{
 x\ar[rr]^{\phi_\a f_\a} \ar[d]_{\phi_{\a\b} f_{\a\b}} && \a^* x   \ar[d]^{\a^*(\phi_\b f_\b)}
 \\
 (\a\b)^*x && \ar[ll]^{(t_{\a,\b})_x}  \a^*(\b^*x)
 } 
 $$
 commutes; i.e.,  the maps $\{\phi_\a f_\a :x \to \a^*x \; | \; \a \in \G\}$ form an equivariant structure on $x$. 
 
 We now show that the isomorphism class of $(x,f\cdot \phi)$ depends only on the cohomology class of $f$. 
 Let  $f,f':\G \to \Aut(x)$ be 1-cocycles. They are
cohomologous if and only if $f'f^{-1}=dg$ for some $g \in C^0(\G,\Aut(x))=\Aut(x)$, i.e., if  and only if there is $g \in \Aut(x)$ such that 
$$
f'_\a f^{-1}_\a=(dg)(\a) = (\a\cdot g) g^{-1}
 $$
 for all $\a \in \G$. On the other hand, $(x,f\cdot \phi) \cong (x,f'\cdot \phi)$ if and only if there is an isomorphism $g:x \to x$ such that 
 the diagram
$$
\xymatrix{
 x\ar[rr]^{\phi_\a f_\a} \ar[d]_{g}  && \a^* x   \ar[d]^{\a^*(g)}
 \\
x \ar[rr]_{\phi_\a f'_\a} && \a^*x
 } 
 $$
 commutes for all $\a \in \G$; i.e., if and only if $\a^*(g)\phi_\a f_\a = \phi_\a f'_\a g$ or, equivalently, $\phi_\a^{-1} \a^*(g)\phi_\a f_\a = f'_\a g$
 for all $\a \in \G$. Since $\Aut(x)$ is abelian, this is equivalent to the condition that 
 $\phi_\a^{-1} \a^*(g)\phi_\a g^{-1}= f'_\a  f_\a ^{-1}$ for all $\a \in \G$,  i.e., $(\a\cdot g)  g^{-1}= f'_\a  f_\a ^{-1}$. This completes the proof that
 $(x,f\cdot \phi) \cong (x,f'\cdot \phi)$ if and only if $[f]=[f']$. Thus, once we have show that $([f],[\phi]) \mapsto [f\cdot \phi]$, really is an action,
 as we do in the next paragraph, we will have shown that $H^1(\G,\Aut(x))$ acts on $\Phi(x)_{\sf Isom}$ and all isotropy groups are trivial.

We now check that $([f],\phi) \mapsto (f\cdot \phi)$ is an action of $H^1(\G,\Aut(x))$ on $\Phi(x)$. Let $f,f':\G \to \Aut(x)$ be 1-cocycles. Then $f\cdot (f' \cdot \phi)= \{\phi_\a f'_\a f_\a \; | \; \a \in \G\}$. Since $f_\a$ and 
$f_\a'$ are elements in the abelian group $\Aut(x)$, $f'_\a f_\a =f_\a f'_\a$, from which it follows that $f\cdot (f' \cdot \phi) = (ff') \cdot \phi$.

It remains to show that $H^1(\G,\Aut(x))$ acts transitively on $\Phi(x)_{\sf Isom}$ is transitive. Let $\phi,\phi' \in \Phi(x)$. We will show there is 
a 1-cocycle $f$ such that $\phi' \cong f\cdot \phi$.  
By Lemma \ref{lem.1.cocycle} below, the function $f:\G \to \Aut(x)$ defined by $f(\a):=\phi_\a^{-1}\phi'_\a$ is a 1-cocycle. 
But $(f \cdot \phi)_\a = \phi_\a f_\a = \phi_\a'$ so $\phi'=f\cdot \phi$. 
 \end{proof}
 
\begin{lemma}
\label{lem.1.cocycle}
Let $x \in {\sf Ob}(\sC)$ and suppose that $\Aut(x)$ is abelian
If $\phi,\psi \in \Phi(x)$, then $\phi^{-1}\psi:=\{\phi_\a^{-1}\psi_\a \; | \; \a \in \G\}$ is a 1-cocycle for $\G$ with values in $\Aut(x)$.
\end{lemma}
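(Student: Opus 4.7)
The plan is to verify the cocycle identity $f_{\a\b} = f_\a \cdot (\a \cdot f_\b)$ for the function $f(\a) := \phi_\a^{-1}\psi_\a$, where the $\G$-action on $\Aut(x)$ is the one described in \Cref{lem.action}, namely $\a \cdot g = \phi_\a^{-1}\,\a^*(g)\,\phi_\a$ (this action is independent of the chosen isomorphisms, so one may use either $\phi$ or $\psi$).

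First, I would write out what it means for $\phi$ and $\psi$ to be equivariant structures on $x$. The commutativity of the defining diagram \Cref{equiv.diag} gives the identities
\[
\phi_{\a\b} \;=\; (t_{\a,\b})_x \circ \a^*(\phi_\b) \circ \phi_\a
\qquad\text{and}\qquad
\psi_{\a\b} \;=\; (t_{\a,\b})_x \circ \a^*(\psi_\b) \circ \psi_\a.
\]
Inverting the first and composing with the second, the natural isomorphism $(t_{\a,\b})_x$ cancels, leaving
\[
\phi_{\a\b}^{-1}\psi_{\a\b} \;=\; \phi_\a^{-1} \circ \a^*(\phi_\b^{-1}) \circ \a^*(\psi_\b) \circ \psi_\a
\;=\; \phi_\a^{-1} \circ \a^*\!\big(\phi_\b^{-1}\psi_\b\big) \circ \psi_\a,
\]
using the functoriality of $\a^*$.

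Next, I would insert $\phi_\a \circ \phi_\a^{-1} = \id$ between $\a^*(\phi_\b^{-1}\psi_\b)$ and $\psi_\a$ to get
\[
\phi_{\a\b}^{-1}\psi_{\a\b} \;=\; \Big(\phi_\a^{-1} \circ \a^*\!\big(\phi_\b^{-1}\psi_\b\big) \circ \phi_\a\Big) \circ \big(\phi_\a^{-1}\psi_\a\big)
\;=\; \big(\a \cdot f_\b\big) \circ f_\a.
\]
Because $\Aut(x)$ is abelian by hypothesis, $(\a\cdot f_\b)\circ f_\a = f_\a \circ (\a\cdot f_\b)$, which is precisely the 1-cocycle condition $f_{\a\b} = f_\a \cdot (\a \cdot f_\b)$.

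There is no real obstacle here: the proof is a direct manipulation using the defining diagrams for the two equivariant structures and the commutativity of $\Aut(x)$. The only thing to keep in mind is that the $\G$-action on $\Aut(x)$ used in forming the cocycle condition is the one furnished by \Cref{lem.action}, which is well defined precisely because $\Aut(x)$ is abelian and is independent of the particular choice of isomorphisms $\phi_\a$ (so using $\phi$ to define the action is harmless, even though $f$ is built from both $\phi$ and $\psi$).
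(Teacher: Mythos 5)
Your proof is correct and is essentially the same computation as the paper's: both use the two equivariance identities $\phi_{\a\b}=(t_{\a,\b})_x\circ\a^*(\phi_\b)\circ\phi_\a$ and $\psi_{\a\b}=(t_{\a,\b})_x\circ\a^*(\psi_\b)\circ\psi_\a$, cancel $(t_{\a,\b})_x$, apply functoriality of $\a^*$, and insert $\phi_\a\phi_\a^{-1}$ to recognize the action $\a\cdot f_\b$. The only cosmetic difference is that you verify $f_{\a\b}=(\a\cdot f_\b)\circ f_\a$ directly, whereas the paper checks that $d(\phi^{-1}\psi)(\a,\b)=\id_x$; these are equivalent given that $\Aut(x)$ is abelian.
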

\begin{proof}
We must show that $d(\phi^{-1}\psi)(\a,\b)$ is the identity for all $\a,\b \in \G$. This is the case because
\begin{align*}
d(\phi^{-1}\psi)(\a,\b) & \; = \; \a\cdot(\phi^{-1}_\b\psi_\b) \circ (\phi^{-1}_{\a\b}\psi_{\a\b})^{-1} \circ \phi^{-1}_\a\psi_\a
\\
& \; = \;  \a\cdot(\phi^{-1}_\b\psi_\b) \circ \phi^{-1}_\a\psi_\a \circ (\phi^{-1}_{\a\b}\psi_{\a\b})^{-1}
\\
& \; = \;  \phi_\a^{-1} \a^*(\phi^{-1}_\b\psi_\b)\phi_\a \circ \phi^{-1}_\a\psi_\a \circ \psi_{\a\b}^{-1} \phi_{\a\b}
\\
& \; = \;  \phi_\a^{-1} \a^*(\phi^{-1}_\b\psi_\b) \psi_\a \circ  \psi_\a^{-1} \a^*(\psi_\b)^{-1} (t_{\a,\b})_x^{-1} \circ (t_{\a,\b})_x\a^*(\phi_\b)   \phi_\a
\\
& \; = \;  \phi_\a^{-1} \a^*(\phi^{-1}_\b\psi_\b)  \a^*(\psi_\b)^{-1}  \a^*(\phi_\b)   \phi_\a
\end{align*}
which is certainly equal to $\id_x$. 
\end{proof}

\subsection{Equivariant modules}

Let $\G$ act as $k$-algebra automorphisms of a $k$-algebra $R$. If $\a \in \G$ and $M$ is a left $R$-module we define $\a^*M$ to be $M$ as a
$k$-vector space with a new action of $R$, namely $x\cdot_\a m:= \a^{-1}(x)m$. If $f:M \to N$ is an $R$-module homomorphism we 
define $\a^*(f):\a^*M \to \a^* N$ to be the function $f$, now viewed as a homomorphism from $\a^*M$ to $\a^*N$.
In this way, $\a^*$ becomes an auto-equivalence of the category of left $R$-modules, $\Mod(R)$. 
Since $\a^*\b^* = (\a\b)^*$ this gives an action of $\G$ on $\Mod(R)$. 

Suppose $M$ is a $\G$-equivariant left $R$-module via the isomorphisms $\phi_\a:M \to \a^*M$, $\a \in \G$. Since $\a^*M=M$, each $\phi_\a$ is
a $k$-linear map $\phi_\a:M \to M$ and it has the property that $\phi_\a(xm)=x\cdot_\a \phi_\a(m)=\a^{-1}(x)\phi_\a(m)$ or, equivalently, $\phi_\a^{-1}(xm)=\a(x)\phi_\a^{-1}(m)$, for all $x \in R$ and $m \in M$. If we write $m^\a:=\phi_\a^{-1}(m)$, 
then we obtain a left action of $\G$ on $M$ with the property that $(xm)^\a=\a(x)m^\a$ for all $x \in R$, $\a \in \G$, and $m \in M$.  

Conversely, if $M$ is a left $R$-module with a left action of $\G$ on $M$ such that $(xm)^\a=\a(x)m^\a$ for all $x \in R$, $\a \in \G$, and $m \in M$,
then the maps $\phi_\a:M \to \a^*M$ defined by $\phi_\a(m)=m^{\a^{-1}}$ gives $M$ the structure of a $\G$-equivariant $R$-module. 

Thus, a $\G$-equivariant $R$-module is an $R$-module, $M$ say, together with an action of $\G$ via a group homomorphism $\G \to \Aut_\ZZ(M)$, $\a \mapsto (m \mapsto m^\a)$, such that $(xm)^\a=\a(x)m^\a$ for all $\a \in \G$ and $m \in M$.

\bibliography{biblio}{}
\bibliographystyle{plain}

\end{document}